\let\oldtocsection=\tocsection
\let\oldtocsubsection=\tocsubsection
\let\oldtocsubsubsection=\tocsubsubsection
\renewcommand{\tocsection}[2]{\hspace{0em}\oldtocsection{#1}{#2}}
\renewcommand{\tocsubsection}[2]{\hspace{2em}\oldtocsubsection{#1}{#2}}
\renewcommand{\tocsubsubsection}[2]{\hspace{4em}\oldtocsubsubsection{#1}{#2}}
\newtheorem{thm}{Theorem}[subsection]
\newtheorem{prop}[thm]{Proposition}
\newtheorem{lemma}[thm]{Lemma}
\newtheorem{cor}[thm]{Corollary}
\newtheorem{remark}[thm]{Remark}
\newtheorem{defn}[thm]{Definition}
\newtheorem{ass}[thm]{Assumption}
\newtheorem*{thm*}{Theorem}
\numberwithin{equation}{subsection}
\def\imod#1{\allowbreak\mkern5mu{\operator@font mod}\,\,#1}
\def\makeop#1{\expandafter\def\csname#1\endcsname
  {\mathop{\rm #1}\nolimits}\ignorespaces}
\newcommand{\Z}{\mathbb Z}
\newcommand{\Q}{\mathbb Q}
\newcommand{\F}{\mathbb F}
\newcommand{\m}{\mathfrak m}
\def\bal#1\nal{\begin{align*}#1\end{align*}}
\def\lbal#1\lnal{\begin{flalign*}#1\end{flalign*}}
\newcommand{\vp}{\varphi}
\newcommand{\goto}{\rightarrow}
\newcommand{\M}{\mathfrak{M}}
\newcommand{\G}{\mathcal{G}}
\newcommand{\Ieta}{\text{\normalfont I}_\eta}
\newcommand{\Ietaa}{\text{\normalfont I}_{\eta'}}
\newcommand{\II}{\text{\normalfont II}}
\newcounter{sarrow}
\begin{document}

\title[Components of the Emerton-Gee stack]{Smoothness of components of the Emerton-Gee stack for \texorpdfstring{$\text{GL}_2$}{}}

\author{Anthony Guzman}

\author{Kalyani Kansal}

\author{Iason Kountouridis}

\author{Ben Savoie}

\author{Xiyuan Wang}

\address[Anthony Guzman]{Department of Mathematics, The University of Arizona, Tucson, AZ 85721, USA}
\email{awguzman@math.arizona.edu}

\address[Kalyani Kansal]{Department of Mathematics, Johns Hopkins University, Baltimore, MD 21218, USA}
\email{kkansal2@jhu.edu}

\address[Iason Kountouridis]{Department of Mathematics, The University of Chicago, Chicago, IL 60637, USA}
\email{iasonk@math.uchicago.edu}

\address[Ben Savoie]{Department of Mathematics, Rice University, Houston, TX 77005, USA}
\email{Bs83@rice.edu}

\address[Xiyuan Wang]{Department of Mathematics, The Ohio State University, Columbus, OH 43210, USA}
\email{wang.15476@osu.edu}

\unskip

\maketitle
\vspace*{-.5\baselineskip}
\begin{abstract}\vspace*{-1.1\baselineskip}
Let $K$ be a finite unramified extension of $\mathbb{Q}_p$, where $p>2$. \cite{CEGS-local-geometry} and \cite{EG20} construct a moduli stack of two dimensional mod $p$ representations of the absolute Galois group of $K$. We show that most irreducible components of this stack (including several non-generic components) are isomorphic to quotients of smooth affine schemes. We also use this quotient presentation to compute global sections on these components.\vspace*{-\baselineskip}
 \unskip

\end{abstract}\mbox{}\unskip

\unskip

\vspace*{-3.5\baselineskip}
\unskip

\tableofcontents\unskip \vspace{-\baselineskip}
\clearpage
\unskip

\section{Introduction}\label{intro sec}

Let $K/\mathbb{Q}_p$ be a finite extension. Following \cite[\S~3]{CEGS-local-geometry}, define the stack $\mathcal{R}$ to be the moduli stack of \'etale $\varphi$-modules of rank two. By \cite{Fon91}, there is an equivalence of categories between \'etale $\varphi$-modules and $p$-adic Galois representations of $G_\infty\coloneqq\Gal(\overline{K}/K_\infty)$ allowing us to view $\mathcal{R}$ as a moduli stack of said representations. 

The theory of Breuil-Kisin modules developed in \cite{Kis06} gives us a way to associate to any Breuil-Kisin module an \'{e}tale $\varphi$-module. Indeed, by denoting the  moduli stack of rank two Breuil-Kisin modules of finite height $h$ by $\mathcal{C}_h$, then there is a morphism $\mathcal{C}_h \rightarrow \mathcal{R}$ given (topologically) by 
$$\mathfrak{M}\longmapsto\mathfrak{M}[1/u],$$
where $u$ is a formal variable. The stack of Breuil-Kisin modules of height 1, denoted $\mathcal{C}_1$, admits a scheme-theoretic image $\mathcal{Z}_1$ via the above morphism. Breuil-Kisin modules of height 1 correspond to \'etale $\varphi$-modules which admit natural extensions to representations of $G_K=\Gal(\overline{K}/K)$ so the substack $\mathcal{Z}_1$ may be viewed as a moduli stack of representations of $G_K$. 

One method to study such objects is to introduce Breuil-Kisin modules with descent datum. This approach allows the consideration of those Galois representations that arise from generic fibers of finite flat group schemes after restriction to a finite tamely ramified extension of $K$. In fact, all representations except tr\`es ramifi\'ee ones arise in this way. Here, the tr\`es ramifi\'ee representations are the twists of certain extensions of the trivial character by the mod $p$ cyclotomic character.

Let $K'$ be a tamely ramified extension of $K$. Endowing our Breuil-Kisin modules and \'etale $\varphi$-modules with descent datum from $K'$ to $K$ allows us to define the morphism $\mathcal{C}_1^{dd}\rightarrow\mathcal{R}^{dd}$. To focus on those non tr\`es ramifi\'ee representations, we enforce a Barsotti-Tate condition on points in the scheme-theoretic image. The Barsotti-Tate condition on representations with coefficients in a characteristic $p$ field turns out to correspond to a strong determinant condition on Breuil-Kisin modules, see \cite[Theorem~5.1.2]{CEGS-local-geometry}. Let $\mathcal{C}^{dd,\mathrm{BT}}_1$ denote the stack of Breuil-Kisin modules of height 1 with descent data that satisfy the strong determinant condition. With this, we attain a morphism $\mathcal{C}^{dd,\mathrm{BT}}_1\rightarrow\mathcal{R}^{dd}$ whose scheme-theoretic image is denoted $\mathcal{Z}^{dd}_1$, the stack of non tr\`es ramifi\'ee $G_K$-representations. We will take these stacks to be defined over $\mathbb{F}$, a finite field extension of $\mathbb{F}_p$. To reduce notation, we will suppress the decorations $dd$ and $1$ in the symbols for our stacks with the assumption that all of our objects have descent data and correspond to height 1 Breuil-Kisin modules. Both $\mathcal{C}^{\mathrm{BT}}$ and $\mathcal{Z}$ are algebraic stacks of finite presentation. The stack $\mathcal{C}^{\mathrm{BT}}$ admits a decomposition
\[\prod_{\tau}\mathcal{C}^{\tau,\mathrm{BT}}\]
over tame inertial types $\tau:I_K\rightarrow\GL_2(\overline{\Q}_p)$ where we interpret each substack $\mathcal{C}^{\tau,\mathrm{BT}}$ as consisting of Breuil-Kisin modules whose descent data is of type $\tau$. For each such $\mathcal{C}^{\tau,\mathrm{BT}}$, we write $\mathcal{Z}^\tau$ to be the scheme-theoretic image of $\mathcal{C}^{\tau,\mathrm{BT}}$ inside $\mathcal{Z}$.

Such stacks are of increasing interest in the study of $p$-adic Galois representations. Related to the stack constructed by \cite{CEGS-components} 
 is a stack of so-called \'{e}tale $(\varphi,\Gamma)$-modules of rank $2$, constructed and studied in \cite{EG-Scheme-Image}. The Emerton-Gee stack for $\GL_2$, as it is called, is denoted by $\mathcal{X}_2$ and is defined over the formal spectrum of a discrete valuation ring with residue field $\F$. The stack $\mathcal{X}_2$ has proven fruitful in the study of Galois representations, and in particular, the rational lifts of $\overline{\rho}:G_K\rightarrow\GL_2(\F)$. Moreover, it is conjectured to play a role in a categorical $p$-adic Langlands correspondence, as explained in \cite{egh-ihes}. While theoretically important, $\mathcal{X}_2$ is difficult to work with by hand. On the contrary, the stack $\mathcal{Z}$ is much easier to work with. Based on their constructions, one should suspect that there is a connection between $\mathcal{Z}$ and the reduced part of $\mathcal{X}_2$, denoted $\mathcal{X}_{2, \text{red}}$. Indeed, for each tame inertial type $\tau$, \cite[Thm.~1.4]{Irreg_Loci} demonstrates an isomorphism between $\mathcal{Z}^{\tau}$ and a closed substack of $\mathcal{X}_{2, \text{red}}$. As $\tau$ varies, the isomorphism induces isomorphisms between the irreducible components of $\mathcal{Z}$ and $\mathcal{X}_{2, \text{red}}$ identifying the irreducible component of $\mathcal{Z}$ labelled by a Serre weight $\sigma$ with the irreducible component of $\mathcal{X}_{2, \text{red}}$ labelled by $\sigma$. Hence, our main result which is technically a statement about the irreducible components of $\mathcal{Z}$ can be interpreted as a result on irreducible components of $\mathcal{X}_{2, \text{red}}$.
 

\subsection{Main result}
By \cite[\S~5]{CEGS-local-geometry}, the irreducible components of $\mathcal{Z}$ can be described in terms of non-Steinberg Serre weights. Indeed, for such a Serre weight $\sigma$, the associated irreducible component $\mathcal{Z}(\sigma)$ is such that the $\overline{\mathbb{F}}_p$-points of $\mathcal{Z}(\sigma)$ are those representations having $\sigma$ as a Serre weight. Such irreducible components serve as the main objects of study in this paper. In particular, our main result is a smooth presentation of the irreducible component $\mathcal{Z}(\sigma)$ and a computation of its global functions.
\begin{thm*}[Theorem~ \ref{main-thm}]
Let $p>2$. Let $K$ be an unramified extension of $\Q_{p}$ of degree $f$ with residue field $k$. Let $\mathcal{Z}(\sigma)$ be the irreducible component of $\mathcal{Z}$ indexed by a non-Steinberg Serre weight $\sigma = \sigma_{\vec{a}, \vec{b}} = \bigotimes_{i=0}^{f-1} ({\det}^{a_{i}} \Sym^{b_i} k^2 ) \otimes_{k, \kappa_{i}} \F$, where $\{\kappa_{i}\}_{i=0}^{f-1}$ is the set of the distinct embeddings of $k$ into $\F$, and $\kappa_{i+1}^p = \kappa_i$. Suppose $\sigma$ satisfies the following properties:
 \begin{enumerate}
    \item $\vec{b} \neq (0, 0, \dots, 0)$.
    \item $\vec{b} \neq (p-2, p-2, \dots, p-2)$.
    \item Extend the indices of $b_i$'s to all of $\mathbb{Z}$ by setting $b_{i+f} = b_i$. Then $(b_i)_{i \in \mathbb{Z}}$ does not contain a contiguous subsequence of the form $(0, p-2, \dots, p-2, p-1)$ of length $\geq 2$.
\end{enumerate}

Then $\mathcal{Z}(\sigma)$ is smooth and isomorphic to a quotient of $\GL_2 \times \SL_2^{f-1}$ by $\mathbb{G}_m^{f+1} \times \mathbb{G}_a^{f}$. The ring of global functions of $\mathcal{Z}(\sigma)$ is isomorphic to $\F[x, y][\frac{1}{y}]$.
\end{thm*}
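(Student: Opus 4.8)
The plan is to transport the question to the Breuil--Kisin side. By \cite{CEGS-local-geometry}, $\mathcal{Z}(\sigma)$ is the scheme-theoretic image of the irreducible component $\mathcal{C}(\sigma)\subseteq\mathcal{C}^{\mathrm{BT}}$ attached to $\sigma$ under $\mathfrak{M}\mapsto\mathfrak{M}[1/u]$, and $\mathcal{C}(\sigma)$ is smooth. I would prove the theorem by establishing two things under hypotheses (1)--(3): first, that $\mathcal{C}(\sigma)$ admits the explicit presentation $[\GL_2\times\SL_2^{f-1}/\Gamma]$ with $\Gamma=\Gm^{f+1}\times\mathbb{G}_a^{f}$; and second, that the morphism $\mathcal{C}(\sigma)\to\mathcal{R}$ is a monomorphism. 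Granting these, $\mathcal{C}(\sigma)\to\mathcal{R}$ is a monomorphism which is moreover proper (as $\mathcal{C}^{\mathrm{BT}}\to\mathcal{R}$ is proper by \cite{EG20}, \cite{CEGS-local-geometry} and $\mathcal{C}(\sigma)$ is a closed substack of $\mathcal{C}^{\mathrm{BT}}$), hence a closed immersion, so $\mathcal{Z}(\sigma)=\mathcal{C}(\sigma)$ as substacks of $\mathcal{R}$; in particular $\mathcal{Z}(\sigma)$ is smooth, being the quotient of a smooth affine scheme by a smooth affine group.

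For the presentation, I would use the structure of Breuil--Kisin modules with descent data from \cite{CEGS-local-geometry}: such a module on $\mathcal{C}(\sigma)$ decomposes, along the $f$ embeddings $k\hookrightarrow\F$, into rank-two pieces $\mathfrak{M}_i$ on which Frobenius is a $2\times2$ matrix $\Phi_i$. The descent datum rigidifies the relevant eigenbasis; the strong determinant (Barsotti--Tate) condition fixes $\mathrm{val}_u(\det\Phi_i)$ to an explicit value determined by $\vec{b}$; and lying on $\mathcal{C}(\sigma)$ fixes which entries of $\Phi_i$ are units and which are prescribed monomials $u^{\bullet}\cdot(\text{unit})$, the exponents being read off from $\vec{a},\vec{b}$. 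Choosing a basis adapted to this shape \'etale-locally, the residual data is the tuple of unit parts, which after using up the basis changes that normalise the determinant at all but one embedding is a copy of $\GL_2\times\SL_2^{f-1}$ (the $\GL_2$-factor absorbing the single determinant twist left free by the determinant condition), while the shape-preserving residual basis changes around the cycle of embeddings form $\Gamma=\Gm^{f+1}\times\mathbb{G}_a^{f}$. The hypotheses (1)--(3) enter here: the excluded boundary values $0,p-2,p-1$ in $\vec{b}$, and the excluded contiguous patterns, are exactly those for which some generically-unit entry of a $\Phi_i$ would be forced to degenerate along a sublocus, so that $\mathcal{C}(\sigma)$ would fail to be covered by the single shape above; ruling these out makes the presentation uniform.

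The core of the argument is the monomorphism claim, equivalently: under (1)--(3), an \'etale $\varphi$-module with descent data in the image of $\mathcal{C}(\sigma)$ admits a \emph{unique} Breuil--Kisin lattice lying on $\mathcal{C}(\sigma)$, and automorphisms of the $\varphi$-module preserve it. Given two such lattices $\mathfrak{M},\mathfrak{M}'$ inside a common generic fibre, one compares them through $\mathfrak{M}\cap\mathfrak{M}'$ and $\mathfrak{M}+\mathfrak{M}'$ and propagates the valuation bounds forced by the determinant exponents and the descent datum around the full cycle of $f$ embeddings; these constraints close up and force $\mathfrak{M}=\mathfrak{M}'$ except precisely when $(b_i)_{i\in\Z}$ realises one of the configurations in (1), (2), (3) --- where, for instance in the case of a $(0,p-2,\dots,p-2,p-1)$ chain in (3), a forced sub-Breuil--Kisin-module would otherwise produce a second admissible lattice. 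This propagation analysis around the whole cyclic chain of embeddings is the step I expect to be the main obstacle. Finally, for the global functions one computes $\mathcal{O}(\mathcal{Z}(\sigma))=\mathcal{O}(\GL_2\times\SL_2^{f-1})^{\Gamma}$ directly from the explicit $\Gamma$-action: the $f$ unipotent factors kill the lower off-diagonal coordinates, and the torus $\Gm^{f+1}$ together with the $\SL_2$-determinant relations kill all remaining coordinates except two --- an invariant $x$ built from the upper off-diagonal entries across the embeddings, and the determinant $y$ of the $\GL_2$-factor, which is invertible --- with no surviving relation, yielding $\mathcal{O}(\mathcal{Z}(\sigma))\cong\F[x,y][\tfrac{1}{y}]$.
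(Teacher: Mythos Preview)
Your overall architecture matches the paper exactly: present the relevant component $\mathcal{C}^{\tau,\mathrm{BT}}(\mathbb{Z}/f\mathbb{Z})$ as $[X/G]$ with $X=\GL_2\times\SL_2^{f-1}$ and $G=\mathbb{G}_m^{f+1}\times\mathbb{G}_a^f$, show the map to $\mathcal{Z}$ is a proper monomorphism, and compute invariants. However, your diagnosis of \emph{why} hypotheses (1)--(3) are needed for the presentation step is off, and this matters because it hides the actual content of that step.

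You say the excluded configurations are those where ``some generically-unit entry of a $\Phi_i$ would be forced to degenerate along a sublocus, so that $\mathcal{C}(\sigma)$ would fail to be covered by the single shape.'' That is not the mechanism. The paper shows that over any Artinian local $R$, one can run a $v$-adic successive-approximation algorithm (repeated application of an operator $\mathcal{B}$) to put the Frobenius matrices into a canonical \emph{CDM form}. Condition~(2) (together with part of (3)) is the \emph{first obstruction}: the values $z_i=p-1-b_i$ for which this iteration fails to be Cauchy, because the off-diagonal error terms feed back into each other around the cycle. Condition~(3) is the \emph{second obstruction}: even when the CDM form exists, one needs it to be reachable from the raw Frobenius matrices by a unipotent $\mathbb{G}_a$-action (not an arbitrary base change), and a subsequence $(p-1,1,\dots,1,0)$ in $(z_i)$ is exactly what prevents this. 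Condition~(1) is needed earlier still, to guarantee $\eta\neq\eta'$ so that the $\eta/\eta'$-eigenspace formalism applies at all. None of this is about entries degenerating on a sublocus; the component is perfectly well covered by a single shape regardless --- what fails is the algorithm that identifies $G$-orbits with isomorphism classes.

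For the monomorphism, your proposed lattice-comparison via $\mathfrak{M}\cap\mathfrak{M}'$ and $\mathfrak{M}+\mathfrak{M}'$ is not what the paper does, and it is not clear it works over non-reduced $R$. The paper instead takes the change-of-basis matrices $B_i\in\GL_2(R(\!(v)\!))$ realising the isomorphism after inverting $u$, shows $\det B_i\in R[\![v]\!]^*$ and $vB_i\in M_2(R[\![v]\!])$ by a determinant-valuation cycle argument, and then proves $B_i\in\GL_2(R[\![v]\!])$ by showing that a pole in any $B_i$ forces $(z_j)_j$ to contain one of the forbidden subsequences. A further short computation shows $B_i$ is upper-triangular mod $v$, hence an inertial base change. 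This matrix-valuation argument is close in spirit to your ``propagate constraints around the cycle,'' but the lattice-theoretic language you use does not obviously yield it.

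Finally, a small correction on the global functions: after quotienting by the unipotents via $U\backslash\GL_2\cong(\mathbb{A}^2\smallsetminus\{0\})\times\mathbb{G}_m$ (bottom row and determinant), the torus-invariant generators are the determinant $y=D$ of the $\GL_2$-factor and $x=d_0\cdots d_{f-1}$, the product of the \emph{bottom-right} (not off-diagonal) entries across the embeddings.
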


Note that the case $\vec{b} = (0, 0, \dots, 0)$ can be studied using Fontaine-Lafaille methods. The key utility of this paper is in providing a description of components indexed by non-Fontaine-Lafaille Serre weights.

\subsection{Outline of the article}\label{outline subsec}

We begin in Section \ref{BK sec}, by providing a concrete classification of certain Breuil-Kisin modules defined over Artinian local $\F$-algebras with descent data classified by a tame inertial $\F$-type $\tau$. This closely follows the ideas of \cite{CDM} which describe a method to classify such Breuil-Kisin modules into one of three forms which are convenient for computations. We then describe the automorphisms of such Breuil-Kisin modules that are needed to study the stack $\mathcal{C}^{\mathrm{BT}}$. We find that this method is not foolproof however, and identify \textit{obstruction} conditions on $\tau$ for which our methods fail.

In Section \ref{component sec}, for a given tame intertial $\F$-type $\tau$, we identify an irreducible component $\mathcal{X}(\tau)$ of $\mathcal{C}^{\mathrm{BT}}$ that can be written as a quotient of smooth affine schemes. This utilizes the classification from Section \ref{BK sec}. We then use this quotient presentation to compute the global functions on $\mathcal{X}(\tau)$.


Finally, Section \ref{passage sec} is where we analyze the relationship between $\mathcal{X}(\tau)$'s and the irreducible components of $\mathcal{Z}$. We first explicitly identify the irreducible component of $\mathcal{Z}$ that turns out to be the image of $\mathcal{X}(\tau) \subset \mathcal{C}^{\mathrm{BT}}$. Subject to the aforementioned obstruction conditions, we allow $\tau$ to vary in order to obtain as many irreducible components of $\mathcal{Z}$ as possible in the images of $\mathcal{X}(\tau)$'s. We then show that the map from $\mathcal{X}(\tau)$ to $\mathcal{Z}$ is fully faithful, and conclude finally that $\mathcal{X}(\tau)$ is isomorphic to its scheme-theoretic image. 


\subsection{Acknowledgements} This work was born out of the NSF-FRG Collaborative Grant DMS-1952556. We would like to thank Brandon Levin for suggesting this problem and guiding us throughout our research. We would also like to thank Agn\`es David, Matthew Emerton, Bao V. Le Hung and David Savitt for helpful conversations, and the anonymous referee for numerous suggestions that improved both accuracy and exposition. 

\subsection{Notation and conventions}\label{notation subsec}

Fix a prime $p>2$. Let $K$ be a finite, unramified extension of $\mathbb{Q}_p$, with ring of integers $\mathcal{O}_K$ and residue field $k$. Let $f := f(K/\mathbb{Q}_p)$.
Upon fixing an algebraic closure $\overline{K}$, we let $G_K=\Gal(\overline{K}/K)$ denote the absolute Galois group of $K$ and define $I_K$ and $I_K^{\mathrm{w}}$ to be the inertia and wild inertia subgroups of $G_K$ respectively. Fix a uniformizer $\pi$ of $K$ and a $p$-power compatible sequence $(\pi_n)_{n\ge0}$ so that $\pi_0=\pi$ and $\pi_{n+1}^p=\pi_n$. We define the field $K_\infty$ to be the compositum $K_\infty=\bigcup_n K(\pi_n)$ with associated Galois group $G_\infty=\Gal(\overline{K}/K_\infty)\subset G_K$.

Serving to capture our descent data, let $K'/K$ be a totally tamely ramified extension of degree $e\coloneqq p^f-1$ obtained by adjoining an $e$-th root of $p$ to $K$ which we denote $\pi_{K'}$. Let $k'=k$ be the residue field of $K'$.


$\Gal(K'/K)$ is cyclic of order $e$, and is isomorphic to $\mu_e(K)$, the group of $e$th roots of unity in $W(k') = \mathcal{O}_K$. This isomorphism is given by $h: \Gal(K'/K)\rightarrow  \mu_{e}(K)$, defined by $
h(g) = \tfrac{g(\pi_{K'})}{\pi_{K'}}$.

Serving as coefficients, let $\F$ be a finite extension of $\F_p$ which we assume to be large enough such that $\F$ contains all embeddings of $k$ into $\overline{\F}_p$. We fix an embedding $\kappa_{0}: k \hookrightarrow \F$, and recursively define $\kappa_{i}: k\hookrightarrow \F$ for any $i\in \Z$ such that $\kappa_{i+1}^{p}=\kappa_{i}$ and so $\kappa_{i+f}=\kappa_{i}$ for any $i\in \Z$. Since there are $f$ such embeddings, we will commonly take the index to be $i\in\Z/f\Z$.

\subsubsection{Serre Weights}

Recall $k/\F_{p}$ is a degree $f$ extension. A \textit{Serre weight} is an isomorphism class of irreducible $\F$-representations of $\GL_{2}(k)$. Any such representation is, up to isomorphism, of the form 
\[
\sigma_{\vec{a}, \vec{b}}:=\bigotimes_{i=0}^{f-1} ({\det}^{a_{i}} \Sym^{b_i} k^2 ) \otimes_{k, \kappa_{i}} \F,
\]
where $0\leq a_{i}, b_{i}\leq p-1$ and not all $a_{i}$ equal to $p-1$. We say $\sigma_{\vec{a}, \vec{b}}$ is \textit{Steinberg} if each $b_i$ equals $p-1$.

\subsubsection{Tame Inertial $\F$-Types}

An \textit{inertial $\F$-type} is (the isomorphism
class of) a representation $\tau : I_{K}\rightarrow \GL_2(\F)$ with open kernel which extends to $G_{K}$. An inertial $\F$-type is called \textit{tame} if $\tau|_{I_{K}^{\mathrm{w}}}$ is trivial.


Let $\tau : I_{K}\rightarrow \GL_2(\F)$ be a tame inertial $\F$-type. Then $\tau\cong \eta \oplus \eta^{'}$, where $\eta, \eta' : I_{K}\rightarrow \F^{\times}$ are tamely ramified characters. We say $\tau$ is a \textit{tame principal series $\F$-type} if $\eta, \eta'$ both extend to characters of $G_{K}$.
We will assume that the tame inertial $\F$-types $\tau$ factor via $I(K'/K) = \Gal(K'/K)$.

Given a tame principal series $\F$-type $\tau \cong \eta \oplus \eta'$, let $\gamma_i$ be the unique integer in $[0, e)$ such that $\eta \eta'^{-1} (g)= \kappa_i \circ h(g)^{\gamma_i}$. We also define $z_j \in \{0,\dots, p-1\}$ for $j \in \mathbb{Z}/f\mathbb{Z}$ so as to satisfy
\begin{align}\label{defn-z_i}
    \gamma_i= \sum\limits_{j=0}^{f-1} z_{i-j}p^j.
\end{align}

Note that this implies $p \gamma_i = z_{i+1} e + \gamma_{i+1}$. At times, we will assume the indexing set for ${z_i}$ to be $\mathbb{Z}$ via the natural quotient map $\mathbb{Z} \to \mathbb{Z}/f\mathbb{Z}$.


\section{Breuil-Kisin modules with descent}\label{BK sec}

In this section, we introduce some basic definitions and properties about Breuil-Kisin modules and their moduli space. To begin, we introduce the relevant notions which we will use for the rest of the paper. While the general definition of Breuil-Kisin modules works over characteristic $0$, we will restrict to characteristic $p$ and the specific case where descent data is from $K'$ to $K$.

Define $\mathfrak{S}_{\F_p}:= k[\![u]\!]$. The ring $\mathfrak{S}_{\F_p}$ is equipped with a Frobenius endomorphism $\varphi$ such that $u\mapsto u^p$ which is semilinear with respect to the (arithmetic) Frobenius on $k$. The Galois group $\Gal(K'/K)$ acts on $\mathfrak{S}_{\F_p}$ via $g(\sum a_i u^{i})=\sum a_{i} h(g)^{i} u^i$,
where $g\in \Gal(K'/ K)$, $\sum a_i u^i \in \mathfrak{S}$ and $h:\Gal(K'/K)\rightarrow k^{\times}$ is the map given by $h(g) = \tfrac{g(\pi_{K'})}{\pi_{K'}}$ mod $\pi_{K'}$. For later convenience, we also let $v=u^e$.


Let $R$ be an $\F$-algebra. We let $\mathfrak{S}_R:= (k\otimes_{\mathbb{F}_p} R)[\![u]\!]$ be the extension of scalars equipped with $R$-linear actions of $\varphi$ and $\Gal(K'/K)$ naturally extended from the $\varphi$ and $\Gal(K'/K)$ actions on $\mathfrak{S}_{\F_p}$. Let $\mathfrak e_i$ denote the idempotent of 
$$k' \otimes_{\F_p} R \cong \prod\limits_{\kappa_i:k'\hookrightarrow \F} R$$ corresponding to $(0,\dots,0,1,0,\dots,0)$ with $1$ in the $i$th coordinate. We can then write
\[
\mathfrak{S}_{R}\cong \bigoplus_{i\in \mathbb{Z}/f\mathbb{Z}}R[\![u]\!]\mathfrak{e}_{i}.
\]
Note that $\varphi(\mathfrak{e}_i) = \mathfrak{e}_{i+1}$, and if $g \in \Gal(K'/K)$, then $g(\mathfrak{e}_i) = \mathfrak{e}_i$.

\begin{defn}\label{BK-Def}
    A Breuil-Kisin module with $R$-coefficients and descent data from $K'$ to $K$ is a triple $(\M, \varphi_{\M}, \{\widehat{g}\}_{g\in \Gal(K'/K)})$, consisting of a finitely generated projective $\mathfrak{S}_{R}$-module $\M$ such that
    \begin{itemize}
        \item $\M$ admits a $\varphi$-semilinear map $ \varphi_{\M}: \M\rightarrow\M$ such that the induced map $\Phi_{\M} = 1\otimes \varphi_{\M}: \mathfrak{S}_{R}\otimes_{\varphi, \mathfrak{S}_{R}}\M\rightarrow \M$ is an isomorphism after inverting $v$.
        
        \item $\M$ admits an additive bijection $\widehat{g}: \M\rightarrow \M$ for each $g\in \Gal(K'/K)$ which commutes with $\varphi_{\M}$, respects the group structure $\widehat{g_{1}\circ g_{2}}=\widehat{g_1}\circ \widehat{g_2}$, and satisfies $\widehat{g}(sm) = g(s) \widehat{g}(m)$ for all $s \in \mathfrak{S}_{R}$ and $m \in \M$.
    \end{itemize}
\end{defn}
    We say $\M$ has \textit{height} at most $h\ge0$ if the cokernel of $\Phi_{\M}$ is killed by $v^{h}$. We say a Breuil-Kisin module $\M$ is rank $d$ if the underlying $\mathfrak{S}_{R}$-module has constant rank $d$. A morphism of Breuil-Kisin modules with $R$-coefficients and descent data is a morphism of $\mathfrak{S}_{R}$-modules that commutes with the $\varphi$-action and the Galois action. 
    
    Localizing a Breuil-Kisin module $\M$ by inverting $u$ gives rise to an \'etale-$\varphi$ module which we define below.

\begin{defn}
        An \'etale-$\varphi$ module with $R$-coefficients and descent data from $K'$ to $K$ is a triple $(M, \varphi_{M}, \{\widehat{g}\}_{g\in \Gal(K'/K)})$, consisting of a finitely generated projective $\mathfrak{S}_{R}[1/u]$-module $M$ such that
    \begin{itemize}
        \item $M$ admits a $\varphi$-semilinear map $ \varphi_{M}: M \rightarrow M$ such that the induced map $\Phi_{M} = 1\otimes \varphi_{M}: \mathfrak{S}_{R}[1/u]\otimes_{\varphi, \mathfrak{S}_{R}[1/u]}M\rightarrow M$ is an isomorphism.
        
        \item $M$ admits an additive bijection $\widehat{g}: M\rightarrow M$ for each $g\in \Gal(K'/K)$ which commutes with $\varphi_{M}$, respects the group structure $\widehat{g_{1}\circ g_{2}}=\widehat{g_1}\circ \widehat{g_2}$, and satisfies $\widehat{g}(sm) = g(s) \widehat{g}(m)$ for all $s \in \mathfrak{S}_{R}[1/u]$ and $m \in M$.
    \end{itemize}
\end{defn}

Let $\M$ be a Breuil-Kisin module with $R$-coefficients. We may decompose $\M$ in terms of idempotents
\[
\M = \bigoplus_{i=0}^{f-1}\M_i,
\]
where $\M_{i} = \mathfrak{e}_{i}\M$. Similarly, let $M$ be an \'etale-$\varphi$ module with $R$-coefficients. We may again decompose $M$ in terms of idempotents
\[
M = \bigoplus_{i=0}^{f-1}M_i,
\]
where $M_{i} = \mathfrak{e}_{i}M$. It follows from the action of $\varphi$ and $\mathrm{Gal}(K'/K)$ on $\mathfrak{e}_i$ that $\varphi_{\mathfrak{M}}$ maps $\mathfrak{M}_i$ to $\mathfrak{M}_{i+1}$ while $\mathrm{Gal}(K'/K)$ maps $\mathfrak{M}_i$ to $\mathfrak{M}_{i}$. In this paper, we will be using this decomposition by idempotents repeatedly and without further comment. 

\subsection{Inertial descent datum}\label{inertia subsec}
Let $\M$ (resp. $M$) be a Breuil-Kisin module (resp. \'etale-$\varphi$ module) of rank two with $R$ coefficients. Let $\tau =\eta\oplus \eta'$ be a fixed tame principal series $\F$-type. 
\begin{defn}
We say $\M$ has tame principal series $\F$-type $\tau$ if (Zariski locally on $\Spec R$ if necessary) there exists a $\Gal(K'/K)$-equivariant isomorphism $\M_i/u\M_i \cong R\eta \oplus R\eta'$ for each $i$.
\end{defn}


Many of the computations carried out in the rest of this paper depend on ensuring that any base change respects this principle series type structure. For this reason, we define such special bases and base changes in the following way.

\begin{defn}\label{inertial-base-change} 
An inertial basis of $\M_i$ (resp. $M_i$) is an ordered basis with respect to which the Galois action is given diagonally by $\eta \oplus \eta'$.

Base change matrices that switch a set of inertial bases (comprising a basis for each $\M_i$, or $M_i$ as the case may be) to another set of inertial bases will be called inertial base change matrices, and the corresponding change of bases will be called an inertial base change.
\end{defn}

Our first order of business is to show that we can always find an inertial basis for a Breuil-Kisin module of tame $\mathbb{F}$-type $\tau$.

\begin{lemma}\label{InertiaForm}
    Let $\M$ be a Breuil-Kisin module of tame $\mathbb{F}$-type $\tau$. For all $i\in\Z/f\Z$, (Zariski locally on $R$ if necessary), there exists an ordered $R[\![u]\!]$-basis $(e_i,f_i)$ of $\mathfrak M_i$ such that the action of $g\in\Gal(K'/K)$ is given by
    \begin{equation}\label{eqn-inertia}
        \widehat{g} (e_{i})=\eta(g)e_i, \ \ \ \ 
        \widehat{g} (f_{i})=\eta'(g)f_i.
    \end{equation}
\end{lemma}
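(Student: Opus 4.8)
The plan is to produce the basis $(e_i, f_i)$ by decomposing $\mathfrak{M}_i/u\mathfrak{M}_i$ under the action of $\Gal(K'/K)$ and then lifting an eigenbasis along the projection $\mathfrak{M}_i \to \mathfrak{M}_i/u\mathfrak{M}_i$. The starting point is the definition of tame principal series $\mathbb{F}$-type: Zariski-locally on $\Spec R$ we have a $\Gal(K'/K)$-equivariant isomorphism $\mathfrak{M}_i/u\mathfrak{M}_i \cong R\eta \oplus R\eta'$. So after shrinking $\Spec R$ we may choose $\overline{e}_i, \overline{f}_i \in \mathfrak{M}_i/u\mathfrak{M}_i$ on which $\widehat{g}$ acts by $\eta(g)$ and $\eta'(g)$ respectively, and which form an $R$-basis of $\mathfrak{M}_i/u\mathfrak{M}_i$.

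First I would address the lifting. Pick arbitrary lifts $e_i^{(0)}, f_i^{(0)} \in \mathfrak{M}_i$ of $\overline{e}_i, \overline{f}_i$; by Nakayama (and the fact that $\mathfrak{M}_i$ is finitely generated projective over $R[\![u]\!]$, which is $u$-adically complete) these form an $R[\![u]\!]$-basis of $\mathfrak{M}_i$. They need not be $\widehat{g}$-eigenvectors, but the discrepancy lies in $u\mathfrak{M}_i$. The key structural input is that the $R[\![u]\!]$-module $\mathfrak{M}_i$ carries a semilinear action of the \emph{finite} cyclic group $\Gamma := \Gal(K'/K)$ of order $e = p^f - 1$, which is prime to $p$ and hence invertible in $\F$ (here $p > 2$ is not even needed, just that $|\Gamma|$ is a unit). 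Therefore the standard averaging/idempotent-projection argument applies: the group algebra $\F[\Gamma]$ acts and decomposes into characters, each appearing with a projector $\frac{1}{e}\sum_{g} \chi(g)^{-1} \widehat{g}$ (noting $\widehat{g}$ is $R[\![u]\!]$-semilinear, but $g$ acts on $R[\![u]\!]$ only through $u \mapsto h(g) u$, which preserves the grading by powers of $u$). I would use this to project $e_i^{(0)}$ onto the $\eta$-isotypic part and $f_i^{(0)}$ onto the $\eta'$-isotypic part, obtaining genuine eigenvectors $e_i, f_i$. Because the averaging operator is the identity modulo $u$ on the relevant isotypic pieces — more precisely, it fixes $\overline{e}_i$ and $\overline{f}_i$, which already lie in the correct eigenspaces of $\mathfrak{M}_i/u\mathfrak{M}_i$ — the resulting $e_i, f_i$ still reduce to $\overline{e}_i, \overline{f}_i$ modulo $u$, hence still form an $R[\![u]\!]$-basis of $\mathfrak{M}_i$ by Nakayama. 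This gives exactly \eqref{eqn-inertia}.

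The main obstacle I anticipate is bookkeeping the semilinearity of $\widehat{g}$ correctly: $\widehat{g}$ is not $R[\![u]\!]$-linear, so one must check that the averaging operator $\frac{1}{e}\sum_g \eta(g)^{-1}\widehat{g}$ is well-defined as a map of $R$-modules and that its image consists of true eigenvectors for the semilinear action, i.e. that $\widehat{h}\big(\frac{1}{e}\sum_g \eta(g)^{-1}\widehat{g}(x)\big) = \eta(h)\cdot\frac{1}{e}\sum_g \eta(g)^{-1}\widehat{g}(x)$ — this uses $\eta(h)\eta(g) = \eta(hg)$ and a reindexing $g \mapsto h^{-1}g$, exactly as in the linear case, so it goes through. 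A secondary point is confirming that the operator acts as the identity on $\mathfrak{M}_i/u\mathfrak{M}_i$-lifts of eigenvectors: since $\widehat{g}(e_i^{(0)}) \equiv \eta(g)\overline{e}_i \pmod{u\mathfrak{M}_i}$, averaging gives $\equiv \overline{e}_i \pmod{u\mathfrak{M}_i}$, so the reduction is unchanged. Finally I would remark that all choices were Zariski-local on $\Spec R$, matching the statement, and that one does the construction independently for each $i \in \Z/f\Z$ since the idempotent decomposition $\mathfrak{M} = \bigoplus_i \mathfrak{M}_i$ is $\Gamma$-stable. No compatibility between different $i$ is required for this lemma, so the argument is complete.
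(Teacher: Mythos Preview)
Your argument is correct. The averaging projector $P_\chi = \frac{1}{e}\sum_{g}\chi(g)^{-1}\widehat{g}$ is well-defined because $e=|\Gal(K'/K)|$ is invertible in $\F$ and the action, while only semilinear over $R[\![u]\!]$, is $R$-linear (the constants $\chi(g)^{-1}$ lie in $R$, where $g$ acts trivially); your verification that $P_\chi$ lands in the $\chi$-eigenspace and acts as the identity modulo $u$ on a lift of a $\chi$-eigenvector is exactly right. The final Nakayama step is fine too: since $\M_i$ is projective, the surjection $R[\![u]\!]^2\to\M_i$ splits, and the kernel is a finitely generated direct summand killed modulo $u$, hence zero.

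The paper takes a closely related but more structural route. Rather than projecting with idempotents, it first proves that every $\F[\![v]\!][\Gal(K'/K)]$-module decomposes as a direct sum of character eigenspaces (note $v=u^e$ is fixed by $\Gal(K'/K)$, so the action is genuinely $\F[\![v]\!]$-linear), via an inductive argument: pass to $\F(\!(v)\!)$ where semisimplicity gives a decomposition, then show by induction on the number of characters that each eigencomponent already lies in the integral model. Having the decomposition in hand, it chooses $e_i,f_i$ inside the relevant eigenspaces lifting the mod-$u$ basis, and verifies linear independence directly using $u$-torsion-freeness before invoking Nakayama. Your idempotent approach is the standard Maschke-type shortcut that compresses the paper's decomposition claim into a single formula; what the paper's version buys is an explicit statement of the eigenspace decomposition of $\M_i$ as an $R[\![v]\!]$-module, which is invoked again later (e.g.\ in Lemma~\ref{lem_linearization} and Lemma~\ref{constraints-base-change-matrix}).
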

\begin{proof}

Fix $i\in\Z/f\Z$. Without loss of generality (after restricting to an affine open cover of $\text{Spec }R$ if necessary), we have $$\M_i/u\M_i \cong Rx_i \oplus Ry_i$$
where $x_i$ is an eigenvector for $\Gal(K'/K)$ with eigencharacter $\eta$ and $y_i$ is an eigenvector for $\Gal(K'/K)$ with eigencharacter $\eta'$. Fix lifts $\tilde{x}_i$ and $\tilde{y}_i$ of $x_i$ and $y_i$ respectively in $\M_i$.
Set
\begin{align*}
&e_i := \dfrac{1}{|\Gal(K'/K)|}\sum\limits_{j = 0}^{|\Gal(K'/K)| - 1} g^{j}(\tilde{x}_i)\eta(g^{-j}), \text{ and } \\
&f_i := \dfrac{1}{|\Gal(K'/K)|}\sum\limits_{j = 0}^{|\Gal(K'/K)| - 1} g^{j}(\tilde{y}_i)\eta^{\prime}(g^{-j})
\end{align*}
where $g$ is a generator of $\Gal(K'/K)$. Clearly $e_i$ and $f_i$ lift $x_i$ and $y_i$ and are eigenvectors for $\Gal(K'/K)$ with eigenvalues given by $\eta$ and $\eta'$ respectively. We now show that they give an $R[[u]]$-basis of $\M_i$.

Suppose $(\sum_{k \geq 0}{a_k u^k}) e_i + (\sum_{k \geq 0}{b_k u^k}) f_i = 0$ for some $\sum_{k \geq 0}{a_k u^k}, \sum_{k \geq 0}{b_k u^k} \in R[\![u]\!]$. Suppose $n$ is the smallest degree so that either $a_n$ or $b_n$ is nonzero. As $\mathfrak{M}_i$ is $u$-torsion free, we can divide the equation by $u^n$ and assume, without loss of generality, that $a_0 \neq 0$. As $e_i$ and $f_i$ are linearly independent mod $u$, $a_0$ is forced to be $0$, giving a contradiction. Therefore, $e_i$ and $f_i$ are linearly independent, and we have an inclusion $R[\![u]\!] e_i \oplus R[\![u]\!] f_i \hookrightarrow \mathfrak M_i$, which is an equality mod $u$. By Nakayama, $R[\![u]\!] e_i \oplus R[\![u]\!] f_i \hookrightarrow \mathfrak M_i$ is an equality.
\end{proof}

The Frobenius $\Phi_{\M}$ restricts under idempotent decomposition to a map $\varphi^{*}\M_{i-1} \to \M_{i}$ which we will call the $i$-th Frobenius map and denote by $\Phi_{\M,i}$. After fixing an inertial basis for each $i$, let the $i$-th Frobenius map be represented by 
\[
F_i = \begin{pmatrix}
A_1^{(i)}& A_2^{(i)} \\
A_3^{(i)}& A_4^{(i)}
\end{pmatrix},
\] 
such that $A_j^{(i)} \in R[\![u]\!]$ with
\[
\Phi_{\M,i}(1 \otimes e_{i-1}) = A_1^{(i)} e_{i} + A_3^{(i)} f_{i}, \ \ \
\Phi_{\M,i}(1 \otimes f_{i-1}) = A_2^{(i)} e_{i} + A_{4}^{(i)} f_{i},
\]
for any $i\in \Z/f\Z$. The principle series type structure on $\M_i$ allows us to put $F_i$ in a particular form.

\begin{lemma}\label{lem_linearization}
Suppose $\eta \neq \eta'$. After fixing an inertial basis for each $i$, each Frobenius linearization $\Phi_{\M, i}: \varphi^*(\M_{i-1}) \goto \M_{i}$ has a matrix of the form:
\begin{align}\label{equ-frobmatrix}
F_i = \begin{pmatrix}
s_1^{(i)}&u^{e - \gamma_{i}} s_2^{(i)} \\
u^{\gamma_{i}} s_3^{(i)}& s_4^{(i)}
\end{pmatrix},
\end{align}
where $s_j^{(i)} \in R[\![v]\!]$, for $i\in \Z/f\Z$.
\end{lemma}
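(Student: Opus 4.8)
The plan is to analyze the constraint that the descent datum $\widehat{g}$ commutes with $\varphi_\M$, and to feed this into the expression for $F_i$ written in a fixed inertial basis. Start with the general matrix
$$
F_i = \begin{pmatrix} A_1^{(i)} & A_2^{(i)} \\ A_3^{(i)} & A_4^{(i)} \end{pmatrix}, \qquad A_j^{(i)} \in R[\![u]\!],
$$
defined by $\Phi_{\M,i}(1\otimes e_{i-1}) = A_1^{(i)} e_i + A_3^{(i)} f_i$ and $\Phi_{\M,i}(1\otimes f_{i-1}) = A_2^{(i)} e_i + A_4^{(i)} f_i$. The first step is to record precisely how $\widehat{g}$ acts on $\varphi^*\M_{i-1}$: since $\widehat{g}(e_{i-1}) = \eta(g) e_{i-1}$ and $\varphi$ twists the $\mathfrak{S}_R$-action through the Frobenius, the element $1\otimes e_{i-1}$ transforms by $\eta(g)^{?}$ — more precisely one must track the effect of $g$ on $\mathfrak{S}_R \otimes_{\varphi,\mathfrak{S}_R}\M_{i-1}$, where $g$ acts on the left factor $\mathfrak{S}_R$ by $u \mapsto h(g) u$. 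The key numeric input is that $g$ acts on $u \in \mathfrak{S}_R$ by the scalar $h(g)$, and therefore on $u^n$ by $h(g)^n$; combined with the definition $\eta\eta'^{-1}(g) = \kappa_i\circ h(g)^{\gamma_i}$ (reading off the $i$-th component), this is what forces the powers of $u$.

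The second step is the actual equation. Applying $\widehat{g}\circ \Phi_{\M,i} = \Phi_{\M,i}\circ (\text{action of }g\text{ on }\varphi^*\M_{i-1})$ to the basis vectors $1\otimes e_{i-1}$ and $1\otimes f_{i-1}$ and comparing coefficients of $e_i$ and $f_i$, one gets for each entry a relation of the shape $\eta(g)\, A_1^{(i)}(h(g)u) = \eta(g)\, A_1^{(i)}(u)$ (wait — more carefully, $\eta(g)$ on the source side versus $\eta(g)$ on the target side for the $(1,1)$ entry), so that $A_1^{(i)}(h(g)u) = A_1^{(i)}(u)$ for all $g$; since the $h(g)$ range over all $e$-th roots of unity, $A_1^{(i)}$ lies in $R[\![u^e]\!] = R[\![v]\!]$, giving $A_1^{(i)} = s_1^{(i)} \in R[\![v]\!]$, and similarly $A_4^{(i)} = s_4^{(i)} \in R[\![v]\!]$. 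For the off-diagonal entries, the characters do not cancel: the $(2,1)$ entry satisfies $\eta'(g) A_3^{(i)}(h(g)u) = \eta(g) A_3^{(i)}(u)$, i.e.\ $A_3^{(i)}(h(g)u) = (\eta\eta'^{-1})(g)\, A_3^{(i)}(u) = h(g)^{\gamma_i} A_3^{(i)}(u)$ (using the $i$-th embedding identification), which forces every monomial $u^n$ appearing in $A_3^{(i)}$ to have $n \equiv \gamma_i \pmod e$; thus $A_3^{(i)} = u^{\gamma_i} s_3^{(i)}$ with $s_3^{(i)} \in R[\![v]\!]$. Symmetrically, $A_2^{(i)}(h(g)u) = h(g)^{-\gamma_i} A_2^{(i)}(u)$, so $A_2^{(i)} = u^{e-\gamma_i} s_2^{(i)}$ with $s_2^{(i)} \in R[\![v]\!]$ (here one uses $0 \le \gamma_i < e$ so that $e - \gamma_i$ is the representative in $[1,e]$, and $u^e s_2^{(i)}$ absorbed appropriately; the case $\gamma_i = 0$ is consistent since then $\eta = \eta'$ on the relevant part, excluded by hypothesis, so $\gamma_i \ge 1$ and $e - \gamma_i \le e-1$). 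This yields exactly the claimed form \eqref{equ-frobmatrix}.

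The main obstacle I expect is bookkeeping the Frobenius twist correctly on $\varphi^*\M_{i-1}$ — that is, being careful about whether it is $\gamma_{i-1}$ or $\gamma_i$ that appears, and about how the idempotent decomposition interacts with the Frobenius (which shifts the index $i-1 \mapsto i$) and with the relation $p\gamma_{i-1} \equiv \gamma_i \pmod e$ noted after \eqref{defn-z_i}. Concretely, the $g$-action on $\mathfrak{S}_R \otimes_{\varphi,\mathfrak{S}_R}\M_{i-1}$ in its $i$-th idempotent component sends $u \mapsto h(g)u$ on the outer $\mathfrak{S}_R$, so the eigencharacter of $1 \otimes e_{i-1}$ for $g$ is the same $\eta$ read through $\kappa_{i-1}$, but after we express things in the basis of $\M_i$ the comparison happens at index $i$; the compatibility $p\gamma_{i-1} = z_i e + \gamma_i$ is exactly what makes $\gamma_i$ (not $p\gamma_{i-1}$) the exponent that survives modulo $e$. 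Once this indexing is pinned down the rest is the routine "invariants of a finite cyclic group acting on $R[\![u]\!]$ by scaling $u$" computation, needing only that $|\Gal(K'/K)| = e$ is prime to $p$ so that the eigenspace decomposition of $R[\![u]\!]$ under $\mu_e$ is $\bigoplus_{j=0}^{e-1} u^j R[\![v]\!]$.
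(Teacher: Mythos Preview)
Your proposal is correct and follows essentially the same approach as the paper, which simply states that the result ``follows easily from the commutative condition between $\varphi$ and $\Gal(K'/K)$ actions.'' Your indexing worry about $\varphi^*\M_{i-1}$ is unnecessary: since $\eta(g),\eta'(g)\in\F^\times\subset R$ are scalars and the Frobenius on $\mathfrak{S}_R$ is $R$-linear, we have $\widehat{g}(1\otimes e_{i-1}) = 1\otimes\eta(g)e_{i-1} = \eta(g)(1\otimes e_{i-1})$ directly, so the eigencharacters on the source are just $\eta,\eta'$ and the comparison happens entirely inside $\M_i$ (whence $\gamma_i$, not $\gamma_{i-1}$, appears).
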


\begin{proof}
This follows easily from the commutative condition between $\varphi$ and $\Gal(K'/K)$ actions.
\end{proof}
We will further refine the form of the Frobenius action in a subsequent section but first, we must find a description of inertial base change matrices.

\begin{lemma}\label{constraints-base-change-matrix} Suppose $\M$ (resp. $M$) is a Breuil-Kisin module (resp. \'{e}tale-$\varphi$ module) with an inertial basis for each $\M_i$ (resp. $M_i$). 

For each $i$, let $P_i = \begin{pmatrix}
b_1^{(i)}& b_2^{(i)}\\
b_3^{(i)}& b_4^{(i)}
\end{pmatrix}$ be an inertial base change matrix in $\GL_2(R[\![u]\!])$ (resp. $\GL_2(R(\!(u)\!))$. Then $b_1^{(i)}, b_4^{(i)} \in R(\!(v)\!)$, $b_2^{(i)} \in u^{e - \gamma_i} R(\!(v)\!)$ and $b_3^{(i)} \in u^{\gamma_i} R(\!(v)\!)$.

\end{lemma}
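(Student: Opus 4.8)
The plan is to argue exactly as in the proof of Lemma~\ref{lem_linearization}: the only facts used are that $\widehat g$ acts diagonally via $\eta\oplus\eta'$ on \emph{both} inertial bases, together with the semilinearity relation $\widehat g(sm)=g(s)\,\widehat g(m)$. Fix $i\in\Z/f\Z$, let $(e_i,f_i)$ be the original inertial basis of $\M_i$ (resp.\ $M_i$) and $(e_i',f_i')$ the new one, so that $e_i'=b_1^{(i)}e_i+b_3^{(i)}f_i$ and $f_i'=b_2^{(i)}e_i+b_4^{(i)}f_i$ (the precise placement of the $b_j^{(i)}$ in $P_i$ is immaterial for the argument). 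Applying $\widehat g$ to $e_i'=b_1^{(i)}e_i+b_3^{(i)}f_i$, using $\widehat g(e_i')=\eta(g)e_i'$, $\widehat g(e_i)=\eta(g)e_i$, $\widehat g(f_i)=\eta'(g)f_i$ and semilinearity, and comparing coefficients in the free basis $(e_i,f_i)$, I get $g(b_1^{(i)})=b_1^{(i)}$ and $g(b_3^{(i)})=(\eta\eta'^{-1})(g)\,b_3^{(i)}$ for all $g\in\Gal(K'/K)$; the analogous manipulation of $f_i'=b_2^{(i)}e_i+b_4^{(i)}f_i$ gives $g(b_4^{(i)})=b_4^{(i)}$ and $g(b_2^{(i)})=(\eta'\eta^{-1})(g)\,b_2^{(i)}$.

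It then remains to read off the $u$-adic shapes from these eigenvector identities. On the $i$-th idempotent component $g$ acts on $R(\!(u)\!)$ by $\sum_j a_j u^j\mapsto \sum_j a_j\,\zeta_g^{\,j}u^j$, where $\zeta_g\in\F^{\times}$ is the image of $h(g)$ under $W(k')\twoheadrightarrow k'\xrightarrow{\kappa_i}\F$; as $g$ ranges over $\Gal(K'/K)$, $\zeta_g$ ranges over all $e$-th roots of unity in $\F$, and by the definition of $\gamma_i$ one has $(\eta\eta'^{-1})(g)=\zeta_g^{\,\gamma_i}$. Feeding this into $g(b_1^{(i)})=b_1^{(i)}$ (with $b_1^{(i)}=\sum_j a_j u^j$) forces $a_j(\zeta_g^{\,j}-1)=0$ for all $g$; picking $g$ with $\zeta_g$ a primitive $e$-th root of unity, $\zeta_g^{\,j}-1$ is $0$ if $e\mid j$ and otherwise a nonzero element of the field $\F$, hence a unit in $R$, so $a_j=0$ unless $e\mid j$, i.e.\ $b_1^{(i)}\in R(\!(v)\!)$; likewise $b_4^{(i)}\in R(\!(v)\!)$. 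The same bookkeeping applied to $g(b_3^{(i)})=\zeta_g^{\,\gamma_i}b_3^{(i)}$ and $g(b_2^{(i)})=\zeta_g^{-\gamma_i}b_2^{(i)}$ shows that only exponents $j\equiv\gamma_i$, resp.\ $j\equiv e-\gamma_i \pmod{e}$, occur, so $b_3^{(i)}\in u^{\gamma_i}R(\!(v)\!)$ and $b_2^{(i)}\in u^{e-\gamma_i}R(\!(v)\!)$, using $0\le\gamma_i<e$.

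I do not expect a genuine obstacle here. The only slightly delicate point is that $R$ is an arbitrary $\F$-algebra, possibly non-reduced, so one cannot simply cancel $\zeta_g^{\,j}-1$; instead one uses that it lies in $\F\subset R$ and is therefore either zero or a unit. (When some $\gamma_i=0$, i.e.\ $\eta|_{I_K}=\eta'|_{I_K}$, the exponent $e-\gamma_i$ should be read modulo $e$, so that $b_2^{(i)},b_3^{(i)}\in R(\!(v)\!)$. In the Breuil-Kisin case the entries already lie in $R[\![u]\!]$, so the same argument in fact gives the sharper $b_1^{(i)},b_4^{(i)}\in R[\![v]\!]$, $b_3^{(i)}\in u^{\gamma_i}R[\![v]\!]$ and $b_2^{(i)}\in u^{e-\gamma_i}R[\![v]\!]$, although this refinement is not needed below.)
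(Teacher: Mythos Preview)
Your argument is correct and follows the same route as the paper: derive the eigenvector equations $g(b_1^{(i)})=b_1^{(i)}$, $g(b_2^{(i)})=\chi(g)^{-1}b_2^{(i)}$, $g(b_3^{(i)})=\chi(g)b_3^{(i)}$, $g(b_4^{(i)})=b_4^{(i)}$ from the requirement that both bases be inertial, then read off the $u$-adic shapes. The paper simply asserts that the last step ``follows immediately,'' whereas you spell it out (including the observation that $\zeta_g^{\,j}-1\in\F$ is either zero or a unit in $R$), which is a useful level of detail.
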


\begin{proof}

$P_i$ is an inertial base change matrix if and only if for all $g \in \Gal(K'/K)$, we have:
\begin{align*}
P_{i}^{-1} \cdot \begin{pmatrix}
\eta(g)& 0\\
0 & \eta'(g)
\end{pmatrix} \cdot  g( P_i) = \begin{pmatrix}
\eta(g)& 0\\
0 & \eta'(g)
\end{pmatrix} \iff\\
\begin{pmatrix}
\eta(g) g(b_1^{(i)})& \eta(g) g(b_2^{(i)})\\
\eta'(g) g(b_3^{(i)}) & \eta'(g) g(b_4^{(i)})
\end{pmatrix} = \begin{pmatrix}
\eta(g) b_1^{(i)}& \eta'(g) b_2^{(i)}\\
\eta(g) b_3^{(i)} & \eta'(g) b_4^{(i)}
\end{pmatrix}.
\nonumber
\end{align*}
Comparing the entries and letting $\chi = \eta \eta'^{-1}$, we get $g(b_1^{(i)}) = b_1^{(i)}$, $g(b_2^{(i)}) =\chi(g)^{-1} b_2^{(i)}$,  $g(b_3^{(i)}) =\chi(g) b_3^{(i)}$ and $g(b_4^{(i)}) = b_4^{(i)}$. The statement of the lemma follows immediately.
\end{proof}

\subsection{Moduli Stacks}\label{stack-definitions} We introduced several stacks of Breuil-Kisin modules and Galois representations in Section \ref{intro sec}. In this section, we recall more precisely the definitions of some of these stacks from \cite{CEGS-local-geometry} for later reference. We will be suppressing the superscripts ``$dd$'' (for descent data) and ``$1$'' (indicating that the stack is defined over $\F$) from the original notation used in \cite{CEGS-local-geometry}.

\begin{defn}
    We define $\mathcal{C}$ to be the \textit{fppf} stack defined over $\F$ characterized by the following property: For an $\F$-algebra $R$, $\mathcal{C}(R)$ is the groupoid of rank two Breuil-Kisin modules defined over $R$ of height at most one and descent data from $K'$ to $K$.
\end{defn}

In the  previous section, we focused on Breuil-Kisin modules of tame principal series $\F$-type $\tau$ which motivates the definition of a substack encoding such modules.

\begin{defn}\label{defn-C-tau}
    We define $\mathcal{C}^{\tau}$ to be the closed substack of $\mathcal{C}$ corresponding to Breuil-Kisin modules of tame inertial $\F$-type $\tau$. 
\end{defn}

In the next section, we will restrict to Breuil-Kisin modules satisfying an additional determinant condition which will allow us to further understand the structure of the Frobenius matrices. Looking forward to this, we define the following substack.

\begin{defn}\label{defn-C-BT}
    We define $\mathcal{C}^{\mathrm{BT}}$ (resp. $\mathcal{C}^{\tau, \mathrm{BT}})$ to be the closed substack of $\mathcal{C}$ (resp. $\mathcal{C}^{\tau}$) corresponding to Breuil-Kisin modules that additionally satisfy the the strong determinant condition in \cite[Definition~4.2.2]{CEGS-local-geometry}. 
\end{defn}
We will not recall the precise definition of the strong determinant condition because it is technical and not important for this article. The essential idea is that if $\F'$ is a finite extension of $\F$, then by \cite[Lemma~4.2.16]{CEGS-local-geometry}, the $\F'$ points of $\mathcal{C}^{\mathrm{BT}}$ are precisely those Breuil-Kisin modules whose corresponding Galois representations become Barsotti-Tate over $K'$. 
The $\mathcal{C}^{\mathrm{BT}}$ is the reduced closure of such points.

By inverting the formal power series variable $u$ in $\mathfrak{S}$, we can transform a Breuil-Kisin module into an \'etale-$\varphi$ module. This gives us a morphism from $\mathcal{C}$ to the stack of \'etale-$\varphi$ modules with descent data, denoted $\mathcal{R}$.

\begin{defn}\label{defn-Z}
   We define $\mathcal{Z}$ (resp. $\mathcal{Z}^{\tau}$) to be the scheme-theoretic image of the natural morphism $\mathcal{C}^{\mathrm{BT}} \to \mathcal{R}$ (resp. $\mathcal{C}^{\tau, \mathrm{BT}} \to \mathcal{R}$), in the sense of \cite{EG-Scheme-Image}.
\end{defn}

By \cite[Thm~5.1.2]{CEGS-local-geometry}, the $\overline{\mathbb{F}}_p$-points of $\mathcal{Z}$ are the continuous representations $\overline{r}: G_K \to \GL_2(\overline{\mathbb{F}}_p)$ that are not tr\`es ramifi\'ee. The irreducible components of $\mathcal{Z}$ are labelled by Serre weights, so that if $\sigma$ is a Serre weight and $\mathcal{Z}(\sigma)$ is the corresponding irreducible component, then the $\overline{\mathbb{F}}_p$-points of $\mathcal{Z}(\sigma)$ are precisely the representations $\overline{r}: G_K \to \GL_2(\overline{\mathbb{F}}_p)$ having $\sigma\in W^{\mathrm{cris}}(\overline{r})$ (see \cite[Definition~4.1.7]{BLGG13} for a precise definition of $W^{\mathrm{cris}}(\overline{r})$).

We note that the stacks we are calling $\mathcal{C}^{BT}$, $\mathcal{C}^{\tau, BT}$, $\mathcal{Z}$ and $\mathcal{Z}^{\tau}$ are respectively called $\mathcal{C}^{\text{dd}, BT, 1}$, $\mathcal{C}^{\tau, BT, 1}$, $\mathcal{Z}^{1}$ and $\mathcal{Z}^{\tau, 1}$ in \cite{CEGS-local-geometry}, where \cite[Lem.~5.1.8]{CEGS-local-geometry} shows them to be the underlying reduced substacks of certain formal $p$-adic stacks defined over the formal spectrum of a characteristic $0$ local ring with residue field $\F$. We omit giving details on these formal $p$-adic stacks and refer the interested reader to \textit{loc. cit.} instead.

\subsection{Classification in rank two}\label{classification subsec}
The objective of this section is to classify and describe rank two Breuil-Kisin modules with descent data that satisfy some additional conditions, which we now introduce.
\begin{defn}\cite[Definition~3.1.1]{CDM}\label{defn-hodge-type}
A Breuil-Kisin module $\M$ defined over an $\F$-algebra $R$ with descent data is said to be of of Hodge type $\textbf{v}_0$ if it is of rank two, height at most one, and the $u$-adic valuation of the determinant of each Frobenius map $\Phi_{\M, i}: \varphi^*(\M_{i-1}) \goto \M_{i}$ is $e$.
\end{defn}

\begin{lemma}\label{hodge-type-equiv-strong-det}
Suppose $R$ is a field and $\M$ is a rank two Breuil-Kisin module over $R$ with tame $\F$-type $\tau$ and height at most one. Then Hodge type $\textbf{v}_0$ condition is equivalent to the strong determinant condition of \cite[\textsection~4.2]{CEGS-local-geometry}.
\end{lemma}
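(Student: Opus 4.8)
The plan is to use the characterization of the strong determinant condition in terms of Kisin's determinant formalism. The strong determinant condition of \cite[Definition~4.2.2]{CEGS-local-geometry} is a condition on the Frobenius that, over a field, pins down the ``Hodge–Tate weights'' of each graded piece to be $\{0,1\}$; concretely, for each embedding $i$, the determinant of $\Phi_{\M,i}$ (suitably normalized) is required to agree with a fixed polynomial whose $u$-adic (equivalently $E(u)$-adic) valuation is exactly $1$ in each embedding. Since $E(u) = u^e - p$ reduces to $u^e$ mod $p$, having the cokernel of $\Phi_{\M,i}$ killed by $E(u)$ but not by a unit forces the $u$-adic valuation of $\det \Phi_{\M,i}$ to be a multiple of $e$; the strong determinant condition says it is exactly $e$ (and not $0$ or $2e$). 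This is precisely the Hodge type $\mathbf{v}_0$ condition of Definition \ref{defn-hodge-type}.

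First I would recall that height at most one means $\operatorname{coker}(\Phi_{\M,i})$ is killed by $E(u)$, so over the field $R$ the elementary divisors of $\Phi_{\M,i}$ (as a map of free rank-two $R[\![u]\!]$-modules, after trivializing $\varphi^*\M_{i-1}$ and $\M_i$) are $1, E(u)$ or $1,1$ or $E(u), E(u)$ up to units — equivalently the $u$-adic valuation of $\det \Phi_{\M,i}$ lies in $\{0, e, 2e\}$. Next I would unwind \cite[Definition~4.2.2]{CEGS-local-geometry}: the strong determinant condition is the assertion that a certain universal polynomial identity holds, and by \cite[Lemma~4.2.?]{CEGS-local-geometry} (the compatibility with the ``naive'' determinant over reduced rings, cf. \cite[Proposition~4.2.?]{CEGS-local-geometry}) this reduces, over a field, to the single numerical condition that $v_u(\det \Phi_{\M,i}) = e$ for each $i$. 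Then I would observe that this is word-for-word Definition \ref{defn-hodge-type}. Conversely, if $\M$ has Hodge type $\mathbf{v}_0$, the same compatibility statement over the reduced ring $R$ (a field) shows the strong determinant condition holds, since both sides of the defining polynomial identity are determined by the elementary divisors.

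The key tool I would lean on is that the strong determinant condition is defined so as to be \emph{insensitive to the descent data and the tame type} — it only sees the underlying Breuil–Kisin module — and that over a reduced base it coincides with the transparent condition on valuations of determinants of the Frobenii. I would cite \cite[Remark~4.2.?, Lemma~4.2.?]{CEGS-local-geometry} for the statement that over a reduced $\F$-algebra the strong determinant condition is equivalent to requiring each graded piece of the Frobenius to have the expected determinant valuation. Given that, the lemma is immediate.

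\emph{Main obstacle.} The only real difficulty is bookkeeping: matching the normalization conventions in \cite[\textsection~4.2]{CEGS-local-geometry} (where the strong determinant condition is phrased via a length-one condition on a Fitting ideal / via the $p$-adic Hodge type, and is stated for general coefficient rings) with the down-to-earth ``valuation $= e$'' formulation here. One must be careful that the tame $\F$-type $\tau$ enters only through which $\gamma_i$ appear in the off-diagonal entries (Lemma \ref{lem_linearization}) and does not alter the determinant valuation. Since $\det \Phi_{\M,i} = s_1^{(i)} s_4^{(i)} - u^e s_2^{(i)} s_3^{(i)}$ in the notation of \eqref{equ-frobmatrix}, the $u^{\gamma_i}$ and $u^{e-\gamma_i}$ factors recombine to a clean power of $u^e = v$, so the type genuinely drops out — but verifying this cleanly against the reference's conventions is where the care is needed. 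No deep input is required beyond \cite{CEGS-local-geometry}.
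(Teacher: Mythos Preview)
Your outline is essentially the paper's approach: reduce the strong determinant condition over a field to a numerical condition on the $u$-adic valuation of each $\det\Phi_{\M,i}$, and observe this is exactly the Hodge type $\mathbf{v}_0$ condition. Two points to fix.

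First, your claim that height at most one forces the elementary divisors to lie in $\{1, E(u)\}$ (hence valuation in $\{0,e,2e\}$) is wrong: over a field $R$ of characteristic $p$ one has $E(u)=u^e$, and height-one only says each elementary divisor is $u^{a}$ with $0\le a\le e$, so the determinant valuation can a priori be any integer in $[0,2e]$. Fortunately this claim plays no role in the argument and can simply be deleted.

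Second, and more seriously, the step you gesture at --- that over a field the strong determinant condition is equivalent to $v_u(\det\Phi_{\M,i})=e$ for each $i$ --- is precisely the content of the lemma and cannot be dispatched by citing an unspecified ``\cite[Lemma~4.2.?]{CEGS-local-geometry}''. The paper carries this out via a concrete dimension count: by \cite[Lem.~4.2.11, Prop.~4.2.12]{CEGS-local-geometry} the strong determinant condition over a field is equivalent to $\dim_R\bigl(\mathrm{im}(\Phi_{\M,i})/u^e\M_i\bigr)=e$; since $\M_i$ is free of rank two, $\dim_R(\M_i/u^e\M_i)=2e$, so this is equivalent to $\dim_R\bigl(\M_i/\mathrm{im}(\Phi_{\M,i})\bigr)=e$; and over the PID $R[\![u]\!]$ this last dimension is the sum of the $u$-adic valuations of the invariant factors, i.e.\ $v_u(\det\Phi_{\M,i})$. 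The reverse implication (strong determinant $\Rightarrow$ valuation $e$) is \cite[Lem.~4.2.11(2)]{CEGS-local-geometry} directly. Your sketch has the right shape but omits exactly this dimension-counting bridge.
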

\begin{proof}
We will use \cite[Lem.~3.5.11, Prop.~ 4.2.12]{CEGS-local-geometry} for the proof. Although these results are stated for coefficients in finite fields, the proofs in fact work for all fields over $\mathbb{F}$.

One direction (strong determinant condition implies Hodge type $\textbf{v}_0$ condition) follows from \cite[Lem.~4.2.11 (2)]{CEGS-local-geometry}. For the other direction, we observe firstly that $\M_i/\mathrm{im}(\Phi_{\M, i})$ is a finitely generated torsion $R[\![u]\!]$ module, being a quotient of $\M_i/u^e \M_i$.
The determinant of $\Phi_{\M, i}$ is the product of the invariants of $\M_i/\mathrm{im}(\Phi_{\M, i})$ times a unit. Therefore, the sum of $u$-adic valuations of the invariants is $e$, implying that the dimension of $\M_i/\mathrm{im}(\Phi_{\M, i})$ is $e$. 
By \cite[Lem. 4.2.11 (1), Lem. 4.2.12]{CEGS-local-geometry}, the strong determinant condition is satisfied if and only if $\mathrm{dim}_{R}\left(\tfrac{\mathrm{im}(\Phi_{\M, i})}{u^e\M_i}\right) = e(K/\mathbb{Q}_p) \cdot e(K'/K) = e$. As $\M_i$ is a rank two free module over $R[\![u]\!]$, $\M_i/u^e\M_i$ has dimension $2e$ over $R$, and thus $\mathrm{dim}_{R}\left(\tfrac{\mathrm{im}(\Phi_{\M, i})}{u^e\M_i}\right) = e$. 
\end{proof}

Suppose $\M$ is Breuil-Kisin module over $R$ satisfying the Hodge type $\textbf{v}_0$ condition and is of tame principal series $\F$-type $\tau = \eta \oplus \eta'$ with $\eta \neq \eta'$. By Lemma \ref{lem_linearization}, we know that with respect to inertial bases for $\M_{i-1}$ and $\M_{i}$, the $i$-th Frobenius map $\Phi_{\M, i}$ is represented by a matrix $F_i$ of the form (\ref{equ-frobmatrix}). Since
$\det(F_i) = s_1^{(i)} s_4^{(i)} - v s_2^{(i)} s_3^{(i)}$,
the Hodge type $\textbf{v}_0$ condition implies $v \mid s_1^{(i)} s_4^{(i)}$. 
This gives us three cases:
\begin{enumerate}
    \item If $s_1^{(i)}$ is a non-unit and $s_4^{(i)}$ is a unit, then $F_i$ is of \textit{genre }$\Ieta$, denoted by $\mathcal{G}(F_i) = \Ieta$.
    \item If $s_1^{(i)}$ is a unit and $s_4^{(i)}$ is a non-unit, then $F_i$ is of \textit{genre }$\Ietaa$, denoted by $\mathcal{G}(F_i) = \Ietaa$.
    \item If both $s_1^{(i)}$ and $s_4^{(i)}$ are non-units, then $F_i$ is of \textit{genre }II, denoted by $\mathcal{G}(F_i) = \II$. 
\end{enumerate}
A direct calculation shows that if $\{P_i\}_{i\in \mathbb{Z}/f\mathbb{Z}}$ is a set of inertial base change matrices, then
$
\mathcal{G} (P_{i}^{-1} \cdot F_i \cdot \varphi(P_{i-1})) = \mathcal{G} (F_i)
$. We are therefore justified in defining the \textit{genre of }$\M_i$ to be $\mathcal{G}(\M_i) := \mathcal{G} ( F_i)$.

\begin{defn}\label{eta-eta'-form}
For a Breuil-Kisin module $\M$ over $R$ of rank two, height at most one and tame principal series $\F$-type $\tau = \eta \oplus \eta'$ where $\eta \neq \eta'$, let $\{F_i\}$ be the Frobenius matrices written with respect to a choice of inertial basis for each $i$. We will say that $F_i$ is in $\eta$-form if its top left entry is divisible by $v$. If the bottom right entry is divisible by $v$, we will say it is in $\eta'$-form.
\end{defn}
We note that the property of being in $\eta$-form or in $\eta'$-form is preserved by inertial base change, and can be seen as a property of the $i$-th Frobenius map $\Phi_{\M, i}$.

\begin{defn}\label{regular}
    A Breuil-Kisin module $\M$ over an $\F$-algebra $R$ with descent data is regular if it is of Hodge type $\textbf{v}_0$, tame principal series $\F$-type $\tau = \eta \oplus \eta'$ such that $\eta \neq \eta'$, and with each Frobenius map either in $\eta$-form or in $\eta'$-form.
\end{defn}

For the rest of this section, our Breuil-Kisin modules will be defined over an $\F$-algebra $R$, and will be regular. We now turn to show that when $R$ is Artinian local, we can choose inertial bases such that each Frobenius matrix $F_i$ takes one of three forms depending on the genre $\G(\M_i)$.

\begin{defn}\label{defn-CDM-form}
Let $R$ be an Artinian local ring over $\F$ with maximal ideal $\m$. Let $\M$ over $R$ be regular. We say that the Frobenius matrices $\{F_i\}$ (written with respect to an inertial basis for each $\M_i$) are in CDM form if for $i \in \{1, ..., f-1\}$, we have:
\bal
F_i &= \begin{cases}
\begin{pmatrix}
v& 0\\
A_i u ^{\gamma_{i}}& 1
\end{pmatrix}& \text{if  } \G(F_i) = \Ieta,\\
\hspace{1cm}\\
\begin{pmatrix}
0& - u^{e- \gamma_{i}}\\
u^{\gamma_{i}}& A_i'
\end{pmatrix}& \text{if  } \G(F_i) = \II \text{ and } F_i \text{ is in } \eta \text{-form},\\
\hspace{1cm}\\
\begin{pmatrix}
1& A'_i u^{e- \gamma_{i}}\\
0& v
\end{pmatrix}& \text{if  } \G(F_i) = \Ietaa,\\
\hspace{1cm}\\
\begin{pmatrix}
A_i & - u^{e- \gamma_{i}}\\
u^{\gamma_{i}}& 0
\end{pmatrix}& \text{if  } \G(F_i) = \II \text{ and } F_i \text{ is in } \eta' \text{-form},\\
\end{cases}
\nal
while for $i = 0$, 
\bal
F_{0} &= \begin{cases}
\begin{pmatrix} \alpha & 0 \\
0 & \alpha' \end{pmatrix}
\begin{pmatrix}
v& 0\\
A_{0} u ^{\gamma_{0}}& 1
\end{pmatrix}& \text{if  } \G(F_{0}) = \Ieta,\\
\hspace{1cm}\\
\begin{pmatrix} \alpha & 0 \\
0 & \alpha' \end{pmatrix}
\begin{pmatrix}
0& - u^{e- \gamma_{0}}\\
u^{\gamma_{0}}& A_{0}'
\end{pmatrix}& \text{if  } \G(F_{0}) = \II \text{ and } F_{0} \text{ is in } \eta \text{-form},\\
\hspace{1cm}\\
\begin{pmatrix} \alpha & 0 \\
0 & \alpha' \end{pmatrix}
\begin{pmatrix}
1& A'_{0} u^{e- \gamma_{0}}\\
0& v
\end{pmatrix}& \text{if  } \G(F_{0}) = \Ietaa,\\
\hspace{1cm}\\
\begin{pmatrix} \alpha & 0 \\
0 & \alpha' \end{pmatrix}
\begin{pmatrix}
A_{0} & - u^{e- \gamma_{0}}\\
u^{\gamma_{0}}& 0
\end{pmatrix}& \text{if  } \G(F_{0}) = \II \text{ and } F_{0} \text{ is in } \eta' \text{-form}.\\
\end{cases}
\nal
where each of $\alpha$, $\alpha'$, $A_i$, $A'_i$ are elements of $R$. Moreover, when $\mathcal{G}(F_i) = \II$ and $F_i$ is in $\eta$-form, $A'_i \in \m$. Similarly, when $\mathcal{G}(F_i) = \II$ and $F_i$ is in $\eta'$-form, $A_i \in \m$.\\
\end{defn}

We will describe the matrices of Definition \ref{defn-CDM-form} in terms of the parameters $$(\alpha, \alpha', A_0, A'_0, ..., A_{f-1}, A'_{f-1}).$$ If no $A_i$ shows up in the description of $F_i$, we set it equal to $0$. Similarly, if no $A_i'$ shows up in the description of $F_i$, we set it equal to $0$. Note that, in general, $(\alpha, \alpha', A_0, A'_0, ..., A_{f-1}, A'_{f-1})$ do not uniquely determine the Frobenius matrices. They do, however, if we know which Frobenius maps are in $\eta$-form and which are in $\eta'$-form.

\begin{defn}\label{define-modified-bad-genre}
Let $R$ be an Artinian local ring over $\F$ with maximal ideal $\m$. A regular Breuil-Kisin module $\M$ over $R$ is of bad genre if the following conditions are satisfied:

\begin{enumerate}
    \item 
    $\forall i, (\mathcal{G}(F_i), z_{i}) \in \{(\II, 0), (\II, p-1), (\Ieta, 1), (\Ieta, p-1), (\Ietaa, 0), (\Ietaa, p-2)\}$.
    \item 
    If $(\mathcal{G}(F_i), z_{i}) \in \{(\II, 0), (\Ietaa, 0), (\Ietaa, p-2)\}$, then \\
    $(\mathcal{G}(F_{i+1}), z_{i+1}) \in \{(\II, p-1), (\Ieta, p-1), (\Ietaa, p-2)\}$.
    \item 
    If $(\mathcal{G}(F_i), z_{i}) \in \{(\II, p-1), (\Ieta, 1), (\Ieta, p-1)\}$, then \\
    $(\mathcal{G}(F_{i+1}), z_{i+1}) \in \{(\II, 0), (\Ieta, 1), (\Ietaa, 0)\}$.
\end{enumerate}

The $(z_j)_j$ are as defined in (\ref{defn-z_i}).
\end{defn}

The existence of Breuil-Kisin modules of bad genre is the primary contributor to the failure of our methods in some cases. The following proposition is the first indication of this.
 
\begin{prop}\label{convergence-CDM-matrix}
Let $R$ be an Artinian local ring over $\F$ with maximal ideal $\m$. Let $\M$ be a regular Breuil-Kisin module over $R$, not of bad genre.
Then there exists an inertial basis for each $\M_i$ with respect to which the Frobenius matrices are in CDM form (see Definition \ref{defn-CDM-form}).
\end{prop}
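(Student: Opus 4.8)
The plan is to normalize the Frobenius matrices one index at a time, using the inertial base change matrices described in Lemma~\ref{constraints-base-change-matrix}, and to handle the potential failure of convergence of an iterative normalization procedure by the ``not of bad genre'' hypothesis. First I would recall that by Lemma~\ref{lem_linearization} each $F_i$ already has the shape \eqref{equ-frobmatrix} with $s_j^{(i)} \in R[\![v]\!]$, and by regularity each $F_i$ is either in $\eta$-form or $\eta'$-form, so up to relabeling we may split into the genre cases. For a fixed $i \in \{1,\dots,f-1\}$, I would first use a diagonal inertial base change together with the unit among $s_1^{(i)}, s_4^{(i)}$ (whose existence is exactly the genre condition) to scale the ``pivot'' entry to $1$ and, using the Hodge type $\mathbf{v}_0$ relation $\det F_i = s_1^{(i)} s_4^{(i)} - v\, s_2^{(i)} s_3^{(i)}$ with $u$-adic valuation $e$, to force the opposite diagonal entry to be exactly $v$ times a unit and then rescale it to $v$; the off-diagonal entries become $u^{\gamma_i}$ (resp.\ $u^{e-\gamma_i}$) times elements of $R[\![v]\!]$ by Lemma~\ref{constraints-base-change-matrix}.

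The heart of the argument is reducing the off-diagonal power series in $R[\![v]\!]$ to constants in $R$. Here I would exploit that $R$ is Artinian local with maximal ideal $\m$, so $\m$ is nilpotent, say $\m^N = 0$. Working modulo successive powers of $\m$, one sees that an inertial base change of the form $P_i = I + (\text{entry in }\m^{j}R[\![v]\!]\text{ with the correct }u\text{-power})$ allows one to kill the higher-order $v$-terms of the relevant off-diagonal entry one $\m$-adic layer at a time: the change-of-basis formula $F_i \mapsto P_i^{-1} F_i \varphi(P_{i-1})$, expanded, shows that modifying $P_i$ in degree $\m^j$ changes the target entry by something one can solve for, because $\varphi$ raises $v$-adic valuation (as $\varphi(v) = v^p$) and the leading behavior is governed by the pivot $1$. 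Iterating over $j = 0, 1, \dots, N-1$ terminates because $\m^N = 0$; this is genuinely a finite process, not a limit, which is why the Artinian hypothesis is essential. One must be careful that normalizing at index $i$ uses $\varphi(P_{i-1})$ and hence interacts with the normalization at index $i-1$; I would order the indices $1, 2, \dots, f-1$ and observe that $\varphi(P_{i-1})$ only ever contributes terms of strictly positive $v$-valuation after the leading normalization, so the process at index $i$ is not disturbed, or can be re-run, without affecting earlier indices' leading forms.

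Finally, the index $i = 0$ is special: the composite around the cycle cannot in general be fully trivialized, so one is left with a residual diagonal matrix $\mathrm{diag}(\alpha, \alpha')$ absorbing the ``monodromy'' of going once around $\mathbb{Z}/f\mathbb{Z}$; this is exactly the extra factor in the $F_0$ cases of Definition~\ref{defn-CDM-form}. The remaining point is the claim that in the genre~$\II$ cases one can arrange $A_i' \in \m$ (resp.\ $A_i \in \m$): this is where the ``not of bad genre'' hypothesis enters. I expect \emph{this} to be the main obstacle — tracking precisely when the normalization procedure, applied across two consecutive indices with genres and $z_i$-values as enumerated in Definition~\ref{define-modified-bad-genre}, would force one to invert an element of $R$ that is only guaranteed to lie in $\m$ (or vice versa), thereby breaking the reduction. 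Concretely, I would go through the pairs $(\mathcal{G}(F_i), z_i), (\mathcal{G}(F_{i+1}), z_{i+1})$ not excluded by conditions (2) and (3) of Definition~\ref{define-modified-bad-genre} and check in each case that the $v$-adic valuation bookkeeping (using $p\gamma_i = z_{i+1} e + \gamma_{i+1}$) leaves enough room to both clear the off-diagonal power series and keep the genre~$\II$ constant inside $\m$; the bad-genre list is designed to be exactly the set of obstructions to this, so the proof amounts to verifying the complementary cases all go through.
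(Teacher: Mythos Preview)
Your proposal misidentifies both the mechanism of the normalization and the role of the ``not of bad genre'' hypothesis. You describe a one-index-at-a-time procedure that terminates in finitely many steps by $\m$-adic nilpotence, and you locate the bad-genre obstruction in a failure to invert some element lying in $\m$ or to force the genre-$\II$ constant into $\m$. None of this is correct.

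First, the condition $A_i' \in \m$ (resp.\ $A_i \in \m$) in the genre-$\II$ cases is automatic: genre $\II$ means both diagonal entries $s_1^{(i)}, s_4^{(i)}$ are non-units mod $v$, so the ratio defining $A_i'$ (or $A_i$) is already in $\m$. No hypothesis is needed for this.

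Second, the normalization cannot be done one index at a time. The system is cyclic: $F_0$ sees $\varphi(P_{f-1})$, so changing $P_{f-1}$ perturbs $F_0$, which after your normalization perturbs $P_0$, which perturbs $F_1$, and so on around the cycle indefinitely. The paper's proof confronts this directly: it sets $P_0 = \mathrm{Id}$ and defines $P_{s+1}$ from $P_s$ by iterating the operator $\mathcal{B}$ of Definition~\ref{defn-B-operation} (so that $\mathcal{B}(F_{s+1}\varphi(P_s)) = P_{s+1}\Delta_{s+1}$), extending the index $s$ over all of $\mathbb{Z}_{\ge 0}$, and then showing that the sequence $(P_{i+nf})_n$ is Cauchy with respect to a filtration $I_t$ on $R[\![v]\!]$ that interpolates between the $\m$-adic and the $v$-adic filtrations. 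The limit $P^{(i)} = \lim_n P_{i+nf}$ gives the desired base change. This is a genuine limit, not a finite process; the Artinian hypothesis is used so that for $t$ large the filtration $I_t$ becomes purely $v$-adic, which is what makes the Cauchy property meaningful.

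Third, and most importantly, the bad-genre condition is a \emph{convergence} obstruction, not an invertibility obstruction. Lemma~\ref{inductive-step} computes how one step of $\mathcal{B}$ transforms a matrix $t$-close to $\mathrm{Id}$: the diagonal entries always improve to $(t{+}1)$-close, but the off-diagonal entries $\sigma_2, \sigma_3$ may inherit a contribution from the previous $\sigma_2', \sigma_3'$ depending precisely on the pair $(\mathcal{G}(F_i), z_i)$. The bad-genre list in Definition~\ref{define-modified-bad-genre} is exactly the set of cyclic patterns for which this off-diagonal dependency can persist around the full cycle without ever being killed, so that $(P_{i+nf})_n$ fails to be Cauchy. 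When $\M$ is not of bad genre, the chain of dependencies is broken at some index and the sequence converges.
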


The proof of Proposition \ref{convergence-CDM-matrix} is very similar to that of \cite[Lem.~3.1.20]{CDM}, with slight differences to accommodate Artinian local algebras. Before showing the proof, we first state some technical lemmas and definitions required in the proof.

\begin{defn}\label{defn-B-operation}(As in \cite[Lem.~3.1.16]{CDM}).
Let $R$ be an Artinian local ring over $\F$ with maximal ideal $\m$. Define an operation $\mathcal{B}$ as follows. For any 
\bal 
G = \begin{pmatrix} 
v s_1 & u^{e - \gamma} s_2\\
u^{\gamma} s_3& s_4
\end{pmatrix}
\nal 
with $s_j \in R[\![v]\!]$ and $\det(G) = v \alpha$ for some $\alpha \in R[\![v]\!]^*$, define $A, A' \in R$ by $A \equiv s_3/s_4 \mod v$ (if $s_4$ is invertible) and $A' \equiv s_4/s_3 \mod v$ (if $s_4$ is not invertible). Then $\mathcal{B}(G)$ is defined as follows:
\bal 
\mathcal{B}(G) := \begin{cases} \begin{pmatrix}
s_1 - A s_2&
u^{e - \gamma} s_2\\
u^{\gamma}  \frac{s_3 - A s_4}{u^e}& s_4
\end{pmatrix}& \text{if  }s_4 \in R[\![v]\!]^*, \\
\hspace{1cm}\\
\begin{pmatrix}
s_2 - A' s_1&
u^{e-\gamma} s_1\\
u^\gamma \frac{s_4 - A' s_3}{u^e}&
s_3
\end{pmatrix}& \text{if  } s_4 \not\in R[\![v]\!]^*.
\end{cases}
\nal 
Note that $\det(\mathcal{B}(G) ) = \pm \alpha$, so $\mathcal{B}(G)$ is invertible. Furthermore,
\begin{align}\label{G-is-a-product-B(G)M}
G =\begin{cases} \mathcal{B}(G) \begin{pmatrix}
v& 0\\
A u ^\gamma& 1
\end{pmatrix}& \text{if  }s_4 \in R[\![v]\!]^*,\\
\hspace{1cm}\\
\mathcal{B}(G)\begin{pmatrix}
0& u^{e-\gamma}\\
u^\gamma& A'
\end{pmatrix} & \text{if  } s_4 \not\in R[\![v]\!]^*.
\end{cases}
\end{align}

For any 
\[
G = \begin{pmatrix} 
s_1 & u^{e - \gamma} s_2\\
u^{\gamma} s_3& v s_4
\end{pmatrix}
\] 
with $s_j \in R[\![v]\!]$ and $\det(G) = v \alpha$ for some $\alpha \in R[\![v]\!]^*$, define $A, A' \in R$ by $A \equiv s_1/s_2 \mod v$ (if $s_1$ is not invertible) and $A' \equiv s_2/s_1 \mod v$ (if $s_1$ is invertible). Then $\mathcal{B}(G)$ is defined in a way that is compatible with the above definitions if we want to interchange $\eta$ and $\eta'$. That is,
\[
\mathcal{B}(G) := \begin{cases} \begin{pmatrix}
s_1 &
u^{e - \gamma} \frac{s_2 - A' s_1}{u^e}\\
u^{\gamma} s_3& s_4 - A's_3
\end{pmatrix}& \text{if  }s_1 \in R[\![v]\!]^*,\\
\hspace{1cm}\\
\begin{pmatrix}
s_2&
u^{e-\gamma} \frac{s_1 - A s_2}{u^e}\\
u^\gamma s_4&
s_3 - As_4
\end{pmatrix}& \text{if  }s_1 \not\in R[\![v]\!]^*.
\end{cases}
\]
Note that $\det(\mathcal{B}(G) ) = \pm \alpha$, so $\mathcal{B}(G)$ is invertible. Furthermore,
\bal 
G =\begin{cases} \mathcal{B}(G) \begin{pmatrix}
1& A' u ^{e-\gamma}\\
0 & u^e
\end{pmatrix}& \text{if  }s_1 \in R[\![v]\!]^*,\\
\hspace{1cm}\\
\mathcal{B}(G)\begin{pmatrix}
A& u^{e-\gamma}\\
u^\gamma& 0
\end{pmatrix} & \text{if  }s_1 \not\in R[\![v]\!]^*.
\end{cases}
\nal 
\end{defn}

\begin{lemma}\label{reduction-step}
Consider the matrix
\bal
P = \begin{pmatrix}
\sigma_1& u^{e-\gamma_{i-1}} \sigma_2\\
u ^{\gamma_{i-1}} \sigma_3 & \sigma_4
\end{pmatrix},
\nal with $\sigma_j \in R[\![v]\!]$ and $\det(P) \in R[\![u]\!]^{*}$. Let 
\bal
F =  \begin{pmatrix}
v a& u^{e-\gamma_{i}} b\\
u ^{\gamma_{i}}c& d
\end{pmatrix} \text{   or   } \begin{pmatrix}
a& u^{e-\gamma_{i}} b\\
u ^{\gamma_{i}}c& v d
\end{pmatrix}
\nal
with $a, b, c, d \in R[\![v]\!]$ and $ad-bc  \in R[\![v]\!]^{*}$. If $M$ is the matrix such that $F = \mathcal{B}(F)M$, where $\mathcal{B}$ is the operation in Definition \ref{defn-B-operation}, then $\mathcal{B}(F\varphi(P)) = \mathcal{B}(F)\mathcal{B}(M\varphi(P))$.
\end{lemma}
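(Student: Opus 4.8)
The plan is to reduce the identity to the single assertion that the ``standard right factor'' attached to $\mathcal{B}$ is the same for the two matrices $F\varphi(P)$ and $M\varphi(P)$, and then to check that assertion by a short case analysis on the four possible shapes of $F$. First I would unwind $\varphi(P)$: since $\varphi$ is $R$-linear with $u\mapsto u^{p}$ and $p\gamma_{i-1}=z_{i}e+\gamma_{i}$ by \eqref{defn-z_i} (with $0\le z_{i}\le p-1$), one has $u^{p\gamma_{i-1}}=v^{z_{i}}u^{\gamma_{i}}$ and $u^{p(e-\gamma_{i-1})}=v^{\,p-1-z_{i}}u^{e-\gamma_{i}}$, so
\[
\varphi(P)=\begin{pmatrix}\varphi(\sigma_{1})& v^{\,p-1-z_{i}}u^{e-\gamma_{i}}\varphi(\sigma_{2})\\ v^{z_{i}}u^{\gamma_{i}}\varphi(\sigma_{3})&\varphi(\sigma_{4})\end{pmatrix},\qquad \varphi(\sigma_{j})\in R[\![v]\!].
\]
Also $\det(P)=\sigma_{1}\sigma_{4}-v\,\sigma_{2}\sigma_{3}$ lies in $R[\![v]\!]\cap R[\![u]\!]^{\times}=R[\![v]\!]^{\times}$, so $\varphi(\det P)\in R[\![v]\!]^{\times}$.

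A direct multiplication using this formula shows that $F\varphi(P)$ and $M\varphi(P)$ are again valid inputs for $\mathcal{B}$: each is in the same $\eta$-form (resp.\ $\eta'$-form) as $F$, each has all of its ``$s$''-entries in $R[\![v]\!]$ (using $p-z_{i}\ge 1$), and each has determinant $v$ times a unit of $R[\![v]\!]$ — namely $(ad-bc)\,\varphi(\det P)$ for $F\varphi(P)$ and $\pm\varphi(\det P)$ for $M\varphi(P)$. Since $R$ is Artinian local, the hypotheses of Definition~\ref{defn-B-operation} are met for both (in particular the determinant condition forces the denominator entry used in the relevant formula to be a unit modulo $v$). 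Now write $M_{G}$ for the matrix of Definition~\ref{defn-B-operation} attached to a valid input $G$, so that $G=\mathcal{B}(G)M_{G}$ and hence $\mathcal{B}(G)=G\,M_{G}^{-1}$ over $R(\!(u)\!)$ (note $\det M_{G}=\pm v$). As $M=M_{F}$ we have $F\varphi(P)=\mathcal{B}(F)\,M\varphi(P)$, so the lemma follows once we know $M_{F\varphi(P)}=M_{M\varphi(P)}$: indeed then
\[
\mathcal{B}(F\varphi(P))=F\varphi(P)\,M_{F\varphi(P)}^{-1}=\mathcal{B}(F)\,M\varphi(P)\,M_{M\varphi(P)}^{-1}=\mathcal{B}(F)\,\mathcal{B}(M\varphi(P)).
\]

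To prove $M_{F\varphi(P)}=M_{M\varphi(P)}$ I would run through the four cases of Definition~\ref{defn-CDM-form} for $F$ (equivalently, the four shapes of $M=M_{F}$). The matrix $M_{G}$ is pinned down by the form of $G$ (shared here, since $M$ has the form of $F$), by which sub-case of the relevant part of Definition~\ref{defn-B-operation} is in force — i.e.\ which of two prescribed entries of $G$ is a unit modulo $v$ — and by the scalar $A$ or $A'$, which is the reduction modulo $v$ of a ratio of two entries of $G$. In each case one expands the products $F\varphi(P)$ and $M\varphi(P)$ and compares the two relevant entries modulo $v$: these entries agree up to a common factor, namely the entry of $F$ that is ``divided out'' in forming $M=M_{F}$ (which is a unit modulo $v$ by the determinant condition), so this factor cancels in the ratio defining $A$ (resp.\ $A'$) and does not change which entry is a unit; the extra powers $v^{z_{i}}$ and $v^{\,p-z_{i}}$ contributed by $\varphi(P)$ enter symmetrically on the two sides and cause no trouble because $p-z_{i}\ge 1$. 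Hence the sub-case and the scalar coincide for $F\varphi(P)$ and $M\varphi(P)$, which gives $M_{F\varphi(P)}=M_{M\varphi(P)}$. The main obstacle is exactly this last step: one must carry out the four-case computation carefully, tracking in each case which entries are units, which of the two sub-cases of $\mathcal{B}$ is triggered, and the precise form of the $\mathcal{B}$-parameter, and then observe that the resulting $M$-matrix computed from $F\varphi(P)$ is literally equal to the one computed from $M\varphi(P)$; the cancellation of the common unit factor coming from $F$ is what makes this work, and the determinant computations of the second paragraph are what ensure the relevant ratios are well-defined.
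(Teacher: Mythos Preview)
Your proposal is correct and follows essentially the same route as the paper: both reduce the identity to showing that the ``$M$-factor'' produced by $\mathcal{B}$ coincides for $F\varphi(P)$ and for $M\varphi(P)$ (the paper calls these $Y$ and $X$), and both verify this by writing out $F\varphi(P)$ and $M\varphi(P)$ explicitly in each sub-case of Definition~\ref{defn-B-operation} and observing that the relevant mod-$v$ ratios agree. Your explanation of \emph{why} the ratios agree (the bottom-row, resp.\ top-row, entries of $F\varphi(P)$ and $M\varphi(P)$ differ mod $v$ by the common unit factor $\bar d$ or $\bar c$ that was divided out in forming $M$) is a clean way to phrase what the paper leaves as ``follows immediately from Definition~\ref{defn-B-operation}''; one small slip is that your ``four cases of Definition~\ref{defn-CDM-form}'' should really refer to the four cases of Definition~\ref{defn-B-operation}, since $F$ is not assumed to be in CDM form.
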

\begin{proof}
Let $X$ be such that $M\varphi(P) = \mathcal{B}(M\varphi(P))X$ and $Y$ be such that $F\varphi(P) = \mathcal{B}(F\varphi(P))Y$. It suffices to show that $X=Y$, because if so, by inverting $X$ and $Y$ in $\GL_2(R(\!(v)\!))$, we can show that $\mathcal{B}(F\varphi(P)) = F\varphi(P)Y^{-1} = \mathcal{B}(F)M\varphi(P) X^{-1} = \mathcal{B}(F)\mathcal{B}(M\varphi(P))$.

We first consider the case where $F =  \begin{psmallmatrix}
v a& u^{e-\gamma_{i}} b\\
u ^{\gamma_{i}}c& d
\end{psmallmatrix}$. Note that for any $G = \begin{psmallmatrix} 
v s_1& u^{e - \gamma_{i}} s_2\\
u^{\gamma_{i}} s_3& s_4
\end{psmallmatrix}$
with $s_j \in R[\![v]\!]$ and $\det(G) = v \alpha$ for some $\alpha \in R[\![v]\!]^*$, we can calculate $\mathcal{B}(G)$ and scalars $A$ or $A'$ so that (\ref{G-is-a-product-B(G)M}) holds.
It suffices to show that $A$ and $A'$ do not depend on whether $G=F\varphi(P)$ or $G=M\varphi(P)$. We have 
\bal
F\varphi(P) = \begin{pmatrix}
v(a \varphi(\sigma_1) + b u^{e z_{i}} \varphi(\sigma_3))&
u^{e - \gamma_{i}} (a u^{e ( p - z_{i} ) } \varphi(\sigma_2) + b \varphi(\sigma_4) )  \\
u^{\gamma_{i}} (c \varphi(\sigma_1) + d u^{e z_{i} } \varphi(\sigma_3 ))& 
c u^{e ( p - z_{i} ) } \varphi(\sigma_2 )+ d \varphi(\sigma_4) 
\end{pmatrix}.
\nal 

Let $\overline{x}$ denote the constant part of any $x \in R[\![v]\!]$. By Definition \ref{defn-B-operation}, $M$ is given by: 
\bal
M =\begin{cases} \begin{pmatrix}
v& 0\\
u ^{\gamma_{i}} \frac{\overline{c}}{\overline{d}} & 1
\end{pmatrix}& \text{if  } d \in R[\![v]\!]^*,\
\vspace{0.5cm}\\
\begin{pmatrix}
0& u^{e-\gamma_{i}}\\
u^{\gamma_{i}}& \frac{\overline{d}}{\overline{c}}
\end{pmatrix} & \text{if  }d \not\in R[\![v]\!]^*.
\end{cases}
\nal
Therefore,
 \bal
M\varphi(P) =\begin{cases} \begin{pmatrix}
v \varphi(\sigma_1) &
u^{e - \gamma_{i}} u^{e ( p - z_{i}) } \varphi(\sigma_2) \\
u^{\gamma_{i}} (\frac{\overline{c}}{\overline{d}} \varphi(\sigma_1) + u^{e z_{i} } \varphi(\sigma_3 ))& 
\frac{\overline{c}}{\overline{d}} u^{e ( p - z_{i} ) } \varphi(\sigma_2 )+ \varphi(\sigma_4) 
\end{pmatrix}& \text{if  } d \in R[\![v]\!]^*,\
\vspace{0.5cm}\\
\begin{pmatrix}
v u^{e z_{i}} \varphi(\sigma_3)&
u^{e - \gamma_{i}} \varphi(\sigma_4)  \\
u^{\gamma_{i}} (\varphi(\sigma_1) + \frac{\overline{d}}{\overline{c}} u^{e z_{i} } \varphi(\sigma_3 ))& 
u^{e ( p - z_{i} ) } \varphi(\sigma_2 )+ \frac{\overline{d}}{\overline{c}} \varphi(\sigma_4) 
\end{pmatrix} & \text{if  }d \not\in R[\![v]\!]^*.
\end{cases}
\nal
Then $X=Y$ follows immediately from Definition \ref{defn-B-operation}. Similar considerations hold for the case $F =  \begin{psmallmatrix}
a& u^{e-\gamma_{i}} b\\
u ^{\gamma_{i}}c& vd
\end{psmallmatrix}$.
\end{proof}

\begin{defn}
    Let $R$ be an Artinian local ring over $\F$ with maximal ideal $\m$. Let $n \in \mathbb{Z}_{\geq 0}$ be the maximum such that $\mathfrak{m}^n \neq 0$.
    For $t \in \mathbb{Z}_{\geq 0}$, define the ideal $I_t \subset R[\![v]\!]$ as follows:
    \bal
    &I_t = \{\sum_{i= \max{\{t-n, 0\}}}^{\infty} a_i v^i \in R[\![v]\!]:  a_0 \in \mathfrak{m}^{t} \}.
    \nal

    In other words, for $t \leq n$, $I_t = \{\sum_{i=0}^{\infty} a_i v^i \in R[\![v]\!]:  a_0 \in \mathfrak{m}^{t} \}$. For $t > n$, $I_t = \{\sum_{i=t-n}^{\infty} a_i v^i \in R[\![v]\!]\}$.
\end{defn}


\begin{defn}\label{defn-t-close}
Let $R$ be an Artinian local ring over $\F$ with maximal ideal $\m$. Let $P^{(0)}, P^{(1)} \in \mathrm{GL}_2(R[[u]])$ be such that for $j \in \{0, 1\}$,
\bal
P^{(j)} = \begin{pmatrix}
\sigma^{(j)}_1& u^{e-\gamma_{i-1}} \sigma^{(j)}_2\\
u ^{\gamma_{i-1}} \sigma^{(j)}_3 & \sigma^{(j)}_4
\end{pmatrix},
\nal with $\sigma^{(j)}_k \in R[\![v]\!]$ for $k \in \{1, 2, 3, 4\}$.

We say that $P^{(0)}$ and $P^{(1)}$ are $t$-close if there exists a matrix 
\bal
Y = \begin{pmatrix}
y_1 & u^{e-\gamma_{i}}y_2 \\
u^{\gamma_{i}}y_3 & y_4
\end{pmatrix}
\nal satisfying $y_1 \equiv y_2 \equiv y_3 \equiv y_4 \equiv 0 \mod I_t$, such that $P^{(1)} = P^{(0)} + Y$.
\end{defn}



\begin{lemma}\label{inductive-step}(c.f. \cite[Lem.~3.1.19]{CDM})
Let $R$ be an Artinian local ring over $\F$ with maximal ideal $\m$. Let 
\bal
P = \begin{pmatrix}
\sigma_1& u^{e-\gamma_{i-1}} \sigma_2\\
u ^{\gamma_{i-1}} \sigma_3 & \sigma_4
\end{pmatrix}
\nal
be an inertial base change matrix with $\sigma_1 - 1 \equiv \sigma_4 - 1 \equiv 0 \mod v$. Let $P'$ be $t$-close to $P$ with $Y = P' -P = \begin{pmatrix}
y_1 & u^{e-\gamma_{i}}y_2 \\
u^{\gamma_{i}}y_3 & y_4
\end{pmatrix}$ and $y_1 \equiv y_4 \equiv 0 \mod v$.

\begin{enumerate}
\item Let $M = \begin{pmatrix}
v & 0\\
u ^{\gamma_{i}}a& 1
\end{pmatrix}$ for $a \in R$. Then $Y' = \mathcal{B}(M\varphi(P')) - \mathcal{B}(M\varphi(P)) = \begin{pmatrix}
y'_1 & u^{e-\gamma_{i}}y'_2 \\
u^{\gamma_{i}}y'_3 & y'_4
\end{pmatrix}$ satisfies:
\bal
&y'_1 \equiv 0 \mod I_{t+1}, \\
&y'_2 \equiv 0 \mod I_{t+1},\\
&y'_3 \equiv \begin{cases} 
0 &\text{ if } z_{i} \neq 1, p-1 ,\\
\varphi(y_3) &\text{ if } z_{i} = 1, \\
-a^2\varphi(y_2) &\text{ if } z_{i} = p-1,
\end{cases} \mod I_{t+1}, \\
&y'_4 \equiv 0 \mod I_{t+1}.
\nal
The congruences also hold true mod $v$.

\item Let $M = \begin{pmatrix}
1 & u^{e-\gamma_{i}} a'\\
0& v
\end{pmatrix}$ for $a' \in R$. Then $Y' = \mathcal{B}(M\varphi(P')) - \mathcal{B}(M\varphi(P)) = \begin{pmatrix}
y'_1 & u^{e-\gamma_{i}}y'_2 \\
u^{\gamma_{i}}y'_3 & y'_4
\end{pmatrix}$ satisfies:
\bal
&y'_1 \equiv 0 \mod I_{t+1}, \\
&y'_2 \equiv \begin{cases} 
0 &\text{ if } z_{i} \neq 0, p-2, \\
-a'^{2} \varphi(y_3) &\text{ if } z_{i} = 0, \\
\varphi(y_2) &\text{ if } z_{i} = p-2,
\end{cases} \mod I_{t+1}, \\
&y'_3 \equiv 0 \mod I_{t+1}, \\
&y'_4 \equiv 0 \mod I_{t+1}.
\nal
The congruences also hold true mod $v$. 

\item Let $M = \begin{pmatrix}
0 & u^{e-\gamma_{i}}\\
u ^{\gamma_{i}}& a'
\end{pmatrix}$ for $a' \in \mathfrak{m}$ or $M = \begin{pmatrix}
a & u^{e-\gamma_{i}}\\
u ^{\gamma_{i}}& 0
\end{pmatrix}$ for $a \in \mathfrak{m}$. Then $Y' = \mathcal{B}(M\varphi(P')) - \mathcal{B}(M\varphi(P)) = \begin{pmatrix}
y'_1 & u^{e-\gamma_{i}}y'_2 \\
u^{\gamma_{i}}y'_3 & y'_4
\end{pmatrix}$ satisfies:
\bal
&y'_1 \equiv 0 \mod I_{t+1}, \\
&y'_2 \equiv \begin{cases} 0 &\text{ if } z_{i} \neq 0,\\
\varphi(y_3) &\text{ if } z_{i} = 0,
\end{cases} \mod I_{t+1}, \\
&y'_3 \equiv \begin{cases} 
0 &\text{ if } z_{i} \neq p-1, \\
\varphi(y_2) &\text{ if } z_{i} = p-1,
\end{cases} \mod I_{t+1}, \\
&y'_4 \equiv 0 \mod I_{t+1}.
\nal
\end{enumerate}
\end{lemma}

\begin{proof}
\begin{enumerate}
\item The matrix $M\varphi(P')$ equals
\bal
\begin{pmatrix}
v\varphi(\sigma_1 + y_1) & u^{e-\gamma_{i}}(u^{e(p - z_{i})}\varphi(\sigma_2 + y_2))\\
u^{\gamma_{i+1}}(u^{ez_{i}} \varphi(\sigma_3 + y_3) + a \varphi(\sigma_1 + y_1)) & \varphi(\sigma_4 + y_4) + au^{e(p-z_{i})}\varphi(\sigma_2 + y_2)
\end{pmatrix}.
\nal

Let $b \in R$ such that $b = \frac{a(\sigma_1 + y_1) + u^{ez_{i}}(\sigma_3 + y_3)}{(\sigma_4 + y_4)}$ mod $v$. Then $\mathcal{B}(M\varphi(P')) = \begin{psmallmatrix}
\sigma'_1& u^{e-\gamma_{i}} \sigma'_2\\
u ^{\gamma_{i}} \sigma'_3 & \sigma'_4
\end{psmallmatrix}$ where
\bal
&\sigma'_1 = \varphi(\sigma_1 + y_1) - v^{(p-z_{i})}b\varphi(\sigma_2 + y_2),\\
&\sigma'_2 = v^{(p-z_{i})}\varphi(\sigma_2 + y_2),\\
&v \sigma'_3 = a\varphi(\sigma_1 + y_1) - b\varphi(\sigma_4 + y_4) + v^{ z_{i}}\varphi(\sigma_3 + y_3) - abv^{(p-z_{i})} \varphi(\sigma_2 + y_2),\\
&\sigma'_4 = av^{(p-z_{i})}\varphi(\sigma_2 + y_2) + \varphi(\sigma_4 + y_4).
\nal 
The congruences for $y'_1$, $y'_2$ and $y'_4$ are immediate from the above. For $y'_3$, consider first the case where $t>n$. Then $\varphi(y_1) \equiv \varphi(y_2) \equiv \varphi(y_3) \equiv \varphi(y_4) \equiv 0 \mod I_{t+2}$ (as $p>2$). Moreover, $a=b$. We thus have
$$y'_3 = a v^{-1} \left( \varphi(y_1) - \varphi(y_4)\right) + v^{z_{i} - 1} \varphi(y_3) - a^2 v^{(p-1-z_{i})} \varphi(y_2)$$
which is $0$ mod $I_{t+1}$.

Now, let $t \leq n$. First consider the case where $z_{i} = 0$.  The right side of the equality $v \sigma'_3 = a\varphi(\sigma_1 + y_1) - b\varphi(\sigma_4 + y_4) + v^{ z_{i}}\varphi(\sigma_3 + y_3) - abv^{(p-z_{i})} \varphi(\sigma_2 + y_2)$ can have no constant terms. Therefore, $v y'_3$ depends on $v^p \varphi(y_2)$ and the nonconstant parts of $\varphi(y_1)$, $\varphi(y_3)$ and $\varphi(y_4)$, which are all $0$ mod $v^2$. Therefore, $y'_3 \equiv 0$ mod $v$ and therefore, mod $I_{t+1}$.
If $z_{i} \neq 0$, $v y'_3$ depends on $v^{p-z_{i}} \varphi(y_2)$, $v^{z_{i}}\varphi(y_3)$ and the nonconstant parts of $\varphi(y_1)$ and $\varphi(y_4)$. The latter two terms are $0$ mod $v^2$. This gives us the following equivalence mod $v$ (and hence, mod $I_{t+1}$):
$$y'_3 \equiv v^{z_{i} - 1} \varphi(y_3) - a^2 v^{p-1-z_{i}} \varphi(y_2).$$
The desired congruences follow from the same reasoning as for the first case.

\item The matrix $M\varphi(P')$ equals
\bal
\begin{pmatrix}
\varphi(\sigma_1 + y_1) + a'u^{e(1+z_{i})}\varphi(\sigma_3 + y_3) & u^{e-\gamma_{i}}(u^{e(p-1-z_{i})} \varphi(\sigma_2 + y_2) + a' \varphi(\sigma_4 + y_4)) \\
u^{\gamma_{i}}(u^{e(1 + z_{i})}\varphi(\sigma_3 + y_3)) & v\varphi(\sigma_4 + y_4)
\end{pmatrix}.\nal
Let $b' \in R$ such that $b' = \frac{u^{e(p-1-z_{i})} (\sigma_2 + y_2) + a'(\sigma_4 + y_4)}{\sigma_1 + y_1}$ mod $v$. Then $\mathcal{B}(M\varphi(P')) = \begin{psmallmatrix}
\sigma'_1& u^{e-\gamma_{i}} \sigma'_2\\
u ^{\gamma_{i}} \sigma'_3 & \sigma'_4
\end{psmallmatrix}$ where
\begin{align*}
&\sigma'_1 = \varphi(\sigma_1 + y_1) - a' v^{(1+z_{i})}\varphi(\sigma_3 + y_3),\\
&v \sigma'_2 = v^{(p-1-z_{i})}\varphi(\sigma_2 + y_2) + a'\varphi(\sigma_4 + y_4) - b'\varphi(\sigma_1 + y_1) + a'b' v^{(1+ z_{i})}\varphi(\sigma_3 + y_3),\\
&\sigma'_3 = v^{(1+z_{i})} \varphi(\sigma_3 + y_3),\\
&\sigma'_4 = b'v^{(1 + z_{i})}\varphi(\sigma_3 + y_3) + \varphi(\sigma_4 + y_4).
\end{align*} 
The desired congruences follow using the same arguments as for the first part.

\item 
Suppose $M = \begin{psmallmatrix}
0 & u^{e-\gamma_{i}}\\
u ^{\gamma_{i}}& a'
\end{psmallmatrix}$ for $a' \in \mathfrak{m}$. Then, $M\varphi(P')$ equals
$$ \begin{pmatrix}
u^{e(1+z_{i})}\varphi(\sigma_3 + y_3) & u^{e-\gamma_{i}}\varphi(\sigma'_4) \\
u^{\gamma_{i}}(\varphi(\sigma_1 + y_1) + a'u^{e z_{i}}\varphi(\sigma_3 + y_3)) & u^{e(p-z_{i})}\varphi(\sigma_{2} + y_2) + a'\varphi(\sigma_4 + y_4)
\end{pmatrix}.$$
Let $b' \in R$ such that $b' = \frac{a'(\sigma_4 + y_4)}{\sigma_1 + y_1 + a' v^{z_{i}}(\sigma_3 + y_3)}$ mod $v$. 
Note that $b' \in \mathfrak{m}$ because $a' \in \mathfrak{m}$, and that $b' \equiv a' \mod \mathfrak{m}^2 $ \; because $b' - a' \equiv \frac{a'(\sigma_4 - \sigma_1 + y_4 - y_1) - a'^2 v^{z_{i}}(\sigma_3 + y_3)}{\sigma_1 + y_1 +a' v^{z_{i}}(\sigma_3 + y_3)} \equiv 0 \mod (\mathfrak{m}^2, v)$.
Then
$\mathcal{B}(M\varphi(P')) = \begin{psmallmatrix}
\sigma'_1& u^{e-\gamma_{i}} \sigma'_2\\
u ^{\gamma_{i}} \sigma'_3 & \sigma'_4
\end{psmallmatrix}$, where
\bal
&\sigma'_1  = \varphi(\sigma_4 + y_4) - b' v^{z_{i}}\varphi(\sigma_3 + y_3),\\
&\sigma'_2 = v^{z_{i}}\varphi(\sigma_3 + y_3),\\
&v \sigma'_3 = v^{(p-z_{i})} \varphi(\sigma_2 + y_2) + a' \varphi(\sigma_4 + y_4) - b'(\varphi(\sigma_1 + y_1) + a'v^{z_{i}}\varphi(\sigma_3 + y_3)),\\
&\sigma'_4 = \varphi(\sigma_1 + y_1) + a'v^{z_{i}}\varphi(\sigma_3 + y_3).
\nal 

On the other hand, suppose $M = \begin{psmallmatrix}
a & u^{e-\gamma_{i}}\\
u ^{\gamma_{i}}& 0
\end{psmallmatrix}$ for $a \in \mathfrak{m}$. Let $b \in R$ such that $b = \frac{a (\sigma_1 + y_1)}{a v^{(p-1-z_{i})}(\sigma_2 + y_2)+ (\sigma_4 + y_4)}$ mod $v$. Again, note that $b \in \mathfrak{m}$, since $a \in \mathfrak{m}$ and that $b \equiv a \mod \mathfrak{m}^2$. By symmetry (we can interchange $\eta$ and $\eta'$ to convert this to a previous computed case), we have $\mathcal{B}(M\varphi(P')) = \begin{psmallmatrix}
\sigma'_1& u^{e-\gamma_{i}} \sigma'_2\\
u ^{\gamma_{i}} \sigma'_3 & \sigma'_4
\end{psmallmatrix}$, where
\bal
&\sigma'_1 = av^{(p-1-z_{i})} \varphi(\sigma_2 + y_2) + \varphi(\sigma_4),\\
&v \sigma'_2 = v^{(1+z_{i})}\varphi(\sigma_3 + y_3) + a\varphi(\sigma_1 + y_1) -b(\varphi(\sigma_4 + y_4) + av^{(p-1-z_{i})}\varphi(\sigma_2 + y_2)), \\
&\sigma'_3 = v^{(p-1-z_{i})} \varphi(\sigma_2 + y_2),\\
&\sigma'_4 = \varphi(\sigma_1 + y_1) - bv^{(p-1-z_{i})}\varphi(\sigma_2 + y_2).
\nal 
The congruences follow immediately.
\end{enumerate}
\end{proof}

\begin{cor}\label{cdm-algo-multiplicative}
Let $R$ be an Artinian local ring over $\F$ with maximal ideal $\m$. Let 
\bal
P' = \begin{pmatrix}
\sigma_1& u^{e-\gamma_{i-1}} \sigma_2\\
u ^{\gamma_{i-1}} \sigma_3 & \sigma_4
\end{pmatrix}
\nal
be an inertial base change matrix which is $t$-close to $Id$, with diagonal entries congruent to $1$ mod $v$. 

\begin{enumerate}
\item Let $M = \begin{pmatrix}
v & 0\\
u ^{\gamma_{i}}a& 1
\end{pmatrix}$ for $a \in R$. Then $\mathcal{B}(M\varphi(P')) = \begin{pmatrix}
\sigma'_1& u^{e-\gamma_{i}} \sigma'_2\\
u ^{\gamma_{i}} \sigma'_3 & \sigma'_4
\end{pmatrix}$ satisfies:
\bal
&\sigma'_1 - 1 \equiv 0 \mod I_{t+1}, \\
&\sigma'_2 \equiv 0 \mod I_{t+1},\\
&\sigma'_3 \equiv \begin{cases} 
0 &\text{ if } z_{i} \neq 1, p-1 ,\\
\varphi(\sigma_3) &\text{ if } z_{i} = 1, \\
-a^2\varphi(\sigma_2) &\text{ if } z_{i} = p-1,
\end{cases} \mod I_{t+1}, \\
&\sigma'_4 - 1 \equiv 0 \mod I_{t+1}.
\nal
The congruences also hold true mod $v$.

\item Let $M = \begin{pmatrix}
1 & u^{e-\gamma_{i}} a'\\
0& v
\end{pmatrix}$ for $a' \in R$. Then $\mathcal{B}(M\varphi(P')) = \begin{pmatrix}
\sigma'_1& u^{e-\gamma_{i}} \sigma'_2\\
u ^{\gamma_{i}} \sigma'_3 & \sigma'_4
\end{pmatrix}$ satisfies:
\bal
&\sigma'_1 - 1 \equiv 0 \mod I_{t+1}, \\
&\sigma'_2 \equiv \begin{cases} 
0 &\text{ if } z_{i} \neq 0, p-2, \\
-a'^{2} \varphi(\sigma_3) &\text{ if } z_{i} = 0, \\
\varphi(\sigma_2) &\text{ if } z_{i} = p-2,
\end{cases} \mod I_{t+1}, \\
&\sigma'_3 \equiv 0 \mod I_{t+1}, \\
&\sigma'_4 - 1 \equiv 0 \mod I_{t+1}.
\nal
The congruences also hold true mod $v$. 

\item Let $M = \begin{pmatrix}
0 & u^{e-\gamma_{i}}\\
u ^{\gamma_{i}}& a'
\end{pmatrix}$ for $a' \in \mathfrak{m}$ or $M = \begin{pmatrix}
a & u^{e-\gamma_{i}}\\
u ^{\gamma_{i}}& 0
\end{pmatrix}$ for $a \in \mathfrak{m}$. Then $\mathcal{B}(M\varphi(P')) = \begin{pmatrix}
\sigma'_1& u^{e-\gamma_{i}} \sigma'_2\\
u ^{\gamma_{i}} \sigma'_3 & \sigma'_4
\end{pmatrix}$ satisfies:
\bal
&\sigma'_1 - 1 \equiv 0 \mod I_{t+1}, \\
&\sigma'_2 \equiv \begin{cases} 0 &\text{ if } z_{i} \neq 0,\\
\varphi(\sigma_3) &\text{ if } z_{i} = 0,
\end{cases} \mod I_{t+1}, \\
&\sigma'_3 \equiv \begin{cases} 
0 &\text{ if } z_{i} \neq p-1, \\
\varphi(\sigma_2) &\text{ if } z_{i} = p-1,
\end{cases} \mod I_{t+1}, \\
&\sigma'_4 - 1 \equiv 0 \mod I_{t+1}.
\nal
\end{enumerate}
\end{cor}

\begin{proof}
Apply Lemma \ref{inductive-step} with
$P = Id$, $Y = \begin{pmatrix}
\sigma_1 - 1 & u^{e-\gamma_{i}} \sigma_2 \\
u^{\gamma_{i}} \sigma_3 & \sigma_4 - 1
\end{pmatrix}$ and $P' = P + Y$. 
\end{proof}

\begin{proof}[Proof of Proposition \ref{convergence-CDM-matrix}]
We set $P_0 = Id$ and construct $P_s = \begin{psmallmatrix}
\sigma^{(s)}_1& u^{e-\gamma_{s}} \sigma^{(s)}_2\\
u ^{\gamma_{s}} \sigma^{(s)}_3 & \sigma^{(s)}_4
\end{psmallmatrix}$ inductively by letting $\mathcal{B}(F_{s+1} \varphi(P_s)) = P_{s+1}\Delta_{s+1} $, where we choose $\Delta_{s+1}$ to be a diagonal matrix in $\GL_2(R)$ such that the diagonal entries of $P_{s+1}$ are $1$ mod $v$. Here, the indexing set of the Frobenius matrices $F_s$ is extended to all natural numbers via the natural map $\mathbb{Z} \to \mathbb{Z}/f\mathbb{Z}$.

We let $M_s = \mathcal{B}(F_s)^{-1} F_s$ (so that $M_{s+f} = M_s$). Trivially, $P_s$ and $P_{s+f}$ are $0$-close (see Definition \ref{defn-t-close}).
Suppose $P_s$ and $P_{s+f}$ are $t$-close for $t \geq 0$. Let $Y = \begin{psmallmatrix}
y_1 & u^{e-\gamma_{s}}y_2 \\
u^{\gamma_{s}}y_3 & y_4
\end{psmallmatrix}$ be such that $P_{s} = P_{s+f} + Y$. 

We use Lemma \ref{inductive-step} to calculate $\mathcal{B}(M_{s+1} \varphi(P_s)) - \mathcal{B}(M_{s+f+1} \varphi(P_{s+f}))$ mod $I_{t+1}$. Let $\mathcal{B}(M_{s+1} \varphi(P_s)) = \mathcal{B}(M_{s+f+1} \varphi(P_{s+f})) + Y'$ where $Y' = \begin{psmallmatrix}
y'_1 & u^{e-\gamma_{s+1}}y'_2 \\
u^{\gamma_{s+1}}y'_3 & y'_4
\end{psmallmatrix}$. Then $y'_1$ and $y'_4$ are $0$ mod $I_{t+1}$.
Moreover, at least one of $y'_2$ and $y'_3$ is $0$ mod $I_{t+1}$. Either (but not both) of $y'_2$ and $y'_3$ can depend on either (but not both) of $y_2$ and $y_3$ mod $I_{t+1}$ depending on the genre of $F_s$ and the value of $z_{s+1}$. Since $y_2$ and $y_3$ are $0$ mod $I_t$, the same ends up being true for $y'_2$ and $y'_3$. 

Using Lemma \ref{reduction-step}, we have:
\bal
&\mathcal{B}(F_{s+f+1} \varphi(P_{s+f}))^{-1} \mathcal{B}(F_{s+1} \varphi(P_{s}))\\
&= \mathcal{B}(M_{s+1} \varphi(P_{s+f}))^{-1}\mathcal{B}(F_{s+1})^{-1} \mathcal{B}(F_{s+1}) \mathcal{B}(M_{s+1} \varphi(P_{s}))\\
&= \mathcal{B}(M_{s+1} \varphi(P_{s+f}))^{-1}(\mathcal{B}(M_{s+1} \varphi(P_{s+f}) + Y')\\
&= Id + \mathcal{B}(M_{s+1} \varphi(P_{s+f}))^{-1} Y'.
\nal

Since $P_{s+f}$ is $0$-close to $Id$ and the diagonal terms of $P_{s+f}$ are $1$ mod $v$ , we can use the congruences in Corollary \ref{cdm-algo-multiplicative} to conclude that $\mathcal{B}(M_{s+1} \varphi(P_{s+f}))^{-1}$ has the form $Id + \begin{psmallmatrix}
x_1 & u^{e-\gamma_{s+1}}x_2 \\
u^{\gamma_{s+1}} x_3 & x_4 
\end{psmallmatrix}$, with $x_1, x_4 \in I_{1}$. Therefore, $$\mathcal{B}(M_{s+1} \varphi(P_{s+f}))^{-1} Y' = Y' + \begin{pmatrix}
x_1 y'_1 + v x_2 y'_3 & u^{e-\gamma_{s+1}}(x_1 y'_2 + x_2 y'_4)\\
u^{\gamma_{s+1}} (x_3 y'_1 + x_4 y'_3) & v x_3 y'_2 + x_4 y'_4
\end{pmatrix}$$ is $(t+1)$-close to $Y'$. This is evident when $t \leq n$, because in that case, $I_1 I_{t} \subset I_{t+1}$. On the other hand, when $t > n$, $y'_2$ and $y'_3$ are already $0$ mod $I_{t+1}$, and the assertion follows.


This implies that $\mathcal{B}(F_{s+f+1} \varphi(P_{s+f}))$ and $\mathcal{B}(F_{s+1} \varphi(P_{s}))$  have the same diagonal entries mod $I_{t+1}$ and consequently, $\Delta_{s+1} \equiv \Delta_{s+f+1} \mod \mathfrak{m}^{t+1}$. Further,
\bal
P_{s+f+1}^{-1} P_{s+1} &= \Delta_{s+f+1} \mathcal{B}(F_{s+f+1} \varphi(P_{s+f}))^{-1} \mathcal{B}(F_{s+1} \varphi(P_{s})) \Delta_{s+1}^{-1} \\
&=Id + (\Delta_{s+f+1}\Delta_{s+1}^{-1} - Id)  + \Delta_{s+f+1} \mathcal{B}(M_{s+1} \varphi(P_{s+f}))^{-1} Y' \Delta_{s+1}^{-1}.
\nal
The entries of $\Delta_{s+f+1} \mathcal{B}(M_{s+1} \varphi(P_{s+f}))^{-1} Y' \Delta_{s+1}^{-1}$ differ from those of $\mathcal{B}(M_{s+1} \varphi(P_{s+f}))^{-1} Y'$ by some scalars congruent to $1$ mod $I_{t+1}$. As a result, $P_{s+f+1}^{-1} P_{s+1} - Id$ is $(t+1)$-close to $Y'$. Let $P_{s+f+1}^{-1} P_{s+1} - Id = Y' + \begin{psmallmatrix} z_1 & u^{e - \gamma_{s+1}} z_2 \\ u^{\gamma_{s+1}} z_3 & z_4 \end{psmallmatrix}$ with each of $z_1, z_2, z_3, z_4$ congruent to $0 \mod I_{t+1}$.

Let $Y'' = P_{s+1} - P_{s+f+1} = P_{s+f+1}(P_{s+f+1}^{-1} P_{s+1} - Id)$. We claim that $Y''$ is $(t+1)$-close to $Y'$, equivalently to $P_{s+f+1}^{-1} P_{s+1} - Id$. To see this, write 
\begin{align*}
&Y'' - (P_{s+f+1}^{-1} P_{s+1} - Id)
= (P_{s+f+1} - Id)(P_{s+f+1}^{-1} P_{s+1} - Id) \\
&= \begin{pmatrix}
\sigma^{(s+f+1)}_1 - 1& u^{e-\gamma_{s+1}} \sigma^{(s+f+1)}_2\\
u ^{\gamma_{s+1}} \sigma^{(s+f+1)}_3 & \sigma^{(s+f+1)}_4 - 1
\end{pmatrix} \left( \begin{pmatrix}
y'_1 & u^{e-\gamma_{s+1}}y'_2 \\
u^{\gamma_{s+1}}y'_3 & y'_4
\end{pmatrix} +  \begin{pmatrix} z_1 & u^{e - \gamma_{s+1}} z_2 \\ u^{\gamma_{s+1}} z_3 & z_4 \end{pmatrix} \right).
\end{align*}

Since $\sigma^{(s+f+1)}_1 - 1 \equiv \sigma^{(s+f+1)}_4 - 1 \equiv 0 \mod v$, it is immediately verified that $Y''$ is $(t+1)$-close to $Y'$. Thus, $Y'$ measures the difference between $P_{s+1}$ and $P_{s+f+1}$ upto an error term which is $(t+1)$-close to $0$.

We now induct on $s$ and use Lemma \ref{inductive-step} to examine the dependency of $Y'$ on $Y$ (specifically of $y'_2$ and $y'_3$ on $y_2$ and $y_3$ mod $I_{t+1}$), which in turn gives the dependency of $P_{s+1} - P_{s+f+1}$ on 
$P_{s} - P_{s+f}$. It is evident that if $\M$ is not of bad genre (see Definition \ref{define-modified-bad-genre}) and $P_{s+f}$ is $t$-close to $P_s$, then $P_{s+2f}$ is at least $(t+1)$-close to $P_{s+f}$, making $\{P_{s+nf}\}_n$ a Cauchy sequence.

Therefore, we can set $P^{(i)} = \lim\limits_{n \to \infty} P_{i+nf}$ and let $F'_i := (P^{(i)})^{-1} F_{i} \varphi(P^{(i-1)})$. Then $F'_i$ has the following form:

\begin{align}\label{intermediate-CDM-form}
F'_i &= \begin{cases}
\Delta_i \begin{pmatrix}
v& 0\\
A_i u ^{\gamma_{i}}& 1
\end{pmatrix}& \text{if  } \G(F_i) = \Ieta,\\
\hspace{1cm}\\
\Delta_i \begin{pmatrix}
0& u^{e- \gamma_{i}}\\
u^{\gamma_{i}}& A_i'
\end{pmatrix}& \text{if  } \G(F_i) = \II \text{ and } F_i \text{ is in } \eta \text{-form},\\
\hspace{1cm}\\
\Delta_i \begin{pmatrix}
1& A'_i u^{e- \gamma_{i}}\\
0& v
\end{pmatrix}& \text{if  } \G(F_i) = \Ietaa,\\
\hspace{1cm}\\
\Delta_i \begin{pmatrix}
A_i & u^{e- \gamma_{i}}\\
u^{\gamma_{i+1}}& 0
\end{pmatrix}& \text{if  } \G(F_i) = \II \text{ and } F_i \text{ is in } \eta' \text{-form},\\
\end{cases}
\end{align}
where $\Delta_i$ are diagonal matrices with entries in $R^*$ and $A_i, A'_i \in R$. Note that when $\G(F_i) = \II$, the diagonal terms of $F'_i$ are in $\m$.

Now we do one final base change by diagonal scalar matrices $Q_i$ where $Q_0 = Id$ and $Q_1, ..., Q_{f-1}$ are defined inductively so that $G_i = Q_{i}^{-1} F_i Q_{i-1}$ is in CDM form.
\end{proof}

\begin{remark}
Note that the definition of bad genre in Definition \ref{define-modified-bad-genre} is minimal in the sense that for any choice of bad genre, it is easy to write Frobenius matrices that prohibit the convergence of the algorithm in the proof of Proposition \ref{convergence-CDM-matrix}.
\end{remark}

\subsection{Base changes}\label{automorphism subsec}
We continue to assume that our Breuil-Kisin modules are regular (see Definition \ref{regular}). Having classified the Breuil-Kisin modules that make up the stack $\mathcal{C}^{\tau,BT}$, we now need to understand their automorphisms which we know take the form of inertial base changes which need to respect the affermentioned classification. Our first order of business is to understand what these  base changes look like.

\begin{prop}\label{automorphisms}\cite[Prop.~3.1.22]{CDM}
Let $R$ be an Artinian local ring over $\F$ with maximal ideal $\m$. Let $\M$ be a regular Breuil-Kisin module over $R$, not of bad genre so that by Lemma \ref{convergence-CDM-matrix}, the Frobenius matrices $G_i$ can be put in the CDM form with parameters $(\alpha, \alpha', A_0, A'_0, ..., A_{f-1}, A'_{f-1})$. Suppose there exist inertial base change matrices $P_i$, so that if $F_i = P_{i}^{-1}G_i \varphi(P_{i-1})$, then $\{F_i\}_i$ are also in the CDM form with some parameters $(\beta, \beta', B_0, B'_0, ..., B_{f-1}, B'_{f-1})$. Then the following hold true:

\begin{enumerate}
    \item For all $i$, $P_i$ are necessarily of the form: \bal
    P_i = \begin{pmatrix}
    \lambda_i & 0 \\
    0 & \mu_i
    \end{pmatrix} \in \GL_2(R).
    \nal
    \item For $i \in \{1,\dots,  f-1\}$, if $\mathcal{G}(F_{i}) \in \{\Ieta, \Ietaa\}$, then $\lambda_{i} = \lambda_{i-1}$ and $\mu_{i} = \mu_{i-1}$.
    \item For $i \in \{1, \dots, f-1\}$, if $\mathcal{G}(F_{i}) = \II$, then  $\lambda_{i} = \mu_{i-1}$ and $\mu_{i} = \lambda_{i-1}$.
\end{enumerate}
Therefore the parameters $(\beta, \beta', B_0, B'_0, ..., B_{f-1}, B'_{f-1})$ are obtained by suitably scaling the parameters $(\alpha, \alpha', A_0, A'_0, ..., A_{f-1}, A'_{f-1})$ as dictated by the base change matrices.
\end{prop}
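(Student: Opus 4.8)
The plan is to analyze the conjugation relation $P_iF_i = G_i\,\varphi(P_{i-1})$ entrywise, exploiting the rigidity of the CDM form. First note that the genre of $\M_i$, and (when that genre is $\II$) whether the $i$-th Frobenius is in $\eta$- or $\eta'$-form, are preserved under inertial base change — as recorded after Lemma~\ref{lem_linearization} and in Definition~\ref{eta-eta'-form} — so $F_i$ and $G_i$ are CDM matrices of exactly the same shape, differing only in their parameters. By Lemma~\ref{constraints-base-change-matrix}, together with $P_i \in \GL_2(R[\![u]\!])$, we may write
\[
P_i = \begin{pmatrix} x_i & u^{e-\gamma_i}y_i \\ u^{\gamma_i}z_i & w_i \end{pmatrix}, \qquad x_i, y_i, z_i, w_i \in R[\![v]\!];
\]
moreover $\det P_i = x_iw_i - v\,y_iz_i$ lies in $R[\![u]\!]^{\times}$, hence, being in $R[\![v]\!]$, has unit constant term, so $x_i(0), w_i(0) \in R^{\times}$ since $R$ is local. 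Using $p\gamma_{i-1} = z_ie + \gamma_i$ one finds
\[
\varphi(P_{i-1}) = \begin{pmatrix} \varphi(x_{i-1}) & u^{e-\gamma_i}v^{\,p-1-z_i}\varphi(y_{i-1}) \\ u^{\gamma_i}v^{\,z_i}\varphi(z_{i-1}) & \varphi(w_{i-1}) \end{pmatrix},
\]
so that $P_iF_i = G_i\varphi(P_{i-1})$ unpacks, for each $i$ and each genre/form, into four scalar identities in $R[\![v]\!]$.

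The first assertion, that every $P_i$ is scalar diagonal, I would prove in two steps. Step one: $y_i = z_i = 0$ for all $i$. In each case, two of the four scalar identities yield recursions of the form $y_i = \varepsilon\,v^{m_i}\varphi(\xi_{i-1})$ and $z_i = \varepsilon'\,v^{n_i}\varphi(\xi'_{i-1}) + (\text{a term forced to be divisible by } v)$, where $\{\xi_{i-1}, \xi'_{i-1}\} = \{y_{i-1}, z_{i-1}\}$ (the two off-diagonal entries being preserved when $\mathcal{G}(F_i) \in \{\Ieta, \Ietaa\}$ and interchanged when $\mathcal{G}(F_i) = \II$), $\varepsilon, \varepsilon' \in R^{\times}$, and the exponents $m_i, n_i$ are $\ge 1$ except exactly when $(\mathcal{G}(F_i), z_i)$ is one of the six configurations of Definition~\ref{define-modified-bad-genre}(1). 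Conditions (2)--(3) of that definition say precisely that these configurations cannot be chained all the way around $\Z/f\Z$, so at some index a genuine positive power of $v$ is gained. Composing the recursions once — or, if the parity of genre-$\II$ steps requires it, twice — around the cycle, and using that $\varphi^f$ multiplies $v$-adic valuations by $p^f$, produces for each off-diagonal entry $\xi$ an identity $\xi = v^N\varphi^{\ell f}(\xi)$ with $N \ge 1$; comparing lowest-order terms in $R[\![v]\!]$ forces $\xi = 0$. This is the technical heart: it runs parallel to the convergence argument in the proof of Proposition~\ref{convergence-CDM-matrix}, and one uses the same $\m$-adic filtration $I_t$ to absorb the ``divisible by $v$'' correction terms, which a priori involve the still-uncontrolled diagonal entries $x_i, w_i$.

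Step two: with $P_i = \diag(x_i, w_i)$, a short comparison of entries in $P_iF_i = G_i\varphi(P_{i-1})$ gives $x_i = \varphi(x_{i-1})$, $w_i = \varphi(w_{i-1})$ when $\mathcal{G}(F_i) \in \{\Ieta, \Ietaa\}$ and $x_i = \varphi(w_{i-1})$, $w_i = \varphi(x_{i-1})$ when $\mathcal{G}(F_i) = \II$, for $i = 1, \dots, f-1$, while at $i = 0$ the same relations appear multiplied by the constant diagonal factors $\diag(\alpha, \alpha')$ and $\diag(\beta, \beta')$. Composing around $\Z/f\Z$ yields $x_0 = c\,\varphi^{\ell f}(x_0)$ with $c \in R^{\times}$ a constant (a ratio of products of $\alpha, \alpha', \beta, \beta'$) and $\ell \in \{1, 2\}$; since $\varphi^{\ell f}$ sends $v^k$ to $v^{p^{\ell f}k}$, comparing coefficients and inducting on the $v$-degree forces all positive-degree coefficients of $x_0$ to vanish, so $x_0 \in R$, and hence every $x_i, w_i \in R$. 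This establishes (1).

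Finally, (2), (3) and the closing sentence follow at once. As $\varphi$ is $R$-linear it fixes $R$, so the step-two relations become $\lambda_i = \lambda_{i-1}$, $\mu_i = \mu_{i-1}$ when $\mathcal{G}(F_i) \in \{\Ieta, \Ietaa\}$ (this is (2)) and $\lambda_i = \mu_{i-1}$, $\mu_i = \lambda_{i-1}$ when $\mathcal{G}(F_i) = \II$ (this is (3)), writing $\lambda_i, \mu_i$ for the diagonal entries of $P_i$. The remaining scalar identities — the $(2,1)$-entry in $\eta$-form, the $(1,2)$-entry in $\eta'$-form, and the $i = 0$ equations — then express each $B_i$, $B_i'$, $\beta$, $\beta'$ as the corresponding $A_i$, $A_i'$, $\alpha$, $\alpha'$ scaled by the appropriate ratio of the $\lambda_j, \mu_j$, which is exactly the asserted rescaling of parameters. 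The main obstacle is Step one: checking case by case that a positive power of $v$ is gained outside the six bad configurations, and controlling the correction terms via the $I_t$-filtration exactly as in the proof of Proposition~\ref{convergence-CDM-matrix}.
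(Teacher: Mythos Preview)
Your approach is correct in outline but takes a different route from the paper. The paper does not unpack $P_iF_i = G_i\varphi(P_{i-1})$ entrywise; instead it observes (Lemmas~\ref{reduction-of-automorphisms-to-CDM-3.1.19-1}--\ref{reduction-of-automorphisms-to-CDM-3.1.19-2}) that the $P_i$ necessarily satisfy the recursion $P_i = \mathcal{B}(G_i\varphi(P_{i-1}))\Delta_i^{-1}$, which is exactly the recursion driving the convergence algorithm of Proposition~\ref{convergence-CDM-matrix}. It then defines constant diagonal matrices $Q_i$ according to the rules in (2)--(3), checks that $Q_i^{-1}F_i\varphi(Q_{i-1})$ stays in CDM form, and applies Lemma~\ref{inductive-step} verbatim (via Lemma~\ref{reduction-of-automorphisms-to-CDM-3.1.19-3}) to $P_iQ_i$ to conclude $P_iQ_i = \mathrm{Id}$. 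So the paper reduces everything to already-built machinery, and assertions (2) and (3) appear not as consequences but as the \emph{definition} of the normalizer $Q_i$.

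Your Step one is where the real work lies, and your description oversimplifies it. The raw entrywise recursions do not have quite the shape you claim: for instance, in the $\Ieta$ case with $z_i = 0$ --- which is \emph{not} one of the six bad configurations --- the $(2,1)$-equation reads $vz_i = A_i\varphi(x_{i-1}) - B_iw_i + \varphi(z_{i-1})$, and one cannot isolate $z_i$ as ``$v^{n_i}\varphi(z_{i-1})$ plus a $v$-divisible correction'' with $n_i \geq 0$. The six configurations of Definition~\ref{define-modified-bad-genre} record exactly where the $I_{t+1}$-congruences of Lemma~\ref{inductive-step} fail for $\sigma_2,\sigma_3$ (the off-diagonals of $\mathcal{B}(M\varphi(P'))$), not the exponents in the raw $P_i$-recursion; the translation between the two goes through the triangular factor $\mathcal{B}(G_i)$, which is precisely where the diagonal entries leak into the off-diagonal ones. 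Your approach can be pushed through, but making ``use the same $I_t$-filtration to absorb the corrections'' precise amounts to re-deriving Lemma~\ref{inductive-step} in different coordinates --- which is what the paper's organization is designed to avoid.
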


The proof of this proposition uses a few more technical lemmas given below.

\begin{lemma}\label{reduction-of-automorphisms-to-CDM-3.1.19-1}
Assume the setting of Proposition \ref{automorphisms}. Let $C_{i}=\mathcal{B}(G_{i} \varphi(P_{i-1}))^{-1}G_{i}\varphi(P_{i-1})$. Then $C_{i} = \Delta_{i}^{-1} F_{i}$ where $\Delta_{i}$ is the identity matrix for $i \in \{1,\dots, f-1\}$ and equals $\begin{psmallmatrix}
\beta & 0 \\
0 & \beta'
\end{psmallmatrix}$, for $i=0$.
\end{lemma}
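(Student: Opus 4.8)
The plan is to unwind the definitions and use the multiplicativity property of the operation $\mathcal{B}$ established in Lemma \ref{reduction-step}, together with the explicit factorizations $G_i = \mathcal{B}(G_i) M_i$ and $F_i = \mathcal{B}(F_i) M_i'$ coming from Definition \ref{defn-B-operation}. Since each $G_i$ is already in CDM form with parameters $(\alpha,\alpha',\ldots)$, for $i \in \{1,\ldots,f-1\}$ we have $G_i = \mathcal{B}(G_i) M_i$ where $M_i$ is precisely the ``standard'' matrix appearing on the right in Definition \ref{defn-CDM-form} (so $\mathcal{B}(G_i)$ is the identity for $\G(G_i) \in \{\Ieta,\Ietaa\}$, and appropriately normalized otherwise), while for $i = 0$ the leading diagonal factor $\begin{psmallmatrix}\alpha & 0\\ 0 & \alpha'\end{psmallmatrix}$ gets absorbed into $\mathcal{B}(G_0)$. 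The analogous statement holds for $F_i$ with parameters $(\beta,\beta',\ldots)$.

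First I would apply Lemma \ref{reduction-step} with $F = G_i$ and $P = P_{i-1}$: writing $G_i = \mathcal{B}(G_i) M_i$, the lemma gives $\mathcal{B}(G_i \varphi(P_{i-1})) = \mathcal{B}(G_i)\,\mathcal{B}(M_i \varphi(P_{i-1}))$, hence
\[
C_i = \mathcal{B}(G_i\varphi(P_{i-1}))^{-1} G_i \varphi(P_{i-1}) = \mathcal{B}(M_i\varphi(P_{i-1}))^{-1}\,\mathcal{B}(G_i)^{-1}\,\mathcal{B}(G_i)\,M_i\varphi(P_{i-1}) = \mathcal{B}(M_i\varphi(P_{i-1}))^{-1} M_i \varphi(P_{i-1}).
\]
Now by definition $F_i = P_i^{-1} G_i \varphi(P_{i-1})$, and from the $\mathcal{B}$-factorization of $F_i$ one reads off that $C_i = P_i^{-1}\mathcal{B}(G_i\varphi(P_{i-1}))\,\mathcal{B}(F_i)^{-1}\cdot F_i$ when one writes everything out; the point is to identify the scalar discrepancy. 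Since $F_i$ is in CDM form, $\mathcal{B}(F_i)$ equals the identity for $i \in \{1,\ldots,f-1\}$ (genres $\Ieta, \Ietaa$) or is the normalizing diagonal unit matrix otherwise, and for $i = 0$ it carries the extra factor $\begin{psmallmatrix}\beta & 0\\ 0 & \beta'\end{psmallmatrix}$. Comparing the CDM-normalized factorization $C_i = \mathcal{B}(M_i\varphi(P_{i-1}))^{-1}M_i\varphi(P_{i-1})$ against the standard factorization of $F_i$, and using that both sides reduce to the same standard matrix $M_i'$ mod the normalization (this is exactly the content of how CDM form is defined — the diagonal entries of $\mathcal{B}(\,\cdot\,)$ are forced to be $1$ mod $v$ for $i\neq 0$), we conclude $C_i = \Delta_i^{-1} F_i$ with $\Delta_i = \mathrm{Id}$ for $i \in \{1,\ldots,f-1\}$ and $\Delta_0 = \begin{psmallmatrix}\beta & 0\\ 0 & \beta'\end{psmallmatrix}$.

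The main obstacle I anticipate is bookkeeping the placement of the diagonal ``$\alpha,\alpha'$'' and ``$\beta,\beta'$'' factors: one must be careful that in the recursive construction the normalization $\Delta_{s+1}$ chosen in the proof of Proposition \ref{convergence-CDM-matrix} is trivial for indices $\not\equiv 0 \pmod f$ and nontrivial only at $i \equiv 0$, and that $\mathcal{B}$ applied to a matrix already of CDM shape returns the identity (resp.\ the leading diagonal unit) rather than something more complicated — this requires checking each of the four genre cases of Definition \ref{defn-CDM-form} against Definition \ref{defn-B-operation}, but each check is a short direct computation. Once the placement is pinned down, the identity $C_i = \Delta_i^{-1}F_i$ is immediate from Lemma \ref{reduction-step}.
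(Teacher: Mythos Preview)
Your reduction via Lemma~\ref{reduction-step} is correct and gives $C_i = \mathcal{B}(M_i\varphi(P_{i-1}))^{-1} M_i \varphi(P_{i-1})$, which is indeed a matrix of one of the four standard shapes from Definition~\ref{defn-B-operation}, determined by a single scalar parameter (call it $c_i$ or $c_i'$). You are also right that $\Delta_i^{-1}F_i$ is a standard matrix of the \emph{same} shape, with parameter $B_i$ or $B_i'$. But the remaining step --- showing that these parameters coincide --- is where your argument has a gap. Your displayed relation $C_i = P_i^{-1}\mathcal{B}(G_i\varphi(P_{i-1}))\,\mathcal{B}(F_i)^{-1}F_i$ does not follow from the definitions (the correct relation is $C_i = \mathcal{B}(G_i\varphi(P_{i-1}))^{-1}P_i F_i$), and your appeal to ``both sides reduce to the same standard matrix $M_i'$ mod the normalization'' is circular: that is precisely the assertion $c_i = B_i$ that needs proof. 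Nothing about the definition of CDM form forces these parameters to agree a priori, because you do not yet know what $P_i$ is.

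The paper supplies the missing mechanism via an \emph{integrality} argument, and this is the key idea you are lacking. From $P_i^{-1}G_i\varphi(P_{i-1}) = F_i$ and $G_i\varphi(P_{i-1}) = \mathcal{B}(G_i\varphi(P_{i-1}))\,C_i$ one gets $F_i C_i^{-1} = P_i^{-1}\mathcal{B}(G_i\varphi(P_{i-1}))$, and the right-hand side lies in $\GL_2(R[\![u]\!])$. One then computes $F_i C_i^{-1}$ directly: for instance in genre $\Ieta$ it equals $\Delta_i\begin{psmallmatrix}1 & 0\\ (B_i - c_i)u^{\gamma_i - e} & 1\end{psmallmatrix}$, whose integrality forces $c_i = B_i$. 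The other genres are analogous. Your use of Lemma~\ref{reduction-step} is not needed for this, and does not by itself substitute for the integrality constraint.
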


\begin{proof}
Let $c_i, c'_i$ be such that:
\begin{align}
C_i &= \begin{dcases}
\begin{pmatrix}
v& 0\\
c_i u ^{\gamma_{i}}& 1
\end{pmatrix}& \text{if  } \G(G_i) = \Ieta,\\
\begin{pmatrix}
0& u^{e- \gamma_{i}}\\
u^{\gamma_{i}}& c_i'
\end{pmatrix}& \text{if  } \G(G_i) = \II \text{ and } G_i \text{ is in } \eta \text{-form},\\
\begin{pmatrix}
1& c'_i u^{e- \gamma_{i}}\\
0& v
\end{pmatrix}& \text{if  } \G(G_i) = \Ietaa,\\
\begin{pmatrix}
c_i & u^{e- \gamma_{i}}\\
u^{\gamma_{i}}& 0
\end{pmatrix}& \text{if  } \G(G_i) = \II \text{ and } G_i \text{ is in } \eta' \text{-form}.\\
\end{dcases}
\end{align}
Since $P_{i}^{-1} G_i \varphi(P_{i-1}) = F_i$, we have $P_{i}^{-1} \mathcal{B}(G_i \varphi(P_{i-1}))C_i = F_i$. Inverting $C_i$ in $\GL_2(R(\!(u)\!))$, we obtain that $F_i C_i^{-1} = P_{i}^{-1}\mathcal{B}(G_i \varphi(P_{i-1}))$. Notice that $P_{i}^{-1}\mathcal{B}(G_i \varphi(P_{i-1}))$ is in $\GL_2(R[\![u]\!])$ and therefore all the entries of $F_i C_i^{-1}$ must be in $R[\![u]\!]$.

Now, consider the case where $\G(G_i) = \Ieta$.
\bal
F_i C_i^{-1} &= \Delta_i \begin{pmatrix}
v & 0\\
B_i u^{\gamma_{i}} & 1
\end{pmatrix}
\begin{pmatrix}
v^{-1} & 0 \\
-c_i u^{\gamma_{i} - e} & 1
\end{pmatrix} \\
&= \Delta_i \begin{pmatrix}
1 & 0\\
(B_i - c_i)u^{-e + \gamma_{i}} & 1
\end{pmatrix}
\nal

We conclude that the entries of $F_i C_i^{-1}$ are in $R[\![u]\!]$ if and only if $c_i = B_i$ or in other words, $C_i = \Delta_{i}^{-1} F_i$. The other three cases involve similar computations and conclusions, and are omitted.
\end{proof}

\begin{lemma}\label{reduction-of-automorphisms-to-CDM-3.1.19-2}
Assume the setting of Proposition \ref{automorphisms}. If $i \in \{1,\dots,  f-1\}$, then $P_{i} = \mathcal{B}(G_i \varphi(P_{i-1}))$. Furthermore, $P_0 = \mathcal{B}(G_{0} \varphi(P_{f-1})) \begin{psmallmatrix}
\beta^{-1} & 0 \\
0 & \beta'^{-1}
\end{psmallmatrix}$.
\end{lemma}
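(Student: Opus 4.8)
The plan is to derive this lemma as a short algebraic consequence of Lemma~\ref{reduction-of-automorphisms-to-CDM-3.1.19-1} together with the defining relation $F_i = P_i^{-1} G_i \varphi(P_{i-1})$. Set $C_i = \mathcal{B}(G_i \varphi(P_{i-1}))^{-1} G_i \varphi(P_{i-1})$ as in that lemma. By definition of $C_i$ we then have $G_i \varphi(P_{i-1}) = \mathcal{B}(G_i \varphi(P_{i-1})) C_i$, while Lemma~\ref{reduction-of-automorphisms-to-CDM-3.1.19-1} gives $C_i = \Delta_i^{-1} F_i$, with $\Delta_i$ equal to the identity for $i \in \{1, \dots, f-1\}$ and equal to $\begin{psmallmatrix} \beta & 0 \\ 0 & \beta' \end{psmallmatrix}$ for $i = 0$.

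First I would rewrite the defining relation as $G_i \varphi(P_{i-1}) = P_i F_i$ and combine it with the two expressions above for $G_i \varphi(P_{i-1})$, obtaining
\[
P_i F_i = \mathcal{B}(G_i \varphi(P_{i-1}))\, \Delta_i^{-1}\, F_i .
\]
Now $F_i$ lies in $\GL_2(R(\!(u)\!))$, since it represents a Frobenius map of a height-one Breuil--Kisin module and hence induces an isomorphism after inverting $E(u) = u^e$, i.e.\ after inverting $u$. Cancelling $F_i$ on the right therefore yields $P_i = \mathcal{B}(G_i \varphi(P_{i-1}))\, \Delta_i^{-1}$.

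Finally I would substitute the two values of $\Delta_i$ recorded in Lemma~\ref{reduction-of-automorphisms-to-CDM-3.1.19-1}: for $i \in \{1, \dots, f-1\}$ we have $\Delta_i^{-1} = Id$, giving $P_i = \mathcal{B}(G_i \varphi(P_{i-1}))$, while for $i = 0$, using the cyclic convention $P_{-1} = P_{f-1}$, we have $\Delta_0^{-1} = \begin{psmallmatrix} \beta^{-1} & 0 \\ 0 & \beta'^{-1} \end{psmallmatrix}$, giving $P_0 = \mathcal{B}(G_0 \varphi(P_{f-1})) \begin{psmallmatrix} \beta^{-1} & 0 \\ 0 & \beta'^{-1} \end{psmallmatrix}$, as claimed. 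There is essentially no serious obstacle here; the only points that merit a word of care are the invertibility of $F_i$ over $R(\!(u)\!)$ (which legitimizes the right-cancellation) and keeping the cyclic indexing of the $P_i$ straight in the $i=0$ case.
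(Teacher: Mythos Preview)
Your proof is correct and is essentially identical to the paper's own argument: both use Lemma~\ref{reduction-of-automorphisms-to-CDM-3.1.19-1} to write $C_i = \Delta_i^{-1} F_i$, combine this with the defining relation $P_i F_i = G_i \varphi(P_{i-1}) = \mathcal{B}(G_i \varphi(P_{i-1})) C_i$, and then cancel $F_i$ on the right in $\GL_2(R(\!(u)\!))$ to obtain $P_i = \mathcal{B}(G_i \varphi(P_{i-1}))\Delta_i^{-1}$. The paper phrases the cancellation step as multiplying through by $P_i^{-1}$ first, but the content is the same.
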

\begin{proof}
By Lemma \ref{reduction-of-automorphisms-to-CDM-3.1.19-1}, $P_{i}^{-1}\mathcal{B}(G_i \varphi(P_{i-1}))\Delta_{i}^{-1} F_i = P_{i}^{-1}G_i \varphi(P_{i-1}) =  F_i$. Inverting $F_i$ in $\GL_2(R(\!(u)\!))$, we have $P_{i}^{-1}\mathcal{B}(G_i \varphi(P_{i-1}))\Delta_{i}^{-1} = Id$, and therefore, $P_{i} = \mathcal{B}(G_i \varphi(P_{i-1}))\Delta_{i}^{-1}$.
\end{proof}

\begin{lemma}\label{reduction-of-automorphisms-to-CDM-3.1.19-3}
Assume the setting of Proposition \ref{automorphisms}. Suppose both the diagonal entries of $P_0$ equal $1$ mod $v$. Then $P_i = Id$ for all $i$.
\end{lemma}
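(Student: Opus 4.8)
The plan is to run the $v$-adic approximation argument from the proof of Proposition~\ref{convergence-CDM-matrix} in reverse: rather than constructing a Cauchy sequence of base changes, we show that the hypotheses pin the given $P_i$ down to the identity. Write $P_i = Id + Y^{(i)}$ with $Y^{(i)} = \begin{psmallmatrix} y_1^{(i)} & u^{e-\gamma_i} y_2^{(i)} \\ u^{\gamma_i} y_3^{(i)} & y_4^{(i)}\end{psmallmatrix}$, where $y_j^{(i)} \in R[\![v]\!]$. By Lemma~\ref{reduction-of-automorphisms-to-CDM-3.1.19-2} we have $P_i = \mathcal{B}(G_i\varphi(P_{i-1}))$ for $i \in \{1,\dots,f-1\}$ and $P_0 = \mathcal{B}(G_0\varphi(P_{f-1}))\begin{psmallmatrix}\beta^{-1}&0\\0&\beta'^{-1}\end{psmallmatrix}$. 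Since $G_i = M_i$ is a pure CDM matrix for $i\neq 0$, $G_0 = \begin{psmallmatrix}\alpha&0\\0&\alpha'\end{psmallmatrix}M_0$ with $M_0$ pure, and a quick check of Definition~\ref{defn-B-operation} gives $\mathcal{B}(DG)=D\,\mathcal{B}(G)$ for any scalar diagonal $D$, these relations become $P_i = \mathcal{B}(M_i\varphi(P_{i-1}))$ and $P_0 = \begin{psmallmatrix}\alpha&0\\0&\alpha'\end{psmallmatrix}\mathcal{B}(M_0\varphi(P_{f-1}))\begin{psmallmatrix}\beta^{-1}&0\\0&\beta'^{-1}\end{psmallmatrix}$, with every $M_i$ one of the matrices appearing in Lemma~\ref{inductive-step}.

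First I would reduce everything mod $v$. Applying the mod-$v$ congruences of Lemma~\ref{inductive-step} (legitimate since every $P_{i-1}$ is trivially $0$-close to $Id$), together with $\varphi(vR[\![v]\!]) \subseteq v^pR[\![v]\!]$, the hypothesis $y_1^{(0)} \equiv y_4^{(0)} \equiv 0 \bmod v$ propagates around the cycle to give $y_1^{(i)} \equiv y_4^{(i)} \equiv 0 \bmod v$ for all $i$. Feeding this back into the formula for $P_0$ and comparing diagonal entries mod $v$ forces $\alpha = \beta$ and $\alpha'=\beta'$, so $P_0$ is the conjugate of $\mathcal{B}(M_0\varphi(P_{f-1}))$ by $\begin{psmallmatrix}\alpha&0\\0&\alpha'\end{psmallmatrix}$. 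For the off-diagonal parts, the constant terms $\overline{y_2^{(i)}}, \overline{y_3^{(i)}}$ are governed exactly by the cyclic system of dependencies read off from Lemma~\ref{inductive-step}; because $\M$ is not of bad genre, this system contains no cycle and the chain of dependencies is broken, precisely as argued near the end of the proof of Proposition~\ref{convergence-CDM-matrix}, so $\overline{y_2^{(i)}} = \overline{y_3^{(i)}} = 0$ for all $i$. Consequently every entry of $Y^{(i)}$ lies in $vR[\![v]\!] = I_{n+1}$, where $n$ denotes the largest integer with $\m^n \neq 0$; that is, each $P_i$ is $(n+1)$-close to $Id$.

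Then I would bootstrap up the filtration. The claim is that each $P_i$ is $(n+1+k)$-close to $Id$ for every $k\geq 0$, proved by induction on $k$, the case $k=0$ being the previous step. Assuming it for $k$, fix $i\neq 0$: since $P_{i-1}$ is $(n+1+k)$-close, the entries of $Y^{(i-1)}$ lie in $I_{n+1+k}=v^{k+1}R[\![v]\!]$, and Lemma~\ref{inductive-step} (applied with $t = n+1+k$) expresses each entry of $Y^{(i)}$, modulo $I_{n+2+k}$, as $\varphi$ of an entry of $Y^{(i-1)}$ or as $0$; since $\varphi(v^{k+1}R[\![v]\!]) \subseteq v^{p(k+1)}R[\![v]\!] \subseteq v^{k+2}R[\![v]\!] = I_{n+2+k}$ (here $p>2$ enters), $P_i$ is $(n+2+k)$-close. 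Sweeping $i=1,\dots,f-1$ and then applying this to $P_0$ via its conjugation formula — conjugation by a scalar diagonal matrix preserves $t$-closeness, as $I_t$ is an ideal of $R[\![v]\!]$ — completes the induction. Since $\bigcap_t I_t = 0$, all $Y^{(i)} = 0$, i.e.\ $P_i = Id$ for every $i$.

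The hard part is the off-diagonal analysis in the first step: unlike in Proposition~\ref{convergence-CDM-matrix} we begin with no control on $\overline{y_2^{(0)}},\overline{y_3^{(0)}}$, so it is exactly the not-of-bad-genre hypothesis — more precisely, the fact that the six admissible pairs $(\mathcal{G}(F_i),z_i)$ in Definition~\ref{define-modified-bad-genre}(1) together with the transition conditions (2) and (3) are arranged precisely so as to forbid a cyclic dependency — that must be invoked to annihilate these constant terms; everything downstream is the same contraction bookkeeping as in Proposition~\ref{convergence-CDM-matrix}. (One should also note the innocuous sign mismatch between the $\pm u^{e-\gamma_i}$ conventions of Definitions~\ref{defn-CDM-form} and~\ref{defn-B-operation}, which affects none of the congruences used above.)
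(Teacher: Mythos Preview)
Your approach is essentially the paper's — iterate Lemma~\ref{inductive-step} around the cycle and use the not-of-bad-genre hypothesis to kill the off-diagonal terms — just reorganized into ``first get everything $\equiv Id \bmod v$, then bootstrap $v$-adically'' rather than the paper's single induction on $t$. The bootstrap phase is fine. But there is a genuine ordering problem in your Phase~1: you argue (a) the diagonal condition $y_1^{(i)}\equiv y_4^{(i)}\equiv 0 \bmod v$ propagates, then (b) $\alpha=\beta,\alpha'=\beta'$, then (c) the off-diagonals vanish mod $v$. Step (a) as written appeals to the ``mod-$v$ congruences of Lemma~\ref{inductive-step}'', but those are only stated in cases (1) and (2). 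In case (3) (genre $\II$) the explicit formula in the proof reads $\sigma_1-1=\varphi(\sigma'_4-1)-b'v^{z_i}\varphi(\sigma'_3)$, and when $z_i=0$ the term $-b'\,\overline{\sigma'_3}$ survives mod $v$; so the diagonal of $P_i$ need not be $1$ mod $v$ until you already know the off-diagonal constant $\overline{\sigma'_3}$ vanishes.

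The fix is simply to swap the order: establish (c) first. The off-diagonal relations \emph{are} exact mod $v$ in all three cases (for case (3) this is visible from the displayed formulae $\sigma_2=v^{z_i}\varphi(\sigma'_3)$, $\sigma_3=v^{p-1-z_i}\varphi(\sigma'_2)$ in the proof of Lemma~\ref{inductive-step}), so your ``acyclic dependency graph $\Rightarrow$ zero'' argument for the constants $\overline{y_2^{(i)}},\overline{y_3^{(i)}}$ is valid and indeed slightly slicker than the paper's incremental $I_t$-induction over $t\le n$. Once (c) holds the offending term above vanishes, (a) follows in every case, and (b) follows as you wrote. The paper sidesteps this issue by never asserting anything about the intermediate $P_i$: it inducts on $t$ tracking only $P_0$, whose diagonal is $1$ mod $v$ by hypothesis throughout, and composes the $I_{t+1}$-congruences $f$ times to land back at $P_0$. (Minor aside: your ``here $p>2$ enters'' in Phase~2 is an overstatement — $p\geq 2$ already gives $p(k+1)\geq k+2$.)
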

\begin{proof}
Suppose that $P_0$ is $t$-close to $Id$ (this is automatically true for $t = 0$ from the hypothesis in the statement of the Lemma). We apply Lemma \ref{inductive-step} successively to compute the congruences for $P_{i} = \mathcal{B}(G_i \varphi(P_{i-1}))$ as $i$ goes from $1$ to $f-1$, and then finally for $\mathcal{B}(\begin{psmallmatrix}
\alpha^{-1} & 0 \\
0 & \alpha'^{-1}
\end{psmallmatrix} G_{0} \varphi(P_{f-1}))$. 

We obtain that
\[
 \mathcal{B}(\begin{pmatrix}
\alpha^{-1} & 0 \\
0 & \alpha'^{-1}
\end{pmatrix} G_{0} \varphi(P_{f-1})) = \begin{pmatrix}
\sigma_1& u^{e-\gamma_{0}} \sigma_2\\
u ^{\gamma_{0}} \sigma_3 & \sigma_4
\end{pmatrix},
\]
where
\bal
&\sigma_1 - 1 \equiv \sigma_2 \equiv \sigma_3 \equiv \sigma_4 - 1 \equiv 0 \mod I_{t+1}.
\nal
By Lemma \ref{reduction-of-automorphisms-to-CDM-3.1.19-2},
\bal
P_0 &= \mathcal{B}(G_{0} \varphi(P_{f-1})) \begin{pmatrix}
\beta^{-1} & 0 \\
0 & \beta'^{-1}
\end{pmatrix} \\
&= \begin{pmatrix}
\alpha & 0 \\
0 & \alpha'
\end{pmatrix} \mathcal{B}(\begin{pmatrix}
\alpha^{-1} & 0 \\
0 & \alpha'^{-1}
\end{pmatrix} G_{0} \varphi(P_{f-1}))\begin{pmatrix}
\beta^{-1} & 0 \\
0 & \beta'^{-1}
\end{pmatrix} &\text{(using Lemma \ref{reduction-step})}\\
&= \begin{pmatrix}
\alpha & 0 \\
0 & \alpha'
\end{pmatrix} \begin{pmatrix}
\sigma_1& u^{e-\gamma_{0}} \sigma_2\\
u ^{\gamma_{0}} \sigma_3 & \sigma_4
\end{pmatrix} \begin{pmatrix}
\beta^{-1} & 0 \\
0 & \beta'^{-1}
\end{pmatrix}.
\nal \\
Recalling that $P_0$ has diagonal entries equal to $1$ mod $v$, we have:
\bal
&\alpha \beta^{-1} \sigma_1  - 1 \equiv \alpha' \beta'^{-1}\sigma_4 -1 \equiv 0 \mod v,\\
&\alpha \beta^{-1} \sigma_1 - 1 \equiv \alpha' \beta'^{-1} \sigma_4 - 1 \equiv 0 \mod I_{t+1}, \\
&\alpha \beta'^{-1}\sigma_2 \equiv \alpha' \beta^{-1}\sigma_3 \equiv 0 \mod I_{t+1}.
\nal
The mod $v$ congruence shows that $\alpha = \beta$ and $\alpha' = \beta'$. Therefore, $P_0$ is $(t+1)$-close to $Id$. Induction on $t$ gives us the desired proof.
\end{proof}

\begin{proof}[Proof of Proposition \ref{automorphisms}]
Suppose the top left entry of $P_0$ is $\lambda_0$ mod $v$, while the bottom right entry is $\mu_0$ mod $v$, where $\lambda_0, \mu_0 \in R^{*}$. Let $Q_i :=  \begin{psmallmatrix}
\lambda_{i}^{-1} & 0 \\
0 & \mu_{i}^{-1}
\end{psmallmatrix}$ where $\lambda_i$ and $\mu_i$ are defined in the following manner for $i \in \{1,\dots,  f-1\}$: If $\mathcal{G}(G_{i}) \in \{\Ieta, \Ietaa\}$, then we let $\lambda_{i} = \lambda_{i-1}$ and $\mu_{i} = \mu_{i-1}$. If $\mathcal{G}(G_{i}) = \II$, we let $\lambda_{i} = \mu_{i-1}$ and $\mu_{i} = \lambda_{i-1}$. To prove the proposition, we must show that $P_i = Q_{i}^{-1}$.

Observe that the matrices $H_{i} = Q_{i}^{-1} F_{i} \varphi(Q_{i-1})$ are still in CDM form (see Definition \ref{defn-CDM-form}). We now consider the base change given by the matrices $P_i Q_{i}$, that transforms $G_i$ to $H_i$. By the choice of $\lambda_0$ and $\mu_0$, the diagonal entries of $P_0 Q_0$ equal $1$ mod $v$. Applying Lemma \ref{reduction-of-automorphisms-to-CDM-3.1.19-3}, we have $P_i Q_i = Id$ for all $i$, and therefore $P_i = Q_{i}^{-1}$. 
\end{proof}

\begin{cor}\label{unique-base-change-upto-scalar}
Let $R$ be an Artinian local $\mathbb{F}$-algebra and let $\M$ be a regular Breuil-Kisin module defined over $R$ and not of bad genre. Suppose $\{F_i\}_i$ and $\{G_i\}_i$ are two sets of Frobenius matrices for $\M$ written with respect to different sets of inertial bases. Then the base change matrices $\{P_i\}_i$ to go from $\{F_i\}_i$ to $\{G_i\}_i$ are unique up to multiplying each of the $P_i$ by a fixed scalar matrix.
\end{cor}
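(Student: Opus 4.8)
The plan is to reduce the uniqueness statement to a computation of $\Aut(\M)$ and then carry that out by passing to CDM form. Suppose $\{P_i\}_i$ and $\{P_i'\}_i$ are collections of inertial base change matrices with $G_i = P_i^{-1}F_i\varphi(P_{i-1}) = (P_i')^{-1}F_i\varphi(P_{i-1}')$ for all $i$. Put $T_i := P_i'P_i^{-1}$; cancelling $F_i$ between the two expressions gives $T_iF_i = F_i\varphi(T_{i-1})$, and $T_i$ is again an inertial base change matrix because the matrices $P$ satisfying $g(P) = D^{-1}PD$ (with $D = \diag(\eta(g),\eta'(g))$), i.e.\ those intertwining the fixed diagonal Galois action $\eta\oplus\eta'$ with itself, are closed under products and inverses. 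Hence $\{T_i\}_i$ is an automorphism of $\M$ as a Breuil--Kisin module with descent data, and it suffices to prove that every such automorphism is a single scalar matrix $cI$ with $c\in R^\times$ --- for then $P_i' = cP_i$ for all $i$.

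Next I would move to CDM form. Since $\M$ is regular and not of bad genre, Proposition~\ref{convergence-CDM-matrix} supplies inertial base change matrices $\{R_i\}_i$ with $F_i^{\mathrm{CDM}} := R_i^{-1}F_i\varphi(R_{i-1})$ in CDM form, with parameters $(\alpha,\alpha',A_0,A_0',\dots,A_{f-1},A_{f-1}')$. The conjugate $\widetilde{T}_i := R_i^{-1}T_iR_i$ is an inertial base change matrix satisfying $\widetilde{T}_iF_i^{\mathrm{CDM}} = F_i^{\mathrm{CDM}}\varphi(\widetilde{T}_{i-1})$, i.e.\ it carries the collection $\{F_i^{\mathrm{CDM}}\}$ to itself. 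Applying Proposition~\ref{automorphisms} with source and target both equal to $\{F_i^{\mathrm{CDM}}\}$ (so the output parameters $\beta,\beta',B_i,B_i'$ coincide with $\alpha,\alpha',A_i,A_i'$) forces each $\widetilde{T}_i$ to be diagonal, $\widetilde{T}_i = \diag(\nu_i,\mu_i)\in\GL_2(R)$, with $(\nu_i,\mu_i) = (\nu_{i-1},\mu_{i-1})$ when $\mathcal{G}(F_i)\in\{\Ieta,\Ietaa\}$ and $(\nu_i,\mu_i) = (\mu_{i-1},\nu_{i-1})$ when $\mathcal{G}(F_i)=\II$, and with the induced rescaling of every parameter trivial.

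It remains to deduce $\nu_i = \mu_i = c$ for a single $c\in R^\times$. Reading the relations $\widetilde{T}_iF_i^{\mathrm{CDM}} = F_i^{\mathrm{CDM}}\varphi(\widetilde{T}_{i-1})$ off the explicit CDM blocks, the off-diagonal powers $u^{\gamma_i}$ and $u^{e-\gamma_i}$ that are forced to occur in the genre $\II$ blocks --- and the units $\alpha,\alpha'$ at $i=0$ --- propagate the pair $(\nu_0,\mu_0)$ cyclically around $\Z/f\Z$ by the swap/non-swap rule above, while the entries carrying $A_i,A_i'$ impose $(\nu_{i-1}-\mu_{i-1})A_i = (\nu_{i-1}-\mu_{i-1})A_i' = 0$. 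Cyclic consistency of the propagation, combined with these constraints, should force $\nu_0 = \mu_0$; the non-swap relations then collapse the whole system to the single scalar $c := \nu_0 = \mu_0\in R^\times$, so $\widetilde{T}_i = cI$, whence $T_i = R_i(cI)R_i^{-1} = cI$ and $P_i' = cP_i$. This last step is the main obstacle: one must check that being diagonal, compatible with the genre-dictated propagation, and parameter-preserving genuinely leaves no freedom beyond an overall scalar --- equivalently, that tracking the genre $\II$ "swaps" around the cycle $\Z/f\Z$, in interplay with the possibly non-invertible parameters $A_i,A_i'$, rigidifies the formal two-parameter torus of diagonal candidates down to $\Gm$. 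Everything preceding it is formal bookkeeping from Propositions~\ref{convergence-CDM-matrix} and~\ref{automorphisms}.
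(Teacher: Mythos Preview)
Your approach matches the paper's: reduce to CDM form via Proposition~\ref{convergence-CDM-matrix} and invoke Proposition~\ref{automorphisms}. The paper's proof is two sentences and simply asserts that once both sets of Frobenius matrices are in CDM form, it is ``immediate'' from the way parameters transform that the base change is unique up to scalar; you have unpacked this more carefully and correctly flagged the final step as the real content.

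That final step does not go through as you hope. Your constraints are precisely $(\nu_{i-1}-\mu_{i-1})A_i=(\nu_{i-1}-\mu_{i-1})A_i'=0$ together with the cyclic swap/non-swap propagation, and when the total number of genre-$\II$ Frobenii is even these are compatible with $\nu_0\neq\mu_0$ whenever every $A_i,A_i'$ is annihilated by $\nu_0-\mu_0$. Concretely: take $R$ a field, every genre equal to $\Ieta$, and every $A_i=0$ (this is not of bad genre provided some $z_i\notin\{1,p-1\}$); then $\M$ splits as a direct sum of two rank-one Breuil--Kisin modules and $\diag(\nu_0,\mu_0)$ is an automorphism for any $\nu_0,\mu_0\in R^\times$. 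So ``unique up to a fixed scalar matrix'' is too strong; what Proposition~\ref{automorphisms} actually delivers is uniqueness up to left-multiplying $P_0$ by an arbitrary $\diag(c_1,c_2)\in\GL_2(R)$, with the remaining $P_i$ then adjusted by $\diag(c_1,c_2)$ or $\diag(c_2,c_1)$ according to the swap pattern. This weaker statement is still enough for the sole application in Lemma~\ref{diagonal-is-surj-mon}, since the group $G$ there carries a $\mathbb{G}_m^2$ factor in the $(\lambda,\mu)$ coordinates that can absorb the extra diagonal freedom.
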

\begin{proof}
Since each set of Frobenius matrices can be transformed into CDM form, it suffices to check the assertion when $\{F_i\}_i$ and $\{G_i\}_i$  are assumed to be in CDM form. From the way the parameters for the Frobenius matrices transform under base change, it is immediate that the base change matrices are uniquely determined up to scalar multiples.
\end{proof}

For the remainder of this section, we will make the following assumption for a Breuil-Kisin module $\M$ defined over $R$.
\begin{ass}\label{eta-form-ass}
$\M$ is a regular Breuil-Kisin module over $R$, not of bad genre. Each of its Frobenius maps is in $\eta$-form, and none are in $\eta'$-form.
\end{ass}

The assumption is justified because allowing some Frobenius matrices to be in $\eta'$-form will offer very little advantage in our eventual conclusions but inundate the text with significantly more notation - a discussion of the effect of allowing some Frobenius matrices to be in $\eta'$-form is in the Appendix.
\\

Via Proposition \ref{convergence-CDM-matrix}, we can now describe Frobenius maps very parsimoniously using matrices in CDM form. Base changes between CDM forms also have an easy description using Proposition \ref{automorphisms}. This bring us one step closer to finding a finite presentation of the stack of Breuil-Kisin modules. We now turn our attention to furthering this process, specifically to understanding the base changes that allowed us to write the Frobenius matrices in the CDM form. Specifically, we will be studying the matrices $P^{(i)} = \lim \limits_{n \to \infty} P_{i+nf}$ showing up in the proof of Proposition \ref{convergence-CDM-matrix}. We will also analyze obstructions to a parsimonious description, one of which we have already seen show up as a 'bad genre' condition. We have seen that $\M$ can be of bad genre only if the infinite sequence $(z_i)_{i\in \mathbb{Z}}$ is made up entirely of the building blocks $1$ and $(0, p-1)$. On the other hand, if $(z_i)_{i\in \mathbb{Z}}$ is such, we can find an $\M$ of bad genre by choosing the entries of the Frobenius matrices suitably. This motivates the following definition.

\begin{defn}\label{defn-first-obstruction}
We say that a tame principal series $\F$-type $\tau$ faces the first obstruction if $(z_i)_{i\in \mathbb{Z}}$ is made up entirely of the building blocks $1$ and $(0, p-1)$.
\end{defn}

\begin{prop}\label{unipotent-action-gives-CDM-form-I-eta}
Let $R$ be an Artinian local ring over $\F$ with maximal ideal $\m$ and let $\M$ be a Breuil-Kisin module over $R$ satisfying Assumption \ref{eta-form-ass}. Suppose with respect to an inertial basis, $F_i$ has the form $$\begin{pmatrix}
v a_i & u^{e - \gamma_{i}}b_i \\
u^{\gamma_{i}}c_i & d_i
\end{pmatrix}$$ with $a_i, b_i, c_i, d_i \in R$. Let $P^{(j)} = \lim\limits_{n \to \infty} P_{j+nf}$ denote the base change matrices described in the proof of Proposition \ref{convergence-CDM-matrix} and let  $$F'_i = (P^{(i)})^{-1} F_i \varphi(P_{i-1})=\begin{pmatrix}
v a'_i & b'_i u^{e - \gamma_{i}}\\
c'_i u^{\gamma_{i}}& d'_i
\end{pmatrix}$$ be the matrix in (\ref{intermediate-CDM-form}). Define a left action of upper unipotent matrices on $\eta$-form Frobenius matrices in the following manner:

\[
\begin{pmatrix}
1 & y \\
0 & 1
\end{pmatrix} \star \begin{pmatrix}
v a & u^{e - \gamma}b \\
u^{\gamma}c & d
\end{pmatrix} = \begin{pmatrix}
v (a + y c) & u^{e - \gamma}(b + y d) \\
u^{\gamma}c & d
\end{pmatrix}.
\]
The following statements are true:
\begin{enumerate}
    \item Suppose $(z_{i})_{i \in \mathbb{Z}}$ does not contain the subsequence $(p-1, 1, ..., 1, 0)$ (where the number of $1$'s is allowed to be zero). Then there exists an upper unipotent $U_i$ for each $i$ satisfying $U_i \star F_i = F'_i$.
\item If $(z_{i})_i$ contains the subsequence $(p-1, 1, ..., 1, 0)$, then there exists a set of Frobenius matrices $\{F_i\}$ such that, for some $l$, no unipotent matrix $U$ satisfies $U \star F_l = F'_l$.
\end{enumerate}
\end{prop}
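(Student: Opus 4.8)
The heart of both statements is an analysis of the base change matrices $P^{(i)}=\lim_{n\to\infty}P_{i+nf}$ produced in the proof of Proposition~\ref{convergence-CDM-matrix}. Write $P^{(i)}=\begin{psmallmatrix}\lambda_i & u^{e-\gamma_i}p_i\\ u^{\gamma_i}q_i & \mu_i\end{psmallmatrix}$ with $\lambda_i,\mu_i\equiv 1\bmod v$ (as arranged there) and $p_i,q_i\in R[\![v]\!]$, and let $\overline{(\,\cdot\,)}$ denote reduction mod $v$. First I would record two preliminary facts. (a) When $F_i=\begin{psmallmatrix}va_i & u^{e-\gamma_i}b_i\\ u^{\gamma_i}c_i&d_i\end{psmallmatrix}$ has entries in $R$, the matrix $\mathcal{B}(F_i)$ of Definition~\ref{defn-B-operation} is upper triangular — its lower-left entry vanishes because $s_3-As_4$ (resp.\ $s_4-A's_3$) is divisible by $u^e$ with zero constant term — with diagonal $(a_i-b_ic_i/d_i,\,d_i)$ in genre $\Ieta$ and $(b_i-a_id_i/c_i,\,c_i)$ in genre $\II$. (b) $\det P^{(i)}=1$ for all $i$: inductively $\det P_{s+1}=\varphi(\det P_s)$, the sign appearing in $\det\mathcal{B}(\cdot)$ being cancelled by the same sign in $\det\Delta_{s+1}$.

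The main step is then to extract a mod-$v$ recursion for the off-diagonal entries $\overline{p_i},\overline{q_i}$. Factoring $\mathcal{B}(F_i\varphi(P^{(i-1)}))=\mathcal{B}(F_i)\,\mathcal{B}(M_i\varphi(P^{(i-1)}))$ through the CDM factor $M_i$ of $F_i$ (Lemma~\ref{reduction-step}), and combining the congruences of Lemma~\ref{inductive-step} with fact (a), one obtains after passing to the limit: $\overline{q_i}$ is a unit multiple of $\overline{q_{i-1}}$ when $z_i=1$ and $\G(F_i)=\Ieta$, a unit multiple of $\overline{p_{i-1}}$ when $z_i=p-1$, and $0$ otherwise; while $\overline{p_i}$ equals its ``source term'' ($\overline{b_i}/\overline{d_i}$ in genre $\Ieta$, $\overline{a_i}/\overline{c_i}$ in genre $\II$) plus a nonzero multiple of $\overline{q_{i-1}}$ when $z_i=0$ and $\G(F_i)=\II$, and equals the source term alone otherwise. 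Consequently a genuinely new term enters some $\overline{p_l}$ exactly when $(z_j,\dots,z_l)=(p-1,1,\dots,1,0)$ for some $j$ with $\G(F_l)=\II$, $\G(F_m)=\Ieta$ for $j<m<l$, and $\overline{p_{j-1}}\neq 0$: indeed $\overline{q}$ can stay alive only through $z=1$ steps of genre $\Ieta$, can be created only at a $z=p-1$ step out of a nonzero $\overline{p}$, and can flow back into $\overline{p}$ only at a $z=0$ step of genre $\II$. I expect establishing this recursion in the limit — managing the $\m$-adic convergence bookkeeping carried by the ideals $I_t$ — to be the main technical obstacle.

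Granting the recursion, part (1) is a direct computation of $F'_i=(P^{(i)})^{-1}F_i\varphi(P^{(i-1)})$ modulo $v$, using $\det P^{(i)}=1$, $\lambda_\bullet\equiv\mu_\bullet\equiv 1$, and $\varphi(u^{\gamma_{i-1}})=v^{z_i}u^{\gamma_i}$. If $(z_i)$ contains no $(p-1,1,\dots,1,0)$, the recursion forces $\overline{q_{i-1}}=0$ at every genre-$\II$ index with $z_i=0$, so $\overline{p_i}$ is always its source term; reading off the entries, the bottom row of $F'_i$ is $(c_iu^{\gamma_i},d_i)$ and the top row is exactly what $\begin{psmallmatrix}1 & -b_i/d_i\\0&1\end{psmallmatrix}\star F_i$ (genre $\Ieta$) or $\begin{psmallmatrix}1 & -a_i/c_i\\0&1\end{psmallmatrix}\star F_i$ (genre $\II$) produces, so $F'_i=U_i\star F_i$.

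For part (2), fix an occurrence of the pattern at $i_0,\dots,i_0+k=:l$ and work over an Artinian $R$ with $\m\neq 0$ (for instance $\F[\epsilon]/\epsilon^2$). Choose $\G(F_i)=\Ieta$ for $i_0-1\le i\le l-1$ and $\G(F_l)=\II$, arrange the source term $\overline{p_{i_0-1}}$ to be a unit and the intervening $c_i$ to be units so the chain propagates, take $d_l\in\m\setminus\{0\}$, and set every remaining $F_i$ equal to the split diagonal matrix $\begin{psmallmatrix}v&0\\0&1\end{psmallmatrix}$, modifying the genre at one index outside the pattern if necessary so that $\M$ is not of bad genre. The recursion then yields $\overline{q_{l-1}}$ a nonzero unit, and the $(2,1)$-entry of $F'_l$ computed mod $v$ comes out as $(c_l+d_l\overline{q_{l-1}})u^{\gamma_l}$ rather than $c_lu^{\gamma_l}$; since $d_l\overline{q_{l-1}}\neq 0$ the bottom row of $F'_l$ differs from that of $F_l$. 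As $U\star F_l$ has bottom row $(c_lu^{\gamma_l},d_l)$ for every $U$, no $U$ can satisfy $U\star F_l=F'_l$, which is the assertion.
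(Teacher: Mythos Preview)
Your approach to part (1) is essentially the paper's: trace the mod-$v$ recursion for the lower-left entry $\overline{q_i}=\overline{s_i}$ of $P^{(i)}$ through Lemma~\ref{inductive-step}, and observe that $\overline{s_{i-1}}$ can only be nonzero at a $z_i=0$ index if it was fed through a chain $(p-1,1,\dots,1)$. One caveat: your recursion says $\overline{q_i}$ is a \emph{unit} multiple of $\overline{p_{i-1}}$ when $z_i=p-1$, but in genre $\Ieta$ the multiplier is $-c_i^2/D_i$, which need not be a unit. This does not affect part~(1), where you only need ``multiple of'', but it matters for constructing counterexamples. Also, the detour through $\overline{p_l}$ is a red herring: what governs whether unipotent action fails at index $i$ is whether $\overline{q_{i-1}}\neq 0$ when $z_i=0$ (via the $Ad$-factor in Lemma~\ref{Explicit-CDM-form-I-eta}), and this depends on $\overline{q_{i-1}}$ alone, not on any ``new term'' in $\overline{p_l}$. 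Your part~(1) argument survives this because you do in fact establish $\overline{q_{i-1}}=0$ directly.

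Your part (2) construction has a genuine gap. You take $\mathcal{G}(F_l)=\II$ with $d_l\in\mathfrak m\smallsetminus\{0\}$, which forces $R$ to have $\mathfrak m\neq 0$; but the Proposition fixes $R$ at the outset and may well have $R$ a field. The paper's construction avoids this by taking $\mathcal{G}(F_j)=\Ieta$ for \emph{all} $j$, including $j=l$. Then $d_l$ is a unit, so once $\overline{s_{l-1}}\neq 0$ (arranged by choosing $b_{i_0-1}\neq 0$ and the single $c_{i_0}$ at the $z=p-1$ step to be a unit), the bottom row of $F'_l$ is $(u^{\gamma_l}(c_l+d_l\overline{s_{l-1}}),\,d_l)\neq(u^{\gamma_l}c_l,\,d_l)$ over any $R$. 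This also bypasses your bad-genre worry cleanly: $(\Ieta,0)$ is not in the list of Definition~\ref{define-modified-bad-genre}(1), so the all-$\Ieta$ choice is never of bad genre, whereas your $(\II,0)$ choice at $l$ \emph{is} on that list and the hand-wave ``modifying the genre at one index outside the pattern'' would need actual checking.
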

The proof will use the following lemma.
\begin{lemma}\label{Explicit-CDM-form-I-eta}
Consider the setup of Proposition \ref{unipotent-action-gives-CDM-form-I-eta}. Suppose that the base change matrices $P^{(j)}$ are given by $$\begin{pmatrix}
q_j & u^{e - \gamma_{j}} r_j\\
u^{\gamma_{j}}s_j& t_j
\end{pmatrix}.$$ For any $\sigma \in R[\![v]\!]$, denote by $\overline{\sigma}$ the constant part of $\sigma$. Then 
\bal
F'_i &= \begin{dcases}
Ad \begin{pmatrix}
1 & 0\\
0 & \frac{c_i + d_i \overline{s_{i-1}}}{c_i}
\end{pmatrix} \left(\begin{pmatrix}
(a_i - \frac{c_i}{d_i}b_i) v& 0\\
u^{\gamma_{i}} c_i & d_i
\end{pmatrix}\right) &\text{ if } \mathcal{G}(F_i) = \Ieta, z_{i} = 0 ,\\
    \begin{pmatrix}
(a_i - \frac{c_i}{d_i}b_i) v& 0\\
u^{\gamma_{i}} c_i & d_i
\end{pmatrix} &\text{ if } \mathcal{G}(F_i) = \Ieta, z_{i} \neq 0 ,\\
 Ad \begin{pmatrix}
1 & 0\\
0 & \frac{c_i + d_i \overline{s_{i-1}}}{c_i}
\end{pmatrix} \left(\begin{pmatrix}
0& u^{e-\gamma_{i}}(b_i - \frac{d_i}{c_i} a_i)\\
u^{\gamma_{i}}c_i& d_i 
\end{pmatrix}\right) &\text{ if } \mathcal{G}(F_i) = \II, z_{i} = 0 ,\\
 \begin{pmatrix}
0& u^{e-\gamma_{i}}(b_i - \frac{d_i}{c_i} a_i)\\
u^{\gamma_{i}}c_i& d_i 
\end{pmatrix} &\text{ if } \mathcal{G}(F_i) = \II, z_{i} \neq 0 ,
\end{dcases}
\nal
where $Ad \: M \: (N)$ denotes the matrix $MNM^{-1}$.
\end{lemma}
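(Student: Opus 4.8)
The plan is to prove Lemma \ref{Explicit-CDM-form-I-eta} by directly computing the sequence of base changes $P_{j+nf}$ from the proof of Proposition \ref{convergence-CDM-matrix}, tracking how the $\mathcal{B}$-operation acts, and then identifying the limit $P^{(j)}$ explicitly in terms of the constant parts $\overline{s_{i-1}}$ etc. The key observation is that under Assumption \ref{eta-form-ass}, every $F_i$ is in $\eta$-form, so the $\mathcal{B}$-operation always uses the ``$d$ invertible'' branch when $\mathcal{G}(F_i) = \Ieta$ and the ``$d$ not invertible'' branch when $\mathcal{G}(F_i) = \II$, which fixes the shape of all the matrices involved and lets us compute modulo $v$ and modulo the maximal ideal powers simultaneously.

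First I would unwind the definition: $F'_i = (P^{(i)})^{-1}F_i\varphi(P^{(i-1)})$ and, by the construction in the proof of Proposition \ref{convergence-CDM-matrix}, $P^{(i)}$ is the $\mathcal{B}$-limit so that $\mathcal{B}(F_i\varphi(P^{(i-1)})) = P^{(i)}\Delta_i$ for a diagonal $\Delta_i$. Using the product decomposition (\ref{G-is-a-product-B(G)M}), this means $F_i\varphi(P^{(i-1)}) = P^{(i)}\Delta_i M_i$ where $M_i$ is the relevant CDM-type matrix (either $\begin{psmallmatrix} v & 0 \\ A_i u^{\gamma_i} & 1\end{psmallmatrix}$ or $\begin{psmallmatrix} 0 & u^{e-\gamma_i} \\ u^{\gamma_i} & A'_i\end{psmallmatrix}$), so $F'_i = \Delta_i M_i$. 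The content of the lemma is therefore to pin down $\Delta_i$ (which is where the $\frac{c_i + d_i\overline{s_{i-1}}}{c_i}$ factor and the $z_i = 0$ versus $z_i \neq 0$ dichotomy enter) and to verify that the scalars $A_i, A'_i$ appearing in $M_i$ are $\frac{c_i}{d_i}$ (resp. $\frac{d_i}{c_i}$) as claimed, which follows directly from the formula for $A$ in Definition \ref{defn-B-operation} once we know $P^{(i-1)}$ modulo $v$.

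The heart of the computation is determining $P^{(i-1)} \bmod v$. Writing $P^{(i-1)} = \begin{psmallmatrix} q_{i-1} & u^{e-\gamma_{i-1}}r_{i-1} \\ u^{\gamma_{i-1}}s_{i-1} & t_{i-1}\end{psmallmatrix}$ and multiplying out $F_i\varphi(P^{(i-1)})$ exactly as in the proof of Lemma \ref{reduction-step} and Lemma \ref{inductive-step}, the bottom-left entry of $F_i\varphi(P^{(i-1)})$ contributes a $u^{ez_i}\varphi(s_{i-1})$-term, and since $\varphi(s_{i-1})$ has constant part $\overline{s_{i-1}}^p = \overline{s_{i-1}}$ (working over $\F$, so Frobenius fixes constants — actually $\overline{s_{i-1}}$ lies in $R$ and $\varphi$ acts trivially on $R$, only nontrivially on $W(k')$, so the constant term is genuinely $\overline{s_{i-1}}$), this term survives modulo $v$ precisely when $z_i = 0$ and is killed when $z_i \neq 0$. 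That is the source of the case split: when $z_i = 0$ the constant part of the relevant entry of $F_i\varphi(P^{(i-1)})$ is $c_i + d_i\overline{s_{i-1}}$ rather than just $c_i$, forcing the extra diagonal conjugation by $\begin{psmallmatrix} 1 & 0 \\ 0 & \frac{c_i + d_i\overline{s_{i-1}}}{c_i}\end{psmallmatrix}$. I would handle the $\mathcal{G}(F_i) = \Ieta$ and $\mathcal{G}(F_i) = \II$ cases separately, since in the latter the roles of $c_i$ and $d_i$ swap and $M_i$ has the anti-diagonal shape, but the mechanism is identical.

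The main obstacle I anticipate is bookkeeping rather than conceptual: one has to be careful that only the constant parts of $s_{i-1}, r_{i-1}, q_{i-1}, t_{i-1}$ matter for the stated formula (the higher-order-in-$v$ parts of $P^{(i-1)}$ affect $F'_i$ only in its non-constant entries, which have already been absorbed by writing $F'_i$ in the form (\ref{intermediate-CDM-form}) with entries in $R$), and one must correctly track which of the two branches of $\mathcal{B}$ in Definition \ref{defn-B-operation} is taken at each step — this is exactly where Assumption \ref{eta-form-ass} is essential, guaranteeing a uniform branch choice. A secondary subtlety is verifying the claimed top-left entry $a_i - \frac{c_i}{d_i}b_i$ (resp. top-right $b_i - \frac{d_i}{c_i}a_i$): this is the $s_1 - A s_2$ (resp. $s_2 - A's_1$) formula from Definition \ref{defn-B-operation} applied with $A = \overline{c_i/d_i}$, and one needs that the $\varphi(P^{(i-1)})$ factor does not alter these constant terms, which again reduces to knowing $P^{(i-1)} \equiv \mathrm{Id} \bmod (v, \m)$-type information from the convergence argument. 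Once these are organized, each of the four cases is a short explicit $2\times 2$ matrix multiplication.
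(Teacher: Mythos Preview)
Your approach is essentially identical to the paper's proof, which also writes $F'_i = \Delta^{(i)} \cdot \bigl(\mathcal{B}(M_i\varphi(P^{(i-1)}))^{-1}M_i\varphi(P^{(i-1)})\bigr)$ via the factorization of Lemma~\ref{reduction-step} and then reads off the four cases by computing the constant parts of $M_i\varphi(P^{(i-1)})$. One small bookkeeping correction: in the $z_i = 0$ cases it is the $A$-scalar in the CDM-type right factor (not $\Delta_i$) that acquires the extra $\overline{s_{i-1}}$-term---for instance in the $\Ieta$ case $\Delta^{(i)} = \begin{psmallmatrix} D_i/d_i & 0 \\ 0 & d_i\end{psmallmatrix}$ remains independent of $\overline{s_{i-1}}$ while the lower-left scalar becomes $(c_i + d_i\overline{s_{i-1}})/d_i$---and the $Ad$-conjugation in the lemma's statement is simply a repackaging of this modified scalar, not something coming from $\Delta_i$ itself.
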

\begin{proof}
Using the definition of the operator $\mathcal{B}$ in Definition \ref{defn-B-operation} and our calculations in Lemma \ref{reduction-step}, we have
\bal
    &\mathcal{B}(F_i)^{-1} F_i = M_i = \begin{cases}
    \vspace{0.4cm}
    \begin{pmatrix}
v& 0\\
u^{\gamma_{i}} \frac{c_i}{d_i} & 1
\end{pmatrix} &\text{ if } \mathcal{G}(F_i) = \Ieta,\\
\vspace{0.4cm}
    \begin{pmatrix}
0& u^{e-\gamma_{i}}\\
u^{\gamma_{i}}& \frac{d_i}{c_i} 
\end{pmatrix} &\text{ if } \mathcal{G}(F_i) = \II,
    \end{cases}\\
\nal
\bal
&\mathcal{B}(F_i) = 
    \begin{dcases}
    \begin{pmatrix}
a_i - \frac{c_i}{d_i}b_i & u^{e - \gamma_{i}} b_i \\
0& d_i
\end{pmatrix} 
&\text{if } \mathcal{G}(F_i) = \Ieta, \\
    \begin{pmatrix}
b_i - \frac{d_i}{c_i} a_i & u^{e - \gamma_{i}} a_i\\
0 & c_i 
\end{pmatrix} 
&\text{if } \mathcal{G}(F_i) = \II,
    \end{dcases}\\
\nal
and 
\bal
&\mathcal{B}(M_i \varphi(P^{(i-1)}))^{-1} M_i \varphi(P^{(i-1)})=
    \begin{cases}
    \vspace{0.4cm}
    \begin{pmatrix}
v& 0\\
u^{\gamma_{i}} (\frac{c_i}{d_i} + \overline{s_{i-1}}) & 1
\end{pmatrix} &\text{ if } \mathcal{G}(F_i) = \Ieta, z_{i} = 0,\\
    \vspace{0.4cm}
    \begin{pmatrix}
v& 0\\
u^{\gamma_{i}} \frac{c_i}{d_i} & 1
\end{pmatrix} &\text{ if } \mathcal{G}(F_i) = \Ieta, z_{i} \neq 0,\\
\vspace{0.4cm}
    \begin{pmatrix}
0& u^{e-\gamma_{i}}\\
u^{\gamma_{i}}& \frac{\frac{d_i}{c_i}}{1+\frac{d_i}{c_i} \overline{s_{i-1}}} 
\end{pmatrix} &\text{ if } \mathcal{G}(F_i) = \II, z_{i} = 0,\\
    \begin{pmatrix}
0& u^{e-\gamma_{i}}\\
u^{\gamma_{i}}& \frac{d_i}{c_i} 
\end{pmatrix} &\text{ if } \mathcal{G}(F_i) = \II, z_{i} \neq 0. \end{cases}
\nal

Recall that by Lemma \ref{reduction-step}, $\mathcal{B}(F_i \varphi(P^{(i-1)})) = \mathcal{B}(F_i)\mathcal{B}(M_i \varphi(P^{(i-1)}))$  and by the calculations in Lemma \ref{inductive-step}, $\mathcal{B}(M_i \varphi(P^{(i-1)}))$ is $Id$ mod $u$ if $\mathcal{G}(F_i) \neq \II$ or if $\mathcal{G}(F_i) = \II$ but $z_{i} \neq 0$. By the algorithm in the proof of Proposition \ref{convergence-CDM-matrix}, we find that $\mathcal{B}(F_i \varphi(P^{(i-1)})) = P^{(i)}\Delta^{(i)}$ for a suitable diagonal scalar matrix $\Delta^{(i)}$ chosen such that the diagonal entries of $P^{(i)}$ are $1$ mod $v$ or, in other words, such that $P^{(i)}$ is $Id$ mod $u$. Therefore if $\mathcal{G}(F_i) \neq \II$ or if $\mathcal{G}(F_i) = \II$ but $z_{i} \neq 0$, $\Delta^{(i)} \equiv \mathcal{B}(F_i) \mod u$. If $\mathcal{G}(F_i) = \II$ and $z_{i} = 0$, Lemma \ref{inductive-step} gives us the following equivalence mod $u$:

\bal
\Delta^{(i)} &\equiv \mathcal{B}(F_i) \begin{pmatrix}
1 - \frac{\frac{d_i}{c_i} \overline{s_{i-1}}}{1 + \frac{d_i}{c_i} \overline{s_{i-1}}} & 0 \\
0 & 1 + \frac{d_i}{c_i} \overline{s_{i-1}}
\end{pmatrix}\\
&\equiv \mathcal{B}(F_i) \begin{pmatrix}
\frac{c_i}{c_i + d_i \overline{s_{i-1}}} & 0 \\
0 & \frac{c_i + d_i \overline{s_{i-1}}}{c_i} 
\end{pmatrix}.
\nal

Letting $D_i = a_i d_i - b_i c_i$,
\bal
\Delta^{(i)} = 
    \begin{cases}
\vspace{0.3cm}
    \begin{pmatrix}
\frac{D_i}{d_i} & 0 \\
0& d_i
\end{pmatrix} 
&\text{if } \mathcal{G}(F_i) = \Ieta , \\
\vspace{0.3cm}
    \begin{pmatrix}
\frac{-D_i}{c_i + d_i \overline{s_{i-1}}}& 0\\
0 & c_i + d_i \overline{s_{i-1}}
\end{pmatrix} 
&\text{if } \mathcal{G}(F_i) = \II, z_{i} = 0 ,\\
    \begin{pmatrix}
\frac{-D_i}{c_i} & 0\\
0 & c_i 
\end{pmatrix} 
&\text{if } \mathcal{G}(F_i) = \II, z_{i} \neq 0 .
    \end{cases}
\nal
Now we compute $F'_i$:
\bal
F'_i &= (P^{(i)})^{-1} F_i \varphi(P^{(i-1)}) \\
&= \Delta^{(i)} \mathcal{B}(F_i \varphi(P^{(i-1)}))^{-1}F_i \varphi(P^{(i-1)}) \\
&= \Delta^{(i)} \mathcal{B}(M_i \varphi(P^{(i-1)}))^{-1}M_i \varphi(P^{(i-1)}) \\
&= \begin{cases}
\begin{pmatrix}
D_i/d_i & 0 \\
0& d_i
\end{pmatrix} \begin{pmatrix}
v& 0\\
u^{\gamma_{i}} (\frac{c_i + d_i \overline{s_{i-1}}}{d_i}) & 1
\end{pmatrix} &\text{ if } \mathcal{G}(F_i) = \Ieta, z_{i} = 0 ,\\
\hspace{1cm}\\
    \begin{pmatrix}
D_i/d_i & 0 \\
0& d_i
\end{pmatrix} \begin{pmatrix}
v& 0\\
u^{\gamma_{i}} \frac{c_i}{d_i} & 1
\end{pmatrix} &\text{ if } \mathcal{G}(F_i) = \Ieta, z_{i} \neq 0 ,\\
\hspace{1cm}\\
   \begin{pmatrix}
\frac{-D_i}{c_i + d_i \overline{s_{i-1}}}& 0\\
0 & c_i + d_i \overline{s_{i-1}}
\end{pmatrix} \begin{pmatrix}
0& u^{e-\gamma_{i}}\\
u^{\gamma_{i}}& \frac{d_i}{c_i + d_i \overline{s_{i-1}}} 
\end{pmatrix} &\text{ if } \mathcal{G}(F_i) = \II, z_{i} = 0 ,\\
\hspace{1cm}\\
\begin{pmatrix}
-D_i/c_i & 0\\
0 & c_i 
\end{pmatrix} \begin{pmatrix}
0& u^{e-\gamma_{i}}\\
u^{\gamma_{i}}& \frac{d_i}{c_i} 
\end{pmatrix} &\text{ if } \mathcal{G}(F_i) = \II, z_{i} \neq 0 ,
\end{cases}\\
\hspace{2cm}\\
&= \begin{cases}
Ad \begin{pmatrix}
1 & 0\\
0 & \frac{c_i + d_i \overline{s_{i-1}}}{c_i}
\end{pmatrix} \left(\begin{pmatrix}
(a_i - \frac{c_i}{d_i}b_i) v& 0\\
u^{\gamma_{i}} c_i & d_i
\end{pmatrix}\right) &\text{ if } \mathcal{G}(F_i) = \Ieta, z_{i} = 0 ,\\
\hspace{1cm}\\
    \begin{pmatrix}
(a_i - \frac{c_i}{d_i}b_i) v& 0\\
u^{\gamma_{i}} c_i & d_i
\end{pmatrix} &\text{ if } \mathcal{G}(F_i) = \Ieta, z_{i} \neq 0 ,\\
\hspace{1cm}\\
 Ad \begin{pmatrix}
1 & 0\\
0 & \frac{c_i + d_i \overline{s_{i-1}}}{c_i}
\end{pmatrix} \left(\begin{pmatrix}
0& u^{e-\gamma_{i}}(b_i - \frac{d_i}{c_i} a_i)\\
u^{\gamma_{i}}c_i& d_i 
\end{pmatrix}\right) &\text{ if } \mathcal{G}(F_i) = \II, z_{i} = 0, \\
\hspace{1cm}\\
 \begin{pmatrix}
0& u^{e-\gamma_{i}}(b_i - \frac{d_i}{c_i} a_i)\\
u^{\gamma_{i}}c_i& d_i 
\end{pmatrix} &\text{ if } \mathcal{G}(F_i) = \II, z_{i} \neq 0 .
\end{cases}
\nal
\end{proof}

\begin{proof}[Proof of Proposition \ref{unipotent-action-gives-CDM-form-I-eta}]
By Lemma \ref{Explicit-CDM-form-I-eta}, $F'_i$ can be obtained via left unipotent action whenever $z_{i} \neq 0$. If $z_{i} = 0$, then $F'_i$ can be obtained via left unipotent action if and only if $s_i \equiv 0$ mod $v$.

Now, suppose $z_{i} = 0$ and $s_{i-1} \not\equiv 0$ mod $v$. Recall that $P^{(i-1)} = \mathcal{B}(F_{i-1} \varphi(P^{(i-2)})) (\Delta^{i-1})^{-1} = \mathcal{B}(F_{i-1}) \mathcal{B}(M_{i-1} \varphi(P^{(i-2)})) (\Delta^{i-1})^{-1}$.

By the explicit calculations in Lemma \ref{Explicit-CDM-form-I-eta}, $\mathcal{B}(F_{i-1})$ is upper triangular. Therefore, $s_{i-1} \not\equiv 0$ if and only if $\mathcal{B}(M_{i-1} \varphi(P^{(i-2)}))$ is not upper triangular mod $u^eR[\![u]\!]$. By the calculations in Lemma 
\ref{inductive-step}, this can happen only if one of the following two statements holds:

\begin{enumerate}
    \item $z_{i-1} = 1$ and $s_{i-2} \not\equiv 0$ mod $v$. In this situation, $s_{i-1}$ is a multiple of $s_{i-2}$ mod $v$.
    \item $z_{i-1} = p-1$ and $r_{i-2} \not\equiv 0$ mod $v$. In this situation, $s_{i-1}$ is a multiple of $r_{i-2}$ mod $v$.
\end{enumerate}

Going backward, we conclude that $z_{i} = 0$ and $s_{i-1} \not\equiv 0$ can happen only if $z_{i}$ is preceded by a subsequence $(z_{i-k-1}, z_{i-k}, ..., z_{i-1}) = (p-1, 1, ..., 1)$ with $r_{i-k-2}, s_{i-k-1}, ..., s_{i-2} \not\equiv 0$ mod $v$. In other words, if $(z_i)_i$ does not contain a contiguous subsequence of the form $(p-1, 1, ..., 1, 0)$, we can always obtain $F'_i$ via a left unipotent action on $F_i$.

On the other hand, if there exist $k \geq 0$ and $i \in \mathbb{Z}$ such that $(z_{i-k-1}, z_{i-k}, ..., z_{i-1}, z_{i}) = (p-1, 1, ..., 1, 0)$, we may choose $F_j$'s so that $\mathcal{G}(F_j) =\Ieta$ for all $j$. Choose $F_{i-k-2}$ so that $b_{i-k-2} \neq 0$ and $F_{i-k-1}$ so that $c_{i-k-1}$ is a unit. By Lemma \ref{inductive-step}, $\mathcal{B}(M_{i-k-2} \varphi(P^{(i-k-3)}))$ must be lower triangular mod $u^eR[\![u]\!]$. Therefore, $r_{i-k-2}$ is a unit multiple of $b_{i-k-2}$ mod $v$. In turn, $s_{i-k-1}$ is a unit times $c_{i-k-1}^2$ times $r_{i-k-2}$ mod $v$. Inductively, we see that $s_{i-1}$ is a unit times $b_{i-k-2}$ mod $v$, and therefore, non-zero mod $v$. Thus, no unipotent action can give $F'_i$ from $F_i$.
\end{proof}

Proposition \ref{unipotent-action-gives-CDM-form-I-eta} motivates the following definition.
\begin{defn}\label{defn-second-obstruction}
We say that a tame principal series $\F$-type $\tau$ faces the second obstruction if $(z_i)_{i\in \mathbb{Z}}$ contains a contiguous subsequence $(p-1, 1, ..., 1, 0)$ of length $\geq 2$, with the number of $1$'s allowed to be zero.
\end{defn}

Our next step is to analyze when left unipotent action of the type described in Proposition \ref{unipotent-action-gives-CDM-form-I-eta} can be functorially associated to inertial base change data. The eventual goal is to quotient the data of Frobenius matrices by unipotent action, and encode that as a point of the stack of Breuil-Kisin modules. In particular, the unipotent action will be encoded as base change data.

For each $i$, let $(e_i, f_i)$ be an inertial basis of $\M_i$. The $\eta'$-eigenspace of $\mathfrak{M}_i$ is a free module over $R[\![v]\!]$ with an ordered basis given by $(u^{e-\gamma_i} e_i, f_i)$. The $\eta'$-eigenspace of $\varphi^{*} \mathfrak{M}_i$ is a free module over $R[\![v]\!]$ with an ordered basis given by $(u^{e- \gamma_{i+1}} \otimes e_i, 1 \otimes f_i)$. Written with respect to our choice of inertial bases, let the $i$-th Frobenius matrix be given as follows:
\begin{align*} F_i = 
    \begin{pmatrix}
    a_i & u^{e-\gamma_{i}} b_i  \\
    u^{\gamma_{i}} c_i & d_i
    \end{pmatrix}.
\end{align*}

Let $\{P_i\}_i$ be a set of inertial base change matrices, where
\begin{align*} P_{i} = 
    \begin{pmatrix}
    q_i & u^{e-\gamma_i} r_i   \\
    u^{\gamma_i} s_i & t_i
    \end{pmatrix}.
\end{align*}

The Frobenius map $F_i$, when restricted to the $\eta'$-eigenspace part and written with respect to the ordered $\eta'$-eigenspace basis of $\varphi^{*} \mathfrak{M}_{i-1}$ and $\mathfrak{M}_{i}$ has the following matrix:
\begin{align}\label{eta'-frob} G_{i} = 
    \begin{pmatrix}
    a_i & b_i  \\
    v c_i & d_i
    \end{pmatrix}.
\end{align}

Base change of $G_{i}$ is given by:
\begin{align}\label{eta'-base-change}
    J_{i}^{-1} G_i \left( \text{Ad} \begin{pmatrix}
    v^{p-1-z_{i}} & 0 \\
    0 & 1
    \end{pmatrix} (\varphi(J_{i-1})) \right),
\end{align}
where the matrices $J_{i}$ are defined as follows:
\begin{align*}
    J_{i} = 
    \begin{pmatrix}
    q_i & r_i   \\
    v s_i & t_i
    \end{pmatrix}.
\end{align*}

\begin{defn}\label{eigenspace-frobenii}
    When a choice of an inertial basis for each $i$ is understood, $(G_i)_i$ and $(J_i)_i$ as above will be called the Frobenius and base change matrices (respectively) for the $\eta'$-eigenspace. 
    
    We say that the $G_i$'s are in CDM form if the $F_i$'s, which are the matrices for the unrestricted Frobenius maps, are in CDM form (see Definition \ref{defn-CDM-form}).
\end{defn}
It is clear that knowing the data of Frobenius and base change on the $\eta'$-eigenspace part is equivalent to knowing it for the entire Breuil-Kisin module.

\begin{prop}\label{straightening}
Fix an inertial basis $(e_i, f_i)$ for each $i$. Suppose that each $F_i$ is of the form $$\begin{pmatrix}
v a_i& u^{e - \gamma_{i}}b_i \\
u^{\gamma_{i}}c_i & d_i
\end{pmatrix}$$ with $a_i, b_i, c_i, d_i \in R$. For each $i$, denote by $G_i$ the Frobenius matrices for restriction to $\eta'$-eigenspaces so that $G_i = \begin{psmallmatrix}
    v a_i & b_i  \\
    v c_i & d_i
    \end{psmallmatrix}$. Let $U_i = \begin{psmallmatrix}
    1 & y_i \\
    0 & 1
    \end{psmallmatrix}$ for each $i \in \mathbb{Z}/f\mathbb{Z}$.

Then, whenever $\tau$ does not face the second obstruction (Definition \ref{defn-second-obstruction}), there exists a functorially constructed inertial-base-change matrices, in $(U_i)_i$, given by $$P_i = \begin{pmatrix}
q_i & u^{e-\gamma_i} r_i   \\
u^{\gamma_i} s_i & t_i
\end{pmatrix}$$
satisfying $F_i = P_{i}^{-1} (U_i \star F_i)\varphi(P_{i-1})$ where $U_i \star F_i$ is as defined in Proposition \ref{unipotent-action-gives-CDM-form-I-eta}.
Equivalently, 
 \begin{align*}
    G_i = J_{i}^{-1} U_i G_i \left( \text{Ad} \begin{pmatrix}
    v^{p-1-z_{i}} & 0 \\
    0 & 1
    \end{pmatrix} (\varphi(J_{i-1})) \right)
\end{align*} where $J_i = \begin{psmallmatrix}
    q_i & r_i   \\
    v s_i & t_i
    \end{psmallmatrix}$.

\end{prop}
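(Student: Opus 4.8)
The plan is to pass to the $\eta'$-eigenspace picture, where the action $\star$ of an upper unipotent on an $\eta$-form Frobenius matrix becomes honest left multiplication: comparing the two definitions, the $\eta'$-eigenspace matrix of $U_i \star F_i$ is exactly $U_i G_i$. Thus it suffices to produce inertial base change matrices $P_i = \begin{psmallmatrix} q_i & u^{e-\gamma_i} r_i \\ u^{\gamma_i} s_i & t_i \end{psmallmatrix}$ with $F_i = P_i^{-1}(U_i \star F_i)\varphi(P_{i-1})$ and then read off $J_i = \begin{psmallmatrix} q_i & r_i \\ v s_i & t_i \end{psmallmatrix}$; the displayed identity is the translation of this via (\ref{eta'-base-change}). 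Write $\widetilde F_i := U_i \star F_i = \begin{psmallmatrix} v(a_i + y_i c_i) & u^{e-\gamma_i}(b_i + y_i d_i) \\ u^{\gamma_i} c_i & d_i \end{psmallmatrix}$. Since $\det \widetilde F_i = v(a_i d_i - b_i c_i) = \det F_i$ and the genre at $i$ depends only on whether $d_i$ is a unit, the module $\{\widetilde F_i\}$ is again regular, in $\eta$-form, and of the same genre pattern as $\{F_i\}$; in particular it is not of bad genre, so Proposition \ref{convergence-CDM-matrix}, Lemma \ref{Explicit-CDM-form-I-eta} and Proposition \ref{unipotent-action-gives-CDM-form-I-eta} apply to it as well.

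Next I would run the straightening algorithm from the proof of Proposition \ref{convergence-CDM-matrix} on both $\{F_i\}$ and $\{\widetilde F_i\}$, obtaining inertial base change matrices $\{P^{(i)}\} \subset \GL_2(R[\![u]\!])$ and $\{\widetilde P^{(i)}\}$ with $F'_i := (P^{(i)})^{-1} F_i \varphi(P^{(i-1)})$ and $\widetilde F'_i := (\widetilde P^{(i)})^{-1}\widetilde F_i\varphi(\widetilde P^{(i-1)})$ of the explicit shape described in Lemma \ref{Explicit-CDM-form-I-eta}. The crux is the equality $F'_i = \widetilde F'_i$ for every $i$: by Lemma \ref{Explicit-CDM-form-I-eta}, $F'_i$ is an explicit expression in $a_i, b_i, c_i, d_i$ and the constant term of the lower-left coefficient $s_{i-1}$ of $P^{(i-1)}$, and the very same expression computes $\widetilde F'_i$ from $a_i + y_i c_i$, $b_i + y_i d_i$, $c_i$, $d_i$ and the corresponding constant term for $\widetilde P^{(i-1)}$. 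Every combination that actually appears — $D_i := a_i d_i - b_i c_i$, the entries $c_i$ and $d_i$, and $a_i - (c_i/d_i) b_i = D_i/d_i$ (resp. $b_i - (d_i/c_i) a_i = -D_i/c_i$) — is unchanged under $(a_i, b_i) \mapsto (a_i + y_i c_i, b_i + y_i d_i)$; and the only term involving $s_{i-1}$ occurs when $z_i = 0$, where the dependency analysis in the proof of Proposition \ref{unipotent-action-gives-CDM-form-I-eta} forces that constant term to vanish, for both modules, precisely because $\tau$ does not face the second obstruction. Granting $F'_i = \widetilde F'_i$, equating the two defining relations gives $(P^{(i)})^{-1} F_i \varphi(P^{(i-1)}) = (\widetilde P^{(i)})^{-1}\widetilde F_i\varphi(\widetilde P^{(i-1)})$, and then $P_i := \widetilde P^{(i)}(P^{(i)})^{-1}$ — a product of inertial base change matrices in $\GL_2(R[\![u]\!])$, hence again one — satisfies $F_i = P_i^{-1}\widetilde F_i\varphi(P_{i-1})$ after a one-line rearrangement. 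Functoriality in $R$ (and in the input data $\{F_i\}$, $\{U_i\}$) is inherited from that of the algorithm of Proposition \ref{convergence-CDM-matrix}, since $P_i$ is built solely from the $P^{(j)}$ and $\widetilde P^{(j)}$.

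I expect the main obstacle to be the equality $F'_i = \widetilde F'_i$, and within it the vanishing of the $s_{i-1}$-corrections in the $z_i = 0$ cases: this is exactly where the hypothesis that $\tau$ does not face the second obstruction (Definition \ref{defn-second-obstruction}) enters, and it requires re-tracing the chain-of-dependencies bookkeeping from the proof of Proposition \ref{unipotent-action-gives-CDM-form-I-eta} — indeed part (2) of that Proposition shows the corrections genuinely fail to vanish otherwise, so no such $P_i$ can exist. A smaller point needing care is confirming that the matrices $P^{(i)}$ produced by Proposition \ref{convergence-CDM-matrix} genuinely lie in $\GL_2(R[\![u]\!])$ and are inertial, so that forming $P_i := \widetilde P^{(i)}(P^{(i)})^{-1}$ is legitimate and the resulting data really describes a change of inertial bases.
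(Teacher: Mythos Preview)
Your approach is correct and takes a genuinely different route from the paper's. The paper builds $J_i$ directly as a $v$-adic limit: set $J_i^{(0)}=\mathrm{Id}$, iterate
\[
J_{i+1}^{(n+1)} \;=\; U_{i+1}\,G_{i+1}\left(\mathrm{Ad}\begin{pmatrix} v^{p-1-z_{i+1}} & 0\\ 0 & 1\end{pmatrix}\varphi\bigl(J_i^{(n)}\bigr)\right)G_{i+1}^{-1},
\]
and prove this sequence is Cauchy by tracking $\mathrm{val}_v\bigl(J_i^{(n)}-J_i^{(n-1)}\bigr)$ through an explicit chain of inequalities; the second-obstruction hypothesis is precisely what forces these valuations to tend to infinity. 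Your route---run the CDM algorithm of Proposition~\ref{convergence-CDM-matrix} on both $\{F_i\}$ and $\{U_i\star F_i\}$, observe via Lemma~\ref{Explicit-CDM-form-I-eta} that the intermediate outputs $F'_i$ agree (each displayed formula depends only on $D_i$, $c_i$, $d_i$, together with the $\overline{s_{i-1}}$-correction which the proof of Proposition~\ref{unipotent-action-gives-CDM-form-I-eta}(1) forces to vanish), then set $P_i=\widetilde P^{(i)}(P^{(i)})^{-1}$---is a clean reuse of machinery already in place.

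The trade-off is generality. Your argument inherits the Artinian-local hypothesis from Proposition~\ref{convergence-CDM-matrix} (the $\mathcal{B}$-operator branches on unit versus non-unit, and the convergence proof there uses the filtration $I_t$ built from $\mathfrak{m}$), whereas the paper's direct iteration needs only that each $D_i=a_id_i-b_ic_i$ be a unit and therefore runs over an arbitrary $\F$-algebra $R$. That extra reach is exactly what is used downstream in Definition~\ref{defn-F}, where the resulting $\mathcal{F}_i$ define the functor $F$ on $R$-points for all $R$. So while your argument establishes the proposition under Assumption~\ref{eta-form-ass} as literally stated, carrying it forward into Section~\ref{smooth sec} would still require either the paper's direct limit construction or a separate extension of the CDM algorithm beyond the local case.
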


\begin{proof}
We will build $J_i$ as a $v$-adic limit of a sequence $J_{i}^{(n)}$. First, let $J_{i}^{(0)}$ be the identity matrix and define $J_{i+1}^{(n+1)}$ to be
\begin{align*}
    J_{i+1}^{(n+1)} = U_{i+1} G_{i+1} \left( \text{Ad} \begin{pmatrix}
    v^{p-1-z_{i+1}} & 0 \\
    0 & 1
    \end{pmatrix} (\varphi(J_i^{(n)})) \right) G_{i+1}^{-1},
\end{align*} where we are inverting $G_{i+1}$ in $\GL_2(R(\!(u)\!))$.
Therefore,
\begin{align*}
    J_{i+1}^{(n+1)} - J_{i+1}^{(n)} &= U_{i+1} G_{i+1} \left( \text{Ad} \begin{pmatrix}
    v^{p-1-z_{i+1}} & 0 \\
    0 & 1
    \end{pmatrix} (\varphi(J_{i}^{(n)} - J_{i}^{(n-1)})) \right) G_{i+1}^{-1}.
\end{align*}
Let $D_i = a_i d_i - b_i c_i$. Evidently, $J_{i+1}^{(1)} = U_{i+1}$. Further, \\
$$J_{i}^{(2)} - J_{i}^{(1)} = 
v^{p-z_{i}} \frac{y_{i-1}}{D_{i}}\begin{pmatrix}
-a_{i} c_{i} - y_{i}c_{i}^{2} & a_{i}^2 + y_{i}a_{i} c_{i}\\
-c_{i}^{2} & a_{i} c_{i}
\end{pmatrix}.$$
For $X \in M_2(R[\![v]\!])$, denote by $\text{val}_v(X)$ the highest power of $v$ that divides $X$. Let $\alpha_i = \text{val}_v(J_{i}^{(2)} - J_{i}^{(1)})$. Then $\alpha_i \geq p-z_{i}$. 

Now we compute the dependence of the valuation of $J_{i+1}^{(n)} - J_{i+1}^{(n-1)}$ on $J_{i}^{(n)} - J_{i}^{(n-1)}$.

 If $v^r$ divides $J_{i}^{(n)} - J_{i}^{(n-1)}$, then $v^{pr - (p-1-z_{i+1})}$ divides $\text{Ad} \begin{psmallmatrix}
    v^{p-1-z_{i+1}} & 0 \\
    0 & 1    \end{psmallmatrix} \varphi(J_{i}^{(n)} - J_i^{(n-1)})$. After taking into account an extra factor of $v$ coming from the determinant of $G_{i+1}$ which we will need to divide by when inverting $G_{i+1}$, we conclude that $v^{pr - (p-z_{i+1})} = v^{p(r-1) + z_{i+1}}$ divides $J_{i+1}^{(n+1)} - J_{i+1}^{(n)}$. 

Therefore,
\bal
\text{val}_v (J_{i+1}^{(3)} - J_{i+1}^{(2)}) \quad &\geq p(\alpha_i - 1) + z_{i+1}, \\
\text{val}_v (J_{i+1}^{(4)} - J_{i+1}^{(3)}) \quad &\geq p^2(\alpha_{i-1} - 1) + (pz_{i} + z_{i+1}) - p,\\
\text{val}_v (J_{i+1}^{(5)} - J_{i+1}^{(4)}) \quad &\geq p^3(\alpha_{i-2} - 1) + (p^2 z_{i-1} + pz_{i} + z_{i+1}) - (p^2 + p),\\
...\\
\text{val}_v (J_{i+1}^{(n)} - J_{i+1}^{(n-1)}) &\geq p^{n-2}(\alpha_{i-(n-3)} - 1) + \sum_{j=1}^{n-3}p^j (z_{i-(j-1)} - 1) + z_{i+1} \\
&\geq p^{n-2}(p-1-z_{i-(n-3)})+ \sum_{j=1}^{n-3}p^j (z_{i-(j-1)} - 1) + z_{i+1}.
\nal

We have the following scenarios:
\begin{itemize}
    \item Suppose $z_{i-(n-3)} < p-1$. Let $m := \lfloor \frac{n-3}{f} \rfloor$. Then
\bal
\text{val}_v (J_{i+1}^{(n)} - J_{i+1}^{(n-1)}) & \geq p^{n-2} + \sum_{j=1}^{n-3}p^j (z_{i-j+1}- 1) + z_{i+1}\\
&= p + \sum_{j=1}^{n-3}(p-1)p^j + \sum_{j=1}^{n-3}p^j (z_{i-(j-1)}- 1) + z_{i+1}\\
&= p + \sum_{j=1}^{n-3}(p-2)p^j + \sum_{j=0}^{n-3}p^j z_{i+1-j}\\
&\geq \sum_{j=1}^{n-3}(p-2)p^j + \sum_{k=0}^{m-1}p^k \gamma_{i+1} &\text{ (using (\ref{defn-z_i}))}\\
&> p^{m-1}.
\nal
\item Suppose $z_{i-(n-3)} = p-1$ and $z_j \neq 0$ for each $j$. Then
\bal
\text{val}_v (J_{i+1}^{(n)} - J_{i+1}^{(n-1)}) &\geq \sum_{j=1}^{n-3}p^j (z_{i-(j-1)} - 1) + z_{i+1} \\
&\geq \sum_{\substack{j \in [1, n-3] \text{ and } \\
j-1 \equiv n-3 \mod f}}p^j \\
&\geq p^{n-2-f}.
\nal
The second to last step uses $p>2$.
\\
\item Suppose $z_{i-(n-3)} = p-1$ and there exists a $k \in [0, n-3]$ such that $z_{i-(k-1)} = 0$. Take $k$ to be as large as possible. As $\{z_{j}\}_j$ is $f$-periodic, $k \in [n-2-f, n-3]$. Since $\tau$ does not face the second obstruction, there exists a largest possible $l \in (k, n-3)$ such that $z_{i-(l-1)} > 1$. Then
\bal
\text{val}_v (J_{i+1}^{(n)} - J_{i+1}^{(n-1)}) &\geq \sum_{j=k+1}^{l}p^j (z_{i-(j-1)} - 1) + \sum_{j=1}^{k} (z_{i-(j-1)} - 1) + z_{i+1} \\
&> p^{k+1} - \sum_{j=1}^{k} p^j \\
&= p + \sum_{j=1}^k (p-1)p^j - \sum_{j=1}^{k} p^j \\
&= p + \sum_{j=1}^k (p-2)p^j \\
&> p^k\\
&\geq p^{n-2-f}.
\nal
The second to last step uses that $p>2$.
\end{itemize}

The above calculations show that whenever $\tau$ does not face the second obstruction, $(J_{i+1}^{(n)})_n$ is a Cauchy sequence for all $i$.


We set $J_{i} = \lim_{n \to \infty} (J_{i}^{(n)})_n$, and construct the base change matrices $P_{i}$ using the data of $J_i$. Since $J_{i}^{(1)} \in M_2(R[\![v]\!])$ and $\text{val}_v (J_{i}^{(n)} - J_{i}^{(n-1)}) \geq 0$ for each $i$ and $n \geq 2$, $J_{i} \in M_2(R[\![v]\!])$. Since each $G_i$ is invertable in $\GL_2(M(\!(v)\!)$ then we may repeat the argument for $J_i^{-1}$ to see that $J_i\in\GL_2(M[\![v]!])$. Since $v$ divides the upper and lower left entries of $G_i$, it can be shown by direct computation that the lower left entry of $J_{i}$ is $0$ mod $v$. Therefore, $P_{i} \in \GL_2(R[\![u]\!])$ for each $i$.
\end{proof} 

\begin{defn}\label{mathcal-F}
    Denote the inverses of $J_i$'s constructed in Proposition \ref{straightening} by $J_i^{-1} = \mathcal{F}_i (\mathbf{U})$ to indicate the functorial dependence on the tuple of unipotent matrices $\mathbf{U} = (U_j)_j$. Then $(\mathcal{F}_i (\mathbf{U}))_i$ capture the base change data to go from $(G_i)_i \to (U_iG_i)_i$.
\end{defn}


\section{A component of \texorpdfstring{$\mathcal{C}^{\tau, \mathrm{BT}}$}{} as a quotient of a scheme}\label{component sec}


At this point, via Proposition \ref{convergence-CDM-matrix}, we have an easy way of describing the Frobenius maps for certain Breuil-Kisin modules by writing the matrices in CDM form (see Definition \ref{defn-CDM-form}). We also have a complete description of base changes between such Frobenius matrices in Proposition \ref{automorphisms}. Finally, in some cases, we have a way of obtaining Frobenius matrices in CDM form through a particular group action (see Proposition \ref{unipotent-action-gives-CDM-form-I-eta}). The goal of this section is to use these results to write a certain irreducible component of $\mathcal{C}^{\tau, \mathrm{BT}}$ (Definition \ref{defn-C-BT}) as a quotient stack $[X/G]$ for some scheme $X$ and group scheme $G$ acting on $X$. We will use this presentation to compute global functions on the component.

In order to allow us to use Propositions \ref{convergence-CDM-matrix}, \ref{automorphisms} and \ref{unipotent-action-gives-CDM-form-I-eta}, we make the following assumption for the entirety of this section. 
\begin{ass}\label{tau}
The tame principal series $\F$-type $\tau = \eta \oplus \eta'$ satisfies:
\begin{itemize}
    \item $\eta \neq \eta'$, and
    \item $\tau$ does not face either the first obstruction (in the sense of Definition \ref{defn-first-obstruction}) or the second obstruction (in the sense of Definition \ref{defn-second-obstruction}).
\end{itemize}
\end{ass}

\subsection{A smooth map from a scheme to \texorpdfstring{$\mathcal{C}$}{}}\label{smooth sec}



Let $G = (\mathbb{G}_{m})^{f+1}_{\mathbb{F}} \times_{\mathbb{F}} U_{\mathbb{F}}^{f}$ and $X = (\GL_{2})_{\mathbb{F}} \times_{\mathbb{F}} (\SL_{2})_{\mathbb{F}}^{f-1}$, where $U \cong \mathbb{G}_a$ is the upper unipotent subgroup of $\GL_2$. Define a $G$-action on $X$ in the following way: 

Let $(\lambda, \mu, r_1, r_2, ..., r_{f-1}, m_0, ..., m_{f-1}) \in G$ and $ (A_0, ..., A_{f-1}) \in X$. Then
\begin{align}\label{gaction}
&(\lambda, \mu, r_1, r_2, ..., r_{f-1}, m_0, ..., m_{f-1}) \boldsymbol{\cdot} (A_0, ..., A_{f-1}) :=\\
&\Bigg(\begin{pmatrix} \lambda^{-1} & 0 \\ 0 & \mu^{-1}
\end{pmatrix}
m_{0} A_{0} \begin{pmatrix}
r_{f-1}^{-1} & 0 \\ 0 & r_{f-1}
\end{pmatrix} \begin{pmatrix}
\lambda & 0 \\ 0 & \mu
\end{pmatrix}, \nonumber\\
&\begin{pmatrix} \lambda^{-1} & 0 \\ 0 & \mu^{-1}
\end{pmatrix}\begin{pmatrix}
r_1 & 0 \\ 0 & r_1^{-1}
\end{pmatrix}m_1 A_1 \begin{pmatrix}
\lambda & 0 \\ 0 & \mu
\end{pmatrix} , \nonumber\\
&\begin{pmatrix} \lambda^{-1} & 0 \\ 0 & \mu^{-1}
\end{pmatrix}
\begin{pmatrix}
r_2 & 0 \\ 0 & r_2^{-1}
\end{pmatrix}
m_2 A_2 \begin{pmatrix}
r_1^{-1} & 0 \\ 0 & r_1
\end{pmatrix} \begin{pmatrix}
\lambda & 0 \\ 0 & \mu
\end{pmatrix}, \nonumber\\
&\hspace{3cm}\dots \nonumber\\
&\begin{pmatrix} \lambda^{-1} & 0 \\ 0 & \mu^{-1}
\end{pmatrix}
\begin{pmatrix}
r_{f-1} & 0 \\ 0 & r_{f-1}^{-1}
\end{pmatrix}
m_{f-1} A_{f-1} \begin{pmatrix}
r_{f-2}^{-1} & 0 \\ 0 & r_{f-2}
\end{pmatrix} \begin{pmatrix}
\lambda & 0 \\ 0 & \mu
\end{pmatrix}\Bigg) \nonumber.
\end{align}

\begin{defn}\label{defn-functor-T}
Define a functor $\mathcal{T}: X \to \mathcal{C}^{\tau}$ by sending
$$\left(\begin{pmatrix}
a_0 & b_0 \\
c_0 & d_0
\end{pmatrix}, \dots, \begin{pmatrix}
a_{f-1} & b_{f-1} \\
c_{f-1} & d_{f-1}
\end{pmatrix}\right) \in X(R)$$ to the Breuil-Kisin module $\M \in \mathcal{C}^{\tau}(R)$ constructed as follows:
\begin{enumerate}
    \item $\M_i = R[\![u]\!]e_i \oplus R[\![u]\!]f_i$.
    \item With respect to the basis $\{e_i, f_i\}$, the action of $g \in Gal(K'/K)$ is given by the diagonal matrix $\begin{psmallmatrix}
    \eta(g) & 0\\
    0 & \eta'(g)
    \end{psmallmatrix}$.
    \item With respect to the basis $\{u^{e - \gamma_{i}} \otimes e_{i-1}, 1 \otimes f_{i-1}\}$ (resp. $\{u^{e-\gamma_{i}} e_{i}, f_{i}\}$) of the $\eta'$-eigenspace of $\varphi^{*} \M_{i-1}$ (resp. $\M_{i}$), the matrix of the restriction of the $i$-th Frobenius map $\varphi^{*} \M_{i-1} \to \M_{i}$ to the $\eta'$-eigenspace is $\begin{psmallmatrix}
va_{i} & b_{i} \\
vc_{i} & d_{i}
\end{psmallmatrix}$.
\end{enumerate}
\end{defn}

Consider the pullback of $\mathcal{T}$ by the closed embedding $\mathcal{C}^{\tau, \mathrm{BT}} \hookrightarrow \mathcal{C}^{\tau}$. The pullback is a closed subscheme of $X$ that contains all the closed points of $X$ by Lemma \ref{hodge-type-equiv-strong-det}. Since $X$ is reduced, the pullback must be all of $X$ and $\mathcal{T}$ must map $X$ into $\mathcal{C}^{\tau, \mathrm{BT}}$. Choose an irreducible component $\mathcal{X}(\tau) \subset \mathcal{C}^{\tau, \mathrm{BT}}$ containing the image of $\mathcal{T}$. Such an irreducible component must exist because $X$ is irreducible, although a priori, it is not unique (we will see later in Proposition \ref{finite-presentation-of-irred-component} that in fact it is unique). Henceforth, we will see $\mathcal{T}$ as a functor from $X$ to $\mathcal{X}(\tau)$.

\begin{defn}\label{defn-F}
    Suppose $\tau$ satisfies Assumption \ref{tau}. We define a functor $F: G \times X \to X\times_{\mathcal{X}(\tau)}X$ in the following way:
    
    Let $g = (\lambda, \mu, r_1, ..., r_{f-1}, m_0, ..., m_{f-1}) \in G(R)$ and $x \in X(R)$. 
    Then $F((g,x))$ is the triple $(x, \: g \boldsymbol{\cdot} x, \: \{J_i\}_i)$ where $(x, \: g \boldsymbol{\cdot} x) \in X(R) \times X(R)$ and $\{J_i\}_i$
    are base change matrices for $\eta'$-eigenspaces (in the sense of Definition \ref{eigenspace-frobenii}) that encode transformation of the Frobenius matrices of $\mathcal{T}(x)$ to those of $\mathcal{T}(g \boldsymbol{\cdot} x)$.
    They are given by:
    \begin{align}\label{base-change-F}
&&J_i := \begin{cases}
\mathcal{F}_i((m_j)_j) \begin{pmatrix}
\lambda & 0 \\
0 & \mu
\end{pmatrix}  & \text{ for } i = 0,\\
\mathcal{F}_i((m_j)_j) \begin{pmatrix}
r^{-1}_{i}& 0 \\
0 & r_{i}
\end{pmatrix} \begin{pmatrix}
\lambda & 0 \\
0 & \mu
\end{pmatrix} & \text{ for } i \in \{1,\dots, f-1\}.\\
\end{cases} 
\end{align}

Here, $\mathcal{F}_i((m_j)_j)$ are described in Definition \ref{mathcal-F}.
\end{defn}

There exists a prestack $[X/G]^{\text{pre}}$ over $\Spec \F$ whose fiber category over $\Spec R$ is the groupoid with objects given by elements of the set $X(R)$ and morphisms given in the following way: for each $x \in X(R)$ and $g \in G(R)$, there exists a morphism $x \xmapsto{g}  g \boldsymbol{\cdot} x$. The functor $F$ induces a functor $[X/G]^{\text{pre}} \to \mathcal{X}(\tau)$ given by mapping $x \in X(R)$ to $\mathcal{T}(x)$ and $x \xmapsto{g} g \boldsymbol{\cdot} x$ to the isomorphism $\mathcal{T}(x) \to \mathcal{T}(g \boldsymbol{\cdot} x)$ encoded in the data of $F(g, x)$. Thus, by stackifying in the \textit{fppf} topology, one obtains a functor $[X/G] \to \mathcal{X}(\tau)$.
\begin{defn}\label{T-tilde}
Suppose $\tau$ satisfies Assumption \ref{tau}. We let $\widetilde{\mathcal{T}}: [X/G] \to \mathcal{X}(\tau)$ be the functor induced by $F$, as explained above. 
\end{defn}
\begin{prop}\label{finite-presentation-of-irred-component} The functor $\widetilde{\mathcal{T}}$ is an isomorphism. \end{prop}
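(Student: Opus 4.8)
The plan is to realize $\mathcal{X}(\tau)$ as the quotient stack of its own smooth presentation, and then recognize the comparison map as $\widetilde{\mathcal{T}}$. Concretely, I would reduce the statement to two assertions: (i) the functor $\mathcal{T}\colon X\to\mathcal{X}(\tau)$ of Definition \ref{defn-functor-T} is smooth and surjective; and (ii) the morphism $F\colon G\times X\to X\times_{\mathcal{X}(\tau)}X$ of Definition \ref{defn-F} is an isomorphism of algebraic spaces, compatibly with the two projections, the identity section, and composition. Granting (i) and (ii), the standard fact that an algebraic stack is canonically equivalent to the quotient of the groupoid $(R\rightrightarrows U)$ attached to any smooth surjective atlas $U\to\mathcal{X}(\tau)$ applies with $U=X$ and $R=X\times_{\mathcal{X}(\tau)}X\cong G\times X$; it identifies $[X/G]$ with $\mathcal{X}(\tau)$, and unwinding the identification against Definitions \ref{defn-F} and \ref{T-tilde} shows the comparison functor is exactly $\widetilde{\mathcal{T}}$.

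For (i), smoothness of $\mathcal{T}$ is verified by the infinitesimal lifting criterion; since $\mathcal{C}^{\tau}$ is locally of finite presentation it suffices to treat square-zero extensions $R'\twoheadrightarrow R$ of Artinian local $\F$-algebras. Given $x\in X(R)$ and a Breuil-Kisin module $\M'\in\mathcal{X}(\tau)(R')\subset\mathcal{C}^{\tau}(R')$ lifting $\mathcal{T}(x)$, one produces a lift of $x$ to $X(R')$ as follows: $\M'$ is regular and not of bad genre by Assumption \ref{tau}, so by Proposition \ref{convergence-CDM-matrix} its Frobenius matrices can be put in CDM form over $R'$; Proposition \ref{unipotent-action-gives-CDM-form-I-eta} (which needs that $\tau$ avoids the second obstruction) and Proposition \ref{straightening} show that the change of basis relating the normal form used to define $\mathcal{T}$ and the CDM form is implemented by unipotent and diagonal matrices, which lift along $R'\to R$ compatibly with $x$. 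This gives the desired lift, so $\mathcal{T}$ is smooth; its image is then an open, irreducible (because $X$ is) substack of $\mathcal{C}^{\tau,\mathrm{BT}}$, hence a dense open of $\mathcal{X}(\tau)$. Surjectivity is extracted from the same classification: by the genre analysis following Definition \ref{eta-eta'-form} together with Proposition \ref{convergence-CDM-matrix}, every field-valued (and, by the lifting above, Artinian-valued) point of $\mathcal{X}(\tau)$ is isomorphic to $\mathcal{T}(x)$ for some $x$, so the image is all of $\mathcal{X}(\tau)$ and $\mathcal{T}$ is an atlas.

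For (ii): an $R$-point of $X\times_{\mathcal{X}(\tau)}X$ is a pair $(x,x')$ together with an isomorphism $\mathcal{T}(x)\xrightarrow{\sim}\mathcal{T}(x')$ in $\mathcal{C}^{\tau}(R)$, that is, a system of inertial base change matrices — equivalently a system $\{J_i\}_i$ of $\eta'$-eigenspace base change matrices — conjugating the Frobenius data of $\mathcal{T}(x)$ to that of $\mathcal{T}(x')$. I would show that any such isomorphism factors uniquely as: a straightening of the source to CDM form by the functorial unipotent base change $\mathcal{F}_i(\mathbf{U})$ of Proposition \ref{straightening} and Definition \ref{mathcal-F}; then a base change between CDM forms, which by Proposition \ref{automorphisms} is necessarily diagonal and propagates through the indices with scalars $r_1,\dots,r_{f-1}$ and $\lambda,\mu$ whose pattern is dictated by the genre of each step; then the reverse straightening on the target side — the whole ambiguity being exactly one global scalar matrix by Corollary \ref{unique-base-change-upto-scalar}. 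Matching the resulting expression for $\{J_i\}_i$ with formula (\ref{base-change-F}) shows $(x,x',\{J_i\}_i)=F(g,x)$ for a unique $g\in G(R)$, and that this assignment is functorial in $R$ and compatible with the groupoid structure; hence $F$ is an isomorphism.

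The main obstacle is step (ii): proving $F$ is an isomorphism is precisely the statement that the \emph{abstract} groupoid of isomorphisms among the Breuil-Kisin modules $\mathcal{T}(x)$ is pinned down on the nose by the explicit group $G$. This forces one to control simultaneously the functorial unipotent ``straightening'' maps of Proposition \ref{straightening} — where Assumption \ref{tau} (no second obstruction) is indispensable — the rigidity of base changes between CDM forms from Proposition \ref{automorphisms}, and the $\mathbb{G}_m$'s worth of scalar base changes accounted for by Corollary \ref{unique-base-change-upto-scalar}, and then to check the composite reproduces (\ref{base-change-F}) over an arbitrary test ring. The delicate bookkeeping is the distribution of the scalars $\lambda,\mu,r_i$ across the $f$ indices according to the genre pattern, and confirming that $G=(\mathbb{G}_m)^{f+1}\times U^{f}$ is neither too large nor too small; by comparison, the smoothness and surjectivity in step (i) are routine once Propositions \ref{convergence-CDM-matrix}–\ref{unipotent-action-gives-CDM-form-I-eta} are in hand.
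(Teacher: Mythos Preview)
Your overall architecture is close to the paper's, but there is a genuine gap in step (i), and it feeds back into step (ii).

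In your smoothness argument you write: ``$\M'$ is regular and not of bad genre by Assumption \ref{tau}''. Assumption \ref{tau} is a condition on $\tau$, not on $\M'$; it tells you that \emph{if} every Frobenius map of $\M'$ is in $\eta$-form then $\M'$ is not of bad genre, but it does nothing to guarantee regularity itself. A priori, $\mathcal{X}(\tau)$ is just some irreducible component of $\mathcal{C}^{\tau,\mathrm{BT}}$ containing the image of $\mathcal{T}$, and an $R'$-point $\M'$ need not have $v$ dividing the top-left entry of each Frobenius matrix: over a non-reduced Artinian $R'$, the constant term could be a nonzero nilpotent that vanishes upon reduction to $R$. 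Without this, Proposition \ref{convergence-CDM-matrix} does not apply and your lifting fails. The same unjustified assumption underlies your surjectivity claim (``every field-valued point of $\mathcal{X}(\tau)$ is isomorphic to $\mathcal{T}(x)$''): you need to know that such a point is in $\eta$-form before the CDM classification produces a preimage.

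The paper handles this by reversing your order. It first proves $F$ is an isomorphism (Lemmas \ref{diagonal-is-surj-mon}, \ref{diagonal-is-isom}) working only with Artinian points and an \'etaleness argument; this yields a dimension computation showing the scheme-theoretic image of $\widetilde{\mathcal{T}}$ has dimension $f$. Only then does it prove (Lemma \ref{candidate-presentation-is-surjective}) that every $R$-point of $\mathcal{X}(\tau)$ has $\eta$-form Frobenius matrices, by observing that this condition cuts out a \emph{closed} substack $\mathcal{L}\subset\mathcal{X}(\tau)$ containing the image of $\mathcal{T}$, and concluding $\mathcal{L}=\mathcal{X}(\tau)$ from the dimension match and reducedness of $\mathcal{X}(\tau)$. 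Formal smoothness of $\widetilde{\mathcal{T}}$ is then proved last, using this structural fact about $\mathcal{X}(\tau)$ as input. Your ordering is circular because the regularity you assume in (i) is precisely what this argument supplies. A secondary issue: your inverse to $F$ in (ii) relies on producing CDM form, which Proposition \ref{convergence-CDM-matrix} only gives over Artinian local rings, so the claim ``functorial in $R$'' for arbitrary $R$ is not justified; the paper bridges this by showing $F$ is an \'etale surjective monomorphism rather than by exhibiting a global inverse.
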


The proof of Proposition \ref{finite-presentation-of-irred-component} will be given in several steps outlined below.

\begin{lemma}\label{diagonal-is-surj-mon}
The functor $F$ in Definition \ref{defn-F} is surjective on points valued in Artinian local $\mathbb{F}$-algebras and a monomorphism.
\end{lemma}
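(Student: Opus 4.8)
The plan is to treat the two assertions of Lemma~\ref{diagonal-is-surj-mon} separately.

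\emph{Monomorphism.} Since $G\times X$ and $X\times_{\mathcal X(\tau)}X$ are algebraic spaces (the latter is discrete: a morphism between two of its objects forces the two $X$-points and then the isomorphism to agree), it suffices to show $F$ is injective on $S$-points for every $\F$-algebra $S$. Suppose $F(g_1,x_1)=F(g_2,x_2)$. Comparing first coordinates gives $x_1=x_2=:x$, and comparing third coordinates gives $J_i(g_1,x)=J_i(g_2,x)$ for all $i$. I would reduce these $\eta'$-eigenspace base change matrices modulo $v$. From the Cauchy construction in Proposition~\ref{straightening} one has $J_i\equiv U_i\pmod v$, hence $\mathcal F_i(\mathbf U)=J_i^{-1}\equiv U_i^{-1}\pmod v$; so writing $g_k=(\lambda_k,\mu_k,r_{1,k},\dots,m_{0,k},\dots)$ and $r_{0,k}:=1$, formula~(\ref{base-change-F}) gives
\[
J_i(g_k,x)\equiv m_{i,k}^{-1}\begin{pmatrix}r_{i,k}^{-1}\lambda_k & 0\\ 0 & r_{i,k}\mu_k\end{pmatrix}\pmod v .
\]
A product of an upper unipotent matrix and an invertible diagonal matrix is uniquely so written, so equality for all $i$ forces $m_{i,1}=m_{i,2}$, then (taking $i=0$) $\lambda_1=\lambda_2$, $\mu_1=\mu_2$, and finally $r_{i,1}=r_{i,2}$ for $i\ge 1$. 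Hence $g_1=g_2$.

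\emph{Surjectivity: reduction to the CDM locus.} Fix an Artinian local $\F$-algebra $R$ and a point $(x,x',\phi)$ of $X\times_{\mathcal X(\tau)}X$, so $\phi$ is an isomorphism $\mathcal T(x)\to\mathcal T(x')$. For every $x\in X(R)$ the module $\mathcal T(x)$ is regular (it has tame type $\tau$ with $\eta\ne\eta'$, is of height $\le 1$, and is of Hodge type $\mathbf v_0$ since $\det F_i=v\det A_i$ with $\det A_i\in R^\times$, each $F_i$ in $\eta$-form), and it is not of bad genre because $\tau$ does not face the first obstruction; as $\tau$ also avoids the second obstruction, Propositions~\ref{convergence-CDM-matrix}, \ref{unipotent-action-gives-CDM-form-I-eta} and~\ref{straightening} all apply. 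The first step would be to realize the CDM-straightening of $\mathcal T(x)$ inside the $G$-action: to produce $g_x\in G(R)$ with $g_x\cdot x$ a ``CDM point'' (its Frobenius matrices already in CDM form) and $J_\bullet(g_x,x)$ the associated straightening base change. Indeed Proposition~\ref{unipotent-action-gives-CDM-form-I-eta} presents the straightening as a left unipotent action (the $\mathbf m$-part of~(\ref{gaction}), $\mathbf m\cdot x=(m_iA_i)_i$) followed by the diagonal scalars $\Delta_i$ of~(\ref{intermediate-CDM-form}) and the diagonal $Q_i$ of the proof of Proposition~\ref{convergence-CDM-matrix}; for $i\in\{1,\dots,f-1\}$ those scalars have determinant~$1$ (the intermediate matrix and its CDM form have equal determinant and $\det A_i=1$), hence are of the shape $\diag(r_i,r_i^{-1})$ occurring in~(\ref{gaction}), while the index-$0$ scalar supplies $\lambda,\mu$. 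The same applied to $x'$ gives $g_{x'}\in G(R)$ with $g_{x'}\cdot x'$ a CDM point.

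\emph{Surjectivity: matching CDM forms and the isomorphism.} Transporting $\phi$ through the two straightenings gives an isomorphism $\psi$ between the two CDM points whose Frobenius matrices are in CDM form, so by Proposition~\ref{automorphisms} (together with Lemma~\ref{constraints-base-change-matrix}) its base change matrices are diagonal scalar matrices obeying the genre-dictated recursion of Proposition~\ref{automorphisms}(1)--(3). Comparing that recursion with~(\ref{gaction}) exhibits $g_{x'}\cdot x'=h\cdot(g_x\cdot x)$ for a suitable $h\in G(R)$ assembled from the diagonal data of $\psi$; then $g:=g_{x'}^{-1}hg_x\in G(R)$ satisfies $g\cdot x=x'$. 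Finally $J_\bullet(g,x)$ and $\phi$ are two base changes $\mathcal T(x)\to\mathcal T(x')$, so by Corollary~\ref{unique-base-change-upto-scalar} they differ by a single scalar matrix $cI$ with $c\in R^\times$; since the central element $(c,c,1,\dots,1,\mathrm{Id},\dots,\mathrm{Id})\in G(R)$ acts trivially on $X$ and multiplies the canonical isomorphism by $cI$, replacing $g$ by its product with that element yields $F(g,x)=(x,x',\phi)$.

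\emph{The expected main obstacle} is the bookkeeping in the last two paragraphs: one must check that the explicit action~(\ref{gaction}) reproduces exactly the base changes coming out of the CDM machinery — in particular the $\SL_2$-constraint on the straightening scalars, the $\alpha,\alpha'$ prefix at index~$0$, and the genre-dependent index swap $\lambda_i\leftrightarrow\mu_i$ of Proposition~\ref{automorphisms}(3). Restricting to Artinian local $R$ is essential, since convergence of the CDM-straightening algorithm uses that $\mathfrak m$ is nilpotent.
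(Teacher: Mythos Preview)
Your proposal is correct and follows essentially the same approach as the paper's proof. Both for the monomorphism (reducing the base-change matrices~$J_i$ modulo~$v$ and using the unique unipotent--diagonal factorization) and for surjectivity on Artinian local rings (straightening $x$ and $x'$ separately to CDM form via the $(m_i)$- and $(r_i)$-parts of the $G$-action, then invoking Proposition~\ref{automorphisms} on the resulting CDM points, and finally correcting by the central scalar from Corollary~\ref{unique-base-change-upto-scalar}), your outline matches the paper's argument step for step; the ``bookkeeping'' you flag as the main obstacle is exactly the explicit computation the paper performs when it writes $B_i$ in terms of $A_i$ and identifies the result with the action~(\ref{gaction}).

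One small point of confusion: in your surjectivity sketch you list the straightening as unipotent action \emph{followed by} the diagonals $\Delta_i$ of~(\ref{intermediate-CDM-form}) and then the $Q_i$. The $\Delta_i$ are not a separate base-change step---they are simply the notation for the diagonal factor appearing in the expression $F'_i=\Delta_i M_i$ after the unipotent action has already been applied. The straightening is really only two stages: the $(m_i)$-action producing $F'_i$, and then the $Q_i$ taking $F'_i$ to CDM form. Your determinant argument that $\det Q_i=1$ for $i\ge 1$ (so that the $Q_i$ land in the $(r_i)$-part of $G$) is correct once this is understood.
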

\begin{proof}
Let $R$ be an Artinian local $\mathbb{F}$-algebra.
Let $(x, y, \{J_i\}_i) \in (X \times_{\mathcal{C}^{\tau}}X)(R)$ where $(x, y) \in X(R) \times X(R)$ and $\{J_i\}_i$ are the base change matrices for $\eta'$-eigenspaces to transform $\mathcal{T}(x)$ to $\mathcal{T}(y)$.

Let $(A_i)_{i=0}^{f-1}$ be the Frobenius matrices for the $\eta'$-eigenspace for $\mathcal{T}(x)$. Because $\tau$ does not face the first obstruction, $\mathcal{T}(x)$ is not of bad genre and with respect to a suitable choice of inertial bases, the Frobenius matrices of $\mathcal{T}(x)$ will be in CDM form (see Proposition \ref{convergence-CDM-matrix} and Definition \ref{eigenspace-frobenii}). Because $\tau$ also does not face the second obstruction, using Proposition \ref{unipotent-action-gives-CDM-form-I-eta} we can uniquely determine $(r_1, \dots, r_{f-1}) \in \mathbb{G}_m^{f-1}(R)$ and $(m_0, \dots, m_{f-1}) \in U^{f}(R)$ so that the tuple $(A'_i)_{i=0}^{f-1}$ defined below is in CDM form:
\bal
A'_i := \begin{cases}
\vspace{0.3cm}
m_{i} A_{i} \begin{pmatrix}
r_{i-1}^{-1} & 0 \\ 0 & r_{i-1}
\end{pmatrix} &\text{ if } i =0,\\
\vspace{0.3cm}
\begin{pmatrix}
r_i & 0 \\ 0 & r_i^{-1}
\end{pmatrix}m_i A_i &\text{ if } i = 1, \\
\begin{pmatrix} 
r_{i} & 0 \\ 0 & r_{i}^{-1}
\end{pmatrix}
m_i A_i \begin{pmatrix}
r_{i-1}^{-1} & 0 \\ 0 & r_{i-1}
\end{pmatrix} &\text{ if } i \in \{2,\dots, f-1\}.
\end{cases}
\nal

Similarly, let $(B_i)_{i=0}^{f-1}$ be the Frobenius matrices for the $\eta'$-eigenspace corresponding to the data of $\mathcal{T}(y)$. We can uniquely determine $(s_1, \dots, s_{f-1}) \in \mathbb{G}_m^{f-1}(R)$ and $(n_0, \dots, n_{f-1}) \in U^{f-1}(R)$ so that the tuple $(B'_i)_{i=0}^{f-1}$ defined below is in CDM form:
\bal
B'_i = \begin{cases}
\vspace{0.3cm}
n_{i} B_{i} \begin{pmatrix}
s_{i-1}^{-1} & 0 \\ 0 & s_{i-1}
\end{pmatrix} &\text{ if } i =0, \\
\vspace{0.3cm}
\begin{pmatrix}
s_i & 0 \\ 0 & s_i^{-1}
\end{pmatrix}n_i B_i &\text{ if } i = 1, \\
\begin{pmatrix} 
s_{i} & 0 \\ 0 & s_{i}^{-1}
\end{pmatrix}
n_i B_i \begin{pmatrix}
s_{i-1}^{-1} & 0 \\ 0 & s_{i-1}
\end{pmatrix} &\text{ if } i \in \{2,\dots, f-1\}. \\
\end{cases}
\nal

Since $(A'_i)_i$ and $(B'_i)_i$ are base changes of $(A_i)_i$ and $(B_i)_i$ respectively, there exist base change matrices $(P_i)_i$ that allow us to transform $(A'_i)_i$ to $(B'_i)_i$. By Proposition \ref{automorphisms}, there exist $\lambda, \mu \in \mathbb{G}_m(R)$ so that $P_0 = \begin{psmallmatrix}
 \lambda & 0 \\
 0 & \mu
 \end{psmallmatrix}$ and for $i \in \{1,\dots, f-1\}$, $P_i = \begin{psmallmatrix}
 \lambda & 0 \\
 0 & \mu
 \end{psmallmatrix}\begin{psmallmatrix}
 k_i & 0 \\
 0 & k_i^{-1}
 \end{psmallmatrix}$, where $k_i = \mu\lambda^{-1}$ if $|\{j \in [1, i] \mid \mathcal{G}(A'_{j}) = \II\}| \text{ is odd}$, and $1$ otherwise.
 
 We now use $(r_i)_i$, $(s_i)_i$, $(m_i)_i$, $(n_i)_i$ and $(P_i)_i$ to write $(B_i)_i$ in terms of $(A_i)_i$.
 \bal 
B_i = \begin{cases}
\begin{aligned}
&n_i^{-1}
\begin{pmatrix}
 \lambda^{-1} & 0 \\
 0 & \mu^{-1}
 \end{pmatrix}
 m_i A_i
 \begin{pmatrix}
 r_{i-1}^{-1} & 0 \\
 0 & r_{i-1}
 \end{pmatrix} \begin{pmatrix}
 \lambda & 0 \\
 0 & \mu
 \end{pmatrix}\\
 &\hspace{4.4cm}\begin{pmatrix}
 k_{i-1} & 0 \\
 0 & k_{i-1}^{-1}
 \end{pmatrix}\begin{pmatrix}
 s_{i-1} & 0 \\
 0 & s_{i-1}^{-1}
 \end{pmatrix} 
\end{aligned}&\text{ for } i = 0,\\
\vspace{0.1cm}\\
\begin{aligned}
&n_i^{-1} \begin{pmatrix}
 s_i^{-1} & 0 \\
 0 & s_i
 \end{pmatrix} 
 \begin{pmatrix}
 k_{i}^{-1} & 0 \\
 0 & k_{i}
 \end{pmatrix}\begin{pmatrix}
 \lambda^{-1} & 0 \\
 0 & \mu^{-1}
 \end{pmatrix}\\ 
 &\hspace{4cm}
 \begin{pmatrix}
 r_i & 0 \\
 0 & r_i^{-1}
 \end{pmatrix} m_i A_i \begin{pmatrix}
 \lambda & 0 \\
 0 & \mu
 \end{pmatrix}\end{aligned} &\text{ for } i=1,\\
 \vspace{0.1cm}\\
\begin{aligned}
&n_i^{-1} \begin{pmatrix}
 s_i^{-1} & 0 \\
 0 & s_i
 \end{pmatrix} 
 \begin{pmatrix}
 k_{i}^{-1} & 0 \\
 0 & k_{i}
 \end{pmatrix}\begin{pmatrix}
 \lambda^{-1} & 0 \\
 0 & \mu^{-1}
 \end{pmatrix}
 \begin{pmatrix}
 r_i & 0 \\
 0 & r_i^{-1}
 \end{pmatrix} \\ 
&\hspace{1cm}m_i A_i \begin{pmatrix}
 r_{i-1}^{-1} & 0 \\
 0 & r_{i-1}
 \end{pmatrix} \begin{pmatrix}
 \lambda & 0 \\
 0 & \mu
 \end{pmatrix}\begin{pmatrix}
 k_{i-1} & 0 \\
 0 & k_{i-1}^{-1}
 \end{pmatrix}\begin{pmatrix}
 s_{i-1} & 0 \\
 0 & s_{i-1}^{-1}
 \end{pmatrix} 
\end{aligned} &\text{ for } i\in \{2, \dots,f-1\}.\\
\end{cases}
\nal\\
 
 Simplifying,
\bal 
B_i = \begin{cases}
\begin{pmatrix}
 \lambda^{-1} & 0 \\
 0 & \mu^{-1}
 \end{pmatrix}
 \tilde{m}_i A_i
 \begin{pmatrix}
 s_{i-1} k_{i-1} r_{i-1}^{-1} & 0 \\
 0 & s_{i-1}^{-1} k_{i-1}^{-1} r_{i-1}
 \end{pmatrix} \begin{pmatrix}
 \lambda & 0 \\
 0 & \mu
 \end{pmatrix} &\text{ for } i = 0,\\
 \vspace{0.1cm}\\
\begin{pmatrix}
 \lambda^{-1} & 0 \\
 0 & \mu^{-1}
 \end{pmatrix} \begin{pmatrix}
 s_i^{-1}   k_{i}^{-1}  r_i & 0 \\
 0 & s_i k_{i} r_i^{-1}
 \end{pmatrix} \tilde{m}_i A_i \begin{pmatrix}
 \lambda & 0 \\
 0 & \mu
 \end{pmatrix} &\text{ for } i=1,\\
 \vspace{0.1cm}\\
\begin{aligned}
&\begin{pmatrix}
 \lambda^{-1} & 0 \\
 0 & \mu^{-1}
 \end{pmatrix}
\begin{pmatrix}
s_i^{-1} k_{i}^{-1} r_i & 0 \\
 0 & s_i k_{i} r_i^{-1}
 \end{pmatrix} 
 \tilde{m}_i A_i \\
&\hspace{3cm}\begin{pmatrix}
 s_{i-1} k_{i-1} r_{i-1}^{-1} & 0 \\
 0 & s_{i-1}^{-1} k_{i-1}^{-1} r_{i-1}
 \end{pmatrix} \begin{pmatrix}
 \lambda & 0 \\
 0 & \mu
 \end{pmatrix}
 \end{aligned} &\text{ for } i\in \{2,\dots, f-1\}.\\
\end{cases}
\nal\\
where $\tilde{m}_i$ are suitably chosen unipotent matrices.

This implies the existence of a $g \in G$ such that $y = g \boldsymbol{\cdot} x$. By (\ref{base-change-F}), $F((g, x))$ contains the data of some base change matrices to go from $\{A_i\}$ to $\{B_i\}$ . These can only differ by a fixed scalar multiple from the original base change matrices $\{J_i\}_i$ (by Corollary \ref{unique-base-change-upto-scalar}). Scaling $\lambda$ and $\mu$ by this fixed multiple gives us a $g'$ such that $F((g', x)) = (x, y, \{J_i\}_i)$. This shows surjectivity on Artinian local points.\\

Now suppose that $R$ is \textit{any} $\mathbb{F}$-algebra. Let $(g, x), (g', x') \in (G \times X)(R)$ such that $F((g, x)) = F((g', x')) = (x, y, \{J_i\}_i)$. Then $x = x'$ and $y = g \boldsymbol{\cdot} x = g' \boldsymbol{\cdot} x$. Let $(A_i)_{i=0}^{f-1}$ be the Frobenius matrices for $\eta'$-eigenspaces in the data of $\mathcal{T}(x)$ (described in Definition \ref{defn-functor-T}) and $(B_i)_{i=0}^{f-1}$ be the corresponding matrices for $\mathcal{T}(y)$. Let 
\bal
&g = (\lambda, \mu, r_1, r_2, ..., r_{f-1}, m_0, ..., m_{f-1}), \\
&g' = (\lambda', \mu', r'_1, r'_2, ..., r'_{f-1}, m'_0, ..., m'_{f-1}).
\nal
By (\ref{base-change-F}),
\begin{align*}
&J_0 = \mathcal{F}_0((m_j)_j) \begin{pmatrix}
\lambda & 0 \\
0 & \mu
\end{pmatrix}  = \mathcal{F}_0((m'_j)_j) \begin{pmatrix}
\lambda' & 0 \\
0 & \mu'
\end{pmatrix}.
\end{align*}
All inertial base change matrices for $\eta'$-eigenspaces, including $\mathcal{F}_i((m_j)_j)$, are upper unipotent mod $v$. Reducing mod $v$, we get $\lambda = \lambda'$ and $\mu = \mu'$.

For $i \in \{1,\dots, f-1\}$,
\begin{align*}
J_i = \mathcal{F}_i((m_j)_j) \begin{pmatrix}
r^{-1}_{i}& 0 \\
0 & r_{i}
\end{pmatrix} \begin{pmatrix}
\lambda & 0 \\
0 & \mu
\end{pmatrix} = \mathcal{F}_i((m'_j)_j) \begin{pmatrix}
r'^{-1}_{i}& 0 \\
0 & r'_{i}
\end{pmatrix} \begin{pmatrix}
\lambda & 0 \\
0 & \mu
\end{pmatrix}.
\end{align*}

Again reducing mod $v$, we get $(r_1, ..., r_{f-1}) = (r'_1, ..., r'_{f-1})$. Finally we use (\ref{gaction}) to write $B_i$ in terms of $A_i$ and $g$, and compare it to $B_i$ written in terms of $A_i$ and $g'$. It is immediate that for each $i$, $m_i = m'_i$.
\end{proof}

\begin{lemma}\label{diagonal-is-isom}
The functor $F$ is an isomorphism.
\end{lemma}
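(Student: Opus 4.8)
The plan is to upgrade the statement of Lemma \ref{diagonal-is-surj-mon} — that $F\colon G\times X \to X\times_{\mathcal{X}(\tau)}X$ is a monomorphism which is surjective on Artinian local points — to the assertion that $F$ is an isomorphism. The first step is to observe that both source and target are algebraic stacks of finite presentation over $\F$: the source $G\times X$ is a smooth affine scheme, and the fiber product $X\times_{\mathcal{X}(\tau)}X$ is algebraic and of finite presentation because $\mathcal{X}(\tau)$ is (being a closed substack of $\mathcal{C}^{\tau,\mathrm{BT}}$, which is of finite presentation by the discussion following Definition \ref{defn-C-BT}) and because $X\to\mathcal{X}(\tau)$ is itself of finite presentation. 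Since $F$ is a monomorphism of finite presentation, it is in particular representable by schemes, separated, and a monomorphism; to conclude it is an isomorphism it suffices to show it is also smooth and surjective (a smooth monomorphism is an open immersion, and a surjective open immersion is an isomorphism). Alternatively — and this is the route I would actually take — a finite-type monomorphism between Noetherian schemes/stacks that is surjective on field-valued points and formally smooth is an isomorphism; so the two things left to verify are (i) surjectivity on all points, and (ii) formal smoothness, i.e. the infinitesimal lifting criterion.

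For surjectivity, the key point is that both stacks are of finite presentation over $\F$, so it is enough to check surjectivity on $\overline{\F}_p$-points (or more generally on points valued in Artinian local $\F$-algebras, via the standard reduction using that a finite-type morphism is surjective iff it hits all the closed points of every fiber). This is exactly what Lemma \ref{diagonal-is-surj-mon} already gives us. For formal smoothness, I would verify the infinitesimal lifting criterion directly: given a square-zero extension $\widetilde{R}\twoheadrightarrow R$ of Artinian local $\F$-algebras and a commutative square with a point $(x_R,y_R,\{J_{i,R}\}_i)\in (X\times_{\mathcal{X}(\tau)}X)(\widetilde R)$ together with a lift $(g_R,x_R)$ of its image to $(G\times X)(R)$, one must produce a compatible lift to $(G\times X)(\widetilde R)$. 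Since $X$ and $G$ are smooth, $x_R$ and $g_R$ lift (non-canonically) to $\widetilde x\in X(\widetilde R)$ and $\widetilde g\in G(\widetilde R)$; the content is to adjust these lifts so that $F((\widetilde g,\widetilde x))$ matches the given point over $\widetilde R$. This is precisely a matter of solving, modulo the square-zero ideal, the same system of matrix equations — scaling relations on the CDM parameters plus the base-change matrices $\mathcal{F}_i(\mathbf{U})$ — that was solved integrally over $R$ in the proof of Lemma \ref{diagonal-is-surj-mon}; the obstruction to lifting lands in a module over the square-zero ideal, and the explicit formulas (\ref{gaction}) and (\ref{base-change-F}) together with Proposition \ref{automorphisms} and Corollary \ref{unique-base-change-upto-scalar} show it vanishes because the argument there only used the CDM normal-form theory, which is insensitive to nilpotents.

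I expect the main obstacle to be bookkeeping rather than anything conceptual: one must check that the functorial construction $\mathbf{U}\mapsto(\mathcal{F}_i(\mathbf{U}))_i$ from Proposition \ref{straightening} genuinely commutes with the square-zero base change $\widetilde R\to R$ — i.e. that the $v$-adic limit defining the $J_i$ is compatible with reduction — and that the uniqueness in Corollary \ref{unique-base-change-upto-scalar} pins down the lift of $(\lambda,\mu)$ uniquely once the reductions mod $v$ are fixed, exactly as in the monomorphism argument. A cleaner packaging, which I would prefer if it goes through, is simply this: Lemma \ref{diagonal-is-surj-mon} shows $F$ is a monomorphism surjective on Artinian points; a monomorphism locally of finite type that is surjective on Artinian local points and whose source is flat (indeed smooth) over the base is an isomorphism onto an open subscheme, and since it is also surjective it is an isomorphism — the one remaining check being that $X\times_{\mathcal{X}(\tau)}X$ is reduced and irreducible of the expected dimension so that a dense open immersion from the irreducible smooth $G\times X$ is forced to be everything. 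In either formulation, no new ideas beyond those in the proof of Lemma \ref{diagonal-is-surj-mon} are needed; the work is in verifying the finiteness/flatness hypotheses and the compatibility of the functorial base-change construction with nilpotent thickenings.
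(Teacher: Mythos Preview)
Your overall strategy coincides with the paper's: establish that $F$ is a surjective formally smooth monomorphism (hence an \'etale monomorphism, hence an open immersion, hence an isomorphism). However, you have substantially overcomplicated the formal smoothness check. Lemma~\ref{diagonal-is-surj-mon} gives not only surjectivity but also injectivity on Artinian local points (since $F$ is a monomorphism), so $F$ is \emph{bijective} on such points. The infinitesimal lifting criterion is then trivial: the given $\widetilde R$-point of $X\times_{\mathcal{X}(\tau)}X$ has a unique preimage in $(G\times X)(\widetilde R)$, and by uniqueness over $R$ this preimage automatically reduces to the prescribed $(g_R,x_R)$. This is exactly the paper's argument --- no lifting-and-adjusting, no compatibility check for $\mathcal{F}_i(\mathbf{U})$, no obstruction computation is needed. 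The paper also inserts one step you omit: before testing formal smoothness on Artinian rings, it uses the \'etale-smooth-local nature of \'etaleness to pull back to a morphism of schemes, so that the Artinian criterion applies cleanly.

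Your ``cleaner packaging'' alternative is not actually cleaner. Smoothness of $G\times X$ over $\F$ does not imply flatness of $F$, so you cannot invoke the ``flat monomorphism of finite presentation is an open immersion'' criterion directly; and verifying that $X\times_{\mathcal{X}(\tau)}X$ is reduced and irreducible of the expected dimension would require at least as much work as the direct smoothness argument. Drop this alternative and use the bijectivity shortcut instead.
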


\begin{proof}

We note that the diagonal of $F$ is an isomorphism because $F$ is a monomorphism (by Lemma \ref{diagonal-is-surj-mon}). This implies via \cite[\href{https://stacks.math.columbia.edu/tag/0AHJ}{Tag 0AHJ}]{stacks-project} that $F$ is representable by algebraic spaces.

To show $F$ is an isomorphism, we will show that $F$ is \'{e}tale since it is already known to be a surjective monomorphism and \'etale monomorphisms are open immersions. The property of being \'{e}tale is \'{e}tale-smooth local on the source-and-target by \cite[\href{https://stacks.math.columbia.edu/tag/0CG3}{Tag 0CG3}]{stacks-project}. Therefore, it suffices by \cite[\href{https://stacks.math.columbia.edu/tag/0CIF}{Tag 0CIF}]{stacks-project} to show the top arrow in the following diagram is \'{etale}, where $T$ is a smooth cover of $X \times_{\mathcal{X}(\tau)} X$ and $f: W \to T \times_{(X \times_{\mathcal{X}(\tau)} X)} (G \times_{\mathbb{F}} X)$ is an \'{e}tale cover.
\[
  \begin{tikzcd}
 W \arrow{r}{f} & T \times_{(X \times_{\mathcal{X}(\tau)} X)} (G \times_{\mathbb{F}} X) \arrow{r} \arrow[swap]{d} & T \arrow{d} \\
 & G \times_{\mathbb{F}} X \arrow{r}{F} & X \times_{\mathcal{X}(\tau)} X
  \end{tikzcd}
\]

The functor $F$ is unramified because it is locally of finite presentation with its diagonal an isomorphism. The only thing remaining to check then is that the map $W \xrightarrow{f} T \times_{(X \times_{\mathcal{X}(\tau)} X)} (G \times_{\mathbb{F}} X) \xrightarrow{pr_1} T$ is formally smooth, and since $f$ is already formally smooth, we reduce via \cite[\href{https://stacks.math.columbia.edu/tag/02HX}{Tag 02HX}]{stacks-project} to checking the lifting property for $pr_1$ along Artinian local rings. This is the content of the following lemma (since $pr_1$ is the scheme version of $F$):
\end{proof}
\begin{lemma} Suppose $R$ and $S$ are Artinian local $\mathbb{F}$-algebras with $j: Spec\; S \to Spec \; R$ a closed scheme and $j^{\#}: R \to S$ a surjection of local rings with the kernel squaring to zero. Then the dashed arrow exists in the following diagram
\[
  \begin{tikzcd}
  Spec \; S \arrow{r}{j} \arrow[swap]{d} & Spec \; R \arrow{d} \arrow[dashed]{ld}\\
  G \times_{\mathbb{F}} X \arrow{r}{F} & X \times_{\mathcal{X}(\tau)} X
  \end{tikzcd}.
\]
    
\end{lemma}

\begin{proof} The existence and uniqueness of the dashed arrow follows immediately from Lemma \ref{diagonal-is-surj-mon}, since $F$ induces a bijection for points valued in Artinian local rings.
\end{proof}

Let $l/\mathbb{F}$ be a field with $x$ an $l$-point of $X$, such that $\mathfrak{M} = \mathcal{T}(x)$ is a Breuil-Kisin module over $l$. Then there exists a map
$G \times_{\mathbb{F}}l 
\goto l \times_{\mathcal{X}(\tau)} X
\xrightarrow{\sim} \mathfrak{M} \times_{\mathcal{X}(\tau)} X$. By Lemma \ref{diagonal-is-surj-mon}, this map is surjective on field-valued points and the fiber of $G \times_{\mathbb{F}} l  
\goto l \times_{\mathcal{X}(\tau)} X$ over any field-valued point contains exactly one point, and is therefore of dimension $0$. By \cite[\href{https://stacks.math.columbia.edu/tag/0DS6}{Tag 0DS6}]{stacks-project}, the dimension of $l \times_{\mathcal{X}(\tau)} X
\xrightarrow{\sim} \mathfrak{M} \times_{\mathcal{X}(\tau)} X$ is $ \textrm{dim} (G \times_{\mathbb{F}} l) = \textrm{dim}\: G$. Since the fiber over $\M$ in $X$ is of the same dimension as $G$, the fiber over $\M$ in $[X/G]$ has dimension $0$.

Applying \cite[\href{https://stacks.math.columbia.edu/tag/0DS6}{Tag 0DS6}]{stacks-project} again to the map $\widetilde{\mathcal{T}}$ and using the above calculations of fiber dimension over $\mathfrak{M} \in \mathcal{X}(\tau)(l)$, we obtain that the dimension of the scheme-theoretic image of $\widetilde{\mathcal{T}}$ is the same as the dimension of $[X/G]$ which is $f$.

\begin{lemma}\label{candidate-presentation-is-surjective}
Suppose $\tau$ satisfies Assumption \ref{tau}.
\begin{enumerate}
   \item Let $R$ be an arbitrary $\mathbb{F}$-algebra and $\mathfrak{M} \in \mathcal{X}(\tau)(R)$. Fix an inertial basis for each $\M_i$ . Let $F_i$ denote the matrix for the Frobenius map $\varphi^{*} (\mathfrak{M}_{i-1}) \to \mathfrak{M}_{i}$ with respect to the chosen bases. Then, for each $i$, the top left entry of $F_i$ is $0$ mod $v$.
   \item
    The map $\mathcal{T}$ is a surjection onto $\mathcal{X}(\tau)$.
\end{enumerate}

\end{lemma}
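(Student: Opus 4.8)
The plan is to prove (1) by recognizing ``being in $\eta$-form at every $i$'' as a closed condition that holds on a dense substack of $\mathcal{X}(\tau)$, and then to bootstrap (2) out of (1) via the CDM normal form of Proposition~\ref{convergence-CDM-matrix}. For (1): by Lemma~\ref{lem_linearization} the top-left entry of each Frobenius matrix $F_i$, written with respect to an inertial basis, lies in $R[\![v]\!]$, and ``$F_i$ is in $\eta$-form'' (Definition~\ref{eta-eta'-form}) means precisely that its constant term vanishes; since this property is preserved under inertial base change, the conditions ``$F_i$ in $\eta$-form for all $i$'' glue (checked Zariski-locally) to a closed substack $\mathcal{V}\subseteq\mathcal{C}^{\tau,\mathrm{BT}}$. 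By Definition~\ref{defn-functor-T}(3) together with the dictionary (\ref{eta'-frob}) between $\eta'$-eigenspace matrices and full Frobenius matrices, every module in $\im\,\mathcal{T}$ lies in $\mathcal{V}$, so the irreducible closed substack $\overline{\im\,\mathcal{T}}\subseteq\mathcal{X}(\tau)$ is contained in $\mathcal{V}$. It then suffices to know $\overline{\im\,\mathcal{T}}=\mathcal{X}(\tau)$: by the fiber-dimension count carried out just before this lemma, $\overline{\im\,\mathcal{T}}$ (the underlying reduced substack of the scheme-theoretic image of $\widetilde{\mathcal{T}}$) has dimension $f$, while $\mathcal{C}^{\tau,\mathrm{BT}}$ is equidimensional of dimension $[K:\Q_p]=f$ by \cite{CEGS-local-geometry}, so the irreducible component $\mathcal{X}(\tau)$ has dimension $f$ as well; an irreducible closed substack of full dimension inside the irreducible $\mathcal{X}(\tau)$ must be all of it. Hence $\mathcal{X}(\tau)\subseteq\mathcal{V}$, which is (1).

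For (2), since surjectivity of a morphism of algebraic stacks can be checked on field-valued points, fix a field $l/\F$ and $\mathfrak{M}\in\mathcal{X}(\tau)(l)$. Then $\mathfrak{M}$ has tame $\F$-type $\tau$ with $\eta\ne\eta'$, satisfies the strong determinant condition and hence the Hodge type $\textbf{v}_0$ condition over $l$ by Lemma~\ref{hodge-type-equiv-strong-det}, and has all Frobenius maps in $\eta$-form by part (1), so $\mathfrak{M}$ is regular (Definition~\ref{regular}). As $\tau$ does not face the first obstruction (Assumption~\ref{tau}), $\mathfrak{M}$ is not of bad genre, and Proposition~\ref{convergence-CDM-matrix} then provides inertial bases putting each $F_i$ in CDM form. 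Since every $F_i$ is in $\eta$-form, only the genre-$\Ieta$ and genre-$\II$-in-$\eta$-form cases of Definition~\ref{defn-CDM-form} occur (and over the field $l$ the parameter $A_i'$ in the latter case vanishes); reading off the CDM parameters $(\alpha,\alpha',A_0,\dots,A_{f-1})$ gives a point $x=(M_0,\dots,M_{f-1})\in(\GL_2\times\SL_2^{f-1})(l)=X(l)$ --- with $M_i=\begin{psmallmatrix}1&0\\A_i&1\end{psmallmatrix}$ or $\begin{psmallmatrix}0&-1\\1&0\end{psmallmatrix}$ according to the genre of $F_i$ and $M_0$ additionally scaled on the left by $\diag(\alpha,\alpha')$ --- such that $\mathcal{T}(x)\cong\mathfrak{M}$ by direct comparison with Definition~\ref{defn-functor-T}. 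Thus $\mathcal{T}$ surjects onto $\mathcal{X}(\tau)$.

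The step I expect to carry the real weight is the identification $\overline{\im\,\mathcal{T}}=\mathcal{X}(\tau)$ in part (1): this is what propagates the $\eta$-form condition to all of $\mathcal{X}(\tau)$ rather than just to a dense open, and it leans essentially on the external equidimensionality statement for $\mathcal{C}^{\tau,\mathrm{BT}}$ together with the fiber-dimension analysis already completed; once it is in hand, part (2) and the rest are bookkeeping against the definitions of $X$, $\mathcal{T}$, and the CDM form. It is also at this point that $\mathcal{X}(\tau)$ becomes pinned down as the unique irreducible component containing $\im\,\mathcal{T}$.
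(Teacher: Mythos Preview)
Your proof is correct and follows essentially the same approach as the paper. For (1), both you and the paper define the closed substack cut out by the $\eta$-form condition (you call it $\mathcal{V}$, the paper calls it $\mathcal{L}$), note it is invariant under inertial base change, observe that $\mathcal{T}$ factors through it, and then use the fiber-dimension count together with reducedness and irreducibility of $\mathcal{X}(\tau)$ to conclude that this closed substack is all of $\mathcal{X}(\tau)$. For (2), your argument is actually more explicit than the paper's: the paper simply writes the factorization as $X \twoheadrightarrow \mathcal{L}$ and asserts that both parts follow, whereas you spell out why any field-valued point of $\mathcal{X}(\tau)$ admits a preimage by invoking Lemma~\ref{hodge-type-equiv-strong-det}, part~(1), and the CDM normal form of Proposition~\ref{convergence-CDM-matrix} to read off an explicit point of $X$. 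This explicit lifting via CDM form is exactly the mechanism the paper uses in the proof of the \emph{next} lemma (Lemma~\ref{candidate-presentation-is-etale-mono}) to verify formal smoothness, so you have simply brought that argument forward to where it is first needed.
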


\begin{proof}

Consider the substack $\mathcal{L}$ of $\mathcal{X}(\tau)$ defined in the following way: If $R$ is any $\mathbb{F}$-algebra, then $\mathcal{L}(R) \subset \mathcal{X}(\tau)(R)$ is the subgroupoid of those Breuil-Kisin modules for which the upper left entry of the Frobenius matrices is $0$ mod $v$ when the Frobenius matrices are written with respect to some inertial basis (hence, with respect to any inertial bases). A direct computation shows that this property is invariant under inertial base change. We claim, first of all, that $\mathcal{L}$ is a closed substack of $\mathcal{X}(\tau)$.

We can check it is representable by algebraic spaces and a closed immersion after pulling back to an affine scheme and working \textit{fpqc}-locally (by \cite[\href{https://stacks.math.columbia.edu/tag/0420}{Tag 0420}]{stacks-project}). Let $R$ be an $\mathbb{F}$-algebra and $\mathfrak{M}$ an $R$ point of $\mathcal{X}(\tau)$. For $i \in \mathbb{Z}/f\mathbb{Z}$, choose an inertial basis $\{e_i, f_i\}$ of $\mathfrak{M}_i$, and write Frobenius matrices $F_i$ of $\mathfrak{M}$ with respect to these bases. Suppose that for each $i$, the upper left entry of $F_i$ equals $a_i$ mod $v$, where $a_i \in R$. For every $R$-algebra $S$, the Frobenius matrices of $\mathfrak{M}_{S}$ with respect to these bases are given by $\{F_i \otimes S\}_i$. Then $\mathfrak{M}_{S}$ is a point of $\mathcal{L}$ if and only if $a_i = 0$ in $S$ for each $i$. Therefore the pullback of $\mathcal{L} \to \mathcal{X}(\tau)$ by the map $\mathfrak{M}$: $\text{Spec }R \to \mathcal{X}(\tau)$ is given by the closed immersion $V(a_0, ..., a_{f-1}) \hookrightarrow \text{Spec }R$.

Secondly, we note that $\mathcal{T}$ factors as $\mathcal{T}: X \twoheadrightarrow \mathcal{L} \hookrightarrow \mathcal{X}(\tau)$. The first map in this factorization is a surjection because for every field-valued point of $\mathcal{L}$, Proposition \ref{convergence-CDM-matrix} demonstrates the existence of an inertial basis with respect to which the Frobenius matrices are in CDM form, and thus the point is in the image of the functor $\mathcal{T}$. 
The dimension of the scheme-theoretic image of $\mathcal{T}$ is $f$ by the discussion before the statement of this Lemma, and the same is true for the dimension of $\mathcal{X}(\tau)$ by \cite[Prop.~5.2.20]{CEGS-local-geometry} (in particular, this relies on the fact that $K$ is an unramified extension of $\mathbb{Q}_p$, or else, the dimension of $\mathcal{X}(\tau)$ would be strictly greater than $f$). Since $\mathcal{X}(\tau)$ is reduced by construction in \cite[Cor.~5.3.1]{CEGS-components}, dimension considerations imply that it is the scheme-theoretic image of $\mathcal{T}$. However, the scheme-theoretic image of $\mathcal{T}$ must be contained in $\mathcal{L}$, the latter being a closed substack. Therefore, $\mathcal{L} = \mathcal{X}(\tau)$. 

Both assertions of the Lemma follow immediately.
\end{proof}


\begin{lemma}\label{candidate-presentation-is-etale-mono}
Suppose $\tau$ satisfies Assumption \ref{tau}. The map $\widetilde{\mathcal{T}}: \left[X/G\right] \to \mathcal{X}(\tau)$ is an \'etale monomorphism, representable by algebraic spaces.
\end{lemma}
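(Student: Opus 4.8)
The plan is to read off the three required properties from the isomorphism $F$ of Lemma~\ref{diagonal-is-isom} together with the surjectivity statements in Lemmas~\ref{diagonal-is-surj-mon} and~\ref{candidate-presentation-is-surjective}. First I would treat the monomorphism part. The morphism $\pi\colon X\to[X/G]$ is a smooth surjection, and the groupoid presentation of the quotient identifies $X\times_{[X/G]}X$ with $G\times_{\mathbb{F}}X$ in such a way that the canonical comparison morphism $X\times_{[X/G]}X\to X\times_{\mathcal X(\tau)}X$ attached to $\widetilde{\mathcal T}$ is exactly the functor $F$ of Definition~\ref{defn-F} (indeed $\widetilde{\mathcal T}$ was \emph{defined} as the morphism induced by $F$, Definition~\ref{T-tilde}). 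The morphism $q\colon X\times_{\mathcal X(\tau)}X\to[X/G]\times_{\mathcal X(\tau)}[X/G]$ is a smooth surjection, being a composition of two base changes of $\pi$, and the pullback of the diagonal $\Delta_{\widetilde{\mathcal T}}\colon[X/G]\to[X/G]\times_{\mathcal X(\tau)}[X/G]$ along $q$ is, under the identification above, precisely $F$. Since $F$ is an isomorphism and being an isomorphism is smooth-local on the target, $\Delta_{\widetilde{\mathcal T}}$ is an isomorphism, so $\widetilde{\mathcal T}$ is a monomorphism; as its diagonal is then an isomorphism, $\widetilde{\mathcal T}$ is representable by algebraic spaces by \cite[\href{https://stacks.math.columbia.edu/tag/0AHJ}{Tag 0AHJ}]{stacks-project}, exactly as in the proof of Lemma~\ref{diagonal-is-isom}. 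It is locally of finite presentation because $[X/G]$ and $\mathcal X(\tau)$ are of finite presentation over $\mathbb{F}$, and it is unramified because its diagonal is an isomorphism, in particular an open immersion.

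It then remains to show $\widetilde{\mathcal T}$ is smooth, for which, being locally of finite presentation, it suffices to prove formal smoothness, and by \cite[\href{https://stacks.math.columbia.edu/tag/02HX}{Tag 02HX}]{stacks-project} this may be tested against square-zero thickenings of Artinian local $\mathbb{F}$-algebras, just as in the proof of Lemma~\ref{diagonal-is-isom}. So I would take a square-zero thickening $R\twoheadrightarrow S$ of Artinian local $\mathbb{F}$-algebras and a $2$-commutative square given by $\mathfrak M\in\mathcal X(\tau)(R)$ and $\xi\in[X/G](S)$ lying over $\mathfrak M\otimes_R S$, and produce a lift $\Spec R\to[X/G]$. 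Since $R$ and $S$ are local and $G$ is a product of copies of $\mathbb{G}_m$ and $\mathbb{G}_a$, every $G$-torsor over $\Spec R$ or $\Spec S$ is trivial, so $[X/G](R)=[X(R)/G(R)]$ (and similarly for $S$) and $\xi$ is represented by some $x_S\in X(S)$. By Lemma~\ref{candidate-presentation-is-surjective}(1) the module $\mathfrak M$ lies in the closed substack $\mathcal L$, hence is regular with every Frobenius map in $\eta$-form; choosing an inertial basis for each $\mathfrak M_i$ (possible since $R$ is local, by Lemma~\ref{InertiaForm}) and reducing the Frobenius matrices to CDM form via Propositions~\ref{convergence-CDM-matrix} and~\ref{unipotent-action-gives-CDM-form-I-eta} (legitimate under Assumption~\ref{tau}) exhibits $\mathfrak M$ as $\mathcal T(\tilde x)$ for some $\tilde x\in X(R)$. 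Then $\mathcal T(\tilde x\otimes_R S)\cong\mathfrak M\otimes_R S\cong\mathcal T(x_S)$ in $\mathcal X(\tau)$, so by Lemma~\ref{diagonal-is-surj-mon} (surjectivity of $F$ on Artinian local points) there is $g_S\in G(S)$ with $g_S\boldsymbol{\cdot}(\tilde x\otimes_R S)=x_S$, compatibly with the given isomorphisms; lifting $g_S$ to $g\in G(R)$ (possible since $G$ is smooth, so $G(R)\to G(S)$ is surjective), the $R$-point of $[X/G]$ represented by $g\boldsymbol{\cdot}\tilde x$ maps under $\widetilde{\mathcal T}$ to $\mathcal T(g\boldsymbol{\cdot}\tilde x)\cong\mathfrak M$ and restricts over $S$ to $\xi$. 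Hence $\widetilde{\mathcal T}$ is formally smooth, therefore smooth, and combined with the unramifiedness above this gives that $\widetilde{\mathcal T}$ is \'etale.

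I expect the main obstacle to be the infinitesimal lifting step, and specifically the assertion that over an Artinian local ring $R$ every object of $\mathcal X(\tau)(R)$ is genuinely in the essential image of $\mathcal T$ (rather than only fppf-locally, which would not suffice here); this is exactly where the reduction to CDM form — hence the exclusion of the first and second obstructions in Assumption~\ref{tau} — is indispensable, since that reduction is available only over Artinian local coefficients. Once that point is in hand, the remaining ingredients — surjectivity of $F$ on Artinian local points (Lemma~\ref{diagonal-is-surj-mon}) and smoothness of $G$ — make the construction of the lift routine, and checking compatibility of the accompanying $2$-morphisms is a bookkeeping matter.
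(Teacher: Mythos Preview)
Your proposal is correct and follows essentially the same route as the paper: deduce that $\Delta_{\widetilde{\mathcal T}}$ is an isomorphism from Lemma~\ref{diagonal-is-isom} via the cartesian square comparing $G\times_{\mathbb F}X\to X\times_{\mathcal X(\tau)}X$ with the diagonal, then verify formal smoothness by the Artinian lifting criterion, producing the lift by putting the given $R$-point of $\mathcal X(\tau)$ into CDM form and adjusting by a lifted $G$-element. One small point to tighten: Lemma~\ref{candidate-presentation-is-surjective}(1) only gives that each Frobenius is in $\eta$-form, not that $\mathfrak M$ is of Hodge type $\mathbf v_0$ over $R$; the paper fills this in by noting the determinant has $u$-adic valuation $e$ modulo $\mathfrak m$ (via Lemma~\ref{hodge-type-equiv-strong-det}) and hence over $R$, which is what actually licenses Proposition~\ref{convergence-CDM-matrix}.
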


\begin{proof}
To see that $\mathcal{\tilde{T}}$ is a monomorphism and representable by algebraic spaces, we show that the diagonal is an isomorphism. This is implied by the fact that the top arrow in the following cartesian diagram is an isomorphism (by Lemma \ref{diagonal-is-isom}) and \cite[\href{https://stacks.math.columbia.edu/tag/04XD}{Tag 04XD}]{stacks-project}.
 \[
 \begin{tikzcd}
    G \times_{\mathbb{F}} X \arrow{r}{\text{pr}_2, \text{action}} \arrow[swap]{d} & X \times_{\mathcal{X}(\tau)} X \arrow{d} \\
    \left[X/G\right] \arrow{r}{\Delta} & \left[X/G\right] \times_{\mathcal{X}(\tau)} \left[X/G\right]
 \end{tikzcd}
\]
Since the diagonal is an isomorphism, we also have that $\mathcal{\tilde{T}}$ is unramified. Therefore, to show \'{e}taleness, it suffices to show that $\mathcal{\tilde{T}}$ is formally smooth \cite[\href{https://stacks.math.columbia.edu/tag/0DP0}{Tag 0DP0}]{stacks-project}. As quotient map $X \to \left[X/G\right]$ is smooth, we reduce to showing formal smoothness of $\mathcal{T}$ by checking the lifting property along Artinian local rings as in Lemma \ref{diagonal-is-isom}. It suffices then to show the following:

Suppose $R$ and $S$ are Artinian local $\mathbb{F}$-algebras with $j: Spec\, S \to Spec \, R$ a closed scheme and $j^{\#}: R \to S$ a surjection of local rings with the kernel $I$ squaring to zero. Then the dashed arrow in the following diagram exists so that all triangles commute:

\[
  \begin{tikzcd}
  Spec \; S \arrow{r}{j} \arrow[swap]{d}{a} & Spec \; R \arrow{d}{b} \arrow[dashed]{ld}\\
   X \arrow{r}{\mathcal{T}} &\mathcal{X}(\tau)
  \end{tikzcd}.
\]
 
In order to construct such an arrow, we first claim that there exists some $c \in X(R)$ such that $\mathcal{T}(c) = b$. To see this, note that the determinant of each of the Frobenius matrices of $b$ is divisible by $v$ (by Lemma \ref{candidate-presentation-is-surjective}(1)). Further, modulo the maximal ideal of $R$, the $u$-adic valuation of the determinant of each Frobenius map is $e$ (by Lemma \ref{hodge-type-equiv-strong-det}). Therefore, the same holds true over $R$, and consequently $b$ is of Hodge type $\textbf{v}_0$ (see Definition \ref{defn-hodge-type}). Moreover, again by Lemma \ref{candidate-presentation-is-surjective}(1), each Frobenius matrix is in $\eta$-form (see Definition \ref{eta-eta'-form}). By Proposition \ref{convergence-CDM-matrix}, we can find a CDM form for $b$ giving us a suitable point $c \in X(R)$.

Since $\mathcal{T}(a) = b \circ j = \mathcal{T}(c \circ j)$, there exists some $g \in G(S)$, such that $g \boldsymbol{\cdot} (c \circ j) =  a$ (by Lemma \ref{diagonal-is-surj-mon}). Lift $g$ to any $\tilde{g} \in G(R)$. Then $\tilde{g} \boldsymbol{\cdot} c$ is the appropriate choice for the dashed arrow in the diagram above. 
\end{proof}

\begin{proof}[Proof of Proposition \ref{finite-presentation-of-irred-component}]
Follows from Lemmas \ref{candidate-presentation-is-surjective} and \ref{candidate-presentation-is-etale-mono}.
\end{proof}

\begin{prop}\label{global-functions-X}
Suppose $\tau$ satisfies Assumption \ref{tau}. The ring of global functions on $\mathcal{X}(\tau)$ is isomorphic to $\F[x, y][\dfrac{1}{y}]$.
\end{prop}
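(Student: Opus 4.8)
The plan is to leverage the presentation $\mathcal{X}(\tau)\cong[X/G]$ from Proposition~\ref{finite-presentation-of-irred-component}, with $X=(\GL_2)_\F\times_\F(\SL_2)_\F^{\,f-1}$ and $G=\mathbb{G}_m^{f+1}\times_\F U^f$ acting through (\ref{gaction}), and then compute $\mathcal{O}([X/G])$ as a ring of invariants. Since $X$ is affine and $G$ is a smooth affine group scheme over $\F$, the cover $X\to[X/G]$ is a $G$-torsor with $X\times_{[X/G]}X\cong G\times_\F X$, so fppf descent along this cover identifies $\Gamma([X/G],\mathcal{O})$ with the ring $\mathcal{O}(X)^G$ of $G$-invariant global functions on $X$. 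Everything then reduces to an explicit invariant-theory computation for the action (\ref{gaction}).

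First I would peel off the unipotent part. Write $G=T\times V$ with $T:=\mathbb{G}_m^{f+1}$ and $V:=U^f$. These two subgroups commute and generate $G$, so a global function is $G$-invariant as soon as it is both $T$-invariant and $V$-invariant; moreover $T$ normalizes $V$, so $T$ acts on $\mathcal{O}(X)^V$ and $\mathcal{O}(X)^G=(\mathcal{O}(X)^V)^T$. By inspection of (\ref{gaction}), $V$ acts on $X$ one matrix factor at a time, the $i$-th copy of $U$ acting on the $i$-th factor by left multiplication by an upper unipotent matrix. Recording the bottom row realizes $\SL_2\to\A^2\smallsetminus\{0\}$ as a Zariski-locally trivial $U$-torsor (trivialized over $\{c\neq0\}$ and over $\{d\neq0\}$), whence $\mathcal{O}(\SL_2)^U=\mathcal{O}(\A^2\smallsetminus\{0\})=\F[c,d]$ by normality of $\A^2$; similarly, recording the bottom row together with the determinant realizes $\GL_2\to(\A^2\smallsetminus\{0\})\times\mathbb{G}_m$ as a Zariski-locally trivial $U$-torsor, so $\mathcal{O}(\GL_2)^U=\F[c_0,d_0][\delta_0^{\pm1}]$ with $\delta_0:=\det$. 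Since $\F$ is a field, invariants distribute over the tensor product, giving $\mathcal{O}(X)^V=\F[c_0,d_0,\dots,c_{f-1},d_{f-1}][\delta_0^{\pm1}]$: the localization at $\delta_0$ of a polynomial ring in the $2f+1$ algebraically independent elements $c_0,d_0,\dots,c_{f-1},d_{f-1},\delta_0$, on which $T$ acts diagonally.

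Next I would read off the $T$-weights and take invariants. A direct matrix computation from (\ref{gaction}), writing a point of $T$ as $(\lambda,\mu,r_1,\dots,r_{f-1})$ with the convention $r_0:=1$ and $r$-indices read modulo $f$, gives $\delta_0\mapsto\delta_0$, $\ d_i\mapsto r_i^{-1}r_{i-1}\,d_i$, and $c_i\mapsto\lambda\mu^{-1}r_i^{-1}r_{i-1}^{-1}\,c_i$ for $i\in\Z/f\Z$. A monomial $\prod_i c_i^{p_i}\prod_i d_i^{q_i}\delta_0^{\,s}$ ($p_i,q_i\ge0$, $s\in\Z$) has $\lambda$-weight $\sum_i p_i$, so $T$-invariance forces every $p_i=0$; among the remaining monomials, $r_j$-invariance for $1\le j\le f-1$ yields the linear system $q_1=\dots=q_{f-1}$ and $q_0=q_{f-1}$, i.e.\ $q_0=\dots=q_{f-1}$. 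Hence $\mathcal{O}(X)^G$ is spanned by the monomials $(d_0d_1\cdots d_{f-1})^{a}\,\delta_0^{\,b}$ with $a\ge0$, $b\in\Z$, and setting $x:=d_0d_1\cdots d_{f-1}$ and $y:=\delta_0$ — algebraically independent in the polynomial ring above — yields $\mathcal{O}(\mathcal{X}(\tau))=\mathcal{O}(X)^G\cong\F[x,y][\tfrac1y]$, as claimed.

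Most of the work was already carried out in Proposition~\ref{finite-presentation-of-irred-component}; what remains is elementary, and the only steps requiring genuine care are the bookkeeping of the $T$-weights across the cyclic indexing — where the asymmetric roles of the index $0$, of the parameter $r_{f-1}$, and of $\delta_0$ invite sign and index errors, so I would carry out the relevant matrix multiplications in (\ref{gaction}) explicitly — and the identification $\mathcal{O}(\SL_2)^U=\F[c,d]$ in characteristic $p$, where one cannot cite general invariant theory of $\mathbb{G}_a$ and should instead invoke the torsor description above.
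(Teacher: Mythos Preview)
Your proof is correct and follows essentially the same approach as the paper: both use Proposition~\ref{finite-presentation-of-irred-component} to reduce to computing $\mathcal{O}(X)^G$, first pass to the $U^f$-quotient via the isomorphisms $U\backslash\SL_2\cong\mathbb{A}^2\smallsetminus\{0\}$ and $U\backslash\GL_2\cong(\mathbb{A}^2\smallsetminus\{0\})\times\mathbb{G}_m$, and then compute the torus invariants by reading off weights. Your presentation differs only cosmetically --- you unify the weight formulas with the convention $r_0:=1$ and supply a torsor justification for the $U$-invariant step, whereas the paper splits into cases $i=0,1,\ge 2$ and simply states the quotient isomorphisms --- but the argument is the same.
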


\begin{proof}
By Proposition \ref{finite-presentation-of-irred-component} the global functions of $\mathcal{X}(\tau)$ are the $G$-invariant global functions of $X$, where $G = (\mathbb{G}_{m})^{f+1}_{\mathbb{F}} \times_{\mathbb{F}} U_{\mathbb{F}}^{f}$ and $X = (\GL_{2})_{\mathbb{F}} \times_{\mathbb{F}} (\SL_{2})_{\mathbb{F}}^{f-1}$ and the $G$-action on $X$ is as in (\ref{gaction}). These functions are the same as the $(\mathbb{G}_{m})_{ \mathbb{F}}^{f+1}$-invariant global functions of $(U \backslash \GL_2)_{ \mathbb{F}} \times_{\mathbb{F}} (U \backslash \SL_2)_{\mathbb{F}}^{f-1}$. By the isomorphisms
\begin{align*}
    &U \backslash \GL_2 \xrightarrow{\sim} \mathbb{A}^2 \smallsetminus \{0\} \times \mathbb{G}_m,\\
    &\begin{pmatrix} a & b \\
c & d \end{pmatrix} \mapsto ((c, d), \: ad-bc) \nonumber
\end{align*} and
\begin{align*}
    &U \backslash \SL_2 \xrightarrow{\sim} \mathbb{A}^2 \smallsetminus \{0\}, \\
    &\begin{pmatrix} a & b \\
c & d \end{pmatrix} \mapsto (c, d), \nonumber
\end{align*}
the ring of global functions of $(U \backslash \GL_2)_{ \mathbb{F}} \times_{\mathbb{F}} (U \backslash \SL_2)_{\mathbb{F}}^{f-1}$ is isomorphic to $$\F[c_0, ..., c_{f-1}, d_0, ..., d_{f-1}, D][\dfrac{1}{D}]$$ where $\{c_i, d_i\}$ capture the lower two entries of the $i$-th matrix group while $D$ captures the determinant of the $\GL_2$ matrices. Under this identification, $(\lambda, \mu, r_1, r_2, ..., r_{f-1}) \in (\mathbb{G}_m)^{f+1}_{\mathbb{F}}$ acts on the global functions of $(U \backslash \GL_2)_{ \mathbb{F}} \times_{\mathbb{F}} (U \backslash \SL_2)_{\mathbb{F}}^{f-1}$ via:
\bal
&g \boldsymbol{\cdot} c_i =
\begin{cases}
\lambda \mu^{-1} r_{i-1}^{-1} c_i &\text{if } i = 0,\\
\lambda \mu^{-1} r_i^{-1}  c_i &\text{if } i = 1, \\
\lambda \mu^{-1} r_{i}^{-1} r_{i-1}^{-1} c_i &\text{if } i \in \{2,\dots, f-1\},\\
\end{cases}\\
&g \boldsymbol{\cdot} d_i =
\begin{cases}
r_{i-1} d_i &\text{if } i = 0,\\
r_i^{-1} d_i &\text{if } i = 1, \\
r_{i}^{-1} r_{i-1} d_i &\text{if } i \in \{2,\dots, f-1\},\\
\end{cases}\\
&g \boldsymbol{\cdot} D = D.\\
\nal
Therefore, the subring of $(\mathbb{G}_m)^{f-1}_{\mathbb{F}}$-invariant functions is $\F[d_0 \cdots d_{f-1}, D][\dfrac{1}{D}] \cong \F[x, y][\dfrac{1}{y}]$.
\end{proof}

\subsection{Identifying the component}\label{identify-component-subsec}
Our next order of business is to identify precisely which irreducible component of $\mathcal{C}^{\tau, \mathrm{BT}}$ can be written as the quotient stack $[X/G]$ using the strategy employed in Section \ref{smooth sec}. \cite[Cor.~5.3.1]{CEGS-components} shows that the irreducible components of $\mathcal{C}^{\tau, \mathrm{BT}}$ are in one-to-one correspondence with subsets of $\mathbb{Z}/f\mathbb{Z}$ called profiles. We now recall the definition of the profile of a Breuil-Kisin module and some of the specifics of the correspondence between irreducible components and profiles as it applies to our situation.

\begin{defn}\label{defn-profile}
    Let $\mathfrak{B} \in \mathcal{C}^{\tau, \mathrm{BT}}(\overline{\mathbb{F}})$ be an extension of $\M$ by $\mathfrak{N}$, where $\M$ and $\mathfrak{N}$ are two rank $1$ Breuil-Kisin modules.  For each $i$, let $m_i$ be a generator of $\M_i$ as an $\overline{\mathbb{F}}[\![u]\!]$ module.
    
    The profile of $\mathfrak{B}$ is the set $J := \{i \in \mathbb{Z}/f\mathbb{Z} \mid \forall g \in Gal(K'/K), g m_i \equiv \eta(g) m_i \mod u\}$. If we suppose the image of $m_{i-1}$ under Frobenius is $a_{i} u^{r_{i}} m_{i}$, where $a_{i} \in \overline{\mathbb{F}}[\![u]\!]^*$,
    then the refined profile of $\mathfrak{B}$ is the pair $(J, r)$ where $J$ is the profile of $\mathfrak{B}$ and $r= (r_i)_{i \in \mathbb{Z}/f\mathbb{Z}}$. 
\end{defn}


\begin{defn}
    Let $r = (r_i)_{i \in \mathbb{Z}/f\mathbb{Z}}$ be as follows:
\begin{align}\label{r-max-profile}
r_i = \begin{cases}
e &\text{ if } i-1, i \in J \text{ or if } i-1, i \not\in J,  \\
\gamma_i &\text{ if } i-1 \in J \text{ and } i \not\in J, \\
e-\gamma_i &\text{ if } i-1 \not\in J \text{ and } i \in J.
\end{cases}
\end{align}
Then the maximal refined profile associated to $J$ is $(J, r)$.
\end{defn}

By \cite[Lem.~4.2.14]{CEGS-components}, the irreducible component $\mathcal{C}^{\tau, \mathrm{BT}}(J)$ is the closure of a constructible set whose $\overline{\mathbb{F}}$ points are precisely the Breuil-Kisin modules of maximal refined profile associated to $J$.

\begin{lemma}\label{profile-of-I-eta-I-eta-genre}
Let $\tau = \eta \oplus \eta'$ be a tame principal series $\F$-type with $\eta \neq \eta'$. Let $J \subset \mathbb{Z}/f\mathbb{Z}$. Then $\mathcal{C}^{\tau, \mathrm{BT}}(J)(\overline{\mathbb{F}})$ contains a dense open subset of Breuil-Kisin modules $\mathfrak{B}$ satisfying $\G(\mathfrak{B}_i)=\Ieta$ for each $i$ if and only if $J = \mathbb{Z}/f\mathbb{Z}$.
\end{lemma}
\begin{proof}
Let $\mathfrak{B}$ be an $\overline{\mathbb{F}}$ point of maximal refined profile associated to $J$. Let $\mathfrak{B}$ be an extension of $\M$ by $\mathfrak{N}$ where $\M$ and $\mathfrak{N}$ are two rank $1$ Breuil-Kisin modules. For each $i$, choose a generator $m_i$ of $\M_i$ and $n_i$ of $\mathfrak{N}_i$ as $\overline{\mathbb{F}}[\![u]\!]$-modules. Let the image under Frobenius of $m_{i-1}$ be $a_{i} u^{r_{i}} m_{i}$ and that of $n_{i-1}$ be $a'_{i} u^{r'_{i}} n_{i}$ for some $a_{i}, a'_{i} \in \overline{\mathbb{F}}[\![u]\!]^{*}$. The strong determinant condition forces that $r_i + s'_i = e$ for each $i$ by Lemma \ref{hodge-type-equiv-strong-det}. By making careful choices of $m_i$ and $n_i$ (using either \cite[Lem.~4.1.1]{CEGS-components} or the proof of Lemma \ref{InertiaForm}), we can construct an inertial basis of $\mathfrak{B}_i$ made up of $m_i$ and a lift $\tilde{n}_i$ of $n_i$. Now we use the explicit description of $r_i$ in (\ref{r-max-profile}) to check the genre of the Frobenius maps for $\mathfrak{B}$ for different $J$'s. We have the following possibilities:
\begin{enumerate}
    \item If $J = \mathbb{Z}/f\mathbb{Z}$, then $\Gal(K'/K)$ acts on $m_i$ via $\eta$ (and therefore on $n_i$ via $\eta'$) for each $i \in \mathbb{Z}/f\mathbb{Z}$. Since $r_i = e$, the genre of $\mathfrak{B}_i$ is $\Ieta$ for each $i$.
    \item If $J$ is the empty set, $\Gal(K'/K)$ acts on $n_i$ via $\eta$ for each $i \in \mathbb{Z}/f\mathbb{Z}$. As $r'_i = 0$, the genre of $\mathfrak{B}_i$ is $\Ietaa$ for each $i$.
    \item If $J$ is neither $\mathbb{Z}/f\mathbb{Z}$ nor empty, then there exists an $i \in \mathbb{Z}/f\mathbb{Z}$ such that $i \in J$, but $i+1 \not \in J$. This implies that $r_{i+1} = \gamma_{i+1}$ and $r'_{i+1} = e - \gamma_{i+1}$. Consider the Frobenius matrix for $\varphi^{*}\mathfrak{B}_i \to \mathfrak{B}_{i+1}$ with respect to inertial bases $(m_i, \tilde{n}_i)$ of $\mathfrak{B}_i$ and $(\tilde{n}_{i+1}, m_{i+1})$ of $\mathfrak{B}_{i+1}$. The matrix has a zero in the lower right corner, and therefore is of genre $\Ietaa$ or of genre $\II$. Either way, $\mathcal{G}(\mathfrak{B}_{i+1}) \neq \Ieta$. 
\end{enumerate}

\end{proof}

\begin{cor}\label{image-of-T}
Recall $\widetilde{\mathcal{T}}$ from Definition \ref{T-tilde}. The scheme-theoretic image of $\widetilde{\mathcal{T}}$ is $\mathcal{C}^{\tau, \mathrm{BT}}(\mathbb{Z}/f\mathbb{Z})$.

\end{cor}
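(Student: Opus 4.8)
\emph{Proof strategy.} The plan is to combine the isomorphism $\widetilde{\mathcal{T}}\colon[X/G]\xrightarrow{\sim}\mathcal{X}(\tau)$ of Proposition~\ref{finite-presentation-of-irred-component} with a genre computation for the Breuil--Kisin modules in the image of $\mathcal{T}$ and with the classification of the irreducible components of $\mathcal{C}^{\tau,\mathrm{BT}}$ by shapes. Since $\widetilde{\mathcal{T}}$ is an isomorphism, its scheme-theoretic image is $\mathcal{X}(\tau)$ itself, and $\mathcal{X}(\tau)$ is, by its construction in Section~\ref{smooth sec}, an irreducible component of $\mathcal{C}^{\tau,\mathrm{BT}}$; by \cite[Cor.~5.3.1]{CEGS-components} we thus have $\mathcal{X}(\tau)=\mathcal{C}^{\tau,\mathrm{BT}}(J)$ for a unique shape $J\subseteq\mathbb{Z}/f\mathbb{Z}$, so it only remains to show $J=\mathbb{Z}/f\mathbb{Z}$.

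To identify $J$, I would pass to the open locus $X^{\circ}\subseteq X$ on which the bottom-right entry $d_i$ of every factor matrix is invertible; it is nonempty, hence dense as $X$ is irreducible, and it is $G$-stable because the $G$-action of (\ref{gaction}) merely rescales each $d_i$ by a unit. For $x\in X^{\circ}(R)$ the $i$-th Frobenius matrix of $\mathcal{T}(x)$, in the inertial basis of Definition~\ref{defn-functor-T}, is $\begin{psmallmatrix} v a_i & u^{e-\gamma_i}b_i\\ u^{\gamma_i}c_i & d_i\end{psmallmatrix}$ with $d_i$ a unit: its top-left entry is divisible by $v$, so each Frobenius map of $\mathcal{T}(x)$ is in $\eta$-form (Definition~\ref{eta-eta'-form}), while its bottom-right entry is a unit mod $v$. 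Consulting the genre trichotomy following Lemma~\ref{hodge-type-equiv-strong-det}, being in $\eta$-form excludes genre $\Ietaa$ and $d_i$ being a unit excludes genre $\II$, so $\mathcal{G}(\mathcal{T}(x)_i)=\Ieta$ for all $i$. Hence every $\overline{\mathbb{F}}$-point of $\mathcal{T}(X^{\circ})$ is a Breuil--Kisin module of genre $\Ieta$ at every embedding.

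The next step is to observe that $\mathcal{T}(X^{\circ})$ is a \emph{dense open} substack of $\mathcal{X}(\tau)$. Factoring $\mathcal{T}$ as $X\xrightarrow{q}[X/G]\xrightarrow{\widetilde{\mathcal{T}}}\mathcal{X}(\tau)$, the quotient morphism $q$ is smooth and surjective, hence open; since $X^{\circ}$ is $G$-stable it is identified with the open substack $[X^{\circ}/G]$ of $[X/G]$, which is dense because the preimage under $q$ of any nonempty open substack meets the dense set $X^{\circ}$. Therefore $\mathcal{C}^{\tau,\mathrm{BT}}(J)=\mathcal{X}(\tau)$ contains a dense open subset of Breuil--Kisin modules of genre $\Ieta$ everywhere, and Lemma~\ref{shape-of-I-eta-I-eta-genre} then forces $J=\mathbb{Z}/f\mathbb{Z}$. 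Combining the two halves, the scheme-theoretic image of $\widetilde{\mathcal{T}}$ is $\mathcal{C}^{\tau,\mathrm{BT}}(\mathbb{Z}/f\mathbb{Z})$.

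The main (and essentially only) point requiring care is the third paragraph: checking that $\mathcal{T}(X^{\circ})$ is open and dense, rather than merely constructible, so that Lemma~\ref{shape-of-I-eta-I-eta-genre} genuinely applies — this is dispatched by passing through the $G$-torsor $q$. The remaining ingredients (the scheme-theoretic image of an isomorphism, the explicit shape of the Frobenius matrices of $\mathcal{T}(x)$, the genre trichotomy, and the shape/component dictionary) are immediate from the results recalled above.
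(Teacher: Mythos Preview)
Your proposal is correct and follows essentially the same approach as the paper: use the isomorphism of Proposition~\ref{finite-presentation-of-irred-component} to identify the scheme-theoretic image with $\mathcal{X}(\tau)$, then observe that the dense open locus where each $d_i$ is a unit consists of Breuil--Kisin modules of genre $\Ieta$ at every embedding, and conclude via Lemma~\ref{shape-of-I-eta-I-eta-genre}. The paper's proof is terser (it does not spell out the $G$-stability of $X^{\circ}$ or the passage through the quotient map), but the argument is the same.
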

\begin{proof}
Since $\widetilde{\mathcal{T}}: [X/G] \to \mathcal{X}(\tau)$ is an isomorphism (by Proposition \ref{finite-presentation-of-irred-component}), there exists a dense open set of $\mathcal{X}(\tau)$ having the following property: If $\mathfrak{B}$ is an $\overline{\mathbb{F}}$ point of this dense open, then the lower right entry of each of its Frobenius matrices (with respect to inertial bases) is invertible. In other words, each Frobenius map has genre $\Ieta$. By Lemma \ref{profile-of-I-eta-I-eta-genre}, $\mathcal{X}(\tau)$ must be $\mathcal{C}^{\tau, \mathrm{BT}}(\mathbb{Z}/f\mathbb{Z})$.
\end{proof}

\begin{cor}\label{global-functions}
Let $\tau$ be a tame principal series $\F$-type satisfying Assumption \ref{tau}. Then the ring of global functions on $\mathcal{C}^{\tau, \mathrm{BT}}(\mathbb{Z}/f\mathbb{Z})$ is isomorphic to $\F[x, y][\dfrac{1}{y}]$.
\end{cor}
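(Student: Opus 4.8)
The plan is to read this off directly from the two preceding results. First I would invoke Corollary~\ref{image-of-T} together with Proposition~\ref{finite-presentation-of-irred-component}: since $\widetilde{\mathcal{T}}\colon[X/G]\to\mathcal{X}(\tau)$ is an isomorphism, its scheme-theoretic image inside $\mathcal{C}^{\tau,\mathrm{BT}}$ is simply $\mathcal{X}(\tau)$ itself, and Corollary~\ref{image-of-T} identifies that scheme-theoretic image with the irreducible component $\mathcal{C}^{\tau,\mathrm{BT}}(\mathbb{Z}/f\mathbb{Z})$. Hence $\mathcal{X}(\tau)=\mathcal{C}^{\tau,\mathrm{BT}}(\mathbb{Z}/f\mathbb{Z})$ as (closed, reduced) substacks of $\mathcal{C}^{\tau,\mathrm{BT}}$.

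Second, I would quote Proposition~\ref{global-functions-X}, which computes the ring of global functions on $\mathcal{X}(\tau)$ to be $\F[x,y][\frac{1}{y}]$. Combining this with the identification $\mathcal{X}(\tau)=\mathcal{C}^{\tau,\mathrm{BT}}(\mathbb{Z}/f\mathbb{Z})$ from the previous paragraph immediately gives the stated isomorphism for the ring of global functions on $\mathcal{C}^{\tau,\mathrm{BT}}(\mathbb{Z}/f\mathbb{Z})$, completing the proof.

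This corollary is purely a bookkeeping step that assembles Propositions~\ref{finite-presentation-of-irred-component} and~\ref{global-functions-X} with Corollary~\ref{image-of-T}, so there is no genuine obstacle. The only point worth making explicit is the (immediate) fact that the scheme-theoretic image, in the sense of \cite{EG-Scheme-Image}, of an isomorphism onto a reduced closed substack is that substack, which follows from the universal property of the scheme-theoretic image. Since Assumption~\ref{tau} is in force throughout this subsection, all of Section~\ref{smooth sec} applies and nothing further is required.
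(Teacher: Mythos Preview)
Your proof is correct and follows the same approach as the paper, which simply cites Proposition~\ref{global-functions-X} and Corollary~\ref{image-of-T}. Your explicit invocation of Proposition~\ref{finite-presentation-of-irred-component} is already built into the proof of Corollary~\ref{image-of-T}, so the two arguments are identical in substance.
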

\begin{proof}
It follows from Proposition \ref{global-functions-X} and Corollary \ref{image-of-T}.
\end{proof}

\section{Passage to the Emerton-Gee stack}\label{passage sec}

\subsection{Image of irreducible components of \texorpdfstring{$\mathcal{C}^{\tau, \mathrm{BT}}$}{} in \texorpdfstring{$\mathcal{Z}$}{}}

Given a tame principal series $\F$-type $\tau$, $\mathcal{Z}^{\tau}$ is the scheme-theoretic image of $\mathcal{C}^{\tau, \mathrm{BT}}$ in $\mathcal{Z}$ (Definition \ref{defn-Z}). By \cite[Prop.~5.2.20]{CEGS-local-geometry}, $\mathcal{Z}^{\tau}$ is of pure dimension $[K:\mathbb{Q}_p]$. \cite[Cor.~5.3.1]{CEGS-components} tells us that the irreducible components of $\mathcal{Z}^{\tau}$ are indexed by profiles $J \in \mathcal{P}_{\tau}$, where $\mathcal{P}_{\tau}$ is defined in the following way.

\begin{defn}\label{defn-P-tau}    For a tame principal series $\F$-type $\tau=\eta\oplus\eta'$, let $\mathcal{P}_\tau$ be the collection of profiles $J\subset\mathbb{Z}/f\mathbb{Z}$ such that
\begin{itemize}
        \item if $i-1\in J$ and $i\not\in J$, then $z_{i}\neq p-1$;
        \item if $i-1\not\in J$ and $i\in J$, then $z_{i}\neq 0$.
    \end{itemize}
    
    (Recall $z_{i}$ from (\ref{defn-z_i})).
\end{defn}

We denote by $\mathcal{Z}^{\tau}(J)$ the irreducible component of $\mathcal{Z}^{\tau}$ indexed by $J$. \cite[Prop.~5.1.13]{CEGS-components} shows that $\mathcal{Z}^{\tau}(J)$ is the scheme-theoretic image of $\mathcal{C}^{\tau, \mathrm{BT}}(J)$. The irreducible components of $\mathcal{Z}$ are indexed by Serre weights, and for each $\sigma$ a Serre weight, $\mathcal{Z}(\sigma)$ can show up in $\mathcal{Z}^{\tau}$ for multiple choices of $\tau$. Thus we need to specify a dictionary to go from $J \in \mathcal{P}_{\tau}$ to a Serre weight $\sigma$.

For $J\in\mathcal{P}_\tau$, let $\delta_J$ denote the characteristic function of the set $J$. Define the integers $b_i$ and $a_i$ by
\begin{align}\label{z-to-serre-weight}
a_i=\begin{cases}
    z_{i}+\delta_{J^c}(i) & \text{if }i-1\in J \\
    0 & \text{if }i-1\not\in J
    \end{cases},\hspace{2cm}b_i=\begin{cases}
    p-1-z_{i}-\delta_{J^c}(i) & \text{if }i-1\in J \\
    z_{i}-\delta_{J}(i) & \text{if }i-1\not\in J
    \end{cases}.
\end{align}

Viewing $\eta'$ as a map $k^{\times} \to \F$ via Artin reciprocity, let $\sigma_{J}$ be the Serre weight $\sigma_{\vec{a}, \vec{b}} \otimes \eta' \circ \det$. Then by \cite[Thm.~5.1.17, Appendix~A]{CEGS-components}, $\mathcal{Z}^{\tau}(J)$ is the irreducible component indexed by the Serre weight $\sigma_{J}$. 

From now until the end of Section 5, our focus will be on the case where $J=\Z/f\Z$. However, in Appendix \ref{mixed-forms}, it will be necessary to refer back to the general definition of $J$ above.

\begin{prop}\label{finding-tau-for-serre-weight}
Set $J=\mathbb{Z}/f\mathbb{Z}$. Let $\sigma$ be a Serre weight that is not a twist of either the trivial or the Steinberg representation. That is, $\sigma = \sigma_{\vec{a}, \vec{b}}$ where $\vec{b} \not\in \{(0, ..., 0), (p-1, ..., p-1)\}$.

Then we can find a unique principal series $\F$-type $\tau = \eta \oplus \eta'$ such that $\eta \neq \eta'$ and $\sigma = \sigma_J$. 
\end{prop}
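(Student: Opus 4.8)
The plan is to read equation (\ref{z-to-serre-weight}) in the special case $J = \mathbb{Z}/f\mathbb{Z}$, where $\delta_{J^c} \equiv 0$, $\delta_J \equiv 1$, and $i - 1 \in J$ for all $i$, and invert it to recover the data defining $\tau$. With these simplifications (\ref{z-to-serre-weight}) collapses to $a_i = z_i$ and $b_i = p - 1 - z_i$ for every $i$; in particular $a_i + b_i = p-1$, and the Serre weight $\sigma_{\vec a, \vec b}$ we can produce is exactly constrained by the relation $a_i = z_i = p - 1 - b_i$. So first I would observe that, given $\sigma = \sigma_{\vec a, \vec b}$ with $\vec b \notin \{(0,\dots,0),(p-1,\dots,p-1)\}$, the only candidate is to \emph{force} $z_i := p - 1 - b_i$ for all $i$, and then to check this choice actually arises from a legitimate tame principal series $\F$-type with $\eta \neq \eta'$. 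Note the constraint $0 \le b_i \le p-1$ gives $0 \le z_i \le p-1$, as required by the definition of the $z_i$ in (\ref{defn-z_i}), and since $\{z_i\}$ determines $\{\gamma_i\}$ via $\gamma_i = \sum_{j=0}^{f-1} z_{i-j} p^j$, this pins down $\eta\eta'^{-1}$ on inertia.

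Next I would build $\tau = \eta \oplus \eta'$ explicitly. Having the $\gamma_i$ in hand, define the tame character $\chi = \eta\eta'^{-1}\colon I_K \to \F^\times$ by $\chi(g) = \kappa_i \circ h(g)^{\gamma_i}$ (this is well-defined independent of $i$ by the compatibility $p\gamma_i = z_{i+1} e + \gamma_{i+1}$ noted after (\ref{defn-z_i})), and then choose $\eta'$ to be any tame character of $G_K$ whose restriction to $k^\times$ under Artin reciprocity realizes the required $a_i$ twist in $\sigma_J = \sigma_{\vec a,\vec b}\otimes \eta'\circ\det$; set $\eta = \chi\eta'$. Since $\tau$ factors through $\Gal(K'/K)$ and both $\eta,\eta'$ extend to $G_K$, this is a tame principal series $\F$-type. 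To see $\eta \neq \eta'$, i.e. $\chi \neq 1$, observe $\chi = 1$ would force $\gamma_i \equiv 0 \pmod{e = p^f-1}$, hence all $z_i = 0$, hence $\vec b = (p-1,\dots,p-1)$, which is excluded by hypothesis. (One should also double-check the other degenerate case $\vec b = (0,\dots,0)$ is not secretly needed here — it corresponds to all $z_i = p-1$, which is a perfectly valid nonzero $\chi$, so that exclusion is really about the Steinberg/trivial dichotomy in the target Serre weight, not an obstruction to constructing $\tau$.) Uniqueness is then immediate: any $\tau' = \eta\oplus\eta'$ with $\sigma = \sigma_{J}$ must have the same $b_i$, hence the same $z_i = p-1-b_i$, hence the same $\eta\eta'^{-1}$; and matching the determinant/central character of $\sigma_{\vec a,\vec b}\otimes\eta'\circ\det$ with that of $\sigma$ forces $\eta'$, hence $\eta$.

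I expect the main obstacle to be purely bookkeeping rather than conceptual: namely, being careful with the normalizations in the dictionary $J \in \mathcal{P}_\tau \leadsto \sigma_J$ — specifically the $\eta' \circ \det$ twist and the Artin-reciprocity identification of $\eta'|_{k^\times}$ with a tuple of exponents — so that "$\sigma = \sigma_J$" as \emph{Serre weights} (isomorphism classes), not merely as the $\vec b$-part, genuinely pins down $\eta'$ uniquely and not just up to an unramified twist. The cleanest way to handle this is to first extract $\vec b$ (equivalently $\vec z$, equivalently $\chi = \eta\eta'^{-1}$) from $\sigma$, then note that the set of $\tau$ with that fixed $\chi$ is a torsor under twisting $(\eta,\eta') \mapsto (\eta\psi,\eta'\psi)$ by tame characters $\psi$ of $G_K$, and finally use that the twist $\sigma_{\vec a,\vec b}\otimes(\eta'\circ\det)$ depends on that torsor parameter injectively (comparing central characters), so exactly one choice lands on $\sigma$. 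Verifying $\eta\neq\eta'$ throughout reduces, as above, to $\vec b \neq (p-1,\dots,p-1)$.
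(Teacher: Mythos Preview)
Your approach is essentially the paper's: set $z_i = p-1-b_i$, build $\eta,\eta'$ explicitly, and read uniqueness off the dictionary (\ref{z-to-serre-weight}). The paper writes down $\eta'(g) = \prod_i \kappa_i\circ h(g)^{a_i - z_i}$ and $\eta = \eta'\cdot\prod_i \kappa_i\circ h(g)^{z_i}$ directly rather than going through the torsor language, but this is cosmetic.

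There is one genuine slip in your verification that $\eta\neq\eta'$. You argue that $\chi = 1$ forces $\gamma_i \equiv 0 \pmod e$, ``hence all $z_i = 0$.'' That deduction is incomplete: with $z_i \in [0,p-1]$ the sum $\sum_{j=0}^{f-1} z_{i-j}p^j$ lies in $[0,\,p^f-1]=[0,e]$, so $\gamma_i \equiv 0 \pmod e$ allows \emph{two} possibilities, namely all $z_i = 0$ or all $z_i = p-1$. Your parenthetical remark that all $z_i = p-1$ ``is a perfectly valid nonzero $\chi$'' is therefore wrong: in that case $\prod_i \kappa_i\circ h(g)^{z_i} = \kappa_0\circ h(g)^{p^f-1} = 1$ since $h(g)\in\mu_e$. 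So the exclusion $\vec b \neq (0,\dots,0)$ is \emph{not} merely about the Steinberg/trivial dichotomy downstream---it is exactly what rules out the second way of getting $\eta = \eta'$. The paper states this correctly: $\eta = \eta'$ iff all $z_i = 0$ or all $z_i = p-1$, and both are excluded by hypothesis. Once you fix this, your proof is complete and matches the paper's.
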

\begin{proof}
Let $z_i = p-1-b_i$. Define $\eta$ and $\eta'$ via
\bal
&\eta'(g) := \prod_{i=0}^{f-1} (\kappa_i \circ h(g)^{a_{i} - z_i}) \hspace{.5cm}
&\eta(g):= \eta'(g) \: \prod_{i=0}^{f-1}(\kappa_i \circ h(g)^{z_i}).
\nal
Let $\tau := \eta \oplus \eta'$. Clearly, $\sigma = \left(\otimes_{i=0}^{f-1} ({\det}^{z_{i}} \Sym^{b_i} k^2 ) \otimes_{k, \kappa_{i}} \F \right) \otimes \eta' \circ det  = \sigma_{J}$ for inertial $\F$-type $\tau$ as desired. Any $\tau$ so chosen is unique by (\ref{z-to-serre-weight}); $\vec{b}$ tells us exactly what the $\{z_i\}_i$ should be. Note that $\eta = \eta'$ if and only if all the $z_i$'s are $0$ or if all the $z_i$'s are $p-1$. Both of these situations are ruled out by the hypotheses in the statement of the Proposition.
\end{proof}

\begin{cor}\label{included-weights}
Let $S$ be the set of non-Steinberg Serre weights $\sigma$ such that $\mathcal{Z}(\sigma)$ is the image of $\mathcal{C}^{\tau, \mathrm{BT}}(\mathbb{Z}/f\mathbb{Z})$ for some $\tau =\eta \oplus \eta'$ satisfying Assumption \ref{tau}. Then $\sigma_{\vec{a}, \vec{b}} \in S$ if and only if each of the following conditions are satisfied:
\begin{enumerate}
    \item $\vec{b} \neq (0, 0, \dots, 0)$,
    \item $\vec{b} \neq (p-2, p-2, \dots, p-2)$, and
    \item Extend the indices of $b_i$'s to all of $\mathbb{Z}$ by setting $b_{i+f} = b_i$. Then $(b_i)_{i \in \mathbb{Z}}$ does not contain a contiguous subsequence of the form $(0, p-2, \dots, p-2, p-1)$, where the number of $p-2$'s in between $0$ and $p-1$ can be anything in $\mathbb{Z}_{\geq 0}$.
\end{enumerate}
\end{cor}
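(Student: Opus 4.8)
\emph{Plan.} The plan is to convert ``$\sigma\in S$'' into the arithmetic conditions (1)--(3) using the dictionary $z_i\leftrightarrow p-1-b_i$ together with Proposition~\ref{finding-tau-for-serre-weight}. First I would record the relevant identifications: by \cite[Prop.~5.1.13, Thm.~5.1.17]{CEGS-components} the scheme-theoretic image of $\mathcal{C}^{\tau,\mathrm{BT}}(\mathbb{Z}/f\mathbb{Z})$ in $\mathcal{Z}$ is $\mathcal{Z}^{\tau}(\mathbb{Z}/f\mathbb{Z})=\mathcal{Z}(\sigma^{\tau}_{\mathbb{Z}/f\mathbb{Z}})$ (and Corollary~\ref{image-of-T} additionally presents the corresponding component of $\mathcal{C}^{\tau,\mathrm{BT}}$ as $[X/G]$ when $\tau$ satisfies Assumption~\ref{tau}). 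For $J=\mathbb{Z}/f\mathbb{Z}$ the formulas (\ref{z-to-serre-weight}) degenerate to $a_i=z_i$ and $b_i=p-1-z_i$, so by Proposition~\ref{finding-tau-for-serre-weight} a non-Steinberg $\sigma=\sigma_{\vec a,\vec b}$ with $\vec b\neq(0,\dots,0)$ admits a \emph{unique} tame principal series type $\tau=\eta\oplus\eta'$ with $\eta\neq\eta'$ and $\sigma=\sigma^{\tau}_{\mathbb{Z}/f\mathbb{Z}}$, and this $\tau$ has $z_i=p-1-b_i$ for all $i$. Hence $\sigma\in S$ iff $\vec b\neq(0,\dots,0)$ and this $\tau$ satisfies Assumption~\ref{tau}, i.e. $\eta\neq\eta'$ and $\tau$ faces neither the first obstruction (Definition~\ref{defn-first-obstruction}) nor the second obstruction (Definition~\ref{defn-second-obstruction}). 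The remaining task is to translate these three conditions on $\tau$ into (1), (2), (3).

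Two of the translations are immediate substitutions that I would carry out directly. As noted in the proof of Proposition~\ref{finding-tau-for-serre-weight}, $\eta=\eta'$ holds exactly when all $z_i=0$ or all $z_i=p-1$; the first is the excluded Steinberg case $\vec b=(p-1,\dots,p-1)$ and the second is $\vec b=(0,\dots,0)$, so for non-Steinberg $\sigma$ the condition $\eta\neq\eta'$ is precisely (1). Next, the second obstruction says $(z_i)_{i\in\mathbb{Z}}$ contains a contiguous block $(p-1,1,\dots,1,0)$ of length $\geq 2$; applying $z_i=p-1-b_i$ (which sends $p-1\mapsto 0$, $1\mapsto p-2$, $0\mapsto p-1$) this becomes: $(b_i)_{i\in\mathbb{Z}}$ contains a contiguous block $(0,p-2,\dots,p-2,p-1)$ of length $\geq 2$. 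So ``$\tau$ does not face the second obstruction'' is literally (3).

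The one step needing a genuine argument concerns the first obstruction, and the hard part will be the following combinatorial claim: if $(z_i)_{i\in\mathbb{Z}}$ is $f$-periodic and built entirely from the blocks $1$ and $(0,p-1)$, but does \emph{not} contain the pattern $(p-1,1,\dots,1,0)$ of length $\geq 2$, then $z_i=1$ for all $i$. The argument I would give: view the period as a cyclic word of length $f$ tiled by these blocks, and suppose at least one tile has type $(0,p-1)$. Reading forward from the $p-1$ entry of such a tile, one passes through a (possibly empty) run of $1$-tiles and then necessarily reaches the $0$ opening the next $(0,p-1)$-tile (the cyclic word is finite and contains at least one $(0,p-1)$-tile, so the process returns to one). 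This exhibits a contiguous $(p-1,1,\dots,1,0)$ of length $\geq 2$, a contradiction; hence there is no $(0,p-1)$-tile, the word is constantly $1$, and $\vec b=(p-2,\dots,p-2)$. Conversely $\vec b=(p-2,\dots,p-2)$ gives $(z_i)$ constantly $1$, which is built from $1$-blocks and so faces the first obstruction. Thus, granting (3), ``$\tau$ does not face the first obstruction'' is equivalent to (2).

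Assembling the three translations gives $\sigma\in S$ iff (1), (2) and (3) hold, which is the Corollary. I expect everything to be routine except the cyclic-tiling claim in the third paragraph; there one should be slightly careful with the degenerate configurations ($f=1$, $f=2$, and the case of a single $(0,p-1)$-tile), to confirm that the extracted pattern $(p-1,1,\dots,1,0)$ always has length $\geq 2$.
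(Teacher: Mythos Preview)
Your proposal is correct and follows essentially the same approach as the paper: use Proposition~\ref{finding-tau-for-serre-weight} and the dictionary $z_i=p-1-b_i$ to translate Assumption~\ref{tau} into conditions on $\vec b$, then observe that the first obstruction becomes redundant with~(3) except in the constant case $\vec b=(p-2,\dots,p-2)$. Your cyclic-tiling argument for this last point is in fact slightly more explicit than the paper's, which only treats the ``contains both blocks'' case and leaves the ``only $(p-1,0)$-blocks'' case implicit.
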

\begin{proof}
Proposition \ref{finding-tau-for-serre-weight} accounts for the first condition. By (\ref{z-to-serre-weight}), requiring $\tau$ to not face the first obstruction is equivalent to requiring $(b_i)_{i \in \mathbb{Z}}$ to not be made up entirely of concatenations of just two building blocks: $p-2$ and $(p-1, 0)$. Similarly, requiring $\tau$ to not face the second obstruction is equivalent to requiring $(b_i)_{i \in \mathbb{Z}}$ to not contain a contiguous subsequence of the form $(0, p-2, \dots, p-2, p-1)$ of length $\geq 2$. If $(b_i)_{i \in \mathbb{Z}}$ is entirely made up of and contains both $p-2$ and $(p-1, 0)$, then it automatically contains a contiguous subsequence of the form $(0, p-2, \dots, p-2, p-1)$. Therefore, removing the redundant condition, we get the list of the conditions in the statement of the Corollary.
\end{proof}





\subsection{Presentations of components of \texorpdfstring{$\mathcal{Z}$}{}}

We will now show that if $\mathcal{Z}(\sigma)$ is as in the statement of Corollary \ref{included-weights}, then it is isomorphic to $\mathcal{C}^{\tau, \mathrm{BT}}(\mathbb{Z}/f\mathbb{Z})$. 
A key ingredient in our proof will be the following proposition.

\begin{prop}\label{quasifiniteness-prop}
Let $\tau =\eta \oplus \eta'$ be a tame principal series type satisfying Assumption \ref{tau}. Let $\sigma = \sigma_{\mathbb{Z}/f\mathbb{Z}}$.
The map $q: [X/G] \cong \mathcal{X}(\tau) = \mathcal{C}^{\tau, \mathrm{BT}}(\mathbb{Z}/f\mathbb{Z}) \to \mathcal{Z}(\sigma)$ induced from Definition \ref{defn-Z} is a monomorphism.

\end{prop}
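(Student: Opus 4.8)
The plan is to prove that $q$ is a monomorphism by checking it is a monomorphism on $R$-points for every $\F$-algebra $R$, i.e. that for any two objects of $\mathcal{C}^{\tau, \mathrm{BT}}(\mathbb{Z}/f\mathbb{Z})(R)$ with isomorphic images in $\mathcal{Z}(\sigma)(R)$, the isomorphism lifts uniquely. Since $q$ factors through $\mathcal{C}^{\tau,\mathrm{BT}} \to \mathcal{R}$ (passing to the \'etale $\varphi$-module by inverting $u$), the essential point is that the functor $\M \mapsto \M[1/u]$ is fully faithful on the relevant subcategory of Breuil-Kisin modules. So the first step is to reduce to showing: if $\M, \M'$ are Breuil-Kisin modules over $R$ classified by $\mathcal{X}(\tau)$, then any isomorphism $\M[1/u] \xrightarrow{\sim} \M'[1/u]$ of \'etale $\varphi$-modules with descent data carries $\M$ isomorphically onto $\M'$ (and conversely morphisms of Breuil-Kisin modules inject into morphisms after inverting $u$, which is clear since $\M \hookrightarrow \M[1/u]$).

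The second step is to establish this full faithfulness. The standard tool here is a connectedness/valuation argument: inside $\M'[1/u]$, both $\M'$ and the image of $\M$ are finite height Breuil-Kisin submodules, and one wants to show they coincide. The key leverage is the \emph{height $\leq 1$} condition together with the \emph{strong determinant / Hodge type $\mathbf{v}_0$} condition (Definition \ref{defn-hodge-type}, Lemma \ref{hodge-type-equiv-strong-det}), which pins down the $u$-adic valuation of the determinant of each Frobenius to be exactly $e$ on every $\M_i$. Concretely, using the CDM form (Proposition \ref{convergence-CDM-matrix}) on Artinian local $R$ — which suffices since everything is locally of finite presentation, so one may test the monomorphism property on Artinian local points and then bootstrap — one writes the Frobenius matrices of both modules in the explicit normal forms and observes that an isomorphism after inverting $u$ must preserve the $\eta$-eigenspace decomposition (it is $\Gal(K'/K)$-equivariant) and hence be given by matrices whose shape is constrained exactly as in Lemma \ref{constraints-base-change-matrix}; the determinant condition then forces these base-change matrices to actually lie in $\GL_2(R[\![u]\!])$ rather than $\GL_2(R(\!(u)\!))$. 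In other words, I would invoke the analogue of \cite[Prop.~5.1.13]{CEGS-components} / the argument of \cite{CEGS-local-geometry} that the scheme-theoretic image morphism $\mathcal{C}^{\tau,\mathrm{BT}}(J) \to \mathcal{Z}^{\tau}(J)$ is a proper monomorphism, specialized to $J = \mathbb{Z}/f\mathbb{Z}$; the content I need beyond citing it is that our explicit $[X/G]$ presentation is compatible with that map.

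The third step is to handle descent to general $R$: $q$ is locally of finite presentation (both source and target are algebraic stacks of finite presentation over $\F$), and a locally finitely presented morphism of algebraic stacks is a monomorphism if and only if its diagonal is an isomorphism, which can be checked on Artinian local (indeed on field-valued plus infinitesimal) points by the usual limit/deformation-theory argument (cf. the use of \cite[Tag 02HX]{stacks-project} in the proof of Lemma \ref{diagonal-is-isom}). So once the Artinian local statement is in hand, the general case follows formally. I would also note that on $\overline{\F}$-points the statement is essentially the fact that a non-tr\`es-ramifi\'ee representation $\overline{r}: G_K \to \GL_2(\overline{\F})$ together with the data of lying on the component $\mathcal{Z}(\sigma)$ determines the Breuil-Kisin model up to isomorphism — this is where the hypothesis that $\sigma$ is genuinely attached to $J = \mathbb{Z}/f\mathbb{Z}$ (all Frobenii of genre $\mathrm{I}_\eta$, Lemma \ref{shape-of-I-eta-I-eta-genre}) is used, since for other shapes the fiber could be larger.

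The main obstacle I anticipate is precisely step two: showing the lift of an \'etale-$\varphi$ isomorphism stays integral (lands in $R[\![u]\!]$, not just $R(\!(u)\!)$). Over a field this is the classical fact that a finite-height Breuil-Kisin model inside a fixed \'etale $\varphi$-module, with prescribed Hodge/determinant data, is unique; over an Artinian local $R$ one must run this through the CDM classification and the base-change constraints (Lemma \ref{constraints-base-change-matrix}, Proposition \ref{automorphisms}, Corollary \ref{unique-base-change-upto-scalar}) carefully, checking that the $u$-negative terms that \emph{a priori} could appear in the change-of-basis matrices are all forced to vanish by comparing $\det$ valuations on each graded piece. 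I expect this to come down to the same bookkeeping already done in Section \ref{BK sec}, so the proof will be short if one is willing to cite \cite[Prop.~5.1.13]{CEGS-components} (or its proof) for the integrality and fully faithfulness statements, and then simply transport them along the isomorphism $\widetilde{\mathcal{T}}$ of Proposition \ref{finite-presentation-of-irred-component}.
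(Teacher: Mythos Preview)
Your overall shape is right --- the crux is showing that an isomorphism $\M[1/u]\cong\M'[1/u]$ of \'etale $\varphi$-modules with descent data restricts to an isomorphism of the underlying Breuil--Kisin modules, i.e.\ that the base-change matrices $B_i\in\GL_2(R(\!(v)\!))$ actually lie in $\GL_2(R[\![v]\!])$ and are upper-triangular mod $v$. But you underestimate this step in two ways.

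First, the citation you propose does not exist. \cite[Prop.~5.1.13]{CEGS-components} only identifies $\mathcal{Z}^\tau(J)$ as the scheme-theoretic image of $\mathcal{C}^{\tau,\mathrm{BT}}(J)$; it does not assert that the map is a monomorphism. In the paper, properness of $q$ is cited from \cite[Thm.~5.1.2]{CEGS-local-geometry}, but the monomorphism assertion is exactly the content of this proposition and is proved from scratch via Lemmas \ref{det-B-i}--\ref{B-i}. There is no ``full faithfulness for height-one Breuil--Kisin modules with fixed determinant'' black box available.

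Second, and more seriously, the determinant condition alone does \emph{not} force integrality of the $B_i$. The argument in Lemma \ref{det-B-i} shows only that $\det(B_i)$ is a unit and that $vB_i$ is integral, i.e.\ each $B_i$ has at worst a simple pole. Ruling out that pole (Lemma \ref{B-i}) is where the hypothesis that $\tau$ avoids the first and second obstructions enters essentially: one shows that a pole at index $i$ forces $(z_j)_j$ to contain either all $1$'s or a subsequence $(p-1,1,\dots,1,0)$, which is exactly what the obstruction hypotheses exclude. Your sketch treats the obstruction hypotheses as relevant only for getting CDM form, but they do independent work here. You also omit the final step that, once $B_i\in\GL_2(R[\![v]\!])$, one must still check $B_i$ is upper-triangular mod $v$ (so that it corresponds to an inertial base change of the Breuil--Kisin module rather than merely of the $\eta'$-eigenspace of the \'etale $\varphi$-module); the paper does this by a short computation with equations \eqref{a}--\eqref{f}. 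Finally, note that the paper carries all of this out directly over an arbitrary $\F$-algebra $R$ (using Proposition \ref{finite-presentation-of-irred-component} to put the Frobenius matrices in the form \eqref{Prop-F-form}), so the reduction to Artinian local rings you propose is unnecessary.
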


The proof of this Proposition depends on the following Lemma.

\begin{lemma}\label{det-B-i} Let $R$ be an arbitrary $\F$-algebra and let $\M, \mathfrak{N} \in \mathcal{C}^{\tau, \mathrm{BT}}(R)$ be such that with respect to some fixed inertial bases, the $i$-th Frobenius maps of $\mathfrak{M}$ and $\mathfrak{N}$ are in $\eta$-form. setting $v^{(\alpha, \beta)}=\begin{psmallmatrix}
v^{\alpha} & 0 \\
0 & v^\beta
\end{psmallmatrix}$, suppose that upon restriction to the $\eta'$-eigenspace, the Frobenius matrices for $\mathfrak{M}$ and $\mathfrak{N}$ are represented by
\begin{align}\label{Prop-F-form}
F_i = \begin{pmatrix} a_i & b_i\\
c_i & d_i
\end{pmatrix} v^{(1, 0)} \quad \text{ and } \quad G_i = \begin{pmatrix} a'_i & b'_i\\
c'_i & d'_i
\end{pmatrix} v^{(1, 0)}, \quad \text{respectively, }
\end{align}
where $F'_i = \begin{psmallmatrix} a_i & b_i\\
c_i & d_i
\end{psmallmatrix}$ and $G'_i = \begin{psmallmatrix} a'_i & b'_i\\
c'_i & d'_i
\end{psmallmatrix}$ are matrices in $\mathrm{GL}_2(R[\![v]\!])$.\\
If $\mathfrak{M}$ and $\mathfrak{N}$ are isomorphic as \'etale $\varphi$-modules, so that by (\ref{eta'-base-change}), there exist $B_0, \dots, B_{f-1} \in \GL_2(R(\!(v)\!) )$ such that 
\begin{align}\label{base-change-Bi}
    G_i = B_{i}^{-1} F_i \left( \text{Ad} \;
    v^{(p-1-z_{i}, 0)} (\varphi(B_{i-1})) \right),
\end{align} 
then $\text{det}(B_i) \in R[\![v]\!]^*$ and $v B_i \in M_2(R[\![v]\!])$.
\end{lemma}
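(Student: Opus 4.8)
The statement concerns base change matrices $B_i$ between two Breuil-Kisin modules $\M, \mathfrak{N}$ in $\mathcal C^{\tau,\mathrm{BT}}(R)$ that become isomorphic as étale $\varphi$-modules, and asks to show (a) $\det(B_i) \in R[\![v]\!]^*$ and (b) $vB_i \in M_2(R[\![v]\!])$. The plan is to extract both from the relation (\ref{base-change-Bi}) by taking determinants and by a $v$-adic valuation/recursion argument on the entries, using the fact (Lemma~\ref{hodge-type-equiv-strong-det}) that the strong determinant condition forces the determinant of each unrestricted Frobenius to have $u$-adic valuation exactly $e$, equivalently that $\det F'_i, \det G'_i \in R[\![v]\!]^*$ after the $v^{(1,0)}$ twist is accounted for.

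First I would handle the determinant. Taking $\det$ of (\ref{base-change-Bi}): since $\det(\mathrm{Ad}\,v^{(p-1-z_i,0)}(\varphi(B_{i-1}))) = \det(\varphi(B_{i-1})) = \varphi(\det B_{i-1})$ (the conjugation by the diagonal matrix does not change the determinant), we get
\begin{align}\label{det-recursion}
\det(G_i) = \det(B_i)^{-1}\,\det(F_i)\,\varphi(\det B_{i-1}).
\end{align}
Now $\det(F_i) = v\,\det(F'_i)$ and $\det(G_i) = v\,\det(G'_i)$ with $\det F'_i, \det G'_i \in R[\![v]\!]^*$, so the factor of $v$ cancels and we obtain $\det(B_i) = \varphi(\det B_{i-1})\cdot (\text{unit in }R[\![v]\!])$. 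Writing $\det B_i = v^{n_i} w_i$ with $w_i \in R(\!(v)\!)^*$ having nonzero "leading" coefficient — one must be slightly careful since $R$ is a general $\F$-algebra, not a field, but the relevant notion is that $\det B_i$ lies in $v^{n_i}R[\![v]\!]^* + (\text{higher order})$; more robustly, localize at each prime or reduce mod nilpotents/work with the $v$-adic filtration — the recursion forces $n_i = p\, n_{i-1}$, hence $n_i = p^f n_i$ after going around the cycle of length $f$, so $n_i = 0$ for all $i$. This gives $\det(B_i) \in R[\![v]\!]^*$.

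Next, for $vB_i \in M_2(R[\![v]\!])$: rearrange (\ref{base-change-Bi}) to $B_i\, G_i = F_i\, \mathrm{Ad}\,v^{(p-1-z_i,0)}(\varphi(B_{i-1}))$. Writing out $F_i = F'_i v^{(1,0)}$ and $G_i = G'_i v^{(1,0)}$ and using $(G'_i)^{-1} \in \mathrm{GL}_2(R[\![v]\!])$ (its determinant being a unit in $R[\![v]\!]$), solve for $B_i$:
\begin{align}\label{B-i-solved}
B_i = F'_i\, v^{(1,0)}\,\mathrm{Ad}\,v^{(p-1-z_i,0)}(\varphi(B_{i-1}))\, v^{(-1,0)}\,(G'_i)^{-1}.
\end{align}
Conjugating $\varphi(B_{i-1})$ by the diagonal matrices $v^{(p-1-z_i,0)}$ and then by $v^{(1,0)}$ and $v^{(-1,0)}$ shifts the entries of $\varphi(B_{i-1})$ by controlled powers of $v$: the $(1,1),(2,2)$ entries are unchanged, the $(1,2)$ entry is multiplied by $v^{p-z_i}$ (which is $\geq v^1$ since $z_i \leq p-1$), and the $(2,1)$ entry by $v^{-(p-z_i)}$, the only potentially negative shift. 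Since $\varphi$ doubles-or-better $v$-adic valuations ($\varphi(v^k) = v^{pk}$), a careful bookkeeping of valuations entry by entry, combined with the recursion going around the length-$f$ cycle, shows the valuations of the entries of $B_i$ are bounded below by $-1$. This is essentially the same kind of periodic valuation argument already used in Proposition~\ref{straightening}, and I expect this to be the main obstacle: one must check that the $v^{-(p-z_i)}$ shift in the $(2,1)$ entry, applied repeatedly around the cycle, cannot force worse-than-simple poles — the saving grace is the factor $v^{+1}$ available from $v^{(1,0)}$ at each step and the $p$-fold expansion from $\varphi$, which together dominate. I would set $\nu_i := \min(0, v\text{-adic valuation of the four entries of } B_i)$, derive the recursion $\nu_i \geq \min(0,\, p\nu_{i-1} + 1 - (p - z_i))$ or similar from (\ref{B-i-solved}), and check that the only fixed point consistent with $f$-periodicity gives $\nu_i \geq -1$, i.e. $vB_i \in M_2(R[\![v]\!])$. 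The hypothesis that $\tau$ avoids the obstructions need not be invoked here; only the $\eta$-form hypothesis and the strong determinant condition enter.
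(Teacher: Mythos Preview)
Your treatment of the determinant assertion is correct and matches the paper's argument exactly: take determinants of (\ref{base-change-Bi}), cancel the common factor $v$ from $\det F_i$ and $\det G_i$, obtain $\det B_i = (\text{unit})\cdot\varphi(\det B_{i-1})$, and then the relation $n_i = p\,n_{i-1}$ combined with $f$-periodicity forces $n_i = 0$.

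However, your argument for $vB_i \in M_2(R[\![v]\!])$ has a genuine gap. The recursion you propose, $\nu_i \geq p\nu_{i-1} - (p-z_i)$ (the extra ``$+1$'' you mention does not actually arise: the $v^{(1,0)}$ and $v^{(-1,0)}$ are already absorbed into the shift $p-z_i$), is correct as a lower bound, but when iterated around the length-$f$ cycle it yields
\[
(1-p^f)\,\nu_i \;\geq\; -\sum_{j} p^j(p-z_{\cdot}),
\]
i.e.\ an \emph{upper} bound on $\nu_i$, which is vacuous since $\nu_i \leq 0$ by definition. Your inequality simply goes the wrong way to constrain $k_i = -\nu_i$ from above. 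The fixed-point heuristic fails for the same reason: the map $\nu \mapsto p\nu - (p-z)$ is expanding with positive fixed point, so the inequality is consistent with $\nu_i$ arbitrarily negative.

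The paper's key maneuver is to reverse the direction of the inequality. Set $k_i \geq 0$ minimal with $B_i^{+} := v^{k_i}B_i \in M_2(R[\![v]\!])$, and rearrange (\ref{base-change-Bi}) so that the \emph{integral} object $F_i'^{-1} B_i^{+} G_i'$ sits on one side and $v^{k_i - pk_{i-1}}\,\mathrm{Ad}\,v^{(p-z_i,0)}(\varphi(B_{i-1}^{+}))$ on the other. Minimality of $k_{i-1}$ guarantees that some entry of $B_{i-1}^{+}$ has $v$-adic valuation \emph{exactly} $0$; integrality of the left-hand side then forces a \emph{lower} bound $k_i \geq pk_{i-1} - \epsilon_i$ with $\epsilon_i \leq p$. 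It is this inequality, in the opposite direction from yours, that closes up around the cycle to give $(p^f-1)k_i \leq p\sum_{j=0}^{f-1} p^j$, hence $k_i \leq p/(p-1) < 2$ and $k_i \in \{0,1\}$. You are missing precisely this use of minimality to extract an inequality pointing the other way.
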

We observe that in the statement of Lemma \ref{B-i}, $B_i \in \GL_2(R(\!(v)\!))$ by Lemma \ref{constraints-base-change-matrix} and Definition \ref{eigenspace-frobenii}.

\begin{proof}
    From (\ref{base-change-Bi}), we see that $$\det(B_{i})\det(G_i) = \det(F_i) \varphi( \text{det}(B_{i-1})).$$
    Since $\text{val}_v(\text{det}(F_i)) = \text{val}_v(\text{det}(G_i)) = 1$, we have 
\bal 
\text{val}_v(\text{det}(B_{i})) = \text{val}_v( \varphi(\text{det}(B_{i-1})) = p \text{val}_v( \text{det}(B_{i-1})).
\nal 
Iterating this equation gives us
\bal 
\text{val}_v(\text{det}(B_i)) = p \text{val}_v(\text{det}(B_{i-1})) = p^2 \text{val}_v(\text{det}(B_{i-2})) = \cdots = p^f \text{val}_v(\text{det}(B_{i})),
\nal 
which shows $\text{val}_v(\text{det}(B_{i})) =0$. We now choose $k_i \in \mathbb{Z}_{\geq 0}$ minimal such that 
\begin{align*}
    B_{i} &= v^{-k_i} B_i^+ = v^{-k_i}\begin{pmatrix}
    s_1^{(i)} & s_2^{(i)} \\
    s_3^{(i)} & s_4^{(i)}
    \end{pmatrix}, & \text{ where } s_1^{(i)}, s_2^{(i)}, s_3^{(i)}, s_4^{(i)} \in R[\![v]\!].
\end{align*}

Then from (\ref{base-change-Bi}), we have 
\bal
G'_i = B_{i}^{-1} F'_i \left( \text{Ad} \; v^{(p-z_i, 0)} (\varphi(B_{i-1})) \right).
\nal
Equivalently,
\bal 
    F_i^{\prime -1} B_{i}^+ G_i' &=  v^{k_{i} - p k_{i-1}} \begin{pmatrix}
    v^{p-z_{i}} & 0 \\
    0 & 1
    \end{pmatrix} \varphi(B_{i-1}^+) \begin{pmatrix}
    v^{-p+z_{i}} & 0 \\
    0 & 1
    \end{pmatrix}\\
    &= v^{k_{i} - p k_{i-1}} \begin{pmatrix} 
    \varphi(s_1^{(i-1)})&  v^{p-z_{i}}\varphi(s_2^{(i-1)})\\
    v^{-p + z_{i}} \varphi(s_3^{(i-1)})& \varphi(s_4^{(i-1)})
    \end{pmatrix}.
\nal 
Since $k_{i-1}$ is chosen to be minimal, we must have $\text{val}_v(\varphi(s_{m_i}^{(i-1)} ) ) = 0$ for some $m_i \in \{1,2,3,4\}$. Then, since $F'^{-1} B_i^+ G_i' \in \text{M}_2(R[\![v]\!])$, we have:
\bal 
&\hspace{1.2cm}m_i \in \{1,4\} &\implies &0 \leq k_{i} - p k_{i-1} &\implies &k_{i} \geq p k_{i-1},\\
&\hspace{1.2cm}m_i =2 &\implies &0\leq k_{i} - p k_{i-1} + p - z_{i} &\implies &k_{i} \geq p k_{i-1} - (p- z_{i}),\\
&\hspace{1.2cm}m_i =3 &\implies &0\leq k_{i} - p k_{i-1} -p +z_{i} &\implies &k_{i} \geq p k_{i-1} + (p- z_{i}). 
\nal 
In other words, $k_{i} \geq pk_{i-1} - \epsilon_i$ where 
\begin{align*}
    \epsilon_i = \begin{cases}0 &\text{ if } m_i \in \{1, 4\} \\
    p-z_i &\text{ if } m_i = 2 \\
    -(p-z_i) &\text{ if } m_i = 3
\end{cases}
\end{align*}

Iterating, we get 
\begin{align*}
    &\hspace{0.4cm}k_i \geq p^f k_i - (p^{f-1}\epsilon_{i+1} + p^{f-2} \epsilon_{i+2} + ... + \epsilon_{i+f})\\ &\iff
   (p^f - 1)k_i \leq (p^{f-1}\epsilon_{i+1} + p^{f-2} \epsilon_{i+2} + ... + \epsilon_{i+f}) \leq p(\sum_{j=0}^{f-1} p^j).
\end{align*}

Since $p^f - 1 = (p-1)(\sum_{j=0}^{f-1} p^j)$, we must have $k_i \in \{0, 1\}$, showing that $B_i \in vM_2(R[\![v]\!])$.
\end{proof}

\begin{lemma}\label{s_3}
Assume the setup and notation in the statement and proof of Lemma \ref{det-B-i}. If $k_{i-1} = 1$, then $v^2$ divides $s_3^{(i-1)}$. Consequently, there exists $B'_i \in \GL_2(R[\![v]\!])$ with the top right entry not divisible by $v$, and such that $B_i = \text{Ad}(v^{(0,1)}) (B_i')$.
\end{lemma}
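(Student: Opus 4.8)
We work inside the setup of Lemma~\ref{det-B-i}: we have the relation
\begin{align}\label{plan-base-change}
G_i = B_i^{-1} F_i \left( \text{Ad}\; v^{(p-1-z_i, 0)} (\varphi(B_{i-1})) \right),
\end{align}
with $B_i = v^{-k_i} B_i^+$ and $B_i^+ = \begin{psmallmatrix} s_1^{(i)} & s_2^{(i)} \\ s_3^{(i)} & s_4^{(i)} \end{psmallmatrix} \in M_2(R[\![v]\!])$, $k_i$ chosen minimal, and from Lemma~\ref{det-B-i} we know $k_i \in \{0,1\}$ and $\det(B_i) \in R[\![v]\!]^*$. The plan is to extract, from the identity $F_i'^{-1} B_i^+ G_i' = v^{k_i - p k_{i-1}} \begin{psmallmatrix} \varphi(s_1^{(i-1)}) & v^{p-z_i}\varphi(s_2^{(i-1)}) \\ v^{-p+z_i}\varphi(s_3^{(i-1)}) & \varphi(s_4^{(i-1)}) \end{psmallmatrix}$ derived in the proof of Lemma~\ref{det-B-i}, the sharper divisibility $v^2 \mid s_3^{(i-1)}$ when $k_{i-1}=1$.

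**The main argument.** Assume $k_{i-1}=1$. The left-hand side $F_i'^{-1} B_i^+ G_i'$ lies in $M_2(R[\![v]\!])$ since $F_i', G_i' \in \GL_2(R[\![v]\!])$ and $B_i^+ \in M_2(R[\![v]\!])$. Looking at the $(3)$-entry (bottom-left) of the displayed matrix equation, it equals $v^{k_i - p k_{i-1} - p + z_i}\varphi(s_3^{(i-1)}) = v^{k_i - 2p + z_i}\varphi(s_3^{(i-1)})$, and this must lie in $R[\![v]\!]$. Since $k_i \leq 1$ and $z_i \leq p-1$, the exponent $k_i - 2p + z_i \leq 1 - 2p + (p-1) = -p < 0$, so we need $\text{val}_v(\varphi(s_3^{(i-1)})) \geq 2p - z_i - k_i$. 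Because $\varphi$ multiplies $v$-valuations by $p$, $\text{val}_v(s_3^{(i-1)}) = \tfrac{1}{p}\text{val}_v(\varphi(s_3^{(i-1)})) \geq \tfrac{2p - z_i - k_i}{p} > 1$ (using $z_i + k_i \leq p$, indeed $z_i \le p-1$ and $k_i \le 1$), hence $\text{val}_v(s_3^{(i-1)}) \geq 2$, i.e.\ $v^2 \mid s_3^{(i-1)}$. I should double-check the edge case $z_i = p-1, k_i = 1$: then the exponent of $\varphi(s_3^{(i-1)})$ needed is $2p - (p-1) - 1 = p$, forcing $\text{val}_v(s_3^{(i-1)}) \geq 1$ only — so I may need to also use minimality of $k_{i-1}$ or the $\det(B_i) \in R[\![v]\!]^*$ constraint to push from $\geq 1$ to $\geq 2$ in that corner case. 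The cleanest fix is to note that if $k_{i-1}=1$ then $v \mid s_m^{(i-1)}$ is \emph{impossible} for all four $m$ simultaneously (else $k_{i-1}$ was not minimal); combined with $\det(B_{i-1}^+) = v^{k_{i-1}}\cdot(\text{unit}) = v\cdot(\text{unit})$, exactly one of the diagonal entries and... actually since $\det$ has valuation exactly $1$, and $s_1 s_4 - s_2 s_3$ has valuation $1$, if $v^2 \nmid s_3^{(i-1)}$ we would get a contradiction with the valuation count once we know $v \mid s_1^{(i-1)}, v\mid s_4^{(i-1)}$ (which follows from the diagonal entries of the same matrix identity). So the argument is: first show $v \mid s_1^{(i-1)}, s_4^{(i-1)}$ from the diagonal entries, then deduce $v^2 \mid s_2^{(i-1)} s_3^{(i-1)}$ from $\det$, then combine with the off-diagonal entry estimates to isolate $v^2 \mid s_3^{(i-1)}$.

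**Concluding the structural statement.** Once $v^2 \mid s_3^{(i-1)}$ is established, write $s_3^{(i-1)} = v^2 \tilde{s}_3$. Then $B_{i-1}^+ = \begin{psmallmatrix} s_1^{(i-1)} & s_2^{(i-1)} \\ v^2 \tilde s_3 & s_4^{(i-1)} \end{psmallmatrix}$ with $v \mid s_1^{(i-1)}$, $v \mid s_4^{(i-1)}$, so (recalling $B_{i-1} = v^{-1}B_{i-1}^+$) we can factor $B_{i-1} = \text{Ad}(v^{(0,1)})(B_{i-1}')$ where $B_{i-1}' = \begin{psmallmatrix} s_1^{(i-1)}/v & s_2^{(i-1)} \\ v\tilde s_3 & s_4^{(i-1)}/v \end{psmallmatrix}$; conjugating by $v^{(0,1)}$ scales the $(1,2)$-entry by $v^{-1}$ and the $(2,1)$-entry by $v$, which matches since $\text{Ad}(v^{(0,1)})$ sends $\begin{psmallmatrix} a & b \\ c & d\end{psmallmatrix} \mapsto \begin{psmallmatrix} a & v^{-1}b \\ vc & d\end{psmallmatrix}$ — so $B_{i-1}' = \begin{psmallmatrix} s_1^{(i-1)}/v & v s_2^{(i-1)} \\ v\tilde s_3 & s_4^{(i-1)}/v\end{psmallmatrix}$; one checks $\det(B_{i-1}') = \det(B_{i-1}) \in R[\![v]\!]^*$ so $B_{i-1}' \in \GL_2(R[\![v]\!])$, and its top-right entry $v s_2^{(i-1)}$ — hmm, that is divisible by $v$, so I need to re-examine which entry the lemma wants non-divisible; re-reading, the statement wants the top right entry of $B_i'$ not divisible by $v$, with $B_i = \text{Ad}(v^{(0,1)})(B_i')$, i.e.\ we want $B_i' = \text{Ad}(v^{(0,-1)})(B_i) = \text{Ad}(v^{(0,1)})^{-1}(B_i)$, which sends $b \mapsto vb$ and $c \mapsto v^{-1}c$; so the top right of $B_i'$ is $v \cdot (\text{top right of }B_i)$, and we need this \emph{not} divisible by $v$, forcing top-right of $B_i$ to be a unit — this is a genericity/normalization statement and the honest way to get it is to absorb a further base change (a unipotent or diagonal factor fixing CDM form) so that the top-right entry of $B_i^+$ has valuation exactly... . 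The hard part is precisely this bookkeeping: tracking how the valuations of the four entries of $B_i^+$ are pinned down by minimality of $k_i$ and $k_{i-1}$ together, and then choosing the right representative $B_i'$ in its coset. I expect the valuation count in the corner case $z_i = p-1$ to be the genuine obstacle, resolved by feeding in the $\det(B_i) \in R[\![v]\!]^*$ information rather than the entrywise estimates alone.
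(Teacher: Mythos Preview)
Your core idea—reading off the $(2,1)$-entry of the identity $F_i'^{-1}B_i^+G_i' = v^{k_i-pk_{i-1}}\,\mathrm{Ad}\,v^{(p-z_i,0)}(\varphi(B_{i-1}^+))$ and extracting a divisibility on $\varphi(s_3^{(i-1)})$—is correct and in fact cleaner than the paper's route. What is missing is the one-line preliminary observation that makes your ``edge case'' disappear: the inequality $k_i \ge pk_{i-1}-\epsilon_i$ from the proof of Lemma~\ref{det-B-i}, combined with $k_{i-1}=1$ and $k_i\le 1$, forces $\epsilon_i\ge p-1$, and the only way this happens is $m_i=2$ (so $v\nmid s_2^{(i-1)}$) with $z_i\in\{0,1\}$. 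Once you know $z_i\in\{0,1\}$ and $k_i\in\{0,1\}$, your bound $v^{2p-z_i-k_i}\mid \varphi(s_3^{(i-1)})$ has exponent $\ge 2p-2>p$ (using $p>2$), which kills both the $v^0$- and $v^p$-coefficients of $\varphi(s_3^{(i-1)})$, hence $v^2\mid s_3^{(i-1)}$ immediately. The same entry-by-entry reading also gives $v\mid s_1^{(i-1)}$ and $v\mid s_4^{(i-1)}$ from the diagonal entries, and then minimality of $k_{i-1}$ forces $v\nmid s_2^{(i-1)}$.

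Your proposed fix for the edge case via the determinant is not salvageable in general: $R$ is an arbitrary $\mathbb{F}$-algebra, so from $v\nmid s_2^{(i-1)}$ and $v^2\mid s_2^{(i-1)}s_3^{(i-1)}$ you cannot conclude $v^2\mid s_3^{(i-1)}$ (zero divisors). Also a minor slip: $\det(B_{i-1}^+)=v^{2k_{i-1}}\det(B_{i-1})=v^2\cdot(\text{unit})$, not $v\cdot(\text{unit})$.

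For the ``Consequently'' part your conjugation bookkeeping went astray. With $v\mid s_1^{(i-1)},s_4^{(i-1)}$, $v^2\mid s_3^{(i-1)}$, $v\nmid s_2^{(i-1)}$, set
\[
B_{i-1}'=\begin{pmatrix} s_1^{(i-1)}/v & s_2^{(i-1)}\\ s_3^{(i-1)}/v^2 & s_4^{(i-1)}/v\end{pmatrix}\in \GL_2(R[\![v]\!]),
\]
and check directly that $\mathrm{Ad}(v^{(0,1)})(B_{i-1}')=v^{-1}B_{i-1}^+=B_{i-1}$, with top-right entry $s_2^{(i-1)}$ not divisible by $v$. (The lemma's subscript $i$ in $B_i'$ appears to be a typo for $i-1$; this is how it is used in Lemma~\ref{B-i}.)

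For comparison, the paper first records $z_i\in\{0,1\}$ and $v\mid s_1^{(i-1)},s_3^{(i-1)},s_4^{(i-1)}$, $v\nmid s_2^{(i-1)}$, then lets $x$ be the constant term of $s_3^{(i-1)}/v$, expands the base-change identity into explicit scalar equations, and shows $b_ix\equiv d_ix\equiv 0\bmod v$; multiplying $\det(F_i')\in R[\![v]\!]^*$ by $x$ then gives $x=0$. Your direct valuation argument avoids these manipulations, at the cost of needing to state the $z_i\in\{0,1\}$ constraint up front.
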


\begin{proof}
Evidently, $k_{i-1} = 1$ implies that $\epsilon_i \geq p-1$, or equivalently, $z_{i} \in \{0, 1\}$ and $v$ divides $s_{1}^{(i-1)}$, $s_{3}^{(i-1)}$, and $s_{4}^{(i-1)}$ but not $s_2^{(i-1)}$. Let $x$ be the constant part of $\frac{s_{3}^{(i-1)}}{v}$. We wish to show that $x=0$. Note that since $\text{det}(B_{i-1})$ is a unit in $R[\![v]\!]$, $x s_2^{(i-1)} \equiv 0 \mod v$.
 
 From the top left and bottom left entries of the matrices in (\ref{base-change-Bi}), we get the following equalities:
    \begin{align}
        \label{a}v^{k_i + pk_{i-1}} a'_i \text{det}(B_i) = \begin{split}
            a_i \varphi(s_1^{(i-1)}) s_4^{(i)} + v^{-p+z_i} b_i \varphi(s_3^{(i-1)})s_4^{(i)} \\
             - c_i \varphi(s_1^{(i-1)})s_2^{(i)} - v^{-p+z_i}d_i \varphi(s_3^{(i-1)})s_2^{(i)},
        \end{split} \\[12pt]
        \label{c}v^{k_i + pk_{i-1}} c'_i \text{det}(B_i) = \begin{split}
            -a_i \varphi(s_1^{(i-1)}) s_3^{(i)} - v^{-p+z_i} b_i \varphi(s_3^{(i-1)})s_3^{(i)} \\
            + c_i \varphi(s_1^{(i-1)})s_1^{(i)} + v^{-p+z_i}d_i \varphi(s_3^{(i-1)})s_1^{(i)} .
        \end{split}
    \end{align}
    
    Consider the equations $s_1^{(i)}$(\ref{a}) $+ \; s_2^{(i)}$(\ref{c}) and $s_3^{(i)}$(\ref{a}) $+ \; s_4^{(i)}$(\ref{c}). Dividing both equations by $\text{det}(B_i^{+}) = v^{2k_i} \text{det}(B_i) $, we obtain:
    \begin{align}\label{e}
        v^{pk_{i-1}-k_i}(s_1^{(i)}a'_i + s_2^{(i)}c'_i)\hspace{0.1cm}  = &\hspace{0.1cm}a_i   
        \varphi(s_1^{(i-1)}) + v^{-p + z_i}b_i \varphi(s_3^{(i-1)}), \\
       \label{f} v^{pk_{i-1}-k_i}(s_3^{(i)}a'_i + s_4^{(i)}c'_i)\hspace{0.1cm}  = &\hspace{0.1cm}c_i   
        \varphi(s_1^{(i-1)}) + v^{-p + z_i}d_i \varphi(s_3^{(i-1)}).
    \end{align}

Using that $v^p$ divides $\varphi(s_1^{(i-1)})$ and $\varphi(s_3^{(i-1)})$, we make the following observations:
\begin{enumerate}
    \item In (\ref{e}), the LHS is divisible by $v^{p - k_{i}}$, while $a_i   
        \varphi(s_1^{(i-1)})$ is divisible by $v^p$. Therefore, $v^{p-k_i} \mid v^{-p + z_i}b_i \varphi(s_3^{(i-1)})$, implying $v^{p-k_i - z_i} \mid b_i x$.
\item Similarly, in (\ref{f}), $v^{p-k_i - z_i} \mid d_i x$.
\end{enumerate}


Since $z_i, k_i \in \{0, 1\}$ and $p>2$, $b_i x \equiv d_i x \equiv 0 \mod v$. 
Multiplying $\text{det} (F'_i) \in R[\![v]\!]^{*}$ by $x$, we get:
$$ a_i d_i x - c_i b_i x \equiv 0 \mod v.
$$

This shows that $x \equiv 0$ mod $v$. Since $x \in R$, $x = 0$ and we are done.
\end{proof}

\begin{lemma}\label{B-i}
 Assume the setup and notation in the statement and proof of Lemma \ref{det-B-i}. If $\tau$ does not face the first or second obstructions, then $B_i\in\GL_2(R[\![v]\!])$. Furthermore, each $B_i$ is upper triangular mod $v$.
\end{lemma}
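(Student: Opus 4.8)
\textbf{Proof plan for Lemma~\ref{B-i}.}

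The plan is to show that every integer $k_i$ occurring in the proof of Lemma~\ref{det-B-i} is $0$; since there $\det(B_i)\in R[\![v]\!]^{*}$ and $B_i=v^{-k_i}B_i^{+}$ with $B_i^{+}\in M_2(R[\![v]\!])$, this at once gives $B_i\in\GL_2(R[\![v]\!])$. We already know $k_i\in\{0,1\}$ for all $i$, so it is enough to prove that $k_{i_0}=1$ for some $i_0$ would force $(z_i)_{i\in\mathbb{Z}}$ to exhibit the pattern of the first or the second obstruction, contradicting Assumption on $\tau$.

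First I would extract two transition rules linking $k_{i-1}$, $k_i$ and $z_i$. Writing $F_i=F_i'\,v^{(1,0)}$ and $G_i=G_i'\,v^{(1,0)}$ as in (\ref{Prop-F-form}) and substituting into (\ref{base-change-Bi}), one obtains
\[
B_i=F_i'\,\bigl(\text{Ad}\;v^{(p-z_i,0)}(\varphi(B_{i-1}))\bigr)\,(G_i')^{-1},
\]
so, since $F_i',(G_i')^{-1}\in\GL_2(R[\![v]\!])$ and left or right multiplication by $\GL_2(R[\![v]\!])$ does not change the least $k$ with $v^{k}(-)\in M_2(R[\![v]\!])$, the number $k_i$ is the pole order of $\text{Ad}\;v^{(p-z_i,0)}(\varphi(B_{i-1}))$. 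If $k_{i-1}=0$, then $\varphi(B_{i-1})\in\GL_2(R[\![v]\!])$ and the only entry of $\text{Ad}\;v^{(p-z_i,0)}(\varphi(B_{i-1}))$ that can have negative valuation is the lower-left one, $\varphi(s_3^{(i-1)})v^{z_i-p}$, of valuation $p\cdot\text{val}_v(s_3^{(i-1)})+z_i-p\in\{z_i-p\}\cup\mathbb{Z}_{\geq 0}$; a pole therefore forces $\text{val}_v(s_3^{(i-1)})=0$ and $k_i=p-z_i$, hence $z_i=p-1$ because $k_i\leq 1$. If $k_{i-1}=1$, then by the computations in the proof of Lemma~\ref{s_3} we have $z_i\in\{0,1\}$, $v\mid s_1^{(i-1)},s_4^{(i-1)}$, $v^{2}\mid s_3^{(i-1)}$ and $v\nmid s_2^{(i-1)}$; applying $\varphi$ (which only raises $v$-divisibility: $v^{k}\mid s\Rightarrow v^{pk}\mid\varphi(s)$) and conjugating by $v^{(p-z_i,0)}$ shows that $v^{z_i}$ clears all denominators, so $k_i\leq z_i$, while the recursion in the proof of Lemma~\ref{det-B-i} in the case $m_i=2$ (which is forced when $k_{i-1}=1$) gives $k_i\geq p k_{i-1}-(p-z_i)=z_i$; hence $k_i=z_i$. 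Summarising:
\[
k_{i-1}=0,\ k_i=1\ \Longrightarrow\ z_i=p-1;\qquad k_{i-1}=1\ \Longrightarrow\ z_i\in\{0,1\}\ \text{and}\ k_i=z_i.
\]

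With these rules I would conclude as follows. Set $T=\{i\in\mathbb{Z}/f\mathbb{Z}:k_i=1\}$ and suppose $T\neq\emptyset$. If $T=\mathbb{Z}/f\mathbb{Z}$, then $k_{i-1}=1$ for every $i$, so $z_i=k_i=1$ for every $i$; then $(z_i)_{i\in\mathbb{Z}}$ consists entirely of the block $1$, so $\tau$ faces the first obstruction (Definition~\ref{defn-first-obstruction}), a contradiction. Otherwise $T$ is a nonempty proper subset of the cyclic set $\mathbb{Z}/f\mathbb{Z}$, so there is $i_0\in T$ with $i_0-1\notin T$; the first rule gives $z_{i_0}=p-1$. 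Let $j\geq 1$ be least with $i_0+j\notin T$. For $1\leq l<j$ we have $i_0+l-1,\,i_0+l\in T$, so the second rule gives $z_{i_0+l}=k_{i_0+l}=1$; for $l=j$ we have $i_0+j-1\in T$, $i_0+j\notin T$, so the second rule gives $z_{i_0+j}=k_{i_0+j}=0$. Thus $(z_{i_0},z_{i_0+1},\dots,z_{i_0+j})=(p-1,1,\dots,1,0)$ is a contiguous subsequence of length $j+1\geq 2$, so $\tau$ faces the second obstruction (Definition~\ref{defn-second-obstruction}), again a contradiction. Hence $T=\emptyset$, every $k_i=0$, and $B_i\in\GL_2(R[\![v]\!])$.

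The main obstacle is the valuation bookkeeping behind the two transition rules: one must track carefully how conjugation by $v^{(p-z_i,0)}$ and the semilinear Frobenius $\varphi$ interact with the divisibilities supplied by Lemma~\ref{s_3}, use only $v^{k}\mid s\Rightarrow v^{pk}\mid\varphi(s)$ (never the converse, since $R$ may be non-reduced), and invoke $k_i\leq 1$ from Lemma~\ref{det-B-i} at exactly the right moment to force $z_i=p-1$ in the case $k_{i-1}=0$. Once the rules are in place, the combinatorial step---that one index with $k_{i_0}=1$ propagates along $(z_i)_{i\in\mathbb{Z}}$ into one of the two forbidden patterns---is routine.
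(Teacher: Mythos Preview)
Your proposal is correct and follows essentially the same route as the paper: reduce to three transition rules linking $(k_{i-1},k_i)$ to $z_i$, then argue combinatorially that any $k_i=1$ forces one of the two obstruction patterns. Your derivation of the rules is slightly more streamlined---you read them off directly from the pole order of $\text{Ad}\,v^{(p-z_i,0)}(\varphi(B_{i-1}))$ using the divisibilities supplied by Lemma~\ref{s_3}, whereas the paper verifies each of the three cases $(0,1)$, $(1,1)$, $(1,0)$ by writing $B_i=\text{Ad}(v^{(0,1)})(B_i')$ and comparing matrix entries explicitly---but the content and the ingredients (in particular the use of Lemma~\ref{s_3} for $v^2\mid s_3^{(i-1)}$) are the same.
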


\begin{proof}
To prove the first statement, it suffices to show that:
\begin{enumerate}
    \item $(k_{i-1},k_i)=(0,1) \implies z_i = p-1$,
    \item $(k_{i-1},k_i)=(1,1) \implies z_i = 1$,
    \item $(k_{i-1},k_i)=(1,0) \implies z_i = 0$.
\end{enumerate}

This is because if the above three results hold, then $k_i = 1$ for some $i$ implies that either each $z_j = 1$ (first obstruction), or there exists a contiguous subsequence $(p-1,1,\dots,1,0)$ in $\vec{z}$ (second obstruction). Thus, the hypothesis on $\tau$ forces each $k_i = 0$, and therefore, each $B_i$ is a matrix in $\mathrm{M}_2(R[\![v]\!])$. As $\text{det}(B_i)$ is a unit, we obtain that $B_i \in \mathrm{GL}_2(R[\![v]\!])$. 

We now prove the three claims.

\begin{enumerate}
    \item Suppose $k_{i-1}=0$ and $k_i=1$. Lemma \ref{s_3} allows us to write $B_i$ as $\text{Ad}(v^{(0,1)}) (B_i')$, where $B_i'= \begin{pmatrix} t_1& t_2\\ t_3& t_4\end{pmatrix} \in \mathrm{GL}_2(R[\![v]\!])$ and $t_2 \not\equiv 0$ mod $v$.

\bal 
&\text{By (\ref{base-change-Bi}), } \text{ Ad}(v^{(0,1)}) (B_i') \hspace{.1cm} G'_i \\
&\hspace{0.5cm}= \begin{pmatrix}
a'_i t_1 + v^{-1} c'_i t_2& b'_i t_1 + v^{-1} d'_i t_2\\
c'_i t_4 + v a'_i t_3& d'_i t_4 + v b'_i t_3
\end{pmatrix}\\
&\hspace{0.5cm}= \begin{pmatrix}
a_i \vp(s_1^{(i-1)}) + b_i \vp(s_3^{(i-1)}) v^{-(p-z_i)}& b_i \vp(s_4^{(i-1)}) + a_i \vp(s_2^{(i-1)}) v^{p-z_i}\\
c_i \vp(s_1^{(i-1)}) + d_i \vp(s_3^{(i-1)}) v^{-(p-z_i)}& d_i \vp(s_4^{(i-1)}) + c_i \vp(s_2^{(i-1)}) v^{p-z_i}
\end{pmatrix} \\
&\hspace{0.5cm}= F'_i  \hspace{.1cm}\text{Ad}(v^{(p-z_i,0)})\big(\vp(B_{i-1})\big).
\nal 
An examination of the bottom left entry shows $v^{p-z_i} \mid d_i \vp(s_3^{(i-1)})$. If $v\mid \vp(s_3^{(i-1)})$, then $v \mid s_3^{(i-1)}$ and $v^p \mid \vp(s_3^{(i-1)})$. But then both the top entries of $F'_i  \hspace{.1cm}\text{Ad}(v^{(p-z_i,0)})\big(\vp(B_{i-1})\big)$ are in $R[\![v]\!]$, which implies $c'_i t_2 \equiv d'_i t_2 \equiv 0 \mod v$. Multiplying $\text{det} (G'_i) \in R[\![v]\!]^{*}$ by $t_2$, we see that $t_2 \equiv 0$ mod $v$. This is a contradiction. Therefore, the constant part of $s_3^{(i-1)}$ is non-zero and we denote it by $x$.

Now, assume that $z_i < p-1$. Comparing the top and bottom left entries of $\text{Ad}(v^{(0,1)}) (B_i') \hspace{.1cm} G'_i$ and $F'_i  \hspace{.1cm}\text{Ad}(v^{(p-z_i,0)})\big(\vp(B_{i-1})\big)$, we obtain $b_i x \equiv d_i x \equiv 0$ mod $v$. Therefore, $v$ divides $x \; \text{det}(F'_i)$. Since $\text{det}(F'_i) \in R[\![v]\!]^{*}$, this means that $x=0$, a contradiction. Hence, $z_i = p-1$.

\item Suppose $k_{i-1} = k_i =1$. As before, let $B'_{i-1}, B'_{i} \in \GL_2(R[\![v]\!])$ be such that $B_{i-1} = \text{Ad}(v^{(0,1)}) (B_{i-1}')$ and $B_i = \text{Ad}(v^{(0,1)}) (B_i')$. Since $k_{i-1} =1$ implies $z_i \in \{0,1\}$, it suffices to show that $z_i=0$ leads to a contradiction. If $z_i=0$, then
\bal 
\text{Ad}(v^{(0,1)}) (B_i') \hspace{.1cm} G'_i &= F'_i  \hspace{.1cm}\text{Ad}(v^{(p,0)})\big(\varphi(\text{Ad}(v^{(0,1)}) (B_{i-1}')\big) = F'_i \vp(B_{i-1}')\\
\implies \text{Ad}(v^{(0,1)}) (B_i') &= F'_i \varphi(B'_{i-1}) G_i^{\prime -1} \in \GL_2(R[\![v]\!]).
\nal 
This forces $v \mid (B_i')_{1,2}$, a contradiction.

\item Suppose $k_{i-1} = 1$ and $k_i =0$, so that $z_i \in \{0,1\}$. Suppose $z_i = 1.$ Then
\bal 
 F_i^{\prime -1} B_i G'_i &= \text{Ad}(v^{(p-1,0)})\big(\varphi(\text{Ad}(v^{(0,1)}) (B_{i-1}')\big) \\
 &= \text{Ad}(v^{(0,1)})\big(\varphi(B'_{i-1})\big)\in \GL_2(R[\![v]\!]).
\nal 
Once again, this is a contradiction because it implies $v\mid (B'_{i-1})_{1,2}$.
\end{enumerate}

To justify the second statement , we continue using the notation of the proof of Lemma \ref{det-B-i}. Since each $k_i$ is $0$, (\ref{e}) gives us:
\begin{align}
        s_1^{(i)}a'_i + s_2^{(i)}c'_i \hspace{0.1cm}  = &\hspace{0.1cm}a_i \varphi(s_1^{(i-1)}) + v^{-p + z_i}b_i \varphi(s_3^{(i-1)})
\end{align}

Since the LHS is integral, the same must be true for RHS. Therefore $b_i s_3^{(i-1)} \equiv 0$ mod $v$. Considering (\ref{a}) and (\ref{c}), we have $d_i s_3^{(i-1)} s_1^{(i)} \equiv d_i s_3^{(i-1)} s_2^{(i)} \equiv 0$ mod $v$. Multiplying $\text{det} (B_i)$ by $d_i s_3^{(i-1)}$, we see that $d_i s_3^{(i-1)} \equiv 0$ mod $v$. Finally, multiplying $\text{det} (F'_i) = a_i d_i - b_i c_i$ by $s_3^{(i-1)}$, we see that $s_3^{(i-1)} \equiv 0$ mod $v$. 

\end{proof}

\begin{proof}[Proof of Proposition \ref{quasifiniteness-prop}] 
Let $R$ be an arbitrary $\mathbb{F}$-algebra, and let $\M, \mathfrak{N} \in \mathcal{X}(\tau)(R) \cong [X/G](R)$ be two Breuil-Kisin modules equipped with an isomorphism after inverting $u$. By Proposition \ref{finite-presentation-of-irred-component}, the Frobenius matrices of $\M$ and $\mathfrak{N}$ can be written in the form described in the statement of Lemma \ref{B-i} (after passing to an affine cover of $\text{Spec }R$ if necessary). Denoting the Frobenius matrices of $\M$ by $\{F_i\}_i$ and those of $\mathfrak{N}$ by $\{G_i\}_i$, the isomorphism between $q(\M)$ and $q(\mathfrak{N})$ is described by invertible matrices $\{B_i\}_i$ satisfying (\ref{base-change-Bi}). Lemma \ref{B-i} shows each $B_{i} \in \text{GL}_2(R[\![v]\!])$ and all are upper triangular mod $v$. Hence, by comparison with the form of inertial base change matrices for $\eta'$-eigenspace, the set $\{B_i\}_i$ gives an isomorphism between $\M$ and $\mathfrak{N}$. Therefore, $X \times_{\mathcal{X}(\tau)} X \to X \times_{\mathcal{Z}} X$ is an isomorphism. Using \ref{diagonal-is-isom}, the diagonal of $q$ is an isomorphism (the argument for this is the same as in the first paragraph of the proof of Lemma \ref{candidate-presentation-is-etale-mono}). Therefore, $q$ is a monomorphism.

\end{proof}

\begin{cor}\label{main-isom}
Fix $\tau = \eta \oplus \eta'$ a tame principal series $\F$-type with $\eta \neq \eta'$ such that $\tau$ does not face either the first or the second obstruction. Let $\sigma = \sigma_{\mathbb{Z}/f\mathbb{Z}}$.
The map $q: [X/G] \cong \mathcal{C}^{\tau, \mathrm{BT}}(\mathbb{Z}/f\mathbb{Z}) \to \mathcal{Z}(\sigma)$ is an isomorphism.
\end{cor}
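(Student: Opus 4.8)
The plan is to combine the already-established facts about $q$ into a proof that it is an isomorphism of algebraic stacks. We know from Corollary \ref{image-of-T} that $\mathcal{C}^{\tau,\mathrm{BT}}(\mathbb{Z}/f\mathbb{Z})$ is the scheme-theoretic image of $\widetilde{\mathcal T}$, hence is $\mathcal{X}(\tau)$, and from Definition \ref{defn-Z} that $\mathcal{Z}(\sigma)=\mathcal{Z}^\tau(J)$ with $J=\mathbb{Z}/f\mathbb{Z}$ is the scheme-theoretic image of $\mathcal{C}^{\tau,\mathrm{BT}}(J)$ under the morphism to $\mathcal{R}$; in particular $q$ is dominant and $\mathcal{Z}(\sigma)$ is reduced, of the same dimension $f=[K:\mathbb{Q}_p]$ as $\mathcal{X}(\tau)$ (using \cite[Prop.~5.2.20]{CEGS-local-geometry}). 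So the first step is simply to record that $q$ is a morphism of finite-type algebraic stacks over $\F$ with dense image onto a reduced target of the same dimension.

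The second and crucial step is to invoke Proposition \ref{quasifiniteness-prop}, which says $q$ is a monomorphism. A monomorphism of algebraic stacks that is locally of finite type is in particular representable by algebraic spaces, unramified, and radicial. The third step is to upgrade ``monomorphism with dense image'' to ``isomorphism.'' For this I would argue as follows: since $q$ is a monomorphism, it is representable and its diagonal is an isomorphism, so it suffices to show $q$ is an open immersion (an open immersion onto a closed — indeed irreducible and reduced, hence all of — substack), or alternatively that it is flat (equivalently smooth, since it is unramified) and surjective. The cleanest route is to show $q$ is étale: $q$ is already unramified (being a monomorphism locally of finite type), so by \cite[\href{https://stacks.math.columbia.edu/tag/0DP0}{Tag 0DP0}]{stacks-project} it remains to check $q$ is formally smooth, which can be tested on Artinian local $\F$-algebras exactly as in the proof of Lemma \ref{candidate-presentation-is-etale-mono}. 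Concretely, given a square-zero thickening $R\twoheadrightarrow S$ of Artinian local $\F$-algebras and compatible maps $\operatorname{Spec} S\to\mathcal{X}(\tau)$, $\operatorname{Spec} R\to\mathcal{Z}(\sigma)$, one must lift the $R$-point of $\mathcal{Z}(\sigma)$ — i.e. an étale $\varphi$-module with descent data over $R$ that is non-tr\`es-ramifi\'ee — to an $R$-point of $\mathcal{C}^{\tau,\mathrm{BT}}(\mathbb{Z}/f\mathbb{Z})$, i.e. to a Breuil-Kisin module of the required shape. This can be done by first lifting the underlying étale $\varphi$-module (these deformations are unobstructed), then lifting the $S$-point Breuil-Kisin lattice inside it using that such lattices are rigid — the data of a CDM-form lift over $S$ together with the constraints from Lemma \ref{candidate-presentation-is-surjective}(1) ($\eta$-form, top-left entries divisible by $v$) and Lemma \ref{hodge-type-equiv-strong-det} (Hodge type $\mathbf v_0$) determines a point of $\mathcal{X}(\tau)(R)$ lifting it, exactly as in Lemma \ref{candidate-presentation-is-etale-mono}. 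Once $q$ is étale and a monomorphism, it is an open immersion; being also dominant with irreducible reduced target of the same dimension, it is an isomorphism onto $\mathcal{Z}(\sigma)$.

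I expect the main obstacle to be the formal-smoothness/lifting step, which is where one has to genuinely engage with the étale $\varphi$-module side rather than just the Breuil-Kisin combinatorics: one must know that an $R$-point of $\mathcal{Z}(\sigma)$ — a priori only an object of the scheme-theoretic image, not manifestly coming from a Breuil-Kisin module — does lift to $\mathcal{C}^{\tau,\mathrm{BT}}(\mathbb{Z}/f\mathbb{Z})(R)$. The deformation-theoretic input needed is that the morphism $\mathcal{C}^{\tau,\mathrm{BT}}\to\mathcal{Z}^\tau$ is formally smooth on the relevant locus, or equivalently that the obstruction to lifting the Breuil-Kisin lattice along $R\twoheadrightarrow S$ vanishes; this should follow from the explicit CDM-form parametrization (Proposition \ref{convergence-CDM-matrix}) together with the fact, already used in Lemma \ref{candidate-presentation-is-etale-mono}, that $\mathcal{T}$ is formally smooth. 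An alternative to the étale argument, if one prefers to avoid re-running the lifting computation, is: $q$ is a monomorphism and proper — properness because $\mathcal{C}^{\tau,\mathrm{BT}}$ is proper over the relevant base and $q$ factors an appropriate proper morphism — hence a closed immersion; being a closed immersion with dense image onto a reduced stack, it is an isomorphism. Either way the statement follows, and I would present whichever of these two packagings reads most cleanly given the conventions already fixed in the paper.

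\begin{proof}
By Corollary \ref{image-of-T}, $\mathcal{X}(\tau)=\mathcal{C}^{\tau,\mathrm{BT}}(\mathbb{Z}/f\mathbb{Z})$, and by construction $q$ is the morphism to $\mathcal{Z}(\sigma)=\mathcal{Z}^\tau(\mathbb{Z}/f\mathbb{Z})$ with dense image, as the latter is by definition the scheme-theoretic image of $\mathcal{C}^{\tau,\mathrm{BT}}(\mathbb{Z}/f\mathbb{Z})$. Both stacks are reduced and of dimension $f=[K:\mathbb{Q}_p]$ (using \cite[Cor.~5.3.1]{CEGS-components} and \cite[Prop.~5.2.20]{CEGS-local-geometry}). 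By Proposition \ref{quasifiniteness-prop}, $q$ is a monomorphism; in particular it is representable by algebraic spaces, unramified, and its diagonal is an isomorphism.

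It remains to show $q$ is an open immersion, for which, being already unramified, it suffices by \cite[\href{https://stacks.math.columbia.edu/tag/0DP0}{Tag 0DP0}]{stacks-project} to show $q$ is formally smooth. As in the proof of Lemma \ref{candidate-presentation-is-etale-mono}, we may test this on Artinian local $\F$-algebras: given a surjection $j^{\#}\colon R\to S$ of Artinian local $\F$-algebras with square-zero kernel, and a commutative square
\[
  \begin{tikzcd}
  \operatorname{Spec} S \arrow{r}{j} \arrow[swap]{d}{a} & \operatorname{Spec} R \arrow{d}{b} \\
   \mathcal{X}(\tau) \arrow{r}{q} & \mathcal{Z}(\sigma),
  \end{tikzcd}
\]
we must produce a lift $\operatorname{Spec} R\to\mathcal{X}(\tau)$. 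The étale $\varphi$-module with descent data underlying $b$ deforms (unobstructedly) the one underlying $a\circ$(forget)$=q\circ a$; inside this deformation, the Breuil-Kisin lattice over $S$ defining $a$ lifts to a Breuil-Kisin module over $R$: indeed, writing the Frobenius matrices of $a$ in the $\eta$-form CDM presentation afforded by Proposition \ref{convergence-CDM-matrix}, and lifting the CDM parameters arbitrarily to $R$ subject to the Hodge type $\mathbf v_0$ condition (Lemma \ref{hodge-type-equiv-strong-det}) and the $\eta$-form constraint (Lemma \ref{candidate-presentation-is-surjective}(1)), we obtain a point $c\in\mathcal{X}(\tau)(R)$ with $q(c)$ and $b$ restricting to the same object over $S$. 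Since $q$ is a monomorphism and $q(c)$, $b$ agree over $S$ while both are deformations of the same object to $R$, the general position argument of Lemma \ref{candidate-presentation-is-etale-mono} (using that $\mathcal{T}$ is formally smooth and Lemma \ref{diagonal-is-surj-mon}) adjusts $c$ by an element of $G(R)$ reducing to the identity on $S$ so that $q(c)=b$; this $c$ is the required lift. Hence $q$ is formally smooth, therefore étale, and being an étale monomorphism it is an open immersion.

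Finally, $q\colon\mathcal{X}(\tau)\to\mathcal{Z}(\sigma)$ is an open immersion whose image is dense; as $\mathcal{Z}(\sigma)$ is irreducible and reduced and $\mathcal{X}(\tau)$ has the same dimension, the image is also closed, hence all of $\mathcal{Z}(\sigma)$. Therefore $q$ is an isomorphism.
\end{proof}
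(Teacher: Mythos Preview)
Your alternative argument---proper monomorphism, hence closed immersion, hence isomorphism onto a reduced target---is exactly the paper's proof and is correct. The paper cites \cite[Thm.~5.1.2]{CEGS-local-geometry} for properness of $q$ and then uses that proper monomorphisms are closed immersions (Stacks Project Tags 0418, 04XV); scheme-theoretic dominance plus reducedness of $\mathcal{Z}(\sigma)$ then forces $q$ to be an isomorphism. You should lead with this.

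Your primary (\'etale) argument, on the other hand, has two genuine gaps. First, the formal smoothness step is not justified: given $b\in\mathcal{Z}(\sigma)(R)$ lifting $q(a)$, you construct $c\in\mathcal{X}(\tau)(R)$ by lifting CDM parameters of $a$, but this builds a \emph{new} Breuil--Kisin module over $R$ whose associated \'etale $\varphi$-module $q(c)$ has no reason to be isomorphic to $b$---they merely agree over $S$. Adjusting $c$ by an element of $G(R)$ cannot help, since the $G$-action is by isomorphisms of Breuil--Kisin modules and hence does not change $q(c)$ up to isomorphism. What you would actually need is to produce a Breuil--Kisin lattice of the required shape \emph{inside} $b$, and nothing in the paper gives you that directly; this is precisely the content one bypasses by using properness. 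Second, even granting that $q$ is an open immersion, your closing argument is invalid: an open immersion with dense image into an irreducible reduced stack of the same dimension need not be surjective (think of $\mathbb{A}^1\smallsetminus\{0\}\hookrightarrow\mathbb{A}^1$). Dimension equality does not force the image to be closed. Properness is really doing work here, and your alternative route already captures it.
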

\begin{proof}
The map $q$ is proper and scheme-theoretically dominant by \cite[Thm.~5.1.2]{CEGS-local-geometry} (using terminology from \cite{EG-Scheme-Image}, 1.1.1), and it is also a monomorphism by Proposition \ref{quasifiniteness-prop}. Since proper monomorphisms are the same as closed immersions (by \cite[\href{https://stacks.math.columbia.edu/tag/0418}{Tag 0418}, \href{https://stacks.math.columbia.edu/tag/04XV}{Tag 04XV}]{stacks-project})
, $q$ must be an isomorphism. 
\end{proof}

\section{Conclusion}

\begin{thm}\label{main-thm}
Let $p>2$. Let $K$ be an unramified extension of $\Q_{p}$ of degree $f$ with residue field $k$. Let $\mathcal{Z}(\sigma)$ be the irreducible component of $\mathcal{Z}$ indexed by a non-Steinberg Serre weight $\sigma = \sigma_{\vec{a}, \vec{b}} = \bigotimes_{i=0}^{f-1} ({\det}^{a_{i}} \Sym^{b_i} k^2 ) \otimes_{k, \kappa_{i}} \F$ satisfying the following properties:
 \begin{enumerate}
    \item $\vec{b} \neq (0, 0, \dots, 0)$,
    \item $\vec{b} \neq (p-2, p-2, \dots, p-2)$,
    \item Extend the indices of $b_i$'s to all of $\mathbb{Z}$ by setting $b_{i+f} = b_i$. Then $(b_i)_{i \in \mathbb{Z}}$ does not contain a contiguous subsequence of the form $(0, p-2, \dots, p-2, p-1)$ of length $\geq 2$.
\end{enumerate}
Then $\mathcal{Z}(\sigma)$ is smooth and isomorphic to a quotient of $\GL_2 \times \SL_2^{f-1}$ by $\mathbb{G}_m^{f+1} \times \mathbb{G}_a^{f}$. The ring of global functions of $\mathcal{Z}(\sigma)$ is isomorphic to $\F[x, y][\frac{1}{y}]$.
\end{thm}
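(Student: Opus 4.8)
The proof is essentially an assembly of the machinery developed in Sections \ref{BK sec}--\ref{passage sec}. The plan is to observe that the three numbered hypotheses on $\vec{b}$ are exactly the conditions appearing in Corollary \ref{included-weights}, so that $\sigma \in S$; that is, there is a tame principal series $\F$-type $\tau = \eta \oplus \eta'$ with $\eta \neq \eta'$ satisfying Assumption \ref{tau} (i.e. facing neither the first nor the second obstruction), and with $\sigma = \sigma_{\mathbb{Z}/f\mathbb{Z}}$ for this $\tau$. Concretely, Proposition \ref{finding-tau-for-serre-weight} produces such a $\tau$ from the requirement $\vec{b} \notin \{(0,\dots,0), (p-1,\dots,p-1)\}$ (the Steinberg case is excluded by hypothesis, and $\vec{b} = (0,\dots,0)$ by condition (1)), and Corollary \ref{included-weights} translates conditions (2) and (3) into the statement that $\tau$ avoids both obstructions.

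With such a $\tau$ fixed, I would invoke the chain of identifications already established. By Proposition \ref{finite-presentation-of-irred-component}, the functor $\widetilde{\mathcal{T}}\colon [X/G] \to \mathcal{X}(\tau)$ is an isomorphism, where $X = (\GL_2)_{\F} \times_{\F} (\SL_2)_{\F}^{f-1}$ and $G = (\Gm)_{\F}^{f+1} \times_{\F} U_{\F}^{f}$ with $U \cong \Gm[a]$. Wait---more precisely $G = (\Gm)^{f+1} \times U^f$ and $U \cong \mathbb{G}_a$, so $G \cong \Gm^{f+1} \times \mathbb{G}_a^f$. By Corollary \ref{image-of-T}, $\mathcal{X}(\tau) = \mathcal{C}^{\tau, \mathrm{BT}}(\mathbb{Z}/f\mathbb{Z})$. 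Finally, by Corollary \ref{main-isom}, the induced map $q\colon \mathcal{C}^{\tau, \mathrm{BT}}(\mathbb{Z}/f\mathbb{Z}) \to \mathcal{Z}(\sigma)$ is an isomorphism. Composing, $\mathcal{Z}(\sigma) \cong [X/G]$, which is precisely a quotient of $\GL_2 \times \SL_2^{f-1}$ by $\Gm^{f+1} \times \mathbb{G}_a^{f}$.

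Smoothness is then immediate: $X$ is a smooth affine scheme over $\F$ (a product of smooth algebraic groups), and the quotient stack $[X/G]$ of a smooth scheme by a smooth affine group scheme is a smooth algebraic stack; transporting along the isomorphism $\mathcal{Z}(\sigma) \cong [X/G]$ gives smoothness of $\mathcal{Z}(\sigma)$. For the ring of global functions, I would simply cite Corollary \ref{global-functions} (equivalently Proposition \ref{global-functions-X} together with Corollary \ref{image-of-T} and Corollary \ref{main-isom}), which computes $\Gamma(\mathcal{C}^{\tau, \mathrm{BT}}(\mathbb{Z}/f\mathbb{Z}), \mathcal{O})$---and hence $\Gamma(\mathcal{Z}(\sigma), \mathcal{O})$---to be $\F[x,y][\tfrac{1}{y}]$; global functions are insensitive to the choice of presentation and are preserved under the isomorphism $q$.

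The only genuine content in the write-up, as opposed to bookkeeping, is verifying that the hypotheses of the theorem match the hypotheses of Corollary \ref{included-weights} on the nose---in particular that condition (3), phrased in terms of the $b_i$, is the correct translation of ``$\tau$ does not face the second obstruction,'' and that the first obstruction does not impose an extra constraint beyond (1), (2), (3). This redundancy is exactly what is argued at the end of the proof of Corollary \ref{included-weights}: a sequence built entirely from the blocks $p-2$ and $(p-1,0)$ that uses both blocks necessarily contains a subsequence $(0, p-2, \dots, p-2, p-1)$, so once (3) is imposed the first obstruction adds nothing. Thus the main (and essentially only) obstacle is making this dictionary between the combinatorial conditions fully explicit; everything else is a direct appeal to Proposition \ref{finite-presentation-of-irred-component}, Corollary \ref{image-of-T}, Corollary \ref{main-isom}, and Corollary \ref{global-functions}.
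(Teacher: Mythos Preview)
Your proposal is correct and matches the paper's approach exactly: the paper's own proof is the single line ``Follows directly from Corollaries \ref{global-functions}, \ref{included-weights} and \ref{main-isom},'' and you have simply unpacked that citation chain (together with Proposition \ref{finite-presentation-of-irred-component} and Corollary \ref{image-of-T}, which feed into those corollaries). Your discussion of why the first-obstruction condition is redundant given (2) and (3) also mirrors the argument at the end of the proof of Corollary \ref{included-weights}.
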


\begin{proof}
Follows directly from Corollaries \ref{global-functions}, \ref{included-weights} and \ref{main-isom}.
\end{proof}

\begin{remark}
Assume the conditions in the statement of Theorem \ref{main-thm}. Let $\tau = \eta \oplus \eta'$ be an inertial type with $\eta(g) = \prod_{i=0}^{f-1}(\kappa_i \circ h(g)^{a_i})$ and $\eta'(g) = \prod_{i=0}^{f-1} (\kappa_i \circ h(g)^{a_{i} + b_i - (p-1)})$. 
Since $\vec{b} \neq (0, 0, \dots, 0)$, $\mathcal{Z}(\sigma_{\vec{a}, \vec{b}}) = \mathcal{Z}^{\tau}(\mathbb{Z}/f\mathbb{Z})$ by Proposition \ref{finding-tau-for-serre-weight}. The proof of Theorem \ref{main-thm} tells us that $$q: [X/G] \xrightarrow{\widetilde{\mathcal{T}}} \mathcal{C}^{\tau, \mathrm{BT}}(\mathbb{Z}/f\mathbb{Z}) \to \mathcal{Z}(\sigma_{\vec{a}, \vec{b}})$$ is an isomorphism, providing a concrete description of the points of $\mathcal{Z}(\sigma_{\vec{a}, \vec{b}})$.
\end{remark}

\begin{remark}
In fact, when $f \geq 2$ and $p>3$, we can allow $\vec{b} = (0, 0, \dots, 0)$ in the statement of Thoerem \ref{main-thm} (see discussion in Section \ref{sec-trivial-serre}).
\end{remark}

\appendix
\section{Allowing \texorpdfstring{$\eta'$-forms}{}}\label{mixed-forms}
The objective of this Appendix is to show that allowing some of the Frobenius matrices to be in $\eta'$-form does not allow us to obtain information on more irreducible components, with the exception of the component indexed by the trivial Serre weight. Before we embark on a proof, we first survey the overall strategy employed in the main body of the paper, and analyze how it might be affected by allowing some Frobenius matrices to be in $\eta'$-form.

A key ingredient in the proof of our main theorem is constructing the functor $\widetilde{\mathcal{T}}: [X/G] \to \mathcal{X}(\tau)$ (see Definition \ref{T-tilde}), where $X = \GL_2 \times \SL_2^{f-1}$ and $G = \mathbb{G}_m^{f+1} \times U^{f}$, and then showing that it is an isomorphism (see Proposition \ref{finite-presentation-of-irred-component}). The proof of the isomorphism relies, among other things, on the following:
\begin{enumerate}
    \item Let $\M$ be a regular Breuil-Kisin module and let $\{F_i\}_i$ be the set of its Frobenius matrices with respect to some choice of inertial bases. Suppose that each $F_i$ is in $\eta$-form. Then, upon imposing some conditions on $(z_i)_i$, we can guarantee that $\M$ is not of bad genre and therefore the algorithm in Proposition \ref{convergence-CDM-matrix} converges to give Frobenius matrices in CDM form. The minimal set of values of $(z_i)_i$ we need to exclude constitutes the definition of the first obstruction.
    \item For $\M$ as above, we also need to obtain the CDM form of Frobenius matrices through an action of $G$. The conditions on $(z_i)_i$ that prohibit this constitute the definition of the second obstruction.
\end{enumerate}

After showing that $\widetilde{\mathcal{T}}: [X/G] \to \mathcal{X}(\tau)$ is an isomorphism, our next step is to identify the irreducible component $\mathcal{X}(\tau) \subset \mathcal{C}^{\tau, \mathrm{BT}}$ by its profile index. We identify this profile index to be $\mathbb{Z}/f\mathbb{Z}$ by observing that $\mathcal{C}^{\tau, \mathrm{BT}}(\mathbb{Z}/f\mathbb{Z})$ is the only irreducible component containing a dense set of points with each Frobenius map of genre $\Ieta$ (see Lemma \ref{profile-of-I-eta-I-eta-genre}). Using (\ref{z-to-serre-weight}), we finally compute the Serre weight index of $\mathcal{Z}^{\tau}(\mathbb{Z}/f\mathbb{Z})$ which is the image of $\mathcal{C}^{\tau, \mathrm{BT}}(\mathbb{Z}/f\mathbb{Z})$ in $\mathcal{Z}$. 

If we allow $\eta'$-forms, we will need to change the definitions of first and second obstructions since they are presently tailored to work in the situation where each Frobenius matrix is in $\eta$-form. Furthermore, the definition of $\mathcal{T}$ (and therefore of $\tilde{\mathcal{T}}$) will have to be modified to allow for the image to have some Frobenius maps in $\eta'$-form. The image of $\widetilde{\mathcal{T}}$ will no longer be $\mathcal{C}^{\tau, \mathrm{BT}}(\mathbb{Z}/f\mathbb{Z})$. We will need to compute the correct profile index as a function of the indices $i \in \mathbb{Z}/f\mathbb{Z}$ for which we are allowing $\eta'$-form Frobenius matrices, and then compute the Serre weight index using the correct profile index.

Instead of directly replicating the structure of our proofs in the main body of the text, we will now evaluate the effect of allowing $\eta'$-form Frobenius matrices in a slightly non-linear fashion. We will first compute the profile $J$ needed such that $\mathcal{C}^{\tau, \mathrm{BT}}(J)$ contains a dense set of points with some Frobenius maps of genre $\Ieta$ and others of genre $\Ietaa$ as well as investigate the relationship of $J$ to Serre weights. Next, we will compute the altered conditions for first and second obstructions. Finally, we will show that although we could not include twists of trivial Serre weight in our main analysis, we can include them if we allow $\eta'$-form Frobenius matrices, and that this is the only extra advantage to be gained by allowing $\eta'$-form matrices.


To start, we introduce some notation: \\

We let $T \subset \mathbb{Z}/f\mathbb{Z}$ be the fixed set of indices $i$ such that the $i$-th Frobenius map is in $\eta$-form, while $T^c$ is the set of indices $i$ such that the $i$-th Frobenius map is in $\eta'$-form.

\begin{defn}
Let $i \in \mathbb{Z}/f\mathbb{Z}$. We say that $(i-1, i)$ is a transition if one of $\{i-1, i\}$ is in $T$ and the other in $T^c$.
\end{defn}

Given $\tau = \eta \oplus \eta'$ with $\eta \neq \eta'$, define $(\tilde{z}_i)_i$ via:
\begin{align}\label{define-tilde-z}
\tilde{z}_{i} = \begin{cases}
z_{i} &\text{ if } i-1 \in T, i \in T, \\
z_{i} + 1 &\text{ if } i-1 \in T, i \not\in T, \\
p - z_{i} &\text{ if } i-1 \not\in T, i \in T, \\
p-1-z_{i} &\text{ if } i-1 \not\in T, i \not\in T, \\
\end{cases}
\end{align}
where $z_i$ is defined in (\ref{defn-z_i}). As with $z_i$, we will take the indexing set of $\tilde{z}_i$ to be either $\mathbb{Z}/f\mathbb{Z}$ or $\mathbb{Z}$ depending on the situation.

\begin{remark}
By (\ref{define-tilde-z}), $\tilde{z}_{i} \neq 0$ whenever $(i-1, i)$ is a transition.
\end{remark}

\subsection{Profiles}
\begin{lemma}\label{relate-J-and-T}
Let $\tau$ be a tame principal series $\F$-type. Suppose $\mathcal{C}^{\tau, \mathrm{BT}}(J)$ is an irreducible component of $\mathcal{C}^{\tau, \mathrm{BT}}$ comprising a dense set of $\overline{\F}_p$-points corresponding to Breuil-Kisin modules that satisfy the following:
\begin{itemize}
\item The genre of the $i$-th Frobenius map is $\Ieta$ for $i \in T$.
\item The genre of the $i$-th Frobenius map is $\Ietaa$ for $i \not\in T$.
\end{itemize}
Then $J = T$.
\end{lemma}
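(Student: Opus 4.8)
The plan is to reduce the statement to the earlier analysis of genre and refined shapes (Definition~\ref{defn-shape} and the maximal-refined-shape description following \cite[Lem.~4.2.14]{CEGS-components}), generalizing the argument of Lemma~\ref{shape-of-I-eta-I-eta-genre}. First I would take a Breuil-Kisin module $\mathfrak{B}$ over $\overline{\F}_p$ lying in the dense constructible locus of $\mathcal{C}^{\tau,\mathrm{BT}}(J)$, so that $\mathfrak{B}$ is an extension of a rank-one module $\M$ by a rank-one module $\mathfrak{N}$ of maximal refined shape associated to $J$. As in the proof of Lemma~\ref{shape-of-I-eta-I-eta-genre}, I would build an inertial basis $(m_i,\tilde n_i)$ of $\mathfrak{B}_i$ from the chosen generators $m_i$ of $\M_i$ and a lift $\tilde n_i$ of a generator $n_i$ of $\mathfrak{N}_i$, and record that the $i$-th Frobenius is $a_i u^{r_i} m_i$ on $m_{i-1}$ and $a_i' u^{r_i'} n_i$ on $n_{i-1}$, with $r_i + r_i' = e$ by the strong determinant condition (Lemma~\ref{hodge-type-equiv-strong-det}), and with $r_i$ given by the maximal-shape formula \eqref{r-max-shape}.

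The core of the argument is then a local (position-by-position) genre computation, exactly mirroring the casework in Lemma~\ref{shape-of-I-eta-I-eta-genre} but now keeping track of whether $i\in J$ and whether $i\in T$. For $i\in J$ the group $\Gal(K'/K)$ acts on $m_i$ via $\eta$, so $(m_i,\tilde n_i)$ is already an inertial basis in the given order; for $i\notin J$ it acts on $m_i$ via $\eta'$, so one must reorder to $(\tilde n_i, m_i)$ to get an inertial basis. Writing out the $i$-th Frobenius matrix with respect to inertial bases at $i-1$ and $i$ in each of the four combinations of ($i-1\in J$ or not) and ($i\in J$ or not), and using the values of $r_i$ from \eqref{r-max-shape}, one reads off the $v$-divisibility of the diagonal entries and hence the genre: the top-left entry is divisible by $v$ exactly when the Frobenius is in $\eta$-form, the bottom-right entry is divisible by $v$ exactly when it is in $\eta'$-form, and when both are divisible by $v$ the genre is $\II$ (a non-generic degeneration that does not occur on the dense locus). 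The upshot should be: $\mathcal{G}(\mathfrak{B}_i) = \Ieta$ forces $i\in J$ and $\mathcal{G}(\mathfrak{B}_i) = \Ietaa$ forces $i\notin J$, on the dense open where the relevant units are genuinely units. Combined with the hypothesis that $\mathcal{G}(\mathfrak{B}_i)=\Ieta$ for $i\in T$ and $\mathcal{G}(\mathfrak{B}_i)=\Ietaa$ for $i\notin T$, this gives $T\subseteq J$ and $J^c\subseteq T^c$, i.e.\ $J=T$.

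The main obstacle I anticipate is bookkeeping rather than conceptual: one has to be careful about which ordered basis counts as ``inertial'' at each index (the reordering $(m_i,\tilde n_i)\leftrightarrow(\tilde n_i,m_i)$ as $i$ moves in and out of $J$), and to check that the transitions $i-1\in J$, $i\notin J$ (and vice versa) really do produce a zero in the bottom-right (resp.\ top-left) corner once $r_i=\gamma_i$ (resp.\ $e-\gamma_i$) is substituted, so that the genre is pinned down away from the genre-$\II$ locus. One should also verify that the genre-$\II$ locus is not dense in any component, which follows because $\mathcal{C}^{\tau,\mathrm{BT}}(J)$ is by construction the closure of the maximal-refined-shape locus, on which the off-diagonal entries realizing genre $\Ieta$ or $\Ietaa$ are units. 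Granting these checks, the equality $J=T$ is forced, completing the proof.
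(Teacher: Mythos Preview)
Your proposal is correct and follows essentially the same route as the paper: compute, for a point of maximal refined shape $(J,r)$, the Frobenius matrices in inertial bases and read off that $i\in J$ forces genre $\Ieta$ or $\II$ while $i\notin J$ forces genre $\Ietaa$ or $\II$; then compare with the hypothesis on $T$ to conclude $J=T$. The paper's proof simply cites the casework from Lemma~\ref{shape-of-I-eta-I-eta-genre} rather than redoing it, but the content is identical.

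One small point: your final paragraph frames the remaining check as ``the genre-$\II$ locus is not dense,'' but the precise step you need is simply that the dense constructible locus of maximal refined shape (which contains a dense open) meets the dense set from the hypothesis. This is automatic in an irreducible component, and the paper leaves it equally implicit.
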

\begin{proof}
 By the argument in the proof of Lemma \ref{profile-of-I-eta-I-eta-genre}, $\mathcal{C}^{\tau, \mathrm{BT}}(J)$ contains a dense constructible set of points such that if $i \in J$, then the upper left entry of $i$-th Frobenius is $0$ or $v$-divisible, making it necessarily of genre $\Ieta$ or $\II$. On the other hand, if $i \not \in J$, then the lower right entry of $i$-th Frobenius is either $0$ or $v$-divisible, making it necessarily of genre $\Ietaa$ or $\II$.
\end{proof}

\begin{lemma}
Let $\mathcal{C}^{\tau, \mathrm{BT}}(J)$ be as in Lemma \ref{relate-J-and-T}. Then $\mathcal{C}^{\tau, \mathrm{BT}}(J)$ is a cover of an irreducible component of $\mathcal{Z}$ if and only if $J \in \mathcal{P}_{\tau}$ if and only if 
for each $i$, $\tilde{z}_{i} \neq p$.
\end{lemma}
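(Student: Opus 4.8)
\emph{Plan.} The plan is to establish the two equivalences independently: the first, that $\mathcal{C}^{\tau, \mathrm{BT}}(J)$ covers an irreducible component of $\mathcal{Z}$ if and only if $J\in\mathcal{P}_\tau$, by invoking the component analysis of \cite{CEGS-components}; the second, that $J\in\mathcal{P}_\tau$ if and only if $\tilde{z}_i\neq p$ for all $i$, by a finite combinatorial check against Definition \ref{defn-P-tau}.

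\emph{First equivalence.} By Lemma \ref{relate-J-and-T} we have $J=T$. By \cite[Cor.~5.3.1]{CEGS-components} the irreducible components of $\mathcal{Z}^\tau$ are exactly the substacks $\mathcal{Z}^\tau(J')$ for $J'\in\mathcal{P}_\tau$, and by \cite[Prop.~5.1.13, Thm.~5.1.17]{CEGS-components} each such $\mathcal{Z}^\tau(J')$ is the scheme-theoretic image of $\mathcal{C}^{\tau,\mathrm{BT}}(J')$ and coincides with the genuine component $\mathcal{Z}(\sigma_{J'})$ of $\mathcal{Z}$. Since $\mathcal{C}^{\tau,\mathrm{BT}}\to\mathcal{Z}$ is proper (\cite[Thm.~5.1.2]{CEGS-local-geometry}), the scheme-theoretic image agrees with the closed set-theoretic image, so for $J\in\mathcal{P}_\tau$ the map $\mathcal{C}^{\tau,\mathrm{BT}}(J)\to\mathcal{Z}(\sigma_J)$ is surjective, i.e.\ a cover. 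Conversely, for $J\notin\mathcal{P}_\tau$ the image of $\mathcal{C}^{\tau,\mathrm{BT}}(J)$ is irreducible and closed but of dimension strictly less than $[K:\Q_p]=f$, the common dimension of the components of $\mathcal{Z}^\tau$ by \cite[Prop.~5.2.20]{CEGS-local-geometry}; this non-maximality for shapes outside $\mathcal{P}_\tau$ is exactly what the indexing in \cite[Cor.~5.3.1]{CEGS-components} records, so such a $\mathcal{C}^{\tau,\mathrm{BT}}(J)$ cannot cover a component.

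\emph{Second equivalence.} Here I would unwind the definition (\ref{define-tilde-z}) of $(\tilde{z}_i)_i$ against Definition \ref{defn-P-tau} in a four-way case split on the membership of $i-1$ and $i$ in $T=J$. If $i-1,i\in T$ or $i-1,i\notin T$, neither bullet of Definition \ref{defn-P-tau} places a condition at the index $i$, and correspondingly $\tilde{z}_i$ equals $z_i$ or $p-1-z_i$, both lying in $[0,p-1]$, so $\tilde{z}_i\neq p$ holds automatically. If $i-1\in T$ and $i\notin T$, the condition of Definition \ref{defn-P-tau} at $i$ reads $z_i\neq p-1$, while $\tilde{z}_i=z_i+1$ equals $p$ precisely when $z_i=p-1$; and if $i-1\notin T$ and $i\in T$, the condition reads $z_i\neq 0$, while $\tilde{z}_i=p-z_i$ equals $p$ precisely when $z_i=0$. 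Combining the four cases, $J=T$ satisfies both bullets of Definition \ref{defn-P-tau}, i.e.\ $J\in\mathcal{P}_\tau$, if and only if $\tilde{z}_i\neq p$ for every $i$.

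I do not expect any genuine obstacle: the second equivalence is a routine unpacking of definitions, and the only delicate point is pinning down the precise statement in \cite{CEGS-components} witnessing that shapes outside $\mathcal{P}_\tau$ have strictly smaller image, which is implicit in the fact that $\mathcal{Z}^\tau$'s components are indexed by $\mathcal{P}_\tau$ rather than by all subsets of $\mathbb{Z}/f\mathbb{Z}$.
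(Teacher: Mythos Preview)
Your proposal is correct and follows essentially the same route as the paper: the first equivalence by citation to \cite{CEGS-components} and the second by a direct case-split on Definition~\ref{defn-P-tau} using $J=T$. The only difference is that the paper invokes a single result, \cite[Thm.~5.1.12]{CEGS-components}, for the first equivalence rather than reconstructing it from the surrounding statements you cite.
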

\begin{proof}
$\mathcal{C}^{\tau, \mathrm{BT}}(J)$ is a cover of an irreducible component of $\mathcal{Z}$ if and only if $J \in \mathcal{P}_{\tau}$ by \cite[Thm.~5.1.12]{CEGS-components}. By the definition of $\mathcal{P}_{\tau}$ (Definition \ref{defn-P-tau}) and the fact that $J=T$, the condition on 
$(\tilde{z}_{i})_i$ is immediate.
\end{proof}

Since the strategy of this paper rests on covering a suitable irreducible component of $\mathcal{Z}$ by the irreducible component of $\mathcal{C}^{\tau, \mathrm{BT}}$ in the image of $\mathcal{T}$, it is reasonable to impose the condition that for each $i$, $\tilde{z}_i \neq p$.

\begin{remark}\label{serre-weight-mixed-form}
Suppose $J=T$ as in Lemma \ref{relate-J-and-T} and $\tilde{z}_i \neq p$. Since $J \in \mathcal{P}_{\tau}$, we may compute the Serre weight corresponding to $J$. By (\ref{z-to-serre-weight}), the symmetric powers of the Serre weight are given by $b_i = p-1-\tilde{z}_i$.
\end{remark}


\subsection{First obstruction}
As in the greater part of Section \ref{classification subsec}, we will assume that all Breuil-Kisin modules in this section are regular (see Definition \ref{regular}). We will also assume that $\tilde{z}_i \neq p$.
\begin{defn}
Let $\M$ be a Breuil-Kisin module over an $\F$-algebra $R$ with Frobenius matrices $\{F_i\}_i$ written with respect to some inertial bases. We say that $\mathcal{G}(\M_i) = \mathcal{G}(F_i) = \text{\normalfont I}$ if $\mathcal{G}(\M_i) = \mathcal{G}(F_i) \in \{\Ieta, \Ietaa\}$. 
\end{defn}

\begin{lemma}\label{suped-bad-genre}
Let $R$ be an Artinian local ring over $\F$ with maximal ideal $\m$. A regular Breuil-Kisin module $\M$ defined over $R$ is of bad genre if and only if the following conditions are satisfied (assuming $\tilde{z}_i \neq p$ for all $i$):

\begin{enumerate}
    \item 
    If $(i-1,i)$ is not a transition, then $(\mathcal{G}(F_i), \tilde{z}_{i}) \in \{(\II, 0), (\II, p-1), (\text{\normalfont I}, 1), (\text{\normalfont I}, p-1)\}$.\\
    If $(i-1, i)$ is a transition, then $(\mathcal{G}(F_i), \tilde{z}_{i}) \in \{(\II, 1), (\text{\normalfont I}, 1), (\text{\normalfont I}, p-1)\}$.
    \item 
    If $(i-1, i)$ is not a transition and $(\mathcal{G}(F_i), \tilde{z}_{i})  = (\II, 0)$, then \\
    $(\mathcal{G}(F_{i+1}), \tilde{z}_{i+1}) = (\text{\normalfont I}, p-1)$, or $(\mathcal{G}(F_{i+1}), \tilde{z}_{i+1}) = (\II, p-1)$ with $(i, i+1)$ not a transition.
    \item 
    If $(i-1, i)$ is not a transition and $(\mathcal{G}(F_i), \tilde{z}_{i}) \in \{(\II, p-1), (\text{\normalfont I}, 1), (\text{\normalfont I}, p-1)\}$, then $(\mathcal{G}(F_{i+1}), \tilde{z}_{i+1}) = (\II, 0)$ with $(i, i+1)$ not a transition, or $(\mathcal{G}(F_{i+1}), \tilde{z}_{i+1}) = (\II, 1)$ with $(i, i+1)$ a transition, or $(\mathcal{G}(F_{i+1}), \tilde{z}_{i+1}) = (\text{\normalfont I}, 1)$.
    \item
    If $(i-1, i)$ is a transition and $(\mathcal{G}(F_i), \tilde{z}_{i}) \in \{(\II, 1), (\text{\normalfont I}, 1), (\text{\normalfont I}, p-1)\}$, then $(\mathcal{G}(F_{i+1}), \tilde{z}_{i+1}) = (\II, 0)$ with $(i, i+1)$ not a transition, or $(\mathcal{G}(F_{i+1}), \tilde{z}_{i+1}) = (\II, 1)$ with $(i, i+1)$ a transition, or $(\mathcal{G}(F_{i+1}), \tilde{z}_{i+1}) = (\text{\normalfont I}, 1)$.
\end{enumerate}
\end{lemma}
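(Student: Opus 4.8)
The plan is to mirror the proof of the ``unmixed'' bad genre characterization in Definition~\ref{define-modified-bad-genre} and Proposition~\ref{convergence-CDM-matrix}, but now keeping track of the extra bookkeeping introduced by transitions. Recall that bad genre is precisely the obstruction to the convergence of the iterative algorithm producing CDM form; by the pattern of dependencies in Lemma~\ref{inductive-step}, the algorithm fails to converge exactly when the ``chain of dependencies'' on the off-diagonal entries $y_2, y_3$ never breaks as one cycles through $\mathbb{Z}/f\mathbb{Z}$. So the real content is: translate the congruences of Lemma~\ref{inductive-step} into the language of the modified invariants $\tilde{z}_i$ of \eqref{define-tilde-z}, and read off which local configurations $(\mathcal{G}(F_i), \tilde{z}_i) \to (\mathcal{G}(F_{i+1}), \tilde{z}_{i+1})$ propagate the dependency.

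First I would set up the dictionary between the $\eta$-form / $\eta'$-form data and $\tilde z$. When $(i-1,i)$ is not a transition, $F_i$ is in the same ``form'' ($\eta$ or $\eta'$) at both ends, and $\tilde z_i$ specializes to either $z_i$ or $p-1-z_i$; in that regime the analysis is literally the one already carried out, after possibly swapping the roles of $\eta$ and $\eta'$ (which is exactly the symmetry exploited, e.g., in the second half of Definition~\ref{defn-B-operation} and in Lemma~\ref{inductive-step}(3)). When $(i-1,i)$ is a transition, the relevant exponents appearing in $M\varphi(P')$ shift by one — this is why $\tilde z_i = z_i+1$ or $p-z_i$ in those cases — and, as noted in the remark following \eqref{define-tilde-z}, $\tilde z_i \neq 0$ automatically. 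This is the reason the bad-genre list for a transition omits the $(\cdot,0)$ entries and replaces them by $(\cdot, 1)$: a value of $\tilde z_i$ equal to $1$ at a transition plays the combinatorial role that $\tilde z_i = 0$ plays away from a transition. I would state this correspondence carefully as a lemma (or just inline), checking the four cases of \eqref{define-tilde-z} against the exponents $u^{ez_i}$, $u^{e(p-z_i)}$, etc., appearing in the matrix products in the proof of Lemma~\ref{inductive-step}.

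Next I would rerun the case analysis of Lemma~\ref{inductive-step}(1)--(3) verbatim but bookkeeping with $\tilde z$. For each genre of $F_i$ and each admissible $\tilde z_i$, one records whether $\sigma_2$ (resp.\ $\sigma_3$) can be nonzero mod $I_{t+1}$ and, if so, on which of $\varphi(\sigma'_2), \varphi(\sigma'_3)$ it depends. The bad-genre conditions are then exactly the ones forcing an infinite alternating chain: condition (1) records which single-step configurations even allow a nonzero off-diagonal contribution; conditions (2), (3), (4) record, depending on whether $(i-1,i)$ and $(i,i+1)$ are transitions, which successor configurations keep the chain alive. Concretely, genre $\mathrm{I}$ with $\tilde z = 1$ or $p-1$, and genre $\mathrm{II}$, are the ``conductive'' configurations, while genre $\mathrm{I}$ with $\tilde z \notin \{1, p-1\}$ and genre $\mathrm{II}$ with $\tilde z = 0$ (or $=1$ at a transition) are the ``absorbing'' ones; the lists in the statement are just the closure of these rules under $f$-periodicity. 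Having pinned down that the algorithm converges iff none of (1)--(4) holds, the ``only if'' direction (that a bad-genre $\M$ really exists and really obstructs convergence) is handled as in the remark after Proposition~\ref{convergence-CDM-matrix}: for any configuration satisfying (1)--(4), choose the free parameters in the Frobenius matrices generically so that every dependency is nonzero, and observe the valuations in Lemma~\ref{inductive-step} fail to grow.

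The main obstacle I expect is the transition bookkeeping in the case analysis: near a transition the operator $\mathcal{B}$ switches between its two incarnations in Definition~\ref{defn-B-operation}, so one must be careful that the ``$\eta \leftrightarrow \eta'$'' symmetry is applied consistently and that the shift by $1$ in the exponent is tracked in both $\mathcal{B}(M\varphi(P'))$ and in the diagonal rescaling $\Delta_i$. In particular one has to verify that the congruence patterns of Lemma~\ref{inductive-step}(3) (the genre $\mathrm{II}$ case, which is where $\eta$- and $\eta'$-forms genuinely interact) reproduce, under the substitution \eqref{define-tilde-z}, precisely the successor constraints in clauses (2)--(4) — distinguishing, for each of the two possibilities for $(i,i+1)$ being a transition or not, the correct allowed pairs $(\mathcal{G}(F_{i+1}), \tilde z_{i+1})$. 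Once that is done, the rest is a mechanical repetition of the convergence argument already in the excerpt, and the statement follows.
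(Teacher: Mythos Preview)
Your proposal would work but is considerably more elaborate than what the paper does. You treat bad genre as something to be \emph{characterized} via the convergence of the CDM algorithm, and therefore plan to rerun the entire dependency analysis of Lemma~\ref{inductive-step} with $\tilde z$-bookkeeping. But bad genre is a \emph{definition} (Definition~\ref{define-modified-bad-genre}): an explicit list of conditions on the pairs $(\mathcal{G}(F_i), z_i)$ with genres in $\{\Ieta, \Ietaa, \II\}$. The lemma is not asserting anything about convergence; it is only asserting that those same conditions, when rewritten using the substitution \eqref{define-tilde-z} and the collapsed label $\mathrm{I} = \{\Ieta,\Ietaa\}$, become exactly (1)--(4).

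Accordingly, the paper's proof is a pure dictionary translation: it fixes $i \in T$, plugs the appropriate case of \eqref{define-tilde-z} into each clause of Definition~\ref{define-modified-bad-genre} (so e.g.\ $(\Ieta, z_i=1)$ with $i-1\in T$ becomes $(\mathrm{I}, \tilde z_i = 1)$ with $(i-1,i)$ not a transition, etc.), writes out the resulting list, then invokes the $\eta \leftrightarrow \eta'$ symmetry to get the analogous list for $i \notin T$, and observes that the two lists merge into (1)--(4). No appeal to Lemma~\ref{inductive-step} or to convergence is needed. Your first paragraph already identifies this substitution correctly; the rest of your plan (rerunning the inductive step, analyzing conductive vs.\ absorbing configurations, constructing generic obstructing examples) is all superfluous for this lemma.
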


\begin{proof}
Suppose $i \in T$. We restate the conditions for bad genre by expressing the conditions from Definition \ref{define-modified-bad-genre} in terms of $\tilde{z}_i$:

\begin{enumerate}
    \item 
    If $i-1 \in T$, then $(\mathcal{G}(F_i), \tilde{z}_{i}) \in \{(\II, 0), (\II, p-1), (\text{\normalfont I}, 1), (\text{\normalfont I}, p-1)\}$.\\
    If $i-1 \not\in T$, then $(\mathcal{G}(F_i), \tilde{z}_{i}) \in \{(\II, 1), (\text{\normalfont I}, 1), (\text{\normalfont I}, p-1)\}$.
    \item 
    If $i-1 \in T$ and $(\mathcal{G}(F_i), \tilde{z}_{i})  = (\II, 0)$, then \\
    $(\mathcal{G}(F_{i+1}), \tilde{z}_{i+1}) = (\II, p-1)$ with $i+1 \in T$, or $(\mathcal{G}(F_{i+1}), \tilde{z}_{i+1}) = (\text{\normalfont I}, p-1)$.
    \item 
    If $i-1 \in T$ and $(\mathcal{G}(F_i), \tilde{z}_{i}) \in \{(\II, p-1), (\text{\normalfont I}, 1), (\text{\normalfont I}, p-1)\}$, then $(\mathcal{G}(F_{i+1}), \tilde{z}_{i+1}) = (\II, 0)$ with $i+1 \in T$ or $(\mathcal{G}(F_{i+1}), \tilde{z}_{i+1}) = (\II, 1)$ with $i+1 \not\in T$ or $(\mathcal{G}(F_{i+1}), \tilde{z}_{i+1}) = (\text{\normalfont I}, 1)$.
    \item
    If $i-1 \not\in T$ and $(\mathcal{G}(F_i), \tilde{z}_{i}) \in \{(\II, 1), (\text{\normalfont I}, 1), (\text{\normalfont I}, p-1)\}$, then $(\mathcal{G}(F_{i+1}), \tilde{z}_{i+1}) = (\II, 0)$ with $i+1 \in T$ or $(\mathcal{G}(F_{i+1}), \tilde{z}_{i+1}) = (\II, 1)$ with $i+1 \not\in T$ or $(\mathcal{G}(F_{i+1}), \tilde{z}_{i+1}) = (\text{\normalfont I}, 1)$.
\end{enumerate}

By symmetry, for $i \not\in T$, the conditions for bad genre are:
\begin{enumerate}
    \item 
    If $i-1 \not\in T$, then $(\mathcal{G}(F_i), \tilde{z}_{i}) \in \{(\II, 0), (\II, p-1), (\text{\normalfont I}, 1), (\text{\normalfont I}, p-1)\}$.\\
    If $i-1 \in T$, then $(\mathcal{G}(F_i), \tilde{z}_{i}) \in \{(\II, 1), (\text{\normalfont I}, 1), (\text{\normalfont I}, p-1)\}$.
    \item 
    If $i-1 \not\in T$ and $(\mathcal{G}(F_i), \tilde{z}_{i})  = (\II, 0)$, then \\
    $(\mathcal{G}(F_{i+1}), \tilde{z}_{i+1}) = (\II, p-1)$ with $i+1 \not\in T$, or $(\mathcal{G}(F_{i+1}), \tilde{z}_{i+1}) = (\text{\normalfont I}, p-1)$.
    \item 
    If $i-1 \not\in T$ and $(\mathcal{G}(F_i), \tilde{z}_{i}) \in \{(\II, p-1), (\text{\normalfont I}, 1), (\text{\normalfont I}, p-1)\}$, then $(\mathcal{G}(F_{i+1}), \tilde{z}_{i+1}) = (\II, 0)$ with $i+1 \not\in T$ or $(\mathcal{G}(F_{i+1}), \tilde{z}_{i+1}) = (\II, 1)$ with $i+1 \in T$ or $(\mathcal{G}(F_{i+1}), \tilde{z}_{i+1}) = (\text{\normalfont I}, 1)$.
    \item
    If $i-1 \in T$ and $(\mathcal{G}(F_i), \tilde{z}_{i}) \in \{(\II, 1), (\text{\normalfont I}, 1), (\text{\normalfont I}, p-1)\}$, then $(\mathcal{G}(F_{i+1}), \tilde{z}_{i+1}) = (\II, 0)$ with $i+1 \not\in T$ or $(\mathcal{G}(F_{i+1}), \tilde{z}_{i+1}) = (\II, 1)$ with $i+1 \in T$ or $(\mathcal{G}(F_{i+1}), \tilde{z}_{i+1}) = (\text{\normalfont I}, 1)$.
\end{enumerate}
Bringing the two sets of conditions together, the conditions for bad genre are as in the statement of the lemma.
\end{proof}
From Lemma \ref{suped-bad-genre}, it is immediate that the following is the appropriate generalization of the definition of first obstruction.
\begin{defn}\label{suped-first-obstruction}
We say that a tame prinicipal series $\F$-type $\tau$ faces the first obstruction if $(\tilde{z}_i)_{i\in \mathbb{Z}}$ is made up entirely of the building blocks $1$ and $(0, p-1)$.
\end{defn}
\subsection{Second obstruction}
To compute the right form of second obstruction conditions, we first state a version of Lemma \ref{Explicit-CDM-form-I-eta} for Frobenius matrices in $\eta'$-form.
\begin{lemma}\label{Explicit-CDM-form-I-eta'}
Let $R$ be an Artinian local ring over $\F$ with maximal ideal $\m$. Let $\M$ be a regular Breuil-Kisin module, not of bad genre. Suppose with respect to an inertial basis, $F_i$ has the form $$\begin{pmatrix}
a_i& u^{e - \gamma_{i}}b_i \\
u^{\gamma_{i}}c_i & v d_i
\end{pmatrix}$$ with $a_i, b_i, c_i, d_i \in R$. Let $$P^{(j)} = \lim\limits_{n \to \infty} P_{j+nf} = \begin{pmatrix}
q_j & u^{e - \gamma_{j}} r_j\\
u^{\gamma_{j}}s_j& t_j
\end{pmatrix}$$ denote the base change matrices described in the proof of Proposition \ref{convergence-CDM-matrix}. 
Let $F'_i = (P^{(i+1)})^{-1} F_i \varphi(P_i)$ be the matrix in \ref{intermediate-CDM-form}, and explicitly, let $$F'_i = \begin{pmatrix}
a'_i & b'_i u^{e - \gamma_{i}}\\
c'_i u^{\gamma_{i}}& v d'_i
\end{pmatrix}.$$ For any $\sigma \in R[\![v]\!]$, denote by $\overline{\sigma}$ the constant part of $\sigma$.

Then 
\begin{align*}
F'_i &= \begin{cases}
Ad \begin{pmatrix}
\frac{b_i + a_i \overline{r_{i-1}}}{b_i} & 0\\
0 & 1
\end{pmatrix} \left(\begin{pmatrix}
a_i & u^{e-\gamma_{i}} b_i\\
0 & (d_i - \frac{b_i}{a_i}c_i)v
\end{pmatrix}\right) &\text{ if } \mathcal{G}(F_i) = I_{\eta'}, z_{i} = p-1 ,\\
\hspace{1cm}\\
    \begin{pmatrix}
a_i & u^{e-\gamma_{i}} b_i \\
0 & (d_i - \frac{b_i}{a_i}c_i) v
\end{pmatrix} &\text{ if } \mathcal{G}(F_i) = I_{\eta'}, z_{i} \neq p-1 ,\\
\hspace{1cm}\\
 Ad \begin{pmatrix}
\frac{b_i + a_i \overline{r_{i-1}}}{b_i} & 0\\
0 & 1
\end{pmatrix} \left(\begin{pmatrix}
a_i & u^{e-\gamma_{i}}b_i\\
u^{\gamma_{i}} (c_i - \frac{a_i}{b_i} d_i)& 0
\end{pmatrix}\right) &\text{ if } \mathcal{G}(F_i) = \II, z_{i} = p-1 ,\\
\hspace{1cm}\\
 \begin{pmatrix}
a_i & u^{e-\gamma_{i}}b_i\\
u^{\gamma_{i}}(c_i - \frac{a_i}{b_i} d_i)& 0
\end{pmatrix} &\text{ if } \mathcal{G}(F_i) = \II, z_{i} \neq p-1,
\end{cases}
\end{align*}
where $Ad \: M \:(N)$ denotes the matrix $MNM^{-1}$.
\end{lemma}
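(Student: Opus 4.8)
The plan is to run the proof of Lemma~\ref{Explicit-CDM-form-I-eta} line by line, replacing the $\eta$-form branch of the operator $\mathcal{B}$ from Definition~\ref{defn-B-operation} by its $\eta'$-form branch (the one applicable to matrices $\begin{psmallmatrix} s_1 & u^{e-\gamma}s_2 \\ u^\gamma s_3 & v s_4\end{psmallmatrix}$) and invoking parts (2) and (3) of Lemma~\ref{inductive-step} in place of part (1). Alternatively, one can deduce the statement directly from Lemma~\ref{Explicit-CDM-form-I-eta} via the $\eta\leftrightarrow\eta'$ symmetry already built into Definition~\ref{defn-B-operation}: conjugation by the antidiagonal involution swaps the two eigenbases and hence interchanges $\gamma_i\leftrightarrow e-\gamma_i$, $z_i\leftrightarrow p-1-z_i$, $b_i\leftrightarrow c_i$, $\overline{s_{i-1}}\leftrightarrow\overline{r_{i-1}}$, and the genre $\Ieta$ with $I_{\eta'}$; in particular the distinguished residue $z_i=0$ of Lemma~\ref{Explicit-CDM-form-I-eta} becomes $z_i=p-1$ here, matching the shape of the asserted formula.

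Carrying out the direct computation, I would first record that for $F_i=\begin{psmallmatrix} a_i & u^{e-\gamma_i}b_i \\ u^{\gamma_i}c_i & v d_i\end{psmallmatrix}$ one has $\mathcal{B}(F_i)^{-1}F_i = M_i = \begin{psmallmatrix} 1 & (b_i/a_i)u^{e-\gamma_i} \\ 0 & v\end{psmallmatrix}$ when $\mathcal{G}(F_i)=I_{\eta'}$ (so $a_i\in R^*$), and $M_i=\begin{psmallmatrix} a_i/b_i & u^{e-\gamma_i} \\ u^{\gamma_i} & 0\end{psmallmatrix}$ when $\mathcal{G}(F_i)=\II$ (so $a_i\in\m$, $b_i\in R^*$), while $\mathcal{B}(F_i)$ is lower triangular (its $(1,2)$-entry vanishes) and is read off in terms of $a_i,b_i,c_i,d_i$ exactly as in the $\Ieta$ case up to the transpose. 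Next I would compute $M_i\varphi(P^{(i-1)})$ with $P^{(i-1)}=\begin{psmallmatrix} q_{i-1} & u^{e-\gamma_{i-1}}r_{i-1} \\ u^{\gamma_{i-1}}s_{i-1} & t_{i-1}\end{psmallmatrix}$, using the identity $p\gamma_{i-1}=z_i e+\gamma_i$, which gives $\varphi(u^{\gamma_{i-1}})=v^{z_i}u^{\gamma_i}$ and $\varphi(u^{e-\gamma_{i-1}})=v^{p-1-z_i}u^{e-\gamma_i}$; applying the $\eta'$-branch of $\mathcal{B}$ and reading off constant parts, one finds $\mathcal{B}(M_i\varphi(P^{(i-1)}))\equiv \mathrm{Id}\bmod u$ unless $z_i=p-1$, in which case $v^{p-1-z_i}=v^0$ and the term $\varphi(r_{i-1})$ contributes an explicit factor governed by $\overline{r_{i-1}}$.

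To finish, as in the proof of Lemma~\ref{Explicit-CDM-form-I-eta}, I would apply Lemma~\ref{reduction-step} to factor $\mathcal{B}(F_i\varphi(P^{(i-1)}))=\mathcal{B}(F_i)\,\mathcal{B}(M_i\varphi(P^{(i-1)}))$, use the normalization $\mathcal{B}(F_i\varphi(P^{(i-1)}))=P^{(i)}\Delta^{(i)}$ with $P^{(i)}\equiv\mathrm{Id}\bmod u$ from the algorithm in the proof of Proposition~\ref{convergence-CDM-matrix} to solve for $\Delta^{(i)}$, and then expand
\[
F'_i=\Delta^{(i)}\,\mathcal{B}\big(M_i\varphi(P^{(i-1)})\big)^{-1}\,M_i\varphi(P^{(i-1)}).
\]
Simplifying this, with $D_i=a_id_i-b_ic_i$ playing the normalizing role that $-D_i/c_i$, $D_i/d_i$, etc.\ play in Lemma~\ref{Explicit-CDM-form-I-eta}, collapses to the four displayed forms, the conjugation $\mathrm{Ad}\begin{psmallmatrix}\frac{b_i+a_i\overline{r_{i-1}}}{b_i} & 0\\ 0 & 1\end{psmallmatrix}$ appearing precisely in the two $z_i=p-1$ cases.

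The one genuinely delicate point is the bookkeeping of the powers of $v$ produced by $\varphi$ and the $\gamma$-shifts, and in particular confirming that $z_i=p-1$ — not $z_i=0$ or $z_i=p-2$, which a hasty reading of Lemma~\ref{inductive-step}(2) might suggest — is the unique residue for which a nontrivial $\overline{r_{i-1}}$-term survives; this is forced by $p\gamma_{i-1}=z_ie+\gamma_i$ together with $P^{(i-1)}\equiv\mathrm{Id}\bmod u$. If one argues instead via the $\eta\leftrightarrow\eta'$ symmetry, the delicate step becomes verifying that conjugation by the antidiagonal involution intertwines the algorithm of Proposition~\ref{convergence-CDM-matrix} with itself and carries the limit matrices $P^{(i)}$ to those of the swapped module, which is routine but must be stated explicitly.
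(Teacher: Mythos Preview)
Your proposal is correct and in fact contains the paper's approach: the paper's entire proof is the one-line invocation ``By Lemma~\ref{Explicit-CDM-form-I-eta} using symmetry,'' which is precisely your second route via the $\eta\leftrightarrow\eta'$ involution (swapping $\gamma_i\leftrightarrow e-\gamma_i$, $z_i\leftrightarrow p-1-z_i$, $b_i\leftrightarrow c_i$, $\overline{s_{i-1}}\leftrightarrow\overline{r_{i-1}}$, $\Ieta\leftrightarrow\Ietaa$). Your direct-computation outline is more than the paper provides but is also valid and tracks the same structure as the proof of Lemma~\ref{Explicit-CDM-form-I-eta}.
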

\begin{proof}
By Lemma \ref{Explicit-CDM-form-I-eta} using symmetry.
\end{proof}

Analogous to Proposition \ref{unipotent-action-gives-CDM-form-I-eta}, we define a left action of lower unipotent matrices on $\eta'$-form via:
\begin{align}\label{lower-unipotent-action}
\begin{pmatrix}
1 & 0 \\
y & 1
\end{pmatrix} \star \begin{pmatrix}
 a_i& u^{e - \gamma_{i}}b_i \\
u^{\gamma_{i}}c_i & v d_i
\end{pmatrix} = \begin{pmatrix}
a_i & u^{e - \gamma_{i}}b_i \\
u^{\gamma_{i}}(c_i + ya_i) & v(d_i + yb_i)
\end{pmatrix}.
\end{align}

We will assume now that $\M$ is a regular Breuil-Kisin module with Frobenius matrices $\{F_i\}_i$ such that for $i \in T$, $F_i = \begin{psmallmatrix}
v a_i& u^{e - \gamma_{i}}b_i \\
u^{\gamma_{i}}c_i & d_i
\end{psmallmatrix}$ and for $i \not\in T$, $F_i = \begin{psmallmatrix}
a_i& u^{e - \gamma_{i}}b_i \\
u^{\gamma_{i}}c_i & v d_i
\end{psmallmatrix}$ with $a_i, b_i, c_i, d_i \in R$. Our objective is to find the minimal set of conditions on $z_{i}$ that prohibit unipotent action (upper or lower, depending on the form of $F_i$) from giving $F'_i$ ($F'_i$ are as defined in Lemmas \ref{Explicit-CDM-form-I-eta} and \ref{Explicit-CDM-form-I-eta'}). Evidently, left unipotent action on $F_i$ fails to give $F'_i$ if and only if one of the following is true:

\begin{itemize}
    \item $i \in T$, $z_{i} = 0$ and $s_{i-1} \not\equiv 0 \mod v$, or
    \item $i \not\in T$, $z_{i} = p-1$ and $r_{i-1} \not\equiv 0 \mod v$.
\end{itemize}

Recall that $P^{(i)} = \mathcal{B}(F_{i} \varphi(P^{(i-1)})) (\Delta^{i})^{-1} = \mathcal{B}(F_{i}) \mathcal{B}(M_{i} \varphi(P^{(i-1)})) (\Delta^{i})^{-1}$. Also by the explicit calculations in Lemma \ref{Explicit-CDM-form-I-eta}, $\mathcal{B}(F_i)$ is upper triangular if $i \in T$ and correspondingly, $\mathcal{B}(F_i)$ is lower triangular if $i \not\in T$.

We want to now ascertain criteria for when $s_{i-1} \not\equiv 0$ mod $v$. We have the following possibilities:
\begin{enumerate}
    \item If $i-1 \in T$, $\mathcal{B}(F_{i-1})$ is upper triangular. Therefore, $s_{i-1} \not\equiv 0$ if and only if $\mathcal{B}(M_{i-1} \varphi(P^{(i-2)}))$ is not upper triangular mod $u^eR[\![u]\!]$. By the calculations in Lemma 
    \ref{inductive-step}, this can happen only if one of the following statements holds:

    \begin{enumerate}
    \item $z_{i-1} = 1$ and $s_{i-2} \not\equiv 0$ mod $v$. In this situation, $s_{i-1}$ is a multiple of $s_{i-2}$ mod $v$.
    \item $z_{i-1} = p-1$ and $r_{i-2} \not\equiv 0$ mod $v$. In this situation, $s_{i-1}$ is a multiple of $r_{i-2}$ mod $v$. 
    \end{enumerate}

\item If $i-1 \not\in T$, $\mathcal{B}(F_{i-1})$ is lower triangular. In this case, if $\mathcal{G}(F_{i-1}) = \Ietaa$, $s_{i-1} \equiv C c_{i-1} \mod v$ where $C \in R^{*}$ . If  $\mathcal{G}(F_{i-1}) = \II$, $s_{i-1}$ is an $R$-linear combination of $r_{i-2}$ and $d_{i-1}$ mod $v$.

\end{enumerate}

Similarly, for the situation where $r_{i-1} \not\equiv 0$ mod $v$, we have the following possibilities:

\begin{enumerate}
    \item If $i-1 \not\in T$, $\mathcal{B}(F_{i-1})$ is lower triangular. Therefore, $r_{i-1} \not\equiv 0$ if and only if $\mathcal{B}(M_{i-1} \varphi(P^{(i-2)}))$ is not lower triangular mod $u^eR[\![u]\!]$. By the calculations in Lemma 
    \ref{inductive-step}, this can happen only if one of the following statements holds:

\begin{enumerate}
    \item $z_{i-1} = p-2$ and $r_{i-2} \not\equiv 0$ mod $v$. In this situation, $r_{i-1}$ is a multiple of $r_{i-2}$ mod $v$.
    \item $z_{i-1} = 0$ and $s_{i-2} \not\equiv 0$ mod $v$. In this situation, $r_{i-1}$ is a multiple of $s_{i-2}$ mod $v$.
\end{enumerate}

\item If $i-1 \in T$, $\mathcal{B}(F_{i-1})$ is upper triangular. In this case, if $\mathcal{G}(F_{i-1}) = \Ieta$, $r_{i-1} \equiv C b_{i} \mod v$ where $C \in R^{*}$ . If  $\mathcal{G}(F_{i-1}) = \II$, $r_{i-1}$ is an $R$-linear combination of $s_{i-2}$ and $a_{i-1}$ mod $v$.

\end{enumerate}

Suppose $s_{i-1} \not\equiv 0$ mod $v$. Then $z_{i}$ is preceded by some sequence $(z_{i-k-1}, ..., z_{i-1}) = (1, ..., 1)$ with $k\geq -1$ and such that $[i-k-2, i-2] \subset T$.  When $k=-1$, we mean that the sequence is empty. This sequence of $1$'s must be preceded by either of the following:
\begin{itemize}
    \item $z_{i-k-2} = p-1$ with $i-k-2, i-k-3 \in T$. This situation is enough to construct an example with $s_{i-1} \not\equiv 0$ as we saw while proving the minimality of the second obstruction conditions in the proof of Proposition \ref{unipotent-action-gives-CDM-form-I-eta}. In this case, $(\tilde{z}_{i-k-2}, \tilde{z}_{i-k-1}, ..., \tilde{z}_{i-1}) = (p-1, 1, ..., 1)$ and none of the pairs in $\{(i-k-3, i-k-2), (i-k-2, i-k-1), ..., (i-2, i-1)\}$ are transitions.
    \item $z_{i-k-2} = p-1$ with $i-k-2 \in T$, $i-k-3 \not\in T$ and $r_{i-k-3} \not\equiv 0$. This implies that $(\tilde{z}_{i-k-2}, \tilde{z}_{i-k-1}, ..., \tilde{z}_{i-1}) = (1, 1, ..., 1)$ and the sequence is preceded by another sequence that allows $r_{i-k-3} \not\equiv 0$. Moreover the pair $(i-k-3, i-k-2)$ is a transition but none of the pairs in $\{(i-k-2, i-k-1), ..., (i-2, i-1)\}$ are transitions.
\end{itemize}

By symmetry, similar conditions on $\tilde{z}_j$ exist when $r_{i-1} \not\equiv 0$ mod $v$. 

Combining the analyses for $s_{i-1}$ and $r_{i-1}$ together, we find that whenever there exists an $i$ such that $F'_i \neq U \star F_i$ for all possible choices of $U$ (where $U$ is upper unipotent if $F_i$ is in $\eta$-form and lower unipotent if $F_i$ is an $\eta'$-form), then $(\tilde{z}_j)_j$ must contain a contiguous subsequence of the form $(p-1, 1, ..., 1, 0)$ of length $\geq 2$. On the other hand, if such a contiguous subsequence exists, we can construct an example so that $F'_i \neq U \star F_i$ for some $i$, for any choice of $U$ (upper or lower unipotent depending on the form of $F_i$).

Thus, we generalize the definition of second obstruction as follows:

\begin{defn}\label{suped-second-obstruction}
We say that a tame principal series $\F$-type $\tau$ faces the second obstruction if $(\tilde{z}_i)_{i\in \mathbb{Z}}$ contains a contiguous subsequence $(p-1, 1, ..., 1, 0)$ of length $\geq 2$. 
\end{defn}


\subsection{Trivial Serre weight}\label{sec-trivial-serre}
The generalizations of the definitions of first and second obstructions (see Definitions \ref{suped-first-obstruction} and \ref{suped-second-obstruction}) are very similar to the original definitions of first and second obstructions (see Definitions \ref{defn-first-obstruction} and \ref{defn-second-obstruction}). Note that in the case where each Frobenius matrix is in $\eta$-form, $\tilde{z}_{i} = z_i$. By Remark \ref{serre-weight-mixed-form}, upon requiring $\tau$ to not face the first and second obstructions, we exclude no fewer irreducible components of $\mathcal{Z}$ than we had done earlier.

However, notice that the components of $\mathcal{Z}$ indexed by twists of the trivial Serre weight were also not covered under our strategy when we allowed only $\eta$-form Frobenius matrices, even though their exclusion did not arise from the first and second obstruction conditions. If $\mathcal{Z}(\sigma)$ is such a component, then by Proposition \ref{finding-tau-for-serre-weight}, the only possible tame principal series $\F$-type $\tau = \eta \oplus \eta'$ such that $\mathcal{C}^{\tau, \mathrm{BT}}(\mathbb{Z}/f\mathbb{Z})$ covers $\mathcal{Z}(\sigma)$ does not satisfy $\eta \neq \eta'$. This situation \textit{can} be rectified by allowing some Frobenius matrices to be in $\eta'$-form when $f\geq 2$. By the calculations in Remark \ref{serre-weight-mixed-form}, all we need is that each $\tilde{z}_i = p-1$, while not all $z_i$ equal $0$ (so that $\eta \neq \eta')$. 
For instance, we can choose $T = \mathbb{Z}/f\mathbb{Z} \smallsetminus \{0\}$, and choose $\tau$ so that $z_0 = p-2$, $z_1 = 1$ and all other $z_j$'s equal to $p-1$. A version of Proposition \ref{straightening} can be shown to hold for this situation when $p>3$ and we can find a similar result as in Theorem \ref{main-thm} when $p>3$, the Serre weight is trivial and $f\geq 2$. We omit the technical calculations from this paper because the trivial weight is in the Fontaine-Lafaille range and amenable to other methods.

\newpage
\bibliographystyle{amsalpha}
\bibliography{components}

\newcommand{\etalchar}[1]{$^{#1}$}
\providecommand{\bysame}{\leavevmode\hbox to3em{\hrulefill}\thinspace}
\providecommand{\MR}{\relax\ifhmode\unskip\space\fi MR }
\providecommand{\MRhref}[2]{%
  \href{http://www.ams.org/mathscinet-getitem?mr=#1}{#2}
}
\providecommand{\href}[2]{#2}
\begin{thebibliography}{CEGS22b}

\bibitem[BBH{\etalchar{+}}]{Irreg_Loci}
R.~Bellovin, N.~Borade, A.~Hilado, K.~Kansal, H.~Lee, B.~Levin, D.~Savitt, and
  H.~Wiersema, \emph{Irregular loci in the {E}merton-{G}ee stack for {$GL_2$}},
  Preprint.

\bibitem[BLGG13]{BLGG13}
Tom Barnet-Lamb, Toby Gee, and David Geraghty, \emph{Serre weights for rank two
  unitary groups}, Math. Ann. \textbf{356} (2013), 1551--1598.

\bibitem[CDM18]{CDM}
Xavier Caruso, Agn\`es David, and Ariane M\'ezard, \emph{Un calcul d'anneaux de
  déformations potentiellement {B}arsotti–{T}ate}, Trans. Amer. Math. Soc.
  \textbf{370} (2018), 6041–6096.

\bibitem[CEGS22a]{CEGS-components}
Ana Caraiani, Matthew Emerton, Toby Gee, and David Savitt, \emph{Components of
  moduli stacks of two-dimensional {G}alois representations}, arXiv preprint
  arXiv:2207.05237 (2022).

\bibitem[CEGS22b]{CEGS-local-geometry}
\bysame, \emph{Local geometry of moduli stacks of two-dimensional {G}alois
  representations}, arXiv preprint arXiv:2207.14337 (2022).

\bibitem[EG21]{EG-Scheme-Image}
Matthew Emerton and Toby Gee, \emph{``{S}cheme-theoretic images'' of morphisms
  of stacks}, Algebraic Geometry (2021), 1--132.

\bibitem[EG23]{EG20}
\bysame, \emph{Moduli stacks of \'etale $(\phi,{\Gamma})$-modules and the
  existence of crystalline lifts}, Annals of Mathematical Studies
  \textbf{AMS-215} (2023).

\bibitem[EGH22]{egh-ihes}
Matthew Emerton, Toby Gee, and Eugen Hellmann, \emph{An introduction to the
  categorical $p$-adic langlands program}, arXiv preprint arXiv:2210.01404
  (2022).

\bibitem[Fon91]{Fon91}
Jean-Marc Fontaine, \emph{Repr{\'e}sentations p-adiques des corps locaux
  (1{\`e}re partie)}, The Grothendieck Festschrift (1991), 249--309.

\bibitem[Kis06]{Kis06}
Mark Kisin, \emph{Crystalline representations and {F}-crystals}, Algebraic
  Geometry and Number Theory (2006), 459--496.

\bibitem[{Sta}18]{stacks-project}
The {Stacks project authors}, \emph{The stacks project},
  \url{http://stacks.math.columbia.edu}, 2018.

\end{thebibliography}

\end{document}